\newtheorem{theorem}{Theorem}[section]
\newtheorem{corollary}[theorem]{Corollary}
\newtheorem{lemma}[theorem]{Lemma}
\newtheorem{proposition}[theorem]{Proposition}
\newtheorem{definition}[theorem]{Definition}
\newtheorem{remark}[theorem]{Remark}
\newtheorem{example}[theorem]{Example}
\newtheorem{thmintro}{Theorem}
\newcommand{\lpi}{\langle}
\newcommand{\rpi}{\rangle}
\newcommand{\fg}{\mathfrak{g}}
\newcommand{\fh}{\mathfrak{h}}
\newcommand{\fs}{\mathfrak{s}}
\newcommand{\fM}{\mathfrak{M}}
\newcommand{\fX}{\mathfrak{X}}
\newcommand{\bbc}{\mathbb{C}}
\newcommand{\bbt}{\mathbb{T}}
\newcommand{\bbz}{\mathbb{Z}}
\newcommand{\Cc}{\mathcal{C}}
\newcommand{\Cd}{\mathcal{D}}
\newcommand{\Ck}{\mathcal{K}}
\newcommand{\Co}{\mathcal{O}}
\newcommand{\Cs}{\mathcal{S}}
\newcommand{\Cv}{\mathcal{V}}
\newcommand{\Cx}{\mathcal{X}}
\newcommand{\bd}{\mathbf{d}}
\newcommand{\be}{\mathbf{e}}
\newcommand{\bq}{\mathbf{q}}
\newcommand{\bu}{\mathbf{u}}
\newcommand{\cpv}{\begin{center}\begin{minipage}[t]{14cm}\small{\it Proof.} }
\newcommand{\fpv}{\fim\end{minipage}\end{center}}
\newcommand{\fim}{\hfill $\Box$}
\newcommand{\cha}{\mathsf{H}_{t,c}}
\newcommand{\chad}{\mathsf{H}_{t,\check{c}}}
\newcommand{\chazero}{\mathsf{H}_{0,c}}
\newcommand{\reschazero}{\bar{\mathsf{H}}_{0,c}}
\DeclareMathOperator{\sgn}{sgn}
\DeclareMathOperator{\im}{Im}
\DeclareMathOperator{\Hom}{Hom}
\DeclareMathOperator{\End}{End}
\DeclareMathOperator{\Ind}{Ind}
\DeclareMathOperator{\triv}{triv}
\DeclareMathOperator{\Irr}{Irr}
\DeclareMathOperator{\Sym}{Sym}
\DeclareMathOperator{\vol}{vol}
\DeclareMathOperator{\coker}{coker}
\DeclareMathOperator{\Rep}{Rep}
\DeclareMathOperator{\Ext}{Ext}
\DeclareMathOperator{\ghom}{Homgr}
\edef\epi{\expandafter[twoheadrightarrow]}
\edef\mono{\expandafter[hookrightarrow]}
\def\xar{\expandafter\arrow}
\numberwithin{equation}{section}
\def\@seccntformat#1{%
  \protect\textup{\protect\@secnumfont
    \ifnum\pdfstrcmp{subsection}{#1}=0 \bfseries\fi% subsection # in \bfseries
    \csname the#1\endcsname
    \protect\@secnumpunct
  }%
}  
\begin{document}

\title{Dirac induction for rational Cherednik algebras}

\author
{Dan Ciubotaru}
       % \address[D. Ciubotaru]{Mathematical Institute, University of Oxford, Oxford OX2 6GG, UK}
       % \email{dan.ciubotaru@maths.ox.ac.uk}
\author
{Marcelo De Martino}
        \address{Mathematical Institute, University of Oxford, Oxford OX2 6GG, UK}
        \email{dan.ciubotaru@maths.ox.ac.uk, marcelo.demartino@maths.ox.ac.uk}

\begin{abstract}
We introduce the local and global indices of Dirac operators for the rational Cherednik algebra $\cha(G,\fh)$, where $G$ is a complex reflection group acting on a finite-dimensional vector space $\fh$. We investigate precise relations between the (local) Dirac index of a simple module in the category $\Co$ of $\cha(G,\fh)$, the graded $G$-character of the module, the Euler-Poincar\' e pairing, and the composition series polynomials for standard modules. In the global theory, we introduce integral-reflection modules for $\cha(G,\fh)$ constructed from finite-dimensional $G$-modules. We define and compute the index of a Dirac operator on the integral-reflection module and show that the index is, in a sense, independent of the parameter function $c$. The study of the kernel of these global Dirac operators  leads naturally to a notion of dualised generalised Dunkl-Opdam operators.
\end{abstract}

\thanks{This research was supported in part by the EPSRC grant EP/N033922/1(2016).}

\subjclass[2010]{16G99,20F55, 20C08}

\maketitle

%\setcounter{tocdepth}{1}
%\tableofcontents

\section{Introduction}

\subsection{\;\!\!\!\!\!}%putting an empty subsection spoiled the fonts of the section (they were boldface)
Rational Cherednik algebras are a particular example of the symplectic reflection algebras introduced by Etingof and Ginzburg \cite{EG}.  They appear as rational degenerations of the double affine Hecke algebra introduced by I. Cherednik \cite{Ch}. We denote them by $\cha(G,\fh)$ or simply $\cha$, see section \ref{s:RCA}. Their definition depends on a finite complex reflection group $G$ acting on a finite-dimensional complex vector space $\fh$ (and on its dual $\fh^*$), as well as on parameters $t\in\bbc$ and a $G$-conjugation invariant function $c$ on the set of reflections of $G$. As a $\bbc$-module, $\cha$ has a Poincar\'e-Birkhoff-Witt decomposition $\cha = \bbc[\fh]\otimes \bbc G\otimes \bbc[\fh^*]$ and the analogy in structure with that of a universal enveloping algebra of a complex semisimple Lie algebra has led to the theory of category $\Co$ for $\cha$ \cite{GGOR}, denoted here as $\Co_{t,c}(G,\fh)$. This is the category of  left $\cha$-modules that are finitely generated and $\fh$-locally nilpotent (see Definition \ref{d:catO}). This category has standard modules, denoted $M_{t,c}(\tau)$, where $\tau$ is a (finite-dimensional) simple $\bbc G$-module, and the simple modules, $L_{t,c}(\tau)$, are the unique irreducible quotients of $M_{t,c}(\tau)$ \cite{GGOR,Go}. The standard modules have an easy description: as a vector space over $\bbc$,   $M_{t,c}(\tau) = \bbc[\fh]\otimes \tau$. The composition series and the behaviour of the simple modules, on the other hand, is complicated and sensitive to the parameters. For example, when $t=1$, this ranges from $L_{t,c}(\tau) = M_{t,c}(\tau)$ when the parameter $c$ is \emph{regular} (see \cite[Definition 2.15]{DO}) to cases in which $L_{t,c}(\tau)$ is finite dimensional, which is always the case if $t=0$ (see, {\it e.g.}, \cite{BEG}, \cite{Et}, \cite{BP}). A basic problem is to compute  the multiplicity of $L_{t,c}(\tau)$ in a Jordan-H{\"o}lder series of $M_{t,c}(\sigma)$, or equivalently, to compute the graded $G$-character of $L_{t,c}(\tau)$.

In this paper, we introduce what we call a graded \emph{local} theory  and a \emph{global} theory of Dirac operators for rational Cherednik algebras. Here the ``local/global'' names are motivated by the analogy with the setting of real reductive groups, where the local picture would be the Dirac cohomology theory for Harish-Chandra modules \cite{HP,Vo}, while the global setting refers to the Dirac operators acting on spaces of sections of spinor bundles over the real symmetric space \cite{AS,Pa}. Our present work builds on the results of \cite{Ci}, where ungraded local Dirac operators were studied for symplectic reflection algebras, and it is motivated by the Dirac operator results obtained for graded affine Hecke algebras in \cite{BCT,CT,COT}.

In the present setting, the Dirac operators are defined with respect to the Clifford algebra $\Cc=\Cc(V)$, associated to the vector space $V=\fh\oplus\fh^*$ (and the bilinear pairing given by extending the natural bilinear pairing $\fh^*\times\fh\to\bbc$) and its irreducible spin module $S$ realised on the exterior algebra $\bigwedge \fh$. For the class of graded modules  of $\Co_{t,c}(G,\fh)$ with {\it infinitesimal character} (see Definition \ref{d:infchar}), we  define and study ($G$-invariant) graded Dirac operators and the Dirac cohomology (introduced in the ungraded setting in \cite{Ci}), and  compute the {\it index}, $I_D(X)$, of such a module $X$ with infinitesimal character. We also introduce the notion of \emph{Dirac index polynomials}, $d_{\tau,\sigma}(\bq)$ (see (\ref{e:DiracIndPoly}), below), given as the graded multiplicity of the irreducible $\bbc G$-module $\sigma$ in the local index $I_D(L_{t,c}(\tau))$.The problem of computing the graded $G$-character can be recast in terms of Dirac theory as follows: 

\begin{thmintro}\label{t:A}
Let $L_{t,c}(\tau)$ be a simple $\cha$-module. Its graded $G$-character equals
\[\textup{ch}(L_{t,c}(\tau),g,\bq)=\frac{\textup{ch}(I_D(L_{t,c}(\tau)),g,\bq)}{\det_{\fh^*}(1-g \bq)},\quad g\in G.\]
Moreover, the matrix of Dirac index polynomials $[d_{\tau,\sigma}(\bq)]$ is inverse to the matrix $[n_{\tau,\sigma}(\bq)]$, of graded multiplicity of $L_{t,c}(\sigma)$ in $M_{t,c}(\tau)$.
\end{thmintro}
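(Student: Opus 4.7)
The plan is to reduce the first identity to an Euler--Poincar\'e principle for the Dirac complex, and then derive the matrix inversion as a purely formal consequence.

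First, I would establish the universal identity
\[
\textup{ch}\bigl(I_D(X),g,\bq\bigr) \;=\; \det_{\fh^*}(1-g\bq)\cdot\textup{ch}(X,g,\bq),\qquad X\in\Co_{t,c}(G,\fh)\text{ with infinitesimal character}.
\]
The infinitesimal character hypothesis ensures (via a result that I expect to be established earlier in the paper) that $D^2$ acts by a scalar on $X\otimes S$, so that the alternating sum of the Dirac cohomology in the $\bbz/2$-grading $S=\wig^{\textup{even}}\fh\oplus\wig^{\textup{odd}}\fh$ equals that of $X\otimes S$ itself. Refining this by the internal Cherednik grading, in the graded Grothendieck group of $G$-modules one obtains
\[
I_D(X) \;=\; \sum_{i=0}^{\dim\fh}(-1)^i\bigl[X\otimes\wig^i\fh\bigr].
\]
Passing to graded $G$-characters, the right-hand side factors as $\textup{ch}(X,g,\bq)$ times $\sum_i(-1)^i\textup{ch}(\wig^i\fh,g,\bq)$, and the latter is the elementary character identity $\det_{\fh^*}(1-g\bq)$ once the exterior grading on $\wig\fh$ is aligned with the Cherednik grading. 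Specialising $X=L_{t,c}(\tau)$ and dividing by $\det_{\fh^*}(1-g\bq)$ yields the first formula of the theorem.

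For the second assertion, I would combine three ingredients: the PBW character of the standard module
\[
\textup{ch}\bigl(M_{t,c}(\tau),g,\bq\bigr) \;=\; \frac{\textup{ch}(\tau,g)}{\det_{\fh^*}(1-g\bq)},
\]
its composition-series expansion $\textup{ch}(M_{t,c}(\tau),g,\bq) = \sum_\sigma n_{\tau,\sigma}(\bq)\,\textup{ch}(L_{t,c}(\sigma),g,\bq)$, and the first formula together with the definition $\textup{ch}(I_D(L_{t,c}(\sigma)),g,\bq) = \sum_{\sigma'} d_{\sigma,\sigma'}(\bq)\,\textup{ch}(\sigma',g)$. Substituting these into the composition-series identity and cancelling the common denominator $\det_{\fh^*}(1-g\bq)$ gives
\[
\textup{ch}(\tau,g) \;=\; \sum_{\sigma'}\Bigl(\sum_\sigma n_{\tau,\sigma}(\bq)\,d_{\sigma,\sigma'}(\bq)\Bigr)\textup{ch}(\sigma',g).
\]
Since the irreducible $\bbc G$-characters $\textup{ch}(\sigma',g)$ are linearly independent as class functions on $G$ (with coefficients in $\bbz[\bq,\bq^{-1}]$ treated as scalars), this forces $\sum_\sigma n_{\tau,\sigma}(\bq)\,d_{\sigma,\sigma'}(\bq) = \delta_{\tau,\sigma'}$, which is the asserted matrix inversion.

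The principal technical obstacle is the first step: proving the Euler--Poincar\'e identity rigorously requires both the scalar action of $D^2$ on modules with infinitesimal character and careful bookkeeping of the interplay between the Cherednik grading on $X$ and the exterior grading on $S=\wig\fh$, so that the resulting factor comes out as $\det_{\fh^*}(1-g\bq)$ with the correct variables and signs, rather than some twist thereof. Once that identity is in hand, the matrix-inversion statement is a routine manipulation in the graded representation ring of $G$.
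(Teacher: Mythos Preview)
Your approach is essentially the same as the paper's: the character formula is Corollary \ref{c:char-formula}, which is an immediate consequence of Proposition \ref{p:local-index} (the Euler--Poincar\'e identity $I_D(X)=X\otimes S^{*,+}-X\otimes S^{*,-}$), and the matrix inversion is Proposition \ref{p:DiracPolyInverse}, proved exactly as you outline via $I_D(M_{t,c}(\tau))=\tau$ (Lemma \ref{l:index-std}) and additivity of the index.

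Two minor imprecisions to flag. First, $D^2$ does \emph{not} act by a single scalar on $X\otimes S^*$; rather, by Lemma \ref{l:D^2-sigma} it acts by a scalar on each graded $G$-isotypic piece $(X\otimes S^*)_{n,\sigma}$, and the key point is that this scalar depends only on the total degree $n$ and on $\sigma$ (not separately on $d$ and $\ell$). The paper's proof of Proposition \ref{p:local-index} therefore runs piece by piece: on each $(X\otimes S^*)_{n,\sigma}$ either $D^2$ is invertible (no contribution) or $D^2=0$ (Euler--Poincar\'e applies). Second, the paper's local Dirac operator acts on $X\otimes S^*$ with $S^*=\bigwedge\fh^*$ (graded with $\fh^*$ in degree $+1$), not $S=\bigwedge\fh$; this is precisely what makes the factor come out as $\det_{\fh^*}(1-g\bq)$ rather than a twist. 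You correctly anticipate both issues as the ``principal technical obstacle'', so there is no genuine gap.
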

We also relate the Dirac index polynomials to the graded Euler-Poincar\'e pairing (see Corollary \ref{c:EPupshot}):
\begin{equation*}
\textup{EPgr}_{\cha}(M_{t,c}(\tau),L_{t,c}(\sigma))=d_{\sigma,\tau}(\bq),
\end{equation*}
and furthermore to a graded elliptic pairing on the space of $G$-representations, Theorem \ref{t:pairs}. Theorem \ref{t:vogan-conj} and Proposition \ref{p:necessary} give necessary conditions on the irreducible $G$-representations that may appear in the numerator of the character formula above. Theorem \ref{t:A} should be regarded as the rational Cherednik algebra analogue of the character formulae proved for graded affine Hecke algebras in \cite{CT}. At the end of section \ref{s:LocalDirac}, we exemplify these formulae in the case of the restricted Cherednik algebra for $G = W(B_2)$ and $G=W(G_2)$.

\subsection{\;\!\!\!\!\!}
The starting point of the global theory is an adaptation of the notion of \emph{integral-reflection} modules from the graded affine Hecke algebras setting \cite[Section 6.3]{COT} (see also \cite{EOS}) to the present setting of the rational Cherednik algebra $\cha$. In section \ref{s:IRR}, we introduce two types of integral-reflection modules, which we denote by $\fM_{t,c}(\sigma)$ and $\fX_{t,c}(\sigma)$, with $\sigma$ ranging over the finite-dimensional modules of $\bbc G$. The first one is a realisation of the costandard modules in $\Co_{t,c}(G,\fh)$ while the second one is crucial to the theory of global Dirac operators.

To define them we adapt to the case of a complex reflection group (see section \ref{s:IRR}) the ideas from \cite{Gu}. In particular, we introduce and describe the main properties of certain operators in the space of polynomial functions on $\fh$ (an on $\fh^*)$ associated to any reflection of the complex reflection group $G$. These operators, called \emph{integral operators}, and denoted by $I_s$ or $I_s^\vee$ (depending if it acts on $\bbc[\fh]$ or $\bbc[\fh^*]$, respectively), with $s$ a reflection of $G$, are dual to the divided difference operators (also known as BGG operators), under the natural polynomial pairing (see (\ref{e:polyduality})) between $\bbc[\fh]$ and $\bbc[\fh^*]$ (see Proposition \ref{p:calcduality}). We then use the polynomial pairing and the operators above mentioned, to define representations of $\cha$ which are dual to the standard modules in the category $\Co$ for the ``dual'' Cherednik algebra $\chad\cong\cha^{\textup{op}}$. This procedure is inspired by the ``integral-reflection'' representations studied in \cite{EOS,COT} (they also appear in \cite{HO}, without the name integral-reflection).  Unlike $\fM_{t,c}$,  the graded $\cha$-module $\fX_{t,c}(\tau)$ is not in category $\Co$, but it is shown in Proposition \ref{p:filtration} that there is an ascending filtration $F_0(\fX_{t,c}(\tau))\subseteq F_1(\fX_{t,c}(\tau))\subseteq \cdots$ whose union $\cup_n F_n(\fX_{t,c}(\tau))$ equals $\fX_{t,c}(\tau)$ and each filtered piece $F_n(\fX_{t,c}(\tau))$ is in category $\Co$ (these filtered pieces can also be shown to be dual versions of the  more general standard modules $\Delta_n(\tau)$ described in \cite[Section 2.3.3]{GGOR}). A feature that both families of integral-reflection modules $\fM_{t,c}(\tau)$ and $\fX_{t,c}(\tau)$ have in common is that the action of $\fh$ has a very simple description by $\fh$-directional derivatives as opposed to the conventional left action of $\fh$ on $\cha$ by means of deformed derivations in the direction of $y\in \fh$ ({\it i.e.,} by Dunkl operators). This  yields an easy description of the $\fh$-singular vectors (see Definitions \ref{d:M-singvec} and \ref{d:X-singvec}) in both cases.

 A global Dirac operator, see section \ref{s:GlobalDirac}, depends on an irreducible module $\tau$ of $\bbc G$ and it is defined as a linear map 
\begin{equation*}
D_\tau:\fX_{t,c}(\tau\otimes S)\to\fX_{t,c}(\tau\otimes S)
\end{equation*}
 that commutes with the integral-reflection $\cha$-action constructed. We prove:
\begin{thmintro}\label{t:B}
When $t=1$, the kernel and the cokernel of $D_\tau$ are in $\Co_{t,c}(G,\fh)$. 
\end{thmintro}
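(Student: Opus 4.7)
The plan is to reduce Theorem \ref{t:B} to a finite-generation statement and then exploit a Parthasarathy-style identity for $D_\tau^2$ that is non-degenerate precisely at $t=1$.

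First I would record two standing observations. Since $D_\tau$ commutes with the $\cha$-action, $\ker D_\tau$ is an $\cha$-submodule of $\fX_{t,c}(\tau\otimes S)$ and $\coker D_\tau$ is a quotient $\cha$-module. Second, as the action of $\fh$ on the integral-reflection module is by ordinary directional derivatives, every vector of $\fX_{t,c}(\tau\otimes S)$ is annihilated by some power of the augmentation ideal of $\bbc[\fh]$. This $\fh$-local nilpotency descends to submodules and quotients, so both $\ker D_\tau$ and $\coker D_\tau$ are $\fh$-locally nilpotent automatically. Membership in $\Co_{t,c}(G,\fh)$ thus reduces to finite generation over $\cha$.

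Finite generation I would control using the filtration $F_0(\fX_{t,c}(\tau\otimes S))\subseteq F_1(\fX_{t,c}(\tau\otimes S))\subseteq\cdots$ of Proposition \ref{p:filtration}, whose terms already lie in $\Co_{t,c}(G,\fh)$. It is enough to find an $N$ such that $\ker D_\tau\subseteq F_N(\fX_{t,c}(\tau\otimes S))$ and $D_\tau(\fX_{t,c}(\tau\otimes S))+F_N(\fX_{t,c}(\tau\otimes S))=\fX_{t,c}(\tau\otimes S)$: the first inclusion places $\ker D_\tau$ inside an object of $\Co_{t,c}(G,\fh)$, and the second exhibits $\coker D_\tau$ as a quotient of $F_N(\fX_{t,c}(\tau\otimes S))$, again in $\Co_{t,c}(G,\fh)$. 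Both bounds would come from a Parthasarathy-type identity $D_\tau^2=\Omega_\tau$, with $\Omega_\tau$ an $\cha$-linear operator expressible through the Euler element $\bh$ together with an endomorphism of the finite-dimensional $\bbc G$-module $\tau\otimes S$. This mirrors the computation of $D^2$ carried out for the local Dirac operator in Section \ref{s:LocalDirac} and the graded affine Hecke algebra analogue of \cite{BCT,CT}. The specialisation $t=1$ is exactly what makes the Euler contribution to $\Omega_\tau$ nondegenerate; at $t=0$ the grading part collapses and the argument does not work.

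Granting the identity, $\ker D_\tau\subseteq\ker\Omega_\tau$, and since the endomorphism factor acts on the finite-dimensional space $\tau\otimes S$ with only finitely many eigenvalues while $\bh$ has eigenvalues tending to infinity on deep filtration layers, $\Omega_\tau$ is invertible once one is sufficiently far out in the filtration. This pins $\ker D_\tau$ inside some $F_N(\fX_{t,c}(\tau\otimes S))$ and makes $\Omega_\tau$, hence also $D_\tau$, surjective onto $\fX_{t,c}(\tau\otimes S)/F_N(\fX_{t,c}(\tau\otimes S))$. The principal obstacle I anticipate is the derivation of the identity $D_\tau^2=\Omega_\tau$ itself in the global picture: the local Dunkl-based calculation relies on how $\fh$ acts through deformed derivations, whereas on $\fX_{t,c}(\tau\otimes S)$ the action of $\fh$ is by plain directional derivatives and all the nontrivial structure is shifted onto the integral operators $I_s$ and $I_s^\vee$ implementing the reflections of $G$. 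Re-running the Parthasarathy computation in these new coordinates, and verifying that the resulting Casimir-like element is still controlled by the Euler grading, is the technical heart of the argument.
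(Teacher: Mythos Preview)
Your strategy is the paper's own: pass to $D_\tau^2$, compute a Parthasarathy-type formula, and use it to show that $\ker D_\tau^2\subseteq F_N(\fX_{t,c}(\widetilde\tau))$ for some $N$ and that $F_N$ surjects onto $\coker D_\tau^2$; then deduce the statement for $D_\tau$ itself. Two refinements you should expect when carrying this out. First, the global square is not literally ``Euler element plus an endomorphism of $\tau\otimes S$'': the paper's formula (Proposition~\ref{p:D-tau-squared}) is
\[
-\tfrac12 D_\tau^2=\nabla+t\bigl(\deg_{\fh^*}+\tfrac\kappa2+\tfrac r2\bigr)+N_c(\tau)-1\otimes\sum_s\frac{c(s)}{d_s}\,s\otimes s\otimes s,
\]
with $\nabla=\sum_j\partial_{y_j}\otimes\partial_{x_j}\otimes 1$. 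The extra operator $\nabla$ is not scalar, but it strictly lowers the filtration index, so on the top graded piece it disappears and your invertibility argument goes through unchanged. Second, the group term $\sum_s\frac{c(s)}{d_s}\,s\otimes s\otimes s$ involves the $\bbc[\fh^*]$-factor, not just $\tau\otimes S$; the paper handles this by decomposing into $G$-isotypic components $\sigma$, where the whole expression (mod $\nabla$) becomes the scalar $t(d+\ell)+N_c(\tau)-N_c(\sigma)$, which for $t=1$ vanishes for at most finitely many $(d,\ell)$.
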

In other words, the global Dirac operator satisfies a ``Fredholm'' condition: even though $D_\tau$ is an endomorphism of $\fX_{t,c}(\tau\otimes S)$ and this space is not finitely generated over $\cha$, the kernel and cokernel of $D_\tau$ are finitely generated (Theorem \ref{t:fredholm} and Corollary \ref{c:fredholm}). An important step in the proof of Theorem \ref{t:B} is a precise formula for the square $D_\tau^2$ (Proposition \ref{p:D-tau-squared}).

When $t=0$, we may work with the restricted rational Cherednik algebra $\bar{\mathsf H}_{0,c}$ \cite{Go}. In that case, all the standard and simple modules are finite dimensional, and so is the restricted integral-reflection $\bar{\mathsf H}_{0,c}$-module. Hence, automatically, the kernel and cokernel of the restricted operator $D_\tau$ are finite dimensional and in the restricted category $\Co$.

\subsection{\;\!\!\!\!\!}
For both $t=0$ and $t=1$, the module $\fX_{t,c}(\tau\otimes S)$, as well as the kernel and cokernel of $D_\tau$, are objects in the category of locally $\fh$-nilpotent $\cha$-modules. We define  the \emph{global index}, $I(\tau)$, as their difference in the Grothendieck group of this category  (actually not for $D_\tau$, but for its even part $D_\tau^+$) and we then prove:
\begin{thmintro}
The global index of $\tau$ is $I(\tau) = \fM_{t,c}(\tau)$, for all the parameters $t,c$.
\end{thmintro}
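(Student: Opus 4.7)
The first move is to reformulate the global index as an Euler characteristic of ambient modules rather than of kernel/cokernel. By Theorem~\ref{t:B} (and the analogous statement for the restricted algebra $\reschazero$ when $t=0$), the two short exact sequences
\begin{equation*}
0 \to \ker D_\tau^+ \to \fX_{t,c}(\tau\otimes S^+) \to \Img D_\tau^+ \to 0,\qquad 0 \to \Img D_\tau^+ \to \fX_{t,c}(\tau\otimes S^-) \to \coker D_\tau^+ \to 0
\end{equation*}
live in the locally $\fh$-nilpotent category, so additivity of classes in the relevant Grothendieck group yields
\begin{equation*}
I(\tau)\,=\,[\fX_{t,c}(\tau\otimes S^+)]-[\fX_{t,c}(\tau\otimes S^-)].
\end{equation*}
This trades a delicate homological object for a purely formal alternating sum of ambient spaces, over which we have full control.

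The next step is to identify this sum with $[\fM_{t,c}(\tau)]$. Here the decisive structural input is that on $\fX_{t,c}(\sigma)$ the action of $\fh$ is by the undeformed directional derivatives on polynomial functions, independent of $c$. Using the ascending filtration of Proposition~\ref{p:filtration}, which realises $\fX_{t,c}(\sigma)$ as a union of objects of $\Co_{t,c}(G,\fh)$, I would extract a graded $G$-character of the form $\textup{ch}(\fX_{t,c}(\sigma),g,\bq)=\textup{ch}(\sigma,g,\bq)\cdot A(g,\bq)$, where $A(g,\bq)$ is a universal factor independent of both $c$ and $\sigma$ (it comes from the $\bbc[\fh^*]$-tensor structure of the integral-reflection module). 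Substituting $\sigma=\tau\otimes S^\pm$ and applying the Koszul identity
\begin{equation*}
\sum_i(-1)^i\bq^i\,\textup{ch}(\bigwedge\nolimits^i\fh,g)=\det\nolimits_{\fh}(1-g\bq),
\end{equation*}
the factor $A(g,\bq)$ collapses to exactly the Poincar\'e factor occurring in the graded $G$-character of the costandard module $\fM_{t,c}(\tau)$. The resulting character identity, together with the compatible $G\times\bbc[\fh]$-action carried by $I(\tau)$ via the integral-reflection structure, upgrades to an equality of classes in the Grothendieck group of the locally $\fh$-nilpotent category.

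The main obstacle is making the alternating sum rigorous in the non-finitely-generated setting of $\fX_{t,c}$. The natural route is to restrict every identity to a filtered piece $F_n(\fX_{t,c}(\tau\otimes S^\pm))$ of Proposition~\ref{p:filtration}, verify that $D_\tau^+$ is compatible with the filtration up to a bounded shift, so that each filtered piece contributes a class in $\Co_{t,c}(G,\fh)$, and finally pass to a limit in an appropriately completed Grothendieck group. For $t=0$ the argument simplifies dramatically: working inside $\reschazero$ all the modules involved are finite-dimensional, and the identity reduces to a finite-dimensional Euler characteristic, which both dispenses with the completion issues and provides a sanity check for the $t=1$ computation.
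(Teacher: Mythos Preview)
Your proposal is correct and follows essentially the same route as the paper: first reduce $I(\tau)$ to $[\fX_{t,c}(\tau\otimes S^+)]-[\fX_{t,c}(\tau\otimes S^-)]$ by the Euler--Poincar\'e principle in $\Co^{\textup{ln}}_{t,c}(G,\fh)$, then evaluate this difference via the filtration of Proposition~\ref{p:filtration} and the Koszul-type identity for $\bbc[\fh^*]\otimes\tau\otimes({\bigwedge}^{\!+}\fh-{\bigwedge}^{\!-}\fh)$.

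One simplification worth noting: you need not verify any compatibility of $D_\tau^+$ with the filtration. Once the Euler--Poincar\'e step has replaced $I(\tau)$ by the alternating sum of ambient modules, the Dirac operator has left the picture entirely, and only the $\cha$-module structure of $\fX_{t,c}(\widetilde\tau)^\pm$ matters. The paper exploits this by writing each filtered piece as a sum of costandard modules, $F_d(\fX_{t,c}(\widetilde\tau)^\varepsilon)=\sum_{i=0}^d\sum_\sigma n^\varepsilon_{i,\tau,\sigma}\,\fM_{t,c}(\sigma)$, and observing that the coefficients $n^+_{i,\tau,\sigma}-n^-_{i,\tau,\sigma}$ are exactly the graded multiplicities appearing in the Koszul identity; the difference therefore stabilises to $\fM_{t,c}(\tau)$ already at $d=0$ and is independent of $d$. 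This sidesteps both the ``upgrade from character to class'' step you mention (since one is working directly with costandard classes throughout) and any completion of the Grothendieck group.
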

This result should be regarded as a manifestation of the rigidity of the Dirac index. A second principle in this theory is that the local Dirac kernel should control the global Dirac kernel, and this provides the bridge between the two settings. More precisely, we have the following result:

\begin{thmintro}[see Proposition \ref{p:local-global}]
Let $Y$ be a module in $\Co_{t,c}(G,\fh)$. Then 
\begin{equation*}
\begin{aligned}
\ghom_{\cha}(Y,\ker D_\tau)&=\ghom_{G}(\tau^*,\ker D_{Y^\dagger}),\\
\ghom_G(\tau^*,\coker D_{Y^\dagger})&\hookrightarrow\ghom_{\cha}(Y, \coker D_\tau),
\end{aligned}
\end{equation*}
where $Y^\dagger$ is the contragredient module of $Y$ and $ D_{Y^\dagger}$ is the local Dirac operator with respect to it.
\end{thmintro}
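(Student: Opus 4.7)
The plan is to derive both statements from a Frobenius-type adjunction that identifies $\ghom_{\cha}(Y, \fX_{t,c}(\tau \otimes S))$ with a space of $G$-equivariant maps into $Y^\dagger \otimes S$, and then to show that under this adjunction the global Dirac operator corresponds to the local one on $Y^\dagger$.

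First I would establish a natural isomorphism of graded vector spaces
\begin{equation*}
\ghom_{\cha}(Y, \fX_{t,c}(\tau \otimes S)) \;\cong\; \ghom_G(\tau^*, Y^\dagger \otimes S).
\end{equation*}
The module $\fX_{t,c}(\sigma)$ was constructed so that $\fh$ acts by ordinary directional derivatives while the $G$-action is twisted by the integral operators $I_s^\vee$, and by Proposition \ref{p:calcduality} these are the adjoints, under the polynomial pairing (\ref{e:polyduality}), of the standard Dunkl action that defines the contragredient $Y^\dagger$ over $\chad \cong \cha^{\textup{op}}$. Consequently a $\cha$-linear map $Y \to \fX_{t,c}(\tau \otimes S)$ is the same datum as a $G$-equivariant pairing of $\tau \otimes S$ with $Y^\dagger$; using the self-duality of the spin module $S \cong \wig \fh$ as a $G$-module to move the spin factor across, one arrives at the right hand side with $\tau^*$ appearing in place of $\tau$.

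Second I would verify that this isomorphism intertwines $\ghom_{\cha}(Y, D_\tau)$ with $\ghom_G(\tau^*, D_{Y^\dagger})$. Both operators are built from the same universal Dirac element in $\cha \otimes \Cc$, and the compatibility reduces to the observation that the Dunkl operators used in the local Dirac operator $D_{Y^\dagger}$ on $Y^\dagger \otimes S$ are transposed, under the polynomial pairing, into exactly the ingredients of $D_\tau$ on $\fX_{t,c}(\tau \otimes S)$ (ordinary derivatives paired with $I_s^\vee$). This is the step I expect to be the chief obstacle: it requires careful bookkeeping of signs and of the Clifford action on the two spin factors, and of the parity conventions distinguishing the even and odd parts of the two Dirac operators, but no new structural input beyond the formulas already derived in the construction of $\fX_{t,c}$ and of $D_\tau$.

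With those two ingredients, the conclusion follows from general homological algebra plus semisimplicity of $\bbc G$. Applying the left-exact functor $\ghom_{\cha}(Y,-)$ to $0 \to \ker D_\tau \to \fX_{t,c}(\tau \otimes S) \xrightarrow{D_\tau} \fX_{t,c}(\tau \otimes S)$ identifies $\ghom_{\cha}(Y, \ker D_\tau)$ with $\ker \ghom_{\cha}(Y, D_\tau)$, which by the adjunction equals $\ker \ghom_G(\tau^*, D_{Y^\dagger})$; exactness of $\ghom_G(\tau^*, -)$ then identifies this with $\ghom_G(\tau^*, \ker D_{Y^\dagger})$, giving the first equality. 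For the second assertion, exactness of $\ghom_G(\tau^*, -)$ applied to $Y^\dagger \otimes S \xrightarrow{D_{Y^\dagger}} Y^\dagger \otimes S \to \coker D_{Y^\dagger} \to 0$ identifies $\ghom_G(\tau^*, \coker D_{Y^\dagger})$ with $\coker \ghom_G(\tau^*, D_{Y^\dagger}) = \coker \ghom_{\cha}(Y, D_\tau)$, while the left-exactness, but not right-exactness, of $\ghom_{\cha}(Y, -)$ only yields an injection $\coker \ghom_{\cha}(Y, D_\tau) \hookrightarrow \ghom_{\cha}(Y, \coker D_\tau)$. The second displayed map in the proposition is precisely this injection, and the asymmetry between the kernel equality and the cokernel inclusion is simply a manifestation of the failure of $\ghom_{\cha}(Y, -)$ to be exact.
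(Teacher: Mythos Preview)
Your proposal is correct and follows essentially the same route as the paper: the paper first establishes the adjunction $\ghom_{\cha}(Y,\fX_{t,c}(\widetilde\tau)) \cong \ghom_G(\widetilde\tau^*, Y^\dagger)$ as Lemma~\ref{l:duality} via a chain of Hom--tensor identifications (using that $\fX_{t,c}(\widetilde\tau)$ is the space of $\fh$-finite vectors in the full graded dual, Proposition~\ref{p:Xish-finite}, which is where the hypothesis $Y\in\Co_{t,c}(G,\fh)$ enters), then checks that the Dirac operators correspond under this identification, and finally runs exactly the exactness argument you describe. One small correction: you neither need nor have self-duality of $S=\bigwedge\fh$ as a $G$-module; moving the spin factor across is simply the tensor--Hom adjunction $\ghom_G(\tau^*\otimes S^*, Y^\dagger)\cong \ghom_G(\tau^*, Y^\dagger\otimes S)$.
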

It is an interesting problem to determine the the kernel (and cokernel) of the global Dirac operators and to provide in this way realisations of the simple $\cha$-modules. The occurrence of simple modules in $\ker D_\tau$ is controlled by the $\fh$-singular vectors in $\ker D_\tau$. As mentioned before, the space of $\fh$-singular vectors of $\fX_{t,c}(\tau\otimes S)$ is easy to determine and a preliminary study of the kernel of $D_\tau$ on that space, when $t=1$ (see subsection \ref{ss:globDiracinSingVecs}), has led us to consider a complex of $G$-modules that depends of the parameter function $c$ and that is dual to the one studied in \cite[Sections 2.2 and 2.3]{DO}. We obtain (see Theorem \ref{t:dunkl-opdam}) a result analogous to the one obtained by Dunkl and Opdam \cite[Theorem 2.9 and Corollary 2.14]{DO}, which may be used to characterise the singular parameters $c$ (see Remark \ref{r:DO}). This indicates that the determination of the nontrivial homology groups of this generalised Dunkl-Opdam complex is closely related to the realisation of the simple $\cha$-modules in the kernel of the Dirac operator. We shall study these questions in future work.

\section{The rational Cherednik algebra}\label{s:RCA}
In this section, we recall the definition of the rational Cherednik algebra \cite{EG}, and of its category $\Co$ \cite{GGOR}; see also \cite[section 3]{EM} for an expository account. 

\subsection{Definitions} Let $\fh$ be a finite-dimensional $\bbc$-vector space of dimension $r$ and $G\subseteq GL(\fh)$ be a finite group generated by reflections.  We shall assume throughout that $G$ acts irreducibly on $\fh$. Thus, for example, if $G = S_n$, we have $r = n-1$ and not $n$. Denote by $\Cs\subseteq G$ the set of reflections. Given $s\in\Cs$ we let $\lambda_s^{-1},\lambda_s\in\bbc$ be the unique nontrivial eigenvalues of $s$ on $\fh$ and $\fh^*$, respectively. We choose eigenvectors $\alpha_s\in \fh^*$ and $\alpha^\vee_s\in \fh$ such that $\alpha_s(\alpha^\vee_s) = 2$. Let $c:\Cs\to\bbc$ be a conjugation invariant function.
\begin{definition}\label{defn:RCA}
The \emph{rational Cherednik algebra} $\cha(G,\fh)$ is the unital, associative algebra over $\bbc$ given as the quotient of $\bbt(\fh\oplus \fh^*)\otimes\bbc G$ modulo the relations
\begin{equation}\label{e:relations}
\begin{aligned}
&[y,y'] = 0 = [x,x'] ,& y,y'\in \fh,x,x'\in \fh^*,\\
&gy = g(y) g, \quad gx = g(x) g,  & y\in \fh,x\in \fh^*,g\in G,\\
&[y,x] = tx(y) - \sum_s c(s)\alpha_s(y)x(\alpha_s^\vee) s,& y\in \fh,x\in \fh^*.
\end{aligned}
\end{equation}
\end{definition}
\begin{remark}
We shall consistently use the shorthand notation $\cha = \cha(G,\fh)$. Given a parameter function $c$, we define $\check{c}:\Cs\to\bbc$ so that $\check{c}(s) = c(s^{-1})$. Since $G$ is naturally a subgroup of $GL(\fh^*)$ we can also consider the algebra $\chad(G,\fh^*)$. We shall use the abbreviated notation $\chad$ for $\chad(G,\fh^*)$. It will be important to us to consider certain naturally defined dualities between $\cha$ and $\chad$. In formulae, we have $\lambda_s\mapsto \lambda_s^{-1}$ if we exchange $\cha$ and $\chad$. We shall always write $y, y'\in\fh, x,x'\in\fh^*,p,p'\in\bbc[\fh]$ and $q,q'\in\bbc[\fh^*]$. Furthermore, if $\mu\neq 0$, we have $\cha\cong \mathsf{H}_{\mu t,\mu c}$, so the important cases are $t=0$ and $t=1$.
\end{remark}

\subsection{Formulae}
As a vector space, \cite{EG} we have $\cha = \bbc[\fh]\otimes\bbc[\fh^*]\otimes\bbc G$.  It is useful to have a formula for the last relation in (\ref{e:relations}) for a polynomial $p\in\bbc[\fh] = \Sym \fh^*$, in the place of $x\in \fh^*$. 

\begin{lemma}\label{l:reflections}
With the choices of eigenvectors made, we have, for all $x\in \fh^*$ and $y\in \fh$, that
\[s(x) = x - d_sx(\alpha_s^\vee)\alpha_s,\quad s(y) = y - d^\vee_s\alpha_s(y)\alpha^\vee_s,\]
where $d_s= \frac{(1-\lambda_s)}{2}$ and $d^\vee_s=\frac{(1-\lambda^{-1}_s)}{2}$. \end{lemma}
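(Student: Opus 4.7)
The statement is a routine eigenspace computation, so the main task is to pick the right decomposition and verify the one non-obvious pairing identity.

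The plan is to write any $x\in\fh^*$ as $x=a\,\alpha_s+x'$ with $x'\in(\fh^*)^s$, the $s$-fixed subspace of $\fh^*$, and then read off the coefficient $a$ by evaluating at $\alpha_s^\vee$. To make this work I first need the orthogonality relation $x'(\alpha_s^\vee)=0$ for every $x'\in(\fh^*)^s$. This follows from a one-line computation using the naturality of the $G$-action on $\fh^*$: since $s(x')=x'$ and $s^{-1}\alpha_s^\vee=\lambda_s\alpha_s^\vee$,
\[
x'(\alpha_s^\vee)=(sx')(\alpha_s^\vee)=x'(s^{-1}\alpha_s^\vee)=\lambda_s\,x'(\alpha_s^\vee),
\]
and since $\lambda_s\neq 1$ the claim follows. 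In particular $x(\alpha_s^\vee)=a\,\alpha_s(\alpha_s^\vee)=2a$, so $a=\tfrac{1}{2}x(\alpha_s^\vee)$.

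Next I would apply $s$ directly to the decomposition: since $s\alpha_s=\lambda_s\alpha_s$ and $sx'=x'$, one gets
\[
s(x)=\lambda_s a\,\alpha_s+x'=x+(\lambda_s-1)a\,\alpha_s=x-\frac{1-\lambda_s}{2}x(\alpha_s^\vee)\,\alpha_s,
\]
which is exactly the stated formula with $d_s=(1-\lambda_s)/2$. For the formula on $\fh$ I would run the dual argument: decompose $y=b\,\alpha_s^\vee+y'$ with $y'\in\fh^s$, use the symmetric orthogonality $\alpha_s(y')=0$ (proved from $s\alpha_s=\lambda_s\alpha_s$ and $sy'=y'$ in the same way) to identify $b=\tfrac{1}{2}\alpha_s(y)$, and then apply $s$, using $s\alpha_s^\vee=\lambda_s^{-1}\alpha_s^\vee$, to obtain $s(y)=y-d_s^\vee\alpha_s(y)\alpha_s^\vee$ with $d_s^\vee=(1-\lambda_s^{-1})/2$.

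There is essentially no obstacle here; the only subtle point is the orthogonality $(\fh^*)^s\perp\alpha_s^\vee$ (and its dual), which one must not take for granted since the pairing between $\fh$ and $\fh^*$ is not an inner product on a single space. Everything else is linear algebra in a two-dimensional quotient.
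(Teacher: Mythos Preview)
Your proof is correct and follows essentially the same approach as the paper: both arguments rest on the identification $(\fh^*)^s=(\alpha_s^\vee)^\perp$ and then verify the formula via the decomposition $\fh^*=\bbc\alpha_s\oplus(\alpha_s^\vee)^\perp$. The paper simply asserts that $s$ fixes $(\alpha_s^\vee)^\perp$ and checks the formula on $\alpha_s$, whereas you spell out the orthogonality $(\fh^*)^s\subseteq(\alpha_s^\vee)^\perp$ explicitly and then compute $s(x)$ directly; the content is the same.
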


\begin{proof}
Clearly $s$ fixes every $x\in(\alpha_s^\vee)^\perp$, and also $s(\alpha_s) = \alpha_s - (1-\lambda_s)\alpha_s = \lambda_s\alpha_s.$ Similar for $s(y)$.
\end{proof}

\begin{remark}
For the formulae for $s^{-1}$, we use that $\lambda_{s^{-1}} = \lambda^{-1}_s$.
When $G$ is a real reflection group, we have $d_s = 1 = d_s^\vee$.
\end{remark}

\noindent Given $s\in \Cs$, let $\Delta_s$ denote the divided difference operator
\[\Delta_s(p) = \frac{p-s(p)}{\alpha_s}.\]
Actually, we have $\Delta_s = \Delta(s,\alpha_s)$, but we shall typically omit the dependence on the choice of $\alpha_s$. One checks that $p - s(p) \in \alpha_s\bbc[\fh]$, so that this is a well-defined map $\bbc[\fh]\to \bbc[\fh]$. We shall write $\Delta_s^\vee:\bbc[\fh^*]\to \bbc[\fh^*]$ for $\Delta(s,\alpha_s^\vee)$.

\begin{proposition}\label{p:commutation}
 For $y\in \fh$ and $p\in\bbc[\fh]$, we have, where $d_s= \frac{(1-\lambda_s)}{2}$, that
\[[y,p] = t\partial_y(p) - \sum_s\frac{c(s)}{d_s}\alpha_s(y)\Delta_s(p)s.\]
\end{proposition}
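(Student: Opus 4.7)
The plan is a straightforward induction on the polynomial degree of $p$, leveraging the fact that $[y,\cdot]$ is a derivation on the commutative subalgebra $\bbc[\fh] \subseteq \cha$, so it is determined by its values on the degree-one generators $x \in \fh^*$.

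First I would handle the base case $p = x \in \fh^*$. Here the defining relation gives $[y,x] = tx(y) - \sum_s c(s)\alpha_s(y)x(\alpha_s^\vee)\,s$, and I need to identify this with the claimed formula. Note that $\partial_y(x) = x(y)$, and Lemma \ref{l:reflections} gives $s(x) = x - d_s x(\alpha_s^\vee)\alpha_s$, so
\[
\Delta_s(x) = \frac{x - s(x)}{\alpha_s} = d_s\, x(\alpha_s^\vee),
\]
whence $\frac{1}{d_s}\Delta_s(x) = x(\alpha_s^\vee)$, exactly matching the coefficient in the defining relation.

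For the inductive step, suppose the formula holds for polynomials $p$ and $q$ of lower degree and consider $pq$. Since $\bbc[\fh]$ is commutative inside $\cha$, $[y, pq] = [y,p]q + p[y,q]$. Expanding using the inductive hypothesis and moving every reflection $s$ to the right via the cross-relation $sq = s(q)s$ (Definition \ref{defn:RCA}), the $t$-terms assemble into $t\partial_y(pq)$ by the Leibniz rule for $\partial_y$, while the reflection terms give
\[
-\sum_s \frac{c(s)}{d_s}\alpha_s(y)\bigl(\Delta_s(p)\,s(q) + p\,\Delta_s(q)\bigr)\,s.
\]
So the one calculation to verify is the twisted Leibniz rule
\[
\Delta_s(pq) = p\,\Delta_s(q) + \Delta_s(p)\,s(q),
\]
which follows directly from $pq - s(p)s(q) = p(q - s(q)) + (p - s(p))s(q)$ after dividing by $\alpha_s$.

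There is no real obstacle here — the only subtlety is bookkeeping the direction in which one applies the twisted Leibniz rule for $\Delta_s$ so that it matches the ordering produced by pushing $s$ past $q$ in the algebra. Once one writes the identity $\Delta_s(pq) = p\Delta_s(q) + \Delta_s(p) s(q)$ (rather than the equally valid $\Delta_s(p)q + s(p)\Delta_s(q)$), everything lines up and the induction closes immediately.
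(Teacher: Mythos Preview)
Your proof is correct and follows essentially the same approach as the paper: induction on degree, with the base case handled by rewriting the defining relation via $\Delta_s(x) = d_s\, x(\alpha_s^\vee)$, and the inductive step handled by the derivation property of $[y,\cdot]$ together with the twisted Leibniz rule $\Delta_s(pq) = p\,\Delta_s(q) + \Delta_s(p)\,s(q)$. The paper writes the inductive product specifically as $x\cdot p$ with $x \in \fh^*$ rather than a general $pq$, but this is not a substantive difference.
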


\begin{proof}
The proof will be by induction on the degree of $p$. We can assume $p$ is a monomial. From Lemma \ref{l:reflections}, it is clear that the last relation of (\ref{e:relations}) can be written as
\[
[y,x] = t\partial_y(x) - \sum_s \frac{c(s)}{d_s}\alpha_s(y)\Delta_s(x) s,
\]
so the formula holds when $\deg p = 1$. Further, we note that for any $p',p\in\bbc[\fh]$, we have $\Delta_s(p'p) = p'\Delta_s(p) + \Delta_s(p')s(p)$. That said, assuming the result is true for a monomial $p$ of degree $d$, we have, for any $x\in \fh^*$, that
\begin{align*}
[y,x p] &= [y,x]p + x[y,p]\\
&= t(\partial_y(x)p + x\partial_y(p)) - \sum_s\frac{c(s)}{d_s}\alpha_s(y)(\Delta_s(x)s(p) + x\Delta_s(p))s,
\end{align*}
and thus the result.
\end{proof}

\begin{proposition}\label{p:commutation2}
For $x\in \fh^*$ and $q\in\bbc[\fh^*]$, we have, where $d^\vee_s= \frac{(1-\lambda^{-1}_s)}{2}$, that
\[[q,x] = t\partial_x(q) - \sum_s\frac{c(s)}{d^\vee_{s}}x(\alpha^\vee_s)\Delta^\vee_s(q)s.\]
\end{proposition}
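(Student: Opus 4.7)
The proof plan is to mirror essentially verbatim the argument used for Proposition \ref{p:commutation}, exploiting the fact that the defining relations (\ref{e:relations}) are symmetric in $\fh$ and $\fh^*$ once one interchanges $\lambda_s \leftrightarrow \lambda_s^{-1}$ (hence $d_s \leftrightarrow d_s^\vee$) and $\alpha_s \leftrightarrow \alpha_s^\vee$. I would proceed by induction on $\deg q$, assuming (without loss of generality) that $q$ is a monomial.

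For the base case, when $q = y \in \fh$, the identity $[y,x] = tx(y) - \sum_s c(s)\alpha_s(y)x(\alpha_s^\vee)\,s$ from (\ref{e:relations}) must be matched with the claimed right-hand side. Here one notes that $\partial_x(y) = x(y)$, while Lemma \ref{l:reflections} gives $y - s(y) = d_s^\vee \alpha_s(y)\alpha_s^\vee$, whence
\[
\Delta_s^\vee(y) \;=\; \frac{y - s(y)}{\alpha_s^\vee} \;=\; d_s^\vee\,\alpha_s(y).
\]
Plugging this in, the factor $d_s^\vee$ cancels against $1/d_s^\vee$ and the base case falls out immediately.

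For the inductive step, suppose the formula holds for a monomial $q$ of degree $d$, and consider $yq$ for $y \in \fh$. Using the derivation property $[yq,x] = [y,x]q + y[q,x]$ of the commutator, the base case gives the first term and the induction hypothesis gives the second. To combine them into the desired shape, I would invoke the Leibniz identity
\[
\Delta_s^\vee(yq) \;=\; y\,\Delta_s^\vee(q) + \Delta_s^\vee(y)\,s(q),
\]
which is immediate from $(yq - s(y)s(q))/\alpha_s^\vee = y(q-s(q))/\alpha_s^\vee + (y-s(y))s(q)/\alpha_s^\vee$, together with the ordinary Leibniz rule for $\partial_x$. One has also to use $s \cdot q = s(q)\cdot s$ in $\cha$ in order to move the group element to the right of $s(q)$. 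Assembling the pieces produces exactly the claimed expression for $[yq,x]$.

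No genuine obstacle is anticipated: the argument is completely parallel to Proposition \ref{p:commutation}, only with the roles of $\fh$ and $\fh^*$ (and the accompanying constants and operators) swapped. The sole bookkeeping point to watch is the consistent use of the ``dual'' conventions ($\lambda_s \mapsto \lambda_s^{-1}$, $\alpha_s \mapsto \alpha_s^\vee$, $d_s \mapsto d_s^\vee$, $\Delta_s \mapsto \Delta_s^\vee$), which is precisely the duality alluded to in the Remark following Definition \ref{defn:RCA}. Alternatively, one could deduce the proposition from Proposition \ref{p:commutation} by transporting that identity through an antiautomorphism $\cha \to \chad^{\textup{op}}$, but the direct induction is shorter and self-contained.
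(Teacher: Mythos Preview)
Your proposal is correct and follows exactly the approach the paper has in mind: the paper's own proof simply reads ``The proof is similar to the previous proposition,'' and your induction on $\deg q$ together with the Leibniz rule for $\Delta_s^\vee$ is precisely the dual of the argument in Proposition~\ref{p:commutation}. The bookkeeping you flag (swapping $\lambda_s\leftrightarrow\lambda_s^{-1}$, $\alpha_s\leftrightarrow\alpha_s^\vee$, $d_s\leftrightarrow d_s^\vee$) is handled correctly.
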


\begin{proof}
The proof is similar to the previous proposition.
%If $q=y$ is a monomial of degree one, from Lemma \ref{l:reflections}, we have that
%\[
%[y,x] = \partial_x(y) - \sum_s \frac{c(s)}{d^\vee_s}x(\alpha^\vee_s)\Delta^\vee_s(y) s,
%\]
%Further, assuming the result is true for a monomial $q$ of degree $d$, we have, for any $y\in \fh$ that
%\begin{align*}
%[y q,x] &= y[q,x] + [y,x]q\\
%&= (y\partial_x(q) + q\partial_x(y)) - \sum_s\frac{c(s)}{d^\vee_s}x(\alpha^\vee_s)(y\Delta^\vee_s(q) + \Delta^\vee_s(y)s(q))s,
%\end{align*}
%and thus the result follows from the similar equality $\Delta_s^\vee(y q) = y\Delta_s^\vee(q) +\Delta_s^\vee(y)s(q)$.
\end{proof}

\begin{remark}
The formulae of Propositions \ref{p:commutation} and \ref{p:commutation2} do not depend on the choice of $\alpha_s$ and $\alpha_s^\vee$ made. Similar equations hold, {\it mutatis mutandis}, for $\chad = \bbc[\fh^*]\otimes\bbc[\fh]\otimes\bbc G$.
\end{remark}

\subsection{Grading} One regards $\cha$ as a $\mathbb Z$-graded algebra by giving $x\in\fh^*$ degree $1$, $y\in\fh$ degree $-1$, and $g\in G$ degree $0$. Define the \emph{Euler element} 
\begin{equation*}
\be\bu=\sum_i x_iy_i+\frac 12 \dim \fh-\sum_{s\in \Cs} \frac{c(s)}{d_s} s \in \cha,
\end{equation*}
where $\{x_i\}$ and $\{y_i\}$ are dual basis of $\fh^*$ and $\fh$, respectively. When $G$ is a real reflection group, $\be\bu=\frac 12\sum_i(x_iy_i+y_ix_i)$. The element $\be\bu$ is $G$-invariant and has the property that
\[ [\be\bu,x]=tx,\ [\be\bu,y]=-ty,\quad x\in\fh^*,\ y\in\fh.
\]
This means that the grading considered for $\cha$ is inner, given by $\be\bu$. When $t=0$, $\be\bu$ is in fact central in $\cha$. 

Throughout, we shall adhere to the following notation regarding graded modules for a graded $\bbc$-algebra $A$. If $\Gamma$ is an abelian group such that $A$ is $\Gamma$-graded and $X,Y$ are $\Gamma$-graded $A$-modules, then we shall write $\ghom_A^0(X,Y) = \{f\in\Hom_A(X,Y)\mid f(X_\gamma)\subseteq Y_\gamma\textup{ for all }\gamma\in\Gamma\}$ and $\ghom_A^\gamma(X,Y) = \ghom_A^0(X[\gamma],Y)$, in which $X[\gamma]$ is the $\Gamma$-graded module equal to $X$ as a vector space and that satisfies $X[\gamma]_\delta = X_{\delta-\gamma}$, for all $\delta\in\Gamma$. Then, $\ghom_A^\Gamma(X,Y)$ (or $\ghom_A^\bullet(X,Y)$, if $\Gamma$ is clear from the context) will be given by
\[\ghom_A^\Gamma(X,Y) = \bigoplus_{\gamma\in\Gamma} \ghom_A^\gamma(X,Y).\]

\subsection{Duality} We recall a natural duality between $\cha$ and $\chad$.
\begin{lemma}\label{l:duality}
The map $\gamma:\chad^{\textup{op}}\to\cha$ given by $\gamma|_\fh = id = \gamma|_{\fh^*}$ and $\gamma(g) = g^{-1}$, extends to an isomorphism between $\chad^{\textup{op}}$ and $\cha$.
\end{lemma}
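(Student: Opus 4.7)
The plan is to reinterpret $\gamma\colon \chad^{\textup{op}} \to \cha$ as an algebra anti-homomorphism $\gamma\colon \chad \to \cha$ (the same data), verify the defining relations of $\chad$ (which are just the relations of Definition \ref{defn:RCA} applied to $\mathsf{H}_{t,\check c}(G,\fh^*)$, with the roles of $\fh$ and $\fh^*$ swapped and the parameter replaced by $\check c$), and then produce an inverse.

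First I would dispose of the easy relations. The commutativity $[\fh^*,\fh^*]=0$ and $[\fh,\fh]=0$ in $\chad$ is preserved since the images of these generators in $\cha$ lie in the commutative subalgebras $\bbc[\fh^*]$ and $\bbc[\fh]$, respectively. The semidirect relation $gy = g(y)g$ in $\chad$ (for $y\in\fh^*\cup\fh$) becomes, after applying $\gamma$ and reversing order, $y\, g^{-1} = g^{-1}\, g(y)$ in $\cha$, which is just a rearrangement of the $\cha$-relation $g^{-1} y = g^{-1}(y)\, g^{-1}$.

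The only substantive step is the cross commutator for $y \in \fh^*$ and $x \in \fh$. In $\chad$ one has
\[
[y,x]_{\chad} = t\, x(y) \;-\; \sum_{s\in\Cs} \check c(s)\,\alpha_s^\vee(y)\, x(\alpha_s)\, s,
\]
where the natural choice of eigenvectors for $G$ acting on $\fh^*$ and $\fh$ makes $\alpha_s^\vee$ play the role of the distinguished vector in "$\fh$" and $\alpha_s$ the role of the distinguished vector in "$\fh^*$"; also $\check c(s) = c(s^{-1})$. Applying $\gamma$ as an antihomomorphism and using $\gamma(s)=s^{-1}$, I must verify the identity
\[
xy - yx \;=\; t\, x(y) \;-\; \sum_{s} c(s^{-1})\, y(\alpha_s^\vee)\, \alpha_s(x)\, s^{-1}
\]
in $\cha$, and compare with the genuine $\cha$-relation $xy - yx = t\, y(x) - \sum_s c(s)\,\alpha_s(x)\,y(\alpha_s^\vee)\, s$. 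The $t$-terms agree because $x(y)=y(x)$, and for the sums I would reindex $s \mapsto s^{-1}$ in one of them. The identity then boils down to the coherent choice $\alpha_{s^{-1}} = \alpha_s$ and $\alpha_{s^{-1}}^\vee = \alpha_s^\vee$, which is allowed because $s$ and $s^{-1}$ have the same reflecting line and hyperplane, and the normalisation $\alpha_{s^{-1}}(\alpha_{s^{-1}}^\vee)=2$ is preserved.

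Finally, bijectivity comes for free by symmetry: the same recipe produces an antihomomorphism $\gamma' \colon \cha \to \chad$ inverting $\gamma$ on each of the generators $y, x, g$, and hence a two-sided inverse by the universal property of the presentation; equivalently, the PBW decompositions $\chad = \bbc[\fh^*] \otimes \bbc[\fh] \otimes \bbc G$ and $\cha = \bbc[\fh] \otimes \bbc[\fh^*] \otimes \bbc G$ imply that $\gamma$ is a linear bijection on ordered PBW monomials. The only mildly delicate point in the whole argument is the reindexing $s \leftrightarrow s^{-1}$ in the cross commutator together with the eigenvector compatibility; the rest is bookkeeping about the opposite multiplication.
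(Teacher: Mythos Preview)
Your proof is correct and takes essentially the same approach as the paper: check the defining relations under the anti-homomorphism, with the only nontrivial point being the cross commutator handled via the reindexing $s\mapsto s^{-1}$. The paper phrases the eigenvector compatibility slightly more generally as $\alpha_{s^{-1}}=\mu\alpha_s$, $\alpha_{s^{-1}}^\vee=\mu^{-1}\alpha_s^\vee$ (so the scalars cancel regardless of the choice), whereas you fix the coherent choice $\mu=1$; and you add the bijectivity argument via an explicit inverse/PBW, which the paper leaves implicit.
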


\begin{proof}
We have to check that the map $\gamma$ preserves the relations in (\ref{e:relations}). We have, for example,
\[\gamma(gy) = yg^{-1} = g^{-1}g(y) = \gamma(g(y)g).\]
Also,

\[%\begin{align*}
\gamma([y,x]^{\textup{op}})= \gamma([x,y])=t x(y) - \sum_s \check{c}(s)x(\alpha_s^\vee)\alpha_s(y) s^{-1},
%&= tx(y) - \sum_s c(s)x(\alpha_{s^{-1}}^\vee)\alpha_{s^{-1}}(y) s,
%\end{align*}
\]
and thus the relation follows, as $\alpha_{s^{-1}} = \mu\alpha_s$ and $\alpha_{s^{-1}}^\vee = \mu^{-1}\alpha_s^\vee$ for a nonzero $\mu\in\bbc$.
\end{proof}
\noindent Thus, if $X$ is an $\cha$-module, the linear dual $X^*$ is endowed with an $\chad$-action via
\begin{equation*}\label{e:dualaction}
(h(v^*),v) = (v^*,\gamma(h)(v)),
\end{equation*}
for all $h\in \chad$, $v^*\in X^*$ and $v\in X$, where $(\cdot,\cdot)$ denotes the bilinear pairing between $X$ and $X^*$.
This yields an $\chad$-action for any $\chad$-invariant subspace $X^\prime\subseteq X^*$. Denote also by $\gamma$ the  similarly defined homomorphism $\cha^{\textup{op}}\to\chad$. 

\subsection{The centre of $\cha$} The centre of $\cha$ is $\bbc$ when $t\neq 0$ \cite{EG}. When $t=0$, the algebra $\chazero$ has a large centre, in particular \cite{EG,Go}
\[
\bbc[\fh]^G\otimes\bbc[\fh^*]^G\subseteq Z(\chazero).
\]
As in \cite{Go}, consider the dual morphism 
\begin{equation*}
\Upsilon: \Cx_c(G)=\textup{Spec } Z(\chazero)\longrightarrow \fh/G\times \fh^*/G.
\end{equation*}
The space $\Cx_c(G)$ is called the \emph{generalised Calogero-Moser space} \cite{EG}. We are mainly interested in the fibre $\Upsilon^{-1}(0)$ over $\{0\}\times\{0\}\in \fh/G\times \fh^*/G$ considered in \cite{Go}. As in {\it loc. cit.}, define the restricted Cherednik algebra 
\begin{equation*}\label{e:restrCA}
\reschazero=\chazero/\langle\bbc[\fh]^G_+\otimes\bbc[\fh^*]^G_+\rangle,
\end{equation*}
where $\bbc[\fh]^G_+$ denote the positive degree part of $\bbc[\fh]^G$ and similarly for $\bbc[\fh^*]^G_+$. The algebra $\reschazero$ is graded (with the grading inherited from $\chazero$), finite dimensional and it is isomorphic as a vector space with $\bbc[\fh]_G\otimes \bbc[\fh^*]_G\otimes \bbc G$. Here, $\bbc[\fh]_G$ and $ \bbc[\fh^*]_G$ are the spaces of $G$-coinvariants. The simple modules for $\reschazero$ are precisely the simple $\chazero$-modules on which $Z(\chazero)$ acts by an element of $\Upsilon^{-1}(0)$.

\subsection{Category $\Co$}Let $\Rep(G)$ denote the set of finite-dimensional modules of $\bbc G$ and $\Irr(G)\subseteq \Rep(G)$ be a (finite) set of representatives of the equivalence classes of the simple ones. 

\begin{definition}[\cite{GGOR}] \label{d:catO}
Let $\Co_{t,c}(G,\fh)$ denote the full subcategory of left $\cha$-modules that are:
\begin{enumerate}
\item $\bbc[\fh]$-finitely generated, and
\item $\bbc[\fh^*]$-locally nilpotent.
\end{enumerate}
Similarly, one also defines the subcategory $\Co_{t,\check c}(G,\fh^*)$ of $\chad$-modules.
\end{definition}

The \emph{standard modules} in $\Co_{t,c}(G,\fh)$ are defined as follows. If $\tau\in\Rep(G)$ we shall consider the $\cha$-module
\[M_{t,c}(\tau) = \cha\otimes_{\bbc[\fh^*]\rtimes\bbc G}\tau= \Ind_{\bbc G}^{\cha} \tau.\]
where $\fh^*$ acts as $0$ on $\tau$. 
The \emph{costandard module} associated to $\tau$ is defined as (see \cite[2.3]{GGOR}) 
\[M^\vee_{t,c}(\tau) =\ghom^\bullet_{\bbc[\fh]\rtimes\bbc G}(\cha,\tau).\] 
Let $L_{t,c}(\tau)$ denote the unique simple quotient of $M_{t,c}(\tau)$, equivalently, the unique simple submodule of $M^\vee_{t,c}(\tau)$. Then $L_{t,c}(\tau)\not\cong L_{t,c}(\tau')$ if $\tau\not\cong\tau'$ and every simple module in $\Co_{t,c}(G,\fh)$ is isomorphic to an $L_{t,c}(\tau)$ as an ungraded $\cha$-module.

Following the standard notation ({\it e.g.}, \cite[section 3.7]{EM}), denote for every irreducible $G$-representation $\sigma$:
\begin{equation*}
N_c(\sigma)=\sum_s\frac {c(s)}{d_s}s\mid_\sigma,\quad h_c(\sigma)=\frac {tr}2-N_c(\sigma).
\end{equation*}
When $t\neq 0$, we say that $L_{t,c}(\sigma)$ and $L_{t,c}(\tau)$ are in the \emph{same block of $\Co_{t,c}(G,\fh)$} if 
\[N_c(\sigma)-N_c(\tau)\in\mathbb Z.\]
We write $\sigma\sim\tau$ when this is the case.

\medskip

Similarly, we have a category $\bar\Co$ for the restricted algebra $\reschazero$ \cite{Go}. The standard modules are the so-called baby Verma modules
\[\bar M_c(\tau)=\reschazero\otimes_{\bbc[\fh^*]_G\rtimes\bbc G}\tau,\]
with unique simple quotients $\bar L_c(\tau)$, $\tau\in\textup{Irr}~G$. Every simple module is isomorphic to one and only one $\bar L_c(\tau)$. Consider the central character map:
\begin{equation*}
\textup{cc}: \textup{Irr}~G \longrightarrow \Upsilon^{-1}(0),\quad  \tau\mapsto \text{the central character of }L_c(\tau).
\end{equation*}
The fibres of this map are called the Calogero-Moser (CM) families of $\textup{Irr}~G$. We write $\sigma\sim\tau$ if $\sigma$ and $\tau$ are in the same CM family. We say that $L_c(\sigma)$ and $L_c(\tau)$ are in the \emph{same block of $\bar\Co$} if $\sigma\sim\tau$.

\section{The Dirac index: local setting} \label{s:LocalDirac}

\subsection{The Clifford algebra} Set $V = \fh\oplus\fh^*$. We extend the natural bilinear pairing $\fh^*\times\fh\to\bbc$ to a bilinear pairing $(\cdot,\cdot):V\times V\to \bbc$, by declaring $( y,y') = 0 = ( x,x')$, for all $y,y'\in\fh$ and $x,x'\in\fh^*$. Let $\Cc = \Cc(V,(\cdot,\cdot))$ be the Clifford algebra over $\bbc$ associated to $V$ and the bilinear pairing indicated, with the relation
\begin{equation}\label{e:Cliff-relation}
vv' + v'v = -2(v,v').
\end{equation}
Since $V = \fh\oplus\fh^*$ is even dimensional, $\Cc$ has $S = \bigwedge\fh$ as the irreducible spin module  with Clifford action $\fs:\Cc\to \End_\bbc(S)$ given by
\begin{equation}\label{e:S-Cliff}
\fs(y)(\omega) = \mu_y(\omega) = y\wedge \omega,\qquad \fs(x)(\omega) = -2\partial_x(\omega),
\end{equation}
for all $y\in\fh,x\in\fh^*$ and $\omega\in\Cs$. Here, $\partial_x$ is the odd-derivation on $S$ characterised by $\partial_xy = x(y),$ as usual. Now let $S^* = \bigwedge\fh^*$. Since the determinant pairing $\lpi\cdot,\cdot\rpi:S^*\times S\to \bbc$ defined by 
\[\lpi x_{i_1}\wedge\ldots\wedge x_{i_\ell},y_{j_1}\wedge\ldots\wedge y_{j_\ell}\rpi = \det(x_{i_p}(y_{j_q}))\]
and extended bilinearly to $S^*\times S$ is nondegenerate, $S^*$ is identified with the dual of $S$. By means of the transpose map $v^t = -v$ for all $v\in V$ and $(ab)^t = b^ta^t$ for all $a,b\in\bbc$, the formula for the action $\fs^*:\Cc\to\End_\bbc(S^*)$ on $S^*$, dual to (\ref{e:S-Cliff}), becomes
\begin{equation}\label{e:S-Cliffd}
\fs^*(y)(\omega^*) = -\partial_y(\omega^*),\qquad \fs^*(x)(\omega^*) = 2\mu_x(\omega^*),
\end{equation}
for all $\omega^*\in S^*$ and $x\in\fh^*,y\in\fh$.
%as $\mu_y$ is dual to $\partial_y$ and $\partial_x$ is dual to $\mu_x$, with respect to the determinant pairing. 
By restricting the Clifford action to $\Cc^{\textup{even}}$, we get a decomposition $S = S^++S^-$ for $\Cc^{\textup{even}}$-invariant subspaces $S^{\pm}$ of $S$. Explicitly, $S^+ = \bigwedge^{\!\textup{even}}\fh$ and $S^-=\bigwedge^{\!\textup{odd}}\fh$.

Fix a basis $\{y_1,\ldots,y_r\}$ of $\fh^*$ and a dual basis $\{x_1,\ldots,x_r\}$ of $\fh$. We define elements $\Cd_x$ and $\Cd_y$ in $\cha\otimes\Cc$ by
\[\Cd_x= \sum_j x_j\otimes y_j,\qquad \Cd_y= \sum_j y_j\otimes x_j,\]
and we set $\Cd = \Cd_x+\Cd_y$. These elements were studied in \cite[Section 4.6]{Ci}. There, formulae for their square were computed and invariance with respect to $G$ was established. We recall these results next.

\subsection{A formula for $\Cd^2$} Define the following elements of $\Cc$:
\begin{equation}\label{e:tau}
\begin{aligned}
&\tau_s^\vee=d_s^\vee \frac{\alpha^\vee_s \alpha_s}2 +1,\quad \tau_s=d_s \frac{\alpha_s \alpha^\vee_s}2 +1,& (s\in\Cs);\\
&\kappa=\sum_i (x_i y_i+1).
\end{aligned}
\end{equation}
As in \cite[Section 4.5]{Ci}, each one of the maps $s\mapsto \tau_s$ and $s\mapsto \tau_s^\vee$ define embeddings of $\mathbb C G$ into $\Cc$. Let $\rho_G:\mathbb C G\to \cha\otimes \Cc$ be defined by extending 
\[s\mapsto s\otimes \tau_s.
\]
Then, $\Cd$ is $G$-invariant:
\begin{equation*}
\rho_G(g)\cdot \Cd\cdot \rho_G(g^{-1})=\Cd\text{ in }\cha\otimes\Cc,\text{ for all }g\in G.
\end{equation*}
\begin{proposition}[{{\it cf.} \cite[Proposition 4.9]{Ci}}]\label{p:D^2}
$\frac 12 \Cd^2=-\be\bu\otimes 1+ t (1\otimes \frac\kappa 2)-\rho_G(\Omega_G),$ where $\Omega_G=\sum_{s\in\Cs} \frac{c(s)}{d_s} s.$
\end{proposition}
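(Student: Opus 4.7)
The plan is a direct computation inside $\cha\otimes\Cc$, expanding $\Cd^2 = (\Cd_x+\Cd_y)^2$ and using the commutation relations of $\cha$ on the first tensor factor and the Clifford relation on the second.

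First I would observe that $\Cd_x^2 = \sum_{i,j} x_ix_j\otimes y_iy_j = 0$: the $\cha$-factor $x_ix_j$ is symmetric in $i,j$ (since $\bbc[\fh^*]$ is commutative in $\cha$) while the Clifford factor $y_iy_j$ is antisymmetric, because $(y_i,y_j)=0$ forces $y_iy_j+y_jy_i=0$ in $\Cc$. Similarly $\Cd_y^2 = 0$, so
\[
\Cd^2 \;=\; \Cd_x\Cd_y + \Cd_y\Cd_x \;=\; \sum_{i,j}\bigl(x_iy_j\otimes y_ix_j \;+\; y_ix_j\otimes x_iy_j\bigr).
\]

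Next I would rewrite each summand using the appropriate relation. On the Clifford side, $y_ix_j = -x_jy_i - 2\delta_{ij}$, giving
\[
\sum_{i,j} x_iy_j\otimes y_ix_j \;=\; -\sum_{i,j} x_iy_j\otimes x_jy_i \;-\; 2\sum_i x_iy_i\otimes 1.
\]
On the $\cha$ side, the defining relation yields $y_ix_j = x_jy_i + t\delta_{ij} - \sum_s c(s)\alpha_s(y_i)x_j(\alpha_s^\vee)s$, so
\[
\sum_{i,j} y_ix_j\otimes x_iy_j \;=\; \sum_{i,j} x_jy_i\otimes x_iy_j \;+\; t\cdot 1\otimes \sum_i x_iy_i \;-\; \sum_s c(s)\,s\otimes\!\!\sum_{i,j}\alpha_s(y_i)x_j(\alpha_s^\vee)x_iy_j.
\]
The crucial observation is that the two ``quadratic'' sums cancel: after relabelling $i\leftrightarrow j$, $\sum_{i,j} x_jy_i\otimes x_iy_j = \sum_{i,j} x_iy_j\otimes x_jy_i$, and this appears with opposite signs in the two identities above.

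What survives is then collected and identified with the three terms in the statement. For the reflection contribution, I use the duality of bases to factor
\[
\sum_{i,j}\alpha_s(y_i)\,x_j(\alpha_s^\vee)\,x_iy_j \;=\; \Bigl(\sum_i \alpha_s(y_i)x_i\Bigr)\Bigl(\sum_j x_j(\alpha_s^\vee)y_j\Bigr) \;=\; \alpha_s\alpha_s^\vee \quad\text{in }\Cc,
\]
so this contribution becomes $-\sum_s c(s)\,s\otimes \alpha_s\alpha_s^\vee$. The diagonal $\cha$-term $-2\sum_i x_iy_i\otimes 1$ is rewritten via the definition of $\be\bu$ as $-2\be\bu\otimes 1 - 2\sum_s \tfrac{c(s)}{d_s}s\otimes 1 + (\text{constant})\otimes 1$, and the diagonal Clifford term is identified with $t\otimes \kappa$ up to a constant, using $\kappa = \sum_i (x_iy_i+1)$. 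Combining with $\tau_s = d_s\tfrac{\alpha_s\alpha_s^\vee}{2}+1$ and dividing by $2$ gives the stated identity, as $\tfrac{c(s)}{d_s}\tau_s = \tfrac{c(s)}{2}\alpha_s\alpha_s^\vee + \tfrac{c(s)}{d_s}$ exactly absorbs both the reflection contribution and the $s\otimes 1$ correction coming from $\be\bu$.

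The computation itself is routine once set up; the main obstacle is really the clean bookkeeping of the reflection sum — in particular spotting the cancellation of the quadratic terms (which is what makes the formula quadratic rather than quartic in the generators) and recognising the product $\alpha_s\alpha_s^\vee\in\Cc$ emerging from the double index sum, which is precisely what the definition of $\tau_s$ was designed to absorb.
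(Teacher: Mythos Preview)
Your proposal is correct and follows essentially the same route as the paper's proof. The only cosmetic difference is that the paper, after noting $\Cd_x^2=\Cd_y^2=0$, combines the two cross terms in a single step into $-\sum_i x_iy_i\otimes 1+\tfrac12\sum_{i,j}[y_j,x_i]\otimes x_jy_i$ (using the Clifford relation once), whereas you expand $\Cd_x\Cd_y$ and $\Cd_y\Cd_x$ separately and then observe the cancellation of the mixed quadratic terms; these are the same manipulation written in two orders, and the identification of $\alpha_s\alpha_s^\vee$, $\be\bu$, $\kappa$, and $\tau_s$ proceeds identically.
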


\begin{proof}Since the notation is different than in {\it loc. cit.}, we present the short calculation. Firstly, 
\[\Cd_x^2=\sum_{i,j} [x_i,x_j]\otimes y_i y_j=0,
\]
where we used that $y_i y_j=-y_j y_i$ in $\Cc$. Similarly, $\Cd_y^2=0$. Hence
\begin{align*}
\frac 12\Cd^2&=\frac 12\Cd_x\Cd_y+\frac 12\Cd_y\Cd_x\\
&=-\sum_i x_iy_i\otimes 1+\frac 12\sum_{i,j}[y_j,x_i]\otimes x_j y_i \quad \text{(using $y_i x_i=-x_i y_i-2$)}\\
&=-\sum_i x_iy_i\otimes 1+\frac t2 \sum_i 1\otimes x_iy_i-\frac 12\sum_s c(s) s\otimes \sum_{i,j}\alpha_s(y_j) x_i(\alpha_s^\vee) x_j y_i\\
&=-\sum_i x_iy_i\otimes 1+\frac t2 \sum_i 1\otimes x_iy_i-\frac 12\sum_s c(s) s\otimes \alpha_s\alpha_s^\vee\\
&=-\be\bu\otimes 1+t\left(1\otimes \frac\kappa 2\right)-\sum_s c(s) s\otimes \left(\frac{\alpha_s\alpha_s^\vee}2+\frac 1{d_s}\right)\\
&=-\be\bu\otimes 1+t\left(1\otimes \frac\kappa 2\right)-\sum_s c(s) s\otimes \tau_s,
\end{align*}
%The claim follows then by noting that $\frac{\alpha_s\alpha_s^\vee}2+\frac 1{d_s}=-(\frac{\alpha_s^\vee\alpha_s}2+\frac 1{d_s^\vee})=-\frac{\tau_s}{d_s^\vee}.$
as required.
\end{proof}

Notice that $\Omega_G$ is $G$-invariant, hence it acts by a scalar  on every irreducible $G$-module $\sigma$. It is also important to recall that in the spin module $S=\bigwedge\fh$, 
\begin{equation*}
\begin{aligned}
\frac\kappa 2&\text{ acts by scalar multiplication by } \left(-\frac r2 +\ell\right)\text{ on }{\bigwedge}^{\!\ell}\fh,\\
\tau_s^\vee&\text{ acts by the reflection }s,
\end{aligned}
\end{equation*}
while in the realisation $S^*=\bigwedge\fh^*$, 
\begin{equation*}
\begin{aligned}
\frac\kappa 2&\text{ acts by scalar multiplication by } \left(\frac r2 -\ell\right)\text{ on }{\bigwedge}^{\!\ell}\fh^*,\\
\tau_s&\text{ acts by the reflection }s.
\end{aligned}
\end{equation*}
%\begin{remark}
%The scalar by which $\Omega_G$ from Proposition \ref{p:D^2} acts on $\sigma$ is $N_{c}(\sigma)$.
% $N_{\check c}(\sigma^*)$.
%\end{remark}

\subsection{The local Dirac operator} We recall the definition of Dirac cohomology \cite{Ci}. Let $X$ be a module in $\Co_{t,c}(G,\fh)$. The action of the Dirac element $\Cd$ defines a \emph{local Dirac operator}
\begin{equation*}
D_X:X\otimes S^*\to X\otimes S^*.
\end{equation*}
\begin{definition}[\cite{Ci}]
The \emph{Dirac cohomology} of $X$ is $H_{D}(X)=\ker D_X/\ker D_X\cap \im D_X$. 
\end{definition}

For every $X$ in $\Co_{t,c}(G,\fh)$, $H_D(X)$ is a finite-dimensional $G$-module. We may refine this notion by noting that the action of $\Cd$ maps $X\otimes S^{*,+}$ to $X\otimes S^{*,-}$. Denote the corresponding operators and Dirac cohomologies by
\begin{equation*}
D_X^{\pm}:X\otimes S^{*,\pm}\to X\otimes S^{*,\mp},\quad H_{D}^\pm(X)=\ker D_X^\pm/\ker D_X^\pm\cap \im D_X^\mp.
\end{equation*}
\begin{definition}
The \emph{local Dirac index} of $X$ is $I_D(X)=H_{D}^+(X)-H_D^-(X)$, a virtual $G$-module. 
\end{definition}
We may regard these definitions in the graded setting as follows. In the Clifford algebra $\Cc(V)$, assign degree $1$ to $x\in \fh^*$ and degree $-1$ to $y\in \fh$. Since the defining relations are (see (\ref{e:Cliff-relation}))
\begin{equation*}
xx'=-x'x,\ yy'=-y'y,\ xy+yx=-2(x,y),\quad x,x'\in \fh^*,\ y,y'\in \fh,
\end{equation*}
this assignment defines a $\mathbb Z$-grading on $\Cc(V)$. The standard gradings on $S=\bigwedge \fh$ (with degree $-1$ on $\fh$) and $S^*=\bigwedge \fh^*$ (with degree $1$ on $\fh^*$) make $S,S^*$ into graded $\Cc(V)$-module.

We regard $\cha\otimes \Cc(V)$ as a $\mathbb Z$-graded algebra (not bi-graded) with respect to the grading given by the total degree. Thus $\Cd$ is an element of degree $0$ in $\cha\otimes \Cc(V)$. This implies that $D_X$ is a graded operator for every graded module $X$ in $\Co_{t,c}(G,\fh)$, and then $H_D(X)$ (respectively, $I_D(X)$) is naturally a graded $G$-module (respectively, virtual $G$-module).

\begin{definition}\label{d:infchar}Let $X$ be a graded module in $\Co_{t,c}(G,\fh)$. We say that $X$ has an \emph{infinitesimal character} if:
\begin{enumerate}
\item when $t\neq 0$, $\be\bu$ acts on the $d$-th degree subspace $X_d$ of $X$ by scalar multiplication by $td+k_X$, where $k_X$ is a constant independent of $d$;
\item when $t=0$, the centre of $\cha$ acts by scalars $\chi_X$ on $X$.
\end{enumerate}
\end{definition}

\begin{remark}
A graded module in category $\Co$ is of finite type \cite{GGOR}, i.e., it is degree-wise of finite dimension.
\end{remark}

\begin{lemma}\label{l:D^2-sigma}
Suppose that $X$ is a graded $\cha$-module with infinitesimal character and $\sigma$ an irreducible $G$-representation. If $\sigma$ appears in $X_d\otimes {\bigwedge}^{\!\ell}\fh^*$, then
\[\frac 12 D_X^2\mid_\sigma=-t(d+\ell)-k_X+h_{c}(\sigma).
\]
\end{lemma}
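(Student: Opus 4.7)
The plan is to simply evaluate the formula for $\tfrac12 \Cd^2$ supplied by Proposition \ref{p:D^2} on the $\sigma$-isotypic piece of $X_d\otimes \bigwedge^\ell\fh^*$, term by term. The identity reads
\[
\tfrac12 \Cd^2 = -\be\bu\otimes 1 + t\bigl(1\otimes \tfrac{\kappa}{2}\bigr) - \rho_G(\Omega_G),
\]
and since $D_X^2$ is induced by left multiplication by $\Cd^2$ in $\cha\otimes\Cc(V)$ (acting via $X\otimes S^*$), it suffices to understand how each of the three summands acts on a vector lying in a copy of $\sigma$ inside $X_d\otimes\bigwedge^\ell\fh^*$.

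First I would handle the two scalar terms. By the infinitesimal character hypothesis (Definition \ref{d:infchar}(1) for $t\neq 0$; for $t=0$ the constant is just $k_X:=\chi_X(\be\bu)$), the element $\be\bu$ acts on $X_d$ by the scalar $td+k_X$, so $-\be\bu\otimes 1$ contributes $-(td+k_X)$. For the middle term, I would invoke the computation of the $\kappa$-action recalled just before the statement, which shows that $\tfrac{\kappa}{2}$ acts on $\bigwedge^\ell\fh^*\subset S^*$ by the scalar $\tfrac{r}{2}-\ell$; hence $t(1\otimes\tfrac{\kappa}{2})$ contributes $t\bigl(\tfrac{r}{2}-\ell\bigr)$.

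The only step requiring a word of care is the $\rho_G(\Omega_G)$ term. The map $\rho_G$ sends $s\mapsto s\otimes \tau_s$, and from the table of actions on $S^*$ we have $\tau_s|_{S^*}=s$, so on $X\otimes S^*$ the operator $\rho_G(g)$ acts as the diagonal $G$-action $g\otimes g$. Consequently $\rho_G(\Omega_G)$ acts on $X_d\otimes\bigwedge^\ell\fh^*$ as the diagonal action of the central element $\Omega_G=\sum_s\tfrac{c(s)}{d_s}s\in Z(\bbc G)$. On any irreducible constituent $\sigma$ of the $G$-module $X_d\otimes \bigwedge^\ell\fh^*$, Schur's lemma forces this to be scalar multiplication by $N_c(\sigma)=\sum_s\tfrac{c(s)}{d_s}s|_\sigma$. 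Thus $-\rho_G(\Omega_G)$ contributes $-N_c(\sigma)$.

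Summing the three contributions gives
\[
\tfrac12 D_X^2|_\sigma = -(td+k_X) + t\bigl(\tfrac{r}{2}-\ell\bigr) - N_c(\sigma) = -t(d+\ell)-k_X+\bigl(\tfrac{tr}{2}-N_c(\sigma)\bigr),
\]
and recognising $\tfrac{tr}{2}-N_c(\sigma)=h_c(\sigma)$ finishes the argument. There is no genuine obstacle: the only substantive point is justifying that on an irreducible $\sigma$-summand of $X_d\otimes\bigwedge^\ell\fh^*$ the diagonal action of $\Omega_G$ reduces to the scalar $N_c(\sigma)$, which is immediate from the centrality of $\Omega_G$ in $\bbc G$.
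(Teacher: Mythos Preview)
Your proof is correct and follows exactly the same approach as the paper: both simply evaluate each of the three summands in the formula $\tfrac12\Cd^2=-\be\bu\otimes 1+t(1\otimes\tfrac{\kappa}{2})-\rho_G(\Omega_G)$ from Proposition~\ref{p:D^2} on a copy of $\sigma$ inside $X_d\otimes\bigwedge^\ell\fh^*$ and recombine the scalars. Your write-up is just a more explicit version of the paper's one-line argument, including the helpful clarification that $\rho_G(\Omega_G)$ acts via the diagonal $G$-action since $\tau_s|_{S^*}=s$.
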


\begin{proof}
This is immediate from Proposition \ref{p:D^2} since $-\be\bu$ acts by $-td-k_X$ on this copy of $\sigma$ and $\kappa/2$ acts by $t(r/2-\ell).$ 
\end{proof}

\begin{proposition}\label{p:local-index}
For every graded module $X$ which has an infinitesimal character, \[I_D(X)=X\otimes S^{*,+}-X\otimes S^{*,-},\] as virtual graded $G$-modules.
\end{proposition}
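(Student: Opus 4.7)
The plan is to slice $X\otimes S^*$ into finite-dimensional $G$-invariant pieces on which the square $D_X^2$ acts by a single scalar, and then run the standard Euler-characteristic argument for a $\mathbb Z/2$-graded complex on each piece.

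First I would check that $D_X$ preserves the total grading on $X\otimes S^*$ (where $X_d\otimes\bigwedge^\ell\fh^*$ has degree $d+\ell$). Inspecting $\Cd=\Cd_x+\Cd_y$, the summand $\Cd_x=\sum_j x_j\otimes y_j$ sends $X_d\otimes\bigwedge^\ell\fh^*$ into $X_{d+1}\otimes\bigwedge^{\ell-1}\fh^*$, and $\Cd_y$ sends it into $X_{d-1}\otimes\bigwedge^{\ell+1}\fh^*$. So $D_X$ is a $G$-equivariant endomorphism of total degree $0$ that reverses parity of $\ell$, and thus restricts to the operators $D_X^\pm$ between the total-degree pieces of $X\otimes S^{*,\pm}$.

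Fix a total degree $N$ and an irreducible $G$-representation $\sigma$, and set
\[
P_{N,\sigma}=\bigoplus_{d+\ell=N}\bigl(X_d\otimes {\textstyle\bigwedge}^{\!\ell}\fh^*\bigr)_\sigma,\qquad P_{N,\sigma}^{\pm}=P_{N,\sigma}\cap(X\otimes S^{*,\pm}).
\]
Since $X$ has infinitesimal character and is of finite type, $P_{N,\sigma}$ is finite dimensional, and by Lemma~\ref{l:D^2-sigma} the operator $\tfrac12 D_X^2$ acts on $P_{N,\sigma}$ by the single scalar $\lambda_{N,\sigma}=-tN-k_X+h_c(\sigma)$, which depends only on $N$ and $\sigma$ (not on the individual bidegree $(d,\ell)$).

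If $\lambda_{N,\sigma}\neq 0$, then $D_X$ is invertible on $P_{N,\sigma}$ (with inverse $(2\lambda_{N,\sigma})^{-1}D_X$), so $D_X^+\colon P_{N,\sigma}^+\to P_{N,\sigma}^-$ is a $G$-equivariant isomorphism. Hence $H_D^\pm(P_{N,\sigma})=0$ and $P_{N,\sigma}^+-P_{N,\sigma}^-=0$ in the Grothendieck group of $G$-modules, so both sides of the claimed identity contribute $0$ in this total degree. If $\lambda_{N,\sigma}=0$, then $D_X^2=0$ on $P_{N,\sigma}$, making it into a finite-dimensional $\mathbb Z/2$-graded $G$-equivariant complex. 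The elementary Euler-characteristic identity for such a complex,
\[
[P_{N,\sigma}^+]-[P_{N,\sigma}^-]=[H_D^+(P_{N,\sigma})]-[H_D^-(P_{N,\sigma})],
\]
holds in the Grothendieck group of $G$-modules by splitting $P_{N,\sigma}^{\pm}$ as $(\im D_X^{\mp})\oplus H_D^{\pm}\oplus (\text{complement of }\ker D_X^{\pm})$ and noting that the complement maps isomorphically onto $\im D_X^{\pm}$. Summing over $N$ and $\sigma\in\Irr(G)$ (the sum is degree-wise finite), and using that $D_X$ preserves the total grading so every identity is graded, yields the desired equality of graded virtual $G$-modules.

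There is no real obstacle here beyond careful bookkeeping; the essential work is already encapsulated in Lemma~\ref{l:D^2-sigma}, which is what makes the "nonzero eigenvalue of $D^2$ contributes trivially" principle apply in this graded setting.
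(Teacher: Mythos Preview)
Your proof is correct and follows essentially the same route as the paper: slice $X\otimes S^*$ by total degree and $G$-isotype, use the formula for $\Cd^2$ to see that $D_X^2$ acts by a single scalar on each finite-dimensional piece, and then apply the Euler--Poincar\'e principle piecewise. One small point: your explicit scalar $\lambda_{N,\sigma}=-tN-k_X+h_c(\sigma)$ via Lemma~\ref{l:D^2-sigma} only literally covers the case $t\neq 0$, since $k_X$ is not defined when $t=0$; the paper handles $t=0$ separately, using that $\be\bu$ is central so $\tfrac12\Cd^2|_\sigma=-\chi_X(\be\bu)-N_c(\sigma)$, but this changes nothing in your argument since all you need is that the scalar depends only on $(N,\sigma)$.
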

\begin{proof}
The proof is analogous to that from the setting of graded affine Hecke algebras \cite[Lemma 4.1]{COT} but we need a little more care because the modules are infinite dimensional. Let $n\in \mathbb Z$ be a degree and consider $D_{X,n}^\pm$, the restriction of the operators $D_{X,n}^\pm$ to the degree $n$ components $(X\otimes S^{*,\pm})_n$ of $X\otimes S^{*,\pm}$. Notice that $\dim (X\otimes S^{*,\pm})_n<\infty$ so that we are now in a finite-dimensional setting. Let $\sigma$ be an irreducible representation of $G$ and let $(X\otimes S^{*,\pm})_{n,\sigma}$ denote the isotypic component of $\sigma$ in $(X\otimes S^{*,\pm})_n$. 
Write $(X\otimes S^{*,\pm})_{n,\sigma}=\sum_{\ell=0}^r (X_{n-\ell}\otimes{\bigwedge}^{\!\ell}\fh^*)_\sigma.$  If $v\in (X_{n-\ell}\otimes{\bigwedge}^{\!\ell}\fh^*)_\sigma$, by the assumption that $X$ has an infinitesimal character, we see that:

\begin{enumerate}
\item $\frac 12\Cd^2(v)=-t n-k_X+h_c(\sigma)$, when $t\neq 0$;
\item $\frac 12\Cd^2(v)=-\chi_X(\be\bu)-N_c(\sigma)$, when $t=0$.
\end{enumerate}
The point is that in both situations, $\Cd^2(v)$ depends at most on $n$, hence $\Cd^2(v)=0$ for some $v\in (X\otimes S^{*,\pm})_{n,\sigma}$ {\it if and only if} $\Cd^2=0$ on all $(X\otimes S^{*,\pm})_{n,\sigma}$.
Therefore, either $\Cd_X^-\circ \Cd_X^+$ is invertible on $(X\otimes S^{*,+})_{n,\sigma}$, in which case there is no contribution from $\sigma$ in degree $n$ to $I_D(X)$, or else $\Cd_X^-\circ \Cd_X^+$ is identically $0$ on $(X\otimes S^{*,+})_{n,\sigma}$. In the latter case, we have the complex of finite-dimensional $G$-representations:
\begin{equation*}
0\longrightarrow \ker D^+_{X,n,\sigma}\longrightarrow (X\otimes S^{*,+})_{n,\sigma}\xrightarrow{D^+_X} (X\otimes S^{*,-})_{n,\sigma}\xrightarrow{D^-_X} \im D^-_{X,n,\sigma}\longrightarrow 0.
\end{equation*}
The claim in the proposition follows by the Euler-Poincar\' e principle.
\end{proof}

\begin{corollary}\label{c:local-index}
Suppose that 
\begin{enumerate}
\item $X$ is a subquotient of a standard module in $\Co_{t,c}(G,\fh)$, when $t\neq 0$;
\item $X$ is any graded $\cha$-module with infinitesimal character, when $t=0$.
\end{enumerate}
Then, the index formula from Proposition \ref{p:local-index} applies to $X$.
\end{corollary}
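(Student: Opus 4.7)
The plan is to reduce the corollary to a direct application of Proposition \ref{p:local-index}, by verifying its sole hypothesis (that $X$ be a graded $\cha$-module with an infinitesimal character) in each of the two cases.

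Case (2) is essentially free: when $t=0$, the assumption on $X$ coincides verbatim with condition (2) of Definition \ref{d:infchar}, so Proposition \ref{p:local-index} applies without further argument. (Indeed, when $t=0$ the scalar $\frac{1}{2}\Cd^2|_\sigma$ computed via Proposition \ref{p:D^2} depends only on $\sigma$ and on $\chi_X(\be\bu)$, independently of the degree $d$ and of $\ell$, which makes case (2) even slightly easier than the generic situation.)

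For case (1), I would first establish that every standard module $M_{t,c}(\tau)$ has an infinitesimal character in the sense of Definition \ref{d:infchar}(1). To see this, I would evaluate
\[\be\bu=\sum_i x_iy_i+\tfrac{r}{2}-\sum_s\tfrac{c(s)}{d_s}s\]
on the generating subspace $1\otimes\tau\subset M_{t,c}(\tau)$, which sits in degree $0$. Since $\fh$ annihilates $\tau$ by construction of the induced module, the term $\sum_i x_iy_i$ acts as zero there, and $\be\bu$ acts by the scalar $k_\tau:=\tfrac{r}{2}-N_c(\tau)$. The bracket relations $[\be\bu,x]=tx$ and $[\be\bu,y]=-ty$, recorded just after the definition of $\be\bu$, then propagate this by induction on the degree, yielding that $\be\bu$ acts on $M_{t,c}(\tau)_d$ by $td+k_\tau$ for every $d$. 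Because $\be\bu$ is a fixed element of $\cha$, any graded subquotient $X$ of $M_{t,c}(\tau)$ inherits the same scalar action on each graded component, so $X$ also satisfies Definition \ref{d:infchar}(1), and Proposition \ref{p:local-index} applies to give the desired index formula.

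I do not expect any real obstacle: the substance is a one-line computation of $\be\bu$ on the lowest-degree piece of $M_{t,c}(\tau)$ together with the elementary observation that the infinitesimal character condition is inherited by graded subquotients. The only mild point worth flagging is that one should work in the graded category so that the subquotient in case (1) is itself graded; this is harmless since standard modules and their composition constituents are all graded in $\Co_{t,c}(G,\fh)$.
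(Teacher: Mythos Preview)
Your proposal is correct and follows essentially the same approach as the paper's proof: verify that standard modules have an infinitesimal character (you spell out the computation of $\be\bu$ on the lowest-degree piece and the propagation via $[\be\bu,x]=tx$, whereas the paper simply asserts it), and then observe that this property passes to graded subquotients (the paper phrases this as ``$\be\bu$ acts semisimply on a standard module'', which is equivalent to your ``inherits the same scalar action on each graded component''). Case~(2) is indeed tautological in both accounts.
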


\begin{proof}
The corollary follows by noticing first that, when $t\neq 0$, every standard module satisfies the infinitesimal character condition. Secondly, since $\be\bu$ acts semisimply on a standard module, every subquotient also satisfies the infinitesimal character condition.
\end{proof}

\begin{corollary}\label{c:char-formula}
If $X$ has an infinitesimal character (in particular, $X$ could be $L_{t,c}(\tau)$), then the graded $G$-character of $X$ equals
\[\textup{ch}(X,g,\bq)=\frac{\textup{ch}(I_D(X),g,\bq)}{\det_{\fh^*}(1-g \bq)},\quad g\in G.
\]
\end{corollary}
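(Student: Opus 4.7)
The plan is to reduce the identity to the index formula of Proposition \ref{p:local-index}, and then compute the $G$-character of $S^{*,+} - S^{*,-}$ explicitly as an exterior-algebra character.

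First I would verify that Corollary \ref{c:local-index} applies to $X$: for $t\neq 0$ the simple module $L_{t,c}(\tau)$ is a subquotient of the standard module $M_{t,c}(\tau)$, and for $t=0$ the hypothesis of infinitesimal character is exactly what is needed. This gives the identity of virtual graded $G$-modules
\[
I_D(X) \;=\; X \otimes S^{*,+} \;-\; X \otimes S^{*,-}.
\]
Taking graded $G$-characters (which are additive on virtual modules and multiplicative on tensor products, because the grading on $X\otimes S^*$ is the sum of the two gradings and the $G$-action is diagonal) yields
\[
\textup{ch}(I_D(X),g,\bq) \;=\; \textup{ch}(X,g,\bq)\cdot\bigl(\textup{ch}(S^{*,+},g,\bq)-\textup{ch}(S^{*,-},g,\bq)\bigr).
\]

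Next I would compute the bracketed factor. Since $S^* = \bigwedge\fh^*$ with $\bigwedge^\ell\fh^*$ placed in degree $\ell$, and since $S^{*,\pm}$ picks out the even/odd exterior powers, the standard identity
\[
\sum_{\ell=0}^{r}(-1)^\ell\,\bq^\ell\,\tr\bigl(g,\,{\textstyle\bigwedge}^{\!\ell}\fh^*\bigr) \;=\; \det{}_{\fh^*}(1-g\bq)
\]
gives $\textup{ch}(S^{*,+},g,\bq) - \textup{ch}(S^{*,-},g,\bq) = \det_{\fh^*}(1-g\bq)$. Substituting and dividing by this factor (which as a polynomial in $\bq$ has nonzero constant term, and so is invertible in $\bbc(g)[\![\bq]\!]$ for each $g\in G$) yields the claimed formula.

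There is no real obstacle here beyond bookkeeping: the content is entirely concentrated in Proposition \ref{p:local-index}, and the remaining steps are the multiplicativity of graded characters under tensor products together with the elementary identity expressing $\det_{\fh^*}(1-g\bq)$ as the alternating sum of exterior-power characters. The one point worth double-checking is that $\det_{\fh^*}(1-g\bq)$ is indeed invertible as a formal power series in $\bq$ for each $g$, so that division is legitimate degree-by-degree; this is clear since its constant term equals $1$.
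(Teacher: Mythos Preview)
Your proof is correct and follows essentially the same route as the paper: the paper's proof is a single sentence invoking Proposition~\ref{p:local-index} together with the identity $\textup{ch}({\bigwedge}^{\!+}\fh^*-{\bigwedge}^{\!-}\fh^*,g,\bq)=\det_{\fh^*}(1-g\bq)$, and you have simply spelled out the multiplicativity of graded characters and the invertibility of the determinant factor in more detail. One minor remark: since the hypothesis of the corollary is already that $X$ has an infinitesimal character, Proposition~\ref{p:local-index} applies directly, so your detour through Corollary~\ref{c:local-index} is only needed to justify the parenthetical ``in particular'' clause about $L_{t,c}(\tau)$.
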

Here $\bq$ is an indeterminate that we use to keep track of the degrees.
\begin{proof}
This is immediate from Proposition \ref{p:local-index}, since   $\textup{ch}({\bigwedge}^{\!+}\fh^*-{\bigwedge}^{\!-}\fh^*,g,\bq)=\det_{\fh^*}(1-g \bq)$.
\end{proof}

\subsection{Dirac cohomology and infinitesimal characters} The main general result about the Dirac cohomology $H_D(X)$ of a module $X$ is that it determines the infinitesimal character of $X$. In the setting of rational Cherednik algebras, this is proved in \cite{Ci}, and it is the analogue of the theorem proved by Huang-Pand\v zi\' c (see \cite{HP}) and conjectured by Vogan \cite{Vo} for $(\fg,K)$-modules of real reductive groups.

\begin{theorem}[{\it cf.} \cite{Ci}]\label{t:vogan-conj} Let $\sigma,\tau$ be two irreducible $G$-representations. If $\ghom_{G}(\sigma,H_D(L_{t,c}(\tau))\neq 0$ then $\tau\sim\sigma$.
\end{theorem}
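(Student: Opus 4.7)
The plan is to exploit Proposition \ref{p:D^2} together with the vanishing $D^2v=0$ on a lift of a Dirac cohomology class, in order to force a compatibility between $N_c(\tau)$ and $N_c(\sigma)$.

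Pick a nonzero class $[\bar v]\in H_D(L_{t,c}(\tau))$ that transforms as $\sigma$ under the diagonal $G$-action, and lift it to some $v\in \ker D\subseteq L_{t,c}(\tau)\otimes S^*$. Since $\be\bu\otimes 1$, $1\otimes\kappa/2$, $\rho_G(\Omega_G)$ and $D$ all respect the bigrading by polynomial degree on $L_{t,c}(\tau)$ and exterior degree on $S^*$, one may arrange $v\in L_{t,c}(\tau)_d\otimes \bigwedge^\ell \fh^*$ realising $\sigma$ under the diagonal $G$-action. The infinitesimal character constant of $L_{t,c}(\tau)$ is $k_X=h_c(\tau)$, read off from the bottom degree $X_0=\tau$, where the polynomial part $\sum_i x_iy_i$ of $\be\bu$ vanishes and only the $G$-part of $\be\bu$ contributes. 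Since $Dv=0$ forces $D^2v=0$, Lemma \ref{l:D^2-sigma} yields the scalar identity
\[
0 \;=\; -t(d+\ell) - h_c(\tau) + h_c(\sigma) \;=\; -t(d+\ell) + N_c(\tau) - N_c(\sigma).
\]

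When $t\neq 0$, the standard rescaling identifying $\cha$ with $\mathsf H_{1,t^{-1}c}$ lets us take $t=1$, and the identity reads $N_c(\tau)-N_c(\sigma)=d+\ell\in\mathbb Z_{\geq 0}$, which is exactly the block relation $\tau\sim\sigma$. This settles the case $t\neq 0$.

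The case $t=0$ is the main obstacle: the identity above only gives $N_c(\tau)=N_c(\sigma)$, whereas lying in the same Calogero--Moser family is the stronger requirement of equality of the full central characters on $Z(\chazero)$. Closing this gap requires the Vogan-type refinement carried out in \cite{Ci}: construct a lift $\zeta:Z(\chazero)\to (\bbc G)^G$ so that each $z\in Z(\chazero)$ acts on $H_D$ through $\zeta(z)$, and so that $\zeta(z)$ acts on the $\sigma$-isotypic of $H_D$ by the scalar $\textup{cc}(\sigma)(z)$. Since $z\otimes 1$ acts on $L_{0,c}(\tau)\otimes S^*$ by $\textup{cc}(\tau)(z)$, passing to $H_D$ equates the two central characters on all of $Z(\chazero)$, proving $\tau\sim\sigma$ in the CM-family sense. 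The construction of $\zeta$, based on a careful analysis of the centralizer of $\Cd$ inside $\chazero\otimes\Cc$, is the technical heart of the argument and we refer to \cite{Ci} for the details.
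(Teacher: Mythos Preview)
Your approach matches the paper's: for $t\neq 0$ the claim is read off from Lemma~\ref{l:D^2-sigma}, and for $t=0$ one defers to the Vogan-type argument in \cite{Ci}. One small inaccuracy: $D$ does \emph{not} preserve the bigrading by $(d,\ell)$ --- the two summands $\Cd_x$ and $\Cd_y$ shift $(d,\ell)$ to $(d+1,\ell-1)$ and $(d-1,\ell+1)$ respectively --- so you cannot in general arrange the lift $v$ to lie in a single $L_{t,c}(\tau)_d\otimes\bigwedge^\ell\fh^*$. What you can do is take $v$ homogeneous of a fixed \emph{total} degree $n$, since $D$ has degree $0$; the point (made explicit in the proof of Proposition~\ref{p:local-index}) is that the scalar by which $\frac12 D^2$ acts on the $\sigma$-isotypic depends only on $n=d+\ell$, so your identity $-t\,n-h_c(\tau)+h_c(\sigma)=0$ follows regardless. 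With that adjustment your argument is correct and essentially identical to the paper's.
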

\begin{proof}
When $t\neq 0$, this claim follows simply from Lemma \ref{l:D^2-sigma}. When $t=0$, the proof is nontrivial and this is done in \cite[Theorem 5.8 and Corollary 5.10]{Ci} in the ungraded setting. The proof in the graded setting is identical. 
\end{proof}

\subsection{Character formulae} We can refine  the character formula from Corollary \ref{c:char-formula}. 

\begin{proposition}\label{p:necessary}
Let $\tau$ be an irreducible $G$-representation. There exist finite-dimensional graded $G$-representations $i_{t,c}(\tau)^+$ and $i_{t,c}(\tau)^-$ such that
\begin{equation*}
\textup{ch}(L_{t,c}(\tau),g,\bq)=\frac{\textup{ch}(i_{t,c}(\tau)^+,g,\bq)-\textup{ch}(i_{t,c}(\tau)^-,g,\bq)}{\det_{\fh^*}(1-g \bq)},\quad g\in G,
\end{equation*}
and:
\begin{enumerate}
\item $\Hom_G(i_{t,c}(\tau)^+_d,i_{t,c}(\tau)^-_d)=0$ for each degree $d$;
\item If $\sigma$ is an irreducible $G$-representation which occurs in $i_{t,c}(\tau)^+$ or $i_{t,c}(\tau)^-$, then $\sigma\sim\tau$;
\item If $L_{t,c}(\tau)$ is finite dimensional\footnote{This is always the case when $t=0$.}, then $\dim i_{t,c}(\tau)^+=\dim i_{t,c}(\tau)^-$ as ungraded representations, and moreover, if $r$ is even, then
\[\textup{ch}(i_{t,c}(\tau)^{\pm,*}\otimes{{\det}_\fh},g,\bq)= \bq^{-r} \textup{ch}(i_{t,\check c}(\tau^*)^{\pm},g,\bq), 
\]
while if $r$ is odd, then
\[\textup{ch}(i_{t,c}(\tau)^{\pm,*}\otimes{{\det}_\fh},g,\bq)= \bq^{-r} \textup{ch}(i_{t,\check c}(\tau^*)^{\mp},g,\bq).
\]
\end{enumerate}
\end{proposition}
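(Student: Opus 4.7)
The plan is to take $i_{t,c}(\tau)^{\pm}$ to be the ``cancelled'' versions of $H_D^\pm(L_{t,c}(\tau))$: in each degree $d$, let $C_d$ denote the largest common $G$-submodule of $H_D^+(L_{t,c}(\tau))_d$ and $H_D^-(L_{t,c}(\tau))_d$, and define $i_{t,c}(\tau)^\pm$ to have degree-$d$ piece equal to $H_D^\pm(L_{t,c}(\tau))_d$ with one copy of $C_d$ removed. Each $i_{t,c}(\tau)^\pm$ is finite dimensional since $H_D(L_{t,c}(\tau))$ is, and the virtual graded $G$-difference $i_{t,c}(\tau)^+ - i_{t,c}(\tau)^-$ coincides with $I_D(L_{t,c}(\tau))$, so Corollary~\ref{c:char-formula} delivers the required character formula while Property~(1) holds by construction. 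Property~(2) is immediate from Theorem~\ref{t:vogan-conj}, applied to the irreducible constituents of $i_{t,c}(\tau)^\pm$, which are subconstituents of $H_D(L_{t,c}(\tau)) = H_D^+(L_{t,c}(\tau)) \oplus H_D^-(L_{t,c}(\tau))$.

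For the ungraded dimension equality in~(3), when $L_{t,c}(\tau)$ is finite dimensional both $L_{t,c}(\tau) \otimes S^{*,+}$ and $L_{t,c}(\tau) \otimes S^{*,-}$ have dimension $2^{r-1}\dim L_{t,c}(\tau)$ (for $r \geq 1$; the case $r=0$ is trivial), so the Euler-Poincar\'e principle applied to the two-periodic complex $(D^+_{L_{t,c}(\tau)}, D^-_{L_{t,c}(\tau)})$ yields $\dim H_D^+(L_{t,c}(\tau)) = \dim H_D^-(L_{t,c}(\tau))$ as ungraded $G$-modules. Since the cancellation procedure removes identical constituents from both sides, $\dim i_{t,c}(\tau)^+ = \dim i_{t,c}(\tau)^-$.

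For the twisted duality identities, I would combine two ingredients: first, a contragredient duality identifying the $\chad$-module obtained from $L_{t,c}(\tau)^*$ via the antiisomorphism $\gamma$ of Lemma~\ref{l:duality} with a $\det_\fh$-twist of $L_{t,\check c}(\tau^*)$, compatibly with the grading inversion $\bq \mapsto \bq^{-1}$; second, the Clifford-side isomorphism $\bigwedge^\ell \fh^* \cong (\bigwedge^{r-\ell}\fh)^* \otimes \det_\fh$, which after dualisation inverts the spin grading $\ell \mapsto r - \ell$ (producing the $\bq^{-r}$ factor) and flips $S^{*,+} \leftrightarrow S^{*,-}$ precisely when $r$ is odd. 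Dualising the Dirac complex of $L_{t,c}(\tau)$ through these identifications intertwines $D^\pm_{L_{t,c}(\tau)}$ with $D^\pm_{L_{t,\check c}(\tau^*)}$ when $r$ is even, and with $D^\mp_{L_{t,\check c}(\tau^*)}$ when $r$ is odd. The cancellation procedure is manifestly compatible with duality (common constituents on one side correspond to common constituents on the other), so the identity descends from $H_D^\pm$ to $i_{t,c}^\pm$.

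The main obstacle is the contragredient duality in~(3): precisely establishing $L_{t,c}(\tau)^* \otimes \det_\fh \cong L_{t,\check c}(\tau^*)$ as graded modules with the correct degree shift, and verifying that the dual of $D_{L_{t,c}(\tau)}$, transported through $\gamma$ and the Clifford-side duality $S \cong S^* \otimes \det_\fh$, really produces $D_{L_{t,\check c}(\tau^*)}$ up to the correct parity. The sign and grading bookkeeping, together with the parity dichotomy in $r$, is where careful attention is required; the rest of the argument is formal.
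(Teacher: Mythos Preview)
Your approach to the character formula and properties~(1) and~(2) is essentially identical to the paper's: cancel common constituents of $H_D^\pm(L_{t,c}(\tau))$ degree by degree, invoke Corollary~\ref{c:char-formula}, and appeal to Theorem~\ref{t:vogan-conj}. Your argument for the ungraded dimension equality in~(3) is also fine and close in spirit to the paper's (which just observes that $S^{*,+}$ and $S^{*,-}$ have the same dimension and uses Proposition~\ref{p:local-index}).

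For the twisted duality identities in~(3), however, the paper takes a shorter route than you do. Rather than dualising the Dirac complex and tracking $H_D^\pm$ through the contragredient functor and Clifford duality, the paper works entirely at the level of virtual graded characters. It starts from the index formula $i_{t,c}(\tau)^+ - i_{t,c}(\tau)^- = L_{t,c}(\tau)\otimes(S^{*,+}-S^{*,-})$ of Proposition~\ref{p:local-index}, tensors both sides with $\det_{\fh^*}$, and uses the elementary character identity
\[
\textup{ch}\bigl((S^{*,+}-S^{*,-})\otimes{\det}_{\fh^*},g,\bq\bigr)=(-1)^r\,\bq^{-r}\,\textup{ch}\bigl(S^{+}-S^{-},g,\bq\bigr).
\]
Combined with the contragredient relation between $L_{t,c}(\tau)$ and $L_{t,\check c}(\tau^*)$, this yields the desired identity for the \emph{difference} $i_{t,c}(\tau)^+-i_{t,c}(\tau)^-$. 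Because property~(1) forces $i_{t,c}(\tau)^\pm$ (and likewise $i_{t,\check c}(\tau^*)^\pm$) to have no common constituents in any degree, the virtual identity determines each piece separately, and the $\pm$ (or $\mp$, according to the parity of $r$) identities follow. Your strategy of intertwining the actual Dirac operators under duality would also succeed, but it demands the extra verification you flag as the ``main obstacle''; the paper's virtual-character argument bypasses that bookkeeping entirely.
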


\begin{proof}
Since the Dirac index is a virtual finite-dimensional graded $G$-module, we may write it as \[I_D(L_{t,c}(\tau))=i_{t,c}(\tau)^+-i_{t,c}(\tau)^-,\] where $i_{t,c}(\tau)^\pm$ are finite-dimensional graded $G$-representations satisfying (1). Part (2) follows from Theorem \ref{t:vogan-conj}, since if $\sigma$ occurs in the Dirac index, it must occur in the Dirac cohomology. The claim about dimensions in (3) is obvious since $I_D(L_{t,c}(\tau))=L_{t,c}(\tau)\otimes S^{*,+}-L_{t,c}(\tau)\otimes S^{*,-}$, and $S^{*,+}$, $S^{*,-}$ have the same ungraded dimension. The last claim follows by tensoring with ${\det}_{\fh^*}$ in the formula $L_{t,c}(\tau)\otimes (S^{*,+}-\otimes S^{*,-})=i_{t,c}(\tau)^+-i_{t,c}(\tau)^-$ and using that
\[\textup{ch}((S^{*,+}-\otimes S^{*,-})\otimes \det\nolimits_{\fh^*},g,\bq)=(-1)^r \bq^{-r}\textup{ch}((S^{+}-\otimes S^{-}),g,\bq).
\]
This finishes the proof.
\end{proof}

\subsection{Composition numbers}
The Dirac index of a standard module is easy to determine.
\begin{lemma}\label{l:index-std}
$I_D(M_{t,c}(\tau))=\tau$, as graded $G$-modules.
\end{lemma}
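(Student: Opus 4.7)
The plan is to apply Proposition \ref{p:local-index} and then read off the answer from a short character calculation using the PBW decomposition of the standard module.

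First I would verify that $M_{t,c}(\tau)$ satisfies the infinitesimal character condition of Definition \ref{d:infchar}. When $t\neq 0$, this is precisely Corollary \ref{c:local-index}(1), since $M_{t,c}(\tau)$ is itself a standard module. When $t=0$, both sides of the desired identity are additive in $\tau$, so I would reduce to $\tau$ irreducible; then $M_{0,c}(\tau)$ is cyclically generated over $\bbc[\fh]$ by the degree-zero copy of $\tau$, so any $z\in Z(\chazero)$ acts on $1\otimes\tau$ as a $G$-equivariant endomorphism, hence as a scalar by Schur's lemma, which then determines $z$ on all of $M_{0,c}(\tau)$.

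Once the infinitesimal character is in place, Proposition \ref{p:local-index} gives the virtual graded $G$-module identity
\[I_D(M_{t,c}(\tau))=M_{t,c}(\tau)\otimes S^{*,+}-M_{t,c}(\tau)\otimes S^{*,-}.\]
By PBW, $M_{t,c}(\tau)\cong\bbc[\fh]\otimes\tau$ as a graded $G$-module with $\tau$ placed in degree $0$, so
\[\textup{ch}(M_{t,c}(\tau),g,\bq)=\frac{\textup{ch}(\tau,g)}{\det_{\fh^*}(1-g\bq)},\]
while the standard exterior-algebra identity gives $\textup{ch}(S^{*,+}-S^{*,-},g,\bq)=\det_{\fh^*}(1-g\bq)$. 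Multiplying these two, the determinant factors cancel and one obtains
\[\textup{ch}(I_D(M_{t,c}(\tau)),g,\bq)=\textup{ch}(\tau,g),\]
which identifies the virtual graded $G$-module $I_D(M_{t,c}(\tau))$ with $\tau$ in degree $0$.

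There is no serious obstacle in this argument: the computation is purely formal once the infinitesimal character condition is confirmed, and the only mild subtlety (at $t=0$) is disposed of by the Schur's lemma argument in the first step.
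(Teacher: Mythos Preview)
Your approach matches the paper's exactly: apply Proposition~\ref{p:local-index}, identify $M_{t,c}(\tau)\cong\bbc[\fh]\otimes\tau$ as graded $G$-modules via PBW, and cancel the factor $\det_{\fh^*}(1-g\bq)$ in characters. The paper simply invokes Proposition~\ref{p:local-index} without pausing to verify its hypothesis.

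Your added verification of the infinitesimal character condition is welcome, but the $t=0$ step has a gap. A central element $z\in Z(\chazero)$ need not preserve the degree-zero piece $1\otimes\tau$: for example, any nonconstant $p\in\bbc[\fh]^G\subseteq Z(\chazero)$ sends $1\otimes v$ to $p\otimes v$, which lies in strictly positive degree. So Schur's lemma does not apply as you wrote it, and indeed $M_{0,c}(\tau)$ does \emph{not} satisfy Definition~\ref{d:infchar}(2). The repair is immediate once you look inside the proof of Proposition~\ref{p:local-index}: at $t=0$ only the scalar action of the single central element $\be\bu$ is used, and $\be\bu$ \emph{does} act by the scalar $h_c(\tau)$ on $M_{0,c}(\tau)$ (on $1\otimes\tau$ the term $\sum_i x_iy_i$ vanishes, and centrality propagates the scalar). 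Alternatively, argue by additivity: each simple composition factor $L_{0,c}(\sigma)$ genuinely has infinitesimal character, so the index formula holds for those, and hence for $M_{0,c}(\tau)$ by summing.
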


\begin{proof}
As graded $G$-representations $M_{t,c}(\tau)=\mathbb C[\fh]\otimes\tau$. By Proposition \ref{p:local-index}:
\[I_D(M_{t,c}(\tau))=\mathbb C[\fh]\otimes \left({\bigwedge}^{\!+}\fh^*-{\bigwedge}^{\!-}\fh^*\right)\otimes\tau.
\]
The claim follows since the graded $G$-character of $\mathbb C[\fh]$ is $g\mapsto \frac 1{\det_{\fh^*}(1-g \bq)}$. 
\end{proof}
If $\tau$ and $\sigma$ are two irreducible $G$-representations, define the \emph{multiplicity polynomials} 
\begin{equation*}
n_{\tau,\sigma}(\bq)=\text{ the graded multiplicity of }L_{t,c}(\sigma)\text{ in }M_{t,c}(\tau).
\end{equation*}
We introduce the \emph{Dirac index polynomials} 
\begin{equation}\label{e:DiracIndPoly}
d_{\sigma,\tau}(\bq)=\text{ the graded multiplicity of }\tau \text{ in }I_D(L_{t,c}(\sigma)).
\end{equation}

\begin{proposition}\label{p:DiracPolyInverse}
The matrix of Dirac index polynomials $[d_{\sigma,\sigma'}(\bq)]$ is inverse to the matrix of multiplicity polynomials $[n_{\sigma,\sigma'}(\bq)]$.
\end{proposition}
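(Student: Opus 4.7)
The plan is to exploit the additivity of the Dirac index on short exact sequences (of graded modules with infinitesimal character) and combine it with the explicit calculation of the Dirac index of the standard modules from Lemma \ref{l:index-std}.

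First I would verify that if $0\to X'\to X\to X''\to 0$ is a short exact sequence of graded $\cha$-modules in $\Co_{t,c}(G,\fh)$, each of which has an infinitesimal character, then $I_D(X) = I_D(X') + I_D(X'')$ in the Grothendieck group of finite-dimensional graded virtual $G$-modules. By Corollary \ref{c:local-index}, the index formula
\[
I_D(Y) = Y\otimes S^{*,+} - Y\otimes S^{*,-}
\]
holds for any such $Y$ (for $t\neq 0$ because subquotients of standards still have infinitesimal character, and for $t=0$ by assumption). Additivity then follows immediately from the additivity of the graded $G$-character on short exact sequences, since the right-hand side is manifestly additive.

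Next, the standard module $M_{t,c}(\tau)$ admits a (possibly infinite) composition series whose graded multiplicities are, by definition, the polynomials $n_{\tau,\sigma}(\bq)$. Since $M_{t,c}(\tau)$ is of finite type and is generated in nonnegative degrees, we may truncate by degree and pass to the limit: in each fixed degree the identity
\[
\textup{ch}(I_D(M_{t,c}(\tau)),g,\bq) = \sum_{\sigma\in\Irr(G)} n_{\tau,\sigma}(\bq)\,\textup{ch}(I_D(L_{t,c}(\sigma)),g,\bq)
\]
involves only finitely many terms and follows from iterated additivity applied to the composition filtration. Combining this with Lemma \ref{l:index-std}, which gives $\textup{ch}(I_D(M_{t,c}(\tau)),g,\bq) = \textup{ch}(\tau,g)$, and with the definition $d_{\sigma,\tau'}(\bq) = $ graded multiplicity of $\tau'$ in $I_D(L_{t,c}(\sigma))$, we obtain for every $\tau,\tau'\in\Irr(G)$ the identity
\[
\delta_{\tau,\tau'} = \sum_{\sigma\in\Irr(G)} n_{\tau,\sigma}(\bq)\,d_{\sigma,\tau'}(\bq),
\]
which is exactly the claim that $[n_{\tau,\sigma}(\bq)]$ and $[d_{\sigma,\tau'}(\bq)]$ are inverse matrices.

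The step I expect to require the most care is justifying the additivity of $I_D$ across the (a priori only graded, not necessarily finite) composition filtration of $M_{t,c}(\tau)$. The point is that the Dirac index is concentrated in finitely many degrees per fixed degree strand (for each $n$ only finitely many composition factors contribute to degree $n$ of $M_{t,c}(\tau)\otimes S^*$, since $M_{t,c}(\tau)$ is of finite type and $S^*$ is finite-dimensional); this degreewise finiteness is what allows us to extract a well-defined identity of formal power series in $\bq$. Once this bookkeeping is in place, the rest is formal linear algebra.
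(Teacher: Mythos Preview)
Your proposal is correct and follows essentially the same approach as the paper: write $M_{t,c}(\tau)=\sum_\sigma n_{\tau,\sigma}(\bq)L_{t,c}(\sigma)$ in the Grothendieck group, apply the additive Dirac index (via Proposition~\ref{p:local-index}), invoke Lemma~\ref{l:index-std} to get $I_D(M_{t,c}(\tau))=\tau$, and read off the matrix identity. The paper simply asserts additivity and works directly in the Grothendieck group, whereas you spell out more carefully the degreewise finiteness that makes the passage through the composition filtration legitimate; this extra care is appropriate but does not change the argument.
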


\begin{proof}
In the Grothendieck group, we write
\[M_{t,c}(\tau)=\sum_{\sigma}n_{\tau,\sigma}(\bq) L_{t,c}(\sigma).
\]
Apply the Dirac index, which by Proposition \ref{p:local-index}, is additive:
\begin{equation*}
I_D(M_{t,c}(\tau))=\sum_{\sigma}n_{\tau,\sigma}(\bq) I_D(L_{t,c}(\sigma)).
\end{equation*}
From Lemma \ref{l:index-std} and the definition of the Dirac index polynomials, we get
\begin{equation*}
\tau=\sum_{\sigma}n_{\tau,\sigma}(\bq)\sum_{\sigma'}d_{\sigma,\sigma'}(\bq)\sigma',
\end{equation*}
hence 
\[\sum_{\sigma}n_{\tau,\sigma}(\bq)d_{\sigma,\sigma'}(\bq)=\delta_{\tau,\sigma'},
\]
which proves the claim.
\end{proof}

\begin{remark}
The similar results (and proofs) hold for the restricted algebra $\reschazero$. The difference is that 
\begin{equation*}
I_D(\bar M_{t,c})=P_G(\bq)\cdot \tau,\quad\text{ where } P_G(\bq)=\prod_i(1-\bq^{d_i}),
\end{equation*}
with $d_i$ being the fundamental degrees of $G$. This is because the graded character of $\mathbb C[\fh]_G$ is $g\mapsto \frac {P_G(\bq)}{\det_{\fh^*}(1-g \bq)}$. Consequently, in $\reschazero$,
\begin{equation}\label{e:multi-restricted}
[d_{\sigma,\sigma'}(\bq)]\cdot [n_{\sigma,\sigma'}(\bq)]=P_G(\bq)\cdot\textup{Id}.
\end{equation}
\end{remark}

\subsection{The Euler-Poincar\' e pairing} Let $X$ be an $\cha$-module. The {\it Koszul-type} resolution of $X$ is (\cite[proof of Theorem 1.8]{EG}, also \cite[section 3.2]{Cha} for graded affine Hecke algebras)
\begin{equation}\label{e:koszul}
0\leftarrow X\xleftarrow{\epsilon} \cha\otimes_{\bbc G} X\mid_G\xleftarrow{d} \cha\otimes_{\bbc G} (V\otimes X\mid_G)\xleftarrow{d} \cha\otimes_{\bbc G} \left({\bigwedge}^{\!2}V\otimes X\mid_G\right)\leftarrow\cdots.
\end{equation}
The map $\epsilon$ is given by the $\cha$-action: $h\otimes x\mapsto h\cdot x$. The maps $d$ are given by
\begin{equation*}
\begin{aligned}
d:\  h\otimes v_1\wedge\dots&\wedge v_p\otimes x\mapsto \sum_{j=1}^p (-1)^{j-1} (h v_j\otimes v_1\wedge\dots\wedge \hat v_j\wedge\dots\wedge v_p\otimes x\\
&-h\otimes v_1\wedge\dots\wedge \hat v_j\wedge\dots\wedge v_p\otimes v_j\cdot x).
\end{aligned}
\end{equation*}
\begin{proposition}[\cite{EG}]
In (\ref{e:koszul}), $$d^2=0,$$ and the complex is a projective resolution of $X$.
\end{proposition}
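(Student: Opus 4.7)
The proof has three pieces: (i) $d^2=0$ by direct computation; (ii) each term of the complex is a projective $\cha$-module; (iii) exactness follows from PBW-filtration techniques.

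For (i), I would decompose $d = d' - d''$, where
\begin{equation*}
d'(h\otimes \omega\otimes x) = \sum_j (-1)^{j-1} hv_j \otimes \iota_j(\omega)\otimes x,\qquad
d''(h\otimes \omega\otimes x) = \sum_j (-1)^{j-1} h \otimes \iota_j(\omega)\otimes v_j \cdot x,
\end{equation*}
with $\iota_j$ denoting removal of the $j$-th wedge factor. Then $d^2 = (d')^2 - d'd'' - d''d' + (d'')^2$. In the classical commutative Koszul case, each of $(d')^2$, $(d'')^2$ vanishes by antisymmetry and the cross-terms pair off to zero. Here, $(d')^2$ produces terms of the form $h(v_iv_j - v_jv_i) \otimes (\omega\text{ with both removed}) \otimes x$ for $i<j$, and similarly $(d'')^2$ produces terms with $[v_i,v_j]$ acting on $x$, while $d'd''+d''d'$ yields mixed terms $hv_i \otimes \cdots \otimes v_j\cdot x + hv_j\otimes \cdots \otimes v_i\cdot x$. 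The key point is that by the defining relations (\ref{e:relations}) the commutator $[v_i,v_j]$ lies in $\bbc G$ for any $v_i,v_j\in V$ (and vanishes when both are in $\fh$ or both in $\fh^*$). Since our tensor products are over $\bbc G$, these $\bbc G$-valued commutators can be transported across the tensor symbols. Careful sign-tracking shows that the commutator contributions from $(d')^2 + (d'')^2$ cancel precisely against the ones arising from $d'd''+d''d'$.

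For (ii), each module $\cha \otimes_{\bbc G}(\bigwedge^p V\otimes X|_G)$ is induced from the $\bbc G$-module $\bigwedge^p V\otimes X|_G$ via the inclusion $\bbc G \hookrightarrow \cha$. Because $G$ is a finite group and we work over $\bbc$, the group algebra $\bbc G$ is semisimple, hence every $\bbc G$-module is projective; induction preserves projectivity, so each term of the complex is a projective $\cha$-module.

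For (iii), I would use the PBW filtration. Assign $\fh$ and $\fh^*$ degree $1$ and $\bbc G$ degree $0$, so that by the PBW theorem of \cite{EG} the associated graded algebra is $\Sym(V)\rtimes \bbc G$. The induced filtration on (\ref{e:koszul}) has as associated graded the classical Koszul complex
\begin{equation*}
0\leftarrow X\mid_G \leftarrow (\Sym V\rtimes\bbc G) \otimes_{\bbc G}(X\mid_G)\leftarrow (\Sym V\rtimes\bbc G)\otimes_{\bbc G}(V\otimes X\mid_G)\leftarrow \cdots,
\end{equation*}
which is exact since it is (up to a harmless twist by $\bbc G$) the standard Koszul resolution for the symmetric algebra tensored with $X|_G$. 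A standard filtered-to-graded argument, valid because the filtration is bounded below and exhaustive and the homogeneous pieces are finite-dimensional in each degree, then promotes exactness of the associated graded complex to exactness of (\ref{e:koszul}). Surjectivity of $\epsilon$ is obvious, so we obtain a projective resolution of $X$.

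The main obstacle is the sign-and-commutator bookkeeping in step (i); the noncommutativity of $\cha$ forces one to verify that the $\bbc G$-valued obstructions in $(d')^2$, $(d'')^2$ and $d'd''+d''d'$ combine to zero, which is not automatic and hinges crucially on the fact that the Cherednik commutator $[y,x]$ lies in $\bbc G$ (not in a larger piece of $\cha$).
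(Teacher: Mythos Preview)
Your approach is correct and matches the paper's, which simply cites \cite[Theorem 1.8(i)]{EG}: $d^2=0$ is verified by direct calculation, and exactness follows by passing to the associated graded under the filtration giving $V$ degree~$1$. Your step (ii) on projectivity is correct and fills in a detail the paper leaves implicit.

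One minor correction to your sketch of (i): the cross terms $d'd''+d''d'$ already vanish by the same sign argument as in the commutative Koszul complex, since there $v_i$ and $v_j$ act on opposite tensor factors and no commutator appears. The genuine obstruction sits entirely in $(d')^2+(d'')^2$; it is \emph{these} commutator contributions that must cancel against each other via $[v_i,v_j]\in\bbc G$ and the $\otimes_{\bbc G}$ relation, and one must also account for the induced $G$-action on the remaining wedge factor when passing the group element across. This is precisely the delicate bookkeeping you flag at the end.
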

\begin{proof}
This is proved in \cite[Theorem 1.8(i)]{EG}: the identity $d^2=0$ is proved by a direct calculation, while the exactness of the complex by the standard technique of passing to the associated graded complex with respect to the filtration where the elements of $V$ are given degree $1$.
\end{proof}
Let $Y$ be another $\cha$-module. To compute $\Ext_{\cha}^\bullet(X,Y)$, apply $\Hom_{\cha}(-,Y)$ to the resolution (\ref{e:koszul}) and take the cohomology of the resulting complex. We do this in the graded setting, where $\cha$ comes with the grading from before. Regard $V=\fh\oplus\fh^*$ as a graded $G$-representation, where $\fh$ has degree $-1$ and $\fh^*$ has degree $1$ (same as in $\cha$). 

Assume that $X$ and $Y$ are graded $\cha$-modules which are finite dimensional in each degree. Notice that the differential $d$ preserves degrees, {\it i.e.}, it maps $(\cha\otimes_{\bbc G} ({\bigwedge}^{\!p}V\otimes X\mid_G))_n$ to $(\cha\otimes_{\bbc G} ({\bigwedge}^{\!p-1}V\otimes X\mid_G))_n$ for each degree $n$. Hence, (\ref{e:koszul}) is a projective resolution in the graded complex. Apply $\ghom_{\cha}(-,Y)$ and get the complex:
\begin{equation*}
d^*: \ghom_{\cha}\left(\cha\otimes_{\bbc G} \left({\bigwedge}^{\!p} V\otimes X\mid_G\right),Y\right)\rightarrow \ghom_{\cha}\left(\cha\otimes_{\bbc G} \left({\bigwedge}^{\!p+1}V\otimes X\mid_G\right),Y\right),
\end{equation*}
which, by Frobenius reciprocity, is equivalent to
\begin{equation}\label{e:G-koszul}
d^*: \ghom_{\bbc G}\left({\bigwedge}^{\!p} V\otimes X\mid_G,Y\mid_G\right)\rightarrow \ghom_{\bbc G} \left({\bigwedge}^{\!p+1}V\otimes X\mid_G,Y\mid_G\right).
\end{equation}
Here, the induced maps $d^*$ are given by
%\begin{equation}
\begin{multline*}
d^*(\phi)(v_1\wedge\dots\wedge v_{p+1}\otimes x)=\sum_{i=1}^{p+1} (-1)^{i+1} (v_i\cdot \phi(v_1\wedge\dots\wedge\hat v_i \wedge\dots\wedge v_{p+1}\otimes x)\\
-\phi(v_1\wedge\dots\wedge\hat v_i \wedge\dots\wedge v_{p+1}\otimes v_i\cdot x) ),
\end{multline*}
%\end{equation}
where $\phi\in \ghom_{\bbc G}({\bigwedge}^{\!p} V\otimes X\mid_G,Y\mid_G).$

\begin{definition}
Let $X,Y$ be graded $\cha$-modules of finite type. The \emph{graded Euler-Poincar\' e pairing} is
\[\textup{EPgr}_{\cha}(X,Y)=\sum_{p\ge 0}(-1)^p\textup{dimgr}\Ext^p_{\cha}(X,Y).
\]
By the previous discussion, this sum is finite and well defined.
\end{definition}

\begin{corollary}\label{c:EP}
For $X,Y$ graded $\cha$-modules of finite type,
\begin{equation}\label{e:EP}
\textup{EPgr}_{\cha}(X,Y)=\dim\ghom_{\bbc G}\left(\left({\bigwedge}^{\!+} V-{\bigwedge}^{\!-} V\right)\otimes X\mid_G,Y\mid_G\right).
\end{equation}
\end{corollary}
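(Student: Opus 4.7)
The plan is to apply the standard graded Euler--Poincar\'e argument to the complex (\ref{e:G-koszul}), which has been set up precisely for this purpose: its cohomology computes $\Ext^\bullet_{\cha}(X,Y)$, its chain groups already have the desired ``$\ghom$ out of ${\bigwedge}^p V\otimes X$'' form, and everything respects the $\mathbb Z$-grading. So the proof reduces to bookkeeping of graded dimensions.

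Concretely, I would proceed in three steps. First, I would verify that (\ref{e:G-koszul}) really computes the graded Ext groups $\Ext^p_{\cha}(X,Y)$: each term $P_p=\cha\otimes_{\bbc G}({\bigwedge}^p V\otimes X|_G)$ in the resolution (\ref{e:koszul}) is a projective graded $\cha$-module (since $\bbc G$ is semisimple, every graded $\bbc G$-module is projective, and induction from $\bbc G$ preserves projectivity), the differentials preserve the $\mathbb Z$-grading as already noted, and graded Frobenius reciprocity identifies $\ghom_{\cha}(P_p,Y)$ with $\ghom_{\bbc G}({\bigwedge}^p V\otimes X|_G,Y|_G)$. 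Second, I would observe that (\ref{e:G-koszul}) is bounded (vanishing for $p>\dim V = 2r$) and, for each fixed outer degree $n\in\mathbb Z$, its $n$-th graded piece is a finite complex of finite-dimensional vector spaces (since $X,Y$ are of finite type, $G$ is finite, and each $\bigwedge^p V$ is finite-dimensional). The classical Euler--Poincar\'e identity applies degree-wise in $n$ and, after assembling into a power series in $\bq$, gives
\[
\textup{EPgr}_{\cha}(X,Y)=\sum_{p\ge 0}(-1)^p\dim\ghom_{\bbc G}({\bigwedge}^{\!p} V\otimes X\mid_G,\,Y\mid_G).
\]
Third, by additivity of the functor $\ghom_{\bbc G}(-\otimes X\mid_G,\,Y\mid_G)$ in the first variable, the alternating sum on the right collapses to $\dim\ghom_{\bbc G}\bigl(({\bigwedge}^{\!+} V-{\bigwedge}^{\!-} V)\otimes X\mid_G,\,Y\mid_G\bigr)$, which is exactly the right-hand side of (\ref{e:EP}).

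There is no substantive obstacle: the content is entirely absorbed into the Koszul resolution (\ref{e:koszul}) and its projectivity, both cited from \cite{EG}. The only points that require minor attention are the compatibility of the various identifications with the grading --- namely that (\ref{e:koszul}) is a projective resolution in the graded category, that Frobenius reciprocity is used in its graded form, and that the chain groups of (\ref{e:G-koszul}) are degree-wise finite-dimensional so that the Euler--Poincar\'e sum converges as a formal power series in $\bq$. All three have already been arranged in the discussion immediately preceding the statement.
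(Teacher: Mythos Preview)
Your proposal is correct and follows exactly the paper's approach: the paper's proof is the single sentence ``This is immediate by the Euler-Poincar\'e principle applied in (\ref{e:G-koszul}),'' and your three steps simply unpack that sentence with the care that the paper leaves implicit.
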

\begin{proof}
This is immediate by the Euler-Poincar\'e principle applied in (\ref{e:G-koszul}).
\end{proof}

\subsection{Elliptic pairings} Consider the Grothendieck group of graded $G$-representations $R(G)_{\textup{gr}}$ with the graded character pairing
\[\langle \sigma,\sigma'\rangle^{\textup{gr}}_G=\dim \ghom_G(\sigma,\sigma').
\]
Define the $V$-\emph{elliptic pairing} 
\begin{equation}
\langle \sigma,\sigma'\rangle^{V-\textup{ell}}_G=\left\langle \sigma\otimes\left({\bigwedge}^{\!+} V-{\bigwedge}^{\!-} V\right),\sigma'\right\rangle^{\textup{gr}}_G.
\end{equation}
The previously defined notions can be related as follows:

\begin{theorem}\label{t:pairs} Suppose that $X,Y$ are graded $\cha$-modules of finite type. 
\begin{enumerate}
\item $\textup{EPgr}_{\cha}(X,Y)=\langle X\mid_G, Y\mid_G\rangle^{V-\textup{ell}}_G.$
\item If in addition, $X$ and $Y$ have infinitesimal characters, then
\begin{equation}\label{e:pairs}
\textup{EPgr}_{\cha}(X,Y)=\langle X\mid_G, Y\mid_G\rangle^{V-\textup{ell}}_G=\langle I_D(X),I_D(Y)\rangle^{\textup{gr}}_G.
\end{equation}
\end{enumerate}
\end{theorem}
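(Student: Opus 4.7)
Part (1) is essentially immediate from Corollary \ref{c:EP}. Unpacking the definition of the $V$-elliptic pairing,
\[
\langle X|_G,Y|_G\rangle^{V-\textup{ell}}_G
= \dim\ghom_G\bigl(X|_G\otimes({\bigwedge}^{\!+}V-{\bigwedge}^{\!-}V),\,Y|_G\bigr),
\]
which matches the formula for $\textup{EPgr}_{\cha}(X,Y)$ in Corollary \ref{c:EP} up to the (irrelevant) reordering of the tensor factors.

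For part (2), the plan is to combine Proposition \ref{p:local-index} with tensor--hom adjunction in the category of graded $\bbc G$-modules. First I would apply Proposition \ref{p:local-index} to rewrite, as virtual graded $G$-modules,
\[
I_D(X)=X\otimes(S^{*,+}-S^{*,-}),\qquad I_D(Y)=Y\otimes(S^{*,+}-S^{*,-}),
\]
so that, setting $A=S^{*,+}-S^{*,-}$,
\[
\langle I_D(X),I_D(Y)\rangle^{\textup{gr}}_G
=\dim\ghom_G(X\otimes A,\,Y\otimes A).
\]
Expanding the virtual module $A$ and applying tensor--hom adjunction to each of the four resulting terms identifies this with $\dim\ghom_G(X\otimes A^*\otimes A,\,Y)$. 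The graded dual of $S^{*,\ell}=\bigwedge^\ell\fh^*$, which sits in degree $+\ell$, is $\bigwedge^\ell\fh=S^\ell$ in degree $-\ell$; hence $A^*=S^+-S^-$ as virtual graded $G$-modules.

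The final step is the identity, as virtual graded $G$-modules,
\[
(S^+-S^-)\otimes(S^{*,+}-S^{*,-}) \;=\; {\bigwedge}^{\!+}V-{\bigwedge}^{\!-}V,
\]
which follows from the isomorphism $\bigwedge V\cong\bigwedge\fh\otimes\bigwedge\fh^*$ of graded $G$-modules together with the observation that the parity of the total exterior degree on $V$ equals the sum of the parities of the exterior degrees on $\fh$ and $\fh^*$. Substituting this into the previous display and comparing with part (1) yields the second equality in (\ref{e:pairs}). The most delicate point I expect will be verifying that every identification above is an equality of \emph{graded} $G$-modules and not merely ungraded ones: the $\bbz$-grading (with $\fh$ in degree $-1$ and $\fh^*$ in degree $+1$) has to interact correctly with the super-grading used to define $\bigwedge^\pm V$ and $S^\pm$, and the graded duality $(\bigwedge^\ell\fh^*)^*\cong\bigwedge^\ell\fh$ must correctly flip degree $+\ell$ to $-\ell$. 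Once these bookkeeping checks are carried out, both equalities in (\ref{e:pairs}) follow formally from Corollary \ref{c:EP} and Proposition \ref{p:local-index}.
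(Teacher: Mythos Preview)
Your proposal is correct and follows essentially the same route as the paper: part (1) is immediate from Corollary \ref{c:EP}, and part (2) combines Proposition \ref{p:local-index} with the factorisation $({\bigwedge}^{\!+}V-{\bigwedge}^{\!-}V)=(S^+-S^-)\otimes(S^{*,+}-S^{*,-})$ and tensor--hom adjunction for graded $G$-modules. The paper runs the argument in the opposite direction (starting from the elliptic pairing and arriving at $\langle I_D(X),I_D(Y)\rangle^{\textup{gr}}_G$) and leaves the adjunction step implicit, but the content is the same.
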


\begin{proof}
The first formula follows immediately from the definitions and Corollary \ref{c:EP}. For (\ref{e:pairs}), notice first that 
\[{\bigwedge}^{\!+} V-{\bigwedge}^{\!-} V=\left({\bigwedge}^{\!+} \fh-{\bigwedge}^{\!-} \fh\right)\otimes \left({\bigwedge}^{\!+} \fh^*-{\bigwedge}^{\!-} \fh^*\right),
\]
hence
\[ \langle X\mid_G,Y\mid_G\rangle^{V-\textup{ell}}_G=\langle X\otimes(S^{*,+}-S^{*,-}),Y\otimes(S^{*,+}-S^{*,-})\rangle^{\textup{gr}}_G=\langle I_D(X),I_D(Y)\rangle^{\textup{gr}}_G,
\]
by Corollary \ref{c:local-index}.
\end{proof}

\begin{corollary}\label{c:EPupshot}
For every irreducible $G$-representations $\sigma$ and $\tau$,
\[\textup{EPgr}_{\cha}(M_{t,c}(\tau),M_{t,c}(\sigma))=\delta_{\tau,\sigma}
\]
and
\begin{equation*}
\textup{EPgr}_{\cha}(M_{t,c}(\tau),L_{t,c}(\sigma))=d_{\sigma,\tau}(\bq),
\end{equation*}
where $d_{\sigma,\tau}(\bq)$ is the Dirac index polynomial.
\end{corollary}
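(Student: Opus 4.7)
The plan is to derive both identities directly from Theorem \ref{t:pairs}(2), so the first task is to confirm that the hypotheses of that theorem apply to the relevant modules. Standard modules $M_{t,c}(\tau)$ satisfy the infinitesimal character condition by Corollary \ref{c:local-index}: when $t\neq 0$, $\be\bu$ acts semisimply on $M_{t,c}(\tau)=\bbc[\fh]\otimes\tau$ with weight $td+k$ on the $d$-th graded piece; when $t=0$, the centre acts by scalars on any standard module since these are cyclic modules generated in one degree. The simple quotient $L_{t,c}(\sigma)$ is a subquotient of $M_{t,c}(\sigma)$, hence also satisfies the infinitesimal character condition (again by Corollary \ref{c:local-index}). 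Both $M_{t,c}(\tau)$ and $L_{t,c}(\sigma)$ are of finite type in each degree, so the graded Euler--Poincar\'e pairing is defined.

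Next I would invoke Theorem \ref{t:pairs}(2) to obtain, for any pair of modules in $\Co$ with infinitesimal character,
\begin{equation*}
\textup{EPgr}_{\cha}(X,Y)=\langle I_D(X),I_D(Y)\rangle^{\textup{gr}}_G.
\end{equation*}
Now apply Lemma \ref{l:index-std}, which identifies $I_D(M_{t,c}(\tau))=\tau$ as a graded $G$-module concentrated in degree $0$. This yields
\begin{equation*}
\textup{EPgr}_{\cha}(M_{t,c}(\tau),M_{t,c}(\sigma))=\langle\tau,\sigma\rangle^{\textup{gr}}_G=\dim\ghom_G(\tau,\sigma)=\delta_{\tau,\sigma},
\end{equation*}
using irreducibility of $\tau$ and $\sigma$ together with the fact that both are placed in degree zero (so the graded Hom collapses to the ordinary $G$-Hom).

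For the second identity, write $I_D(L_{t,c}(\sigma))=\sum_{\tau'\in\Irr(G)} d_{\sigma,\tau'}(\bq)\,\tau'$ as a virtual graded $G$-module; this is the very definition of the Dirac index polynomial in (\ref{e:DiracIndPoly}). Pairing against $I_D(M_{t,c}(\tau))=\tau$ and using additivity of the graded character pairing gives
\begin{equation*}
\langle I_D(M_{t,c}(\tau)),I_D(L_{t,c}(\sigma))\rangle^{\textup{gr}}_G=\sum_{\tau'}d_{\sigma,\tau'}(\bq)\langle\tau,\tau'\rangle^{\textup{gr}}_G=d_{\sigma,\tau}(\bq),
\end{equation*}
completing the proof. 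The only potential subtlety, and the one place where care is warranted, is checking the infinitesimal character hypothesis for $L_{t,c}(\sigma)$ when $t=0$; this is handled by noting that every subquotient of a standard module inherits the scalar action of the centre, which is exactly the content invoked from Corollary \ref{c:local-index}. No further calculation is required since Lemma \ref{l:index-std} and Theorem \ref{t:pairs} do all the heavy lifting.
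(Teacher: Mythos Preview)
Your proof is correct and follows exactly the paper's approach: invoke Theorem \ref{t:pairs}(2), use Lemma \ref{l:index-std} to compute $I_D(M_{t,c}(\tau))=\tau$, and read off the pairings from the definition of $d_{\sigma,\tau}(\bq)$. One small caution on your hypothesis-checking at $t=0$: the claim that the centre acts by scalars on $M_{0,c}(\tau)$ ``since these are cyclic modules generated in one degree'' is not right---$\bbc[\fh]^G$ lies in the centre and acts by multiplication, not by scalars---so strictly speaking $M_{0,c}(\tau)$ fails Definition \ref{d:infchar}(2); what is actually needed (and what the proofs of Proposition \ref{p:local-index} and Lemma \ref{l:index-std} use) is only that $\be\bu$ acts by a scalar on each graded piece, which does hold for standard modules.
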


\begin{proof}
Immediate from Theorem \ref{t:pairs}, Lemma \ref{l:index-std}, and the definition of $d_{\sigma,\tau}(\bq).$
\end{proof}

\subsection{An example: $t=0$, $G=W(B_2)$}
For $\reschazero$ of type $B_2$ and with constant parameter $c$, there is an interesting CM family consisting of the irreducible $W(B_2)$-representations $\{11\times 0,0\times 2,1\times 1\}$ (here we use Carter's notation via bipartitions for the irreducible $W(B_n)$-representations). This is a particular case of the cuspidal CM families studied in \cite{BT} and \cite{Ci2}.
The graded $W(B_2)$-structures of the corresponding simple modules are:
\begin{equation*}
\begin{aligned}
\bar L(11\times 0)&=11\times 0\\
\bar L(0\times 2)&=0\times 2\\
\bar L(1\times 1)&=(1+\bq^2)(1\times 1)+\bq (2\times 0+0\times 11).
\end{aligned}
\end{equation*}
These are obtained as follows. The modules $\bar L(11\times 0)$ and $\bar L(0\times 2)$ can be constructed by requiring $\fh$ and $\fh^*$ to act identically by zero, see {\it loc. cit.}. Then, to determine the graded character of $\bar L(1\times 1)$, we can look at the graded characters of $\bar M(11\times 0)$ and $\bar M(0\times 2)$, for example.

\smallskip

The graded character of ${\bigwedge}^{\!+}\fh-{\bigwedge}^{\!-}\fh$ is $2\times 0-\bq(1\times 1)+\bq^2(0\times 11)$. Applying Proposition \ref{p:local-index}, we find that 
the matrix of Dirac polynomials for the cuspidal block of $\{11\times 0,0\times 2,1\times 1\}$ in this order is:
\begin{equation*}
[d_{\sigma,\tau}(\bq)]=\left(\begin{matrix} 1 & \bq^2 &-\bq\\ \bq^2 &1&-\bq \\ -(\bq+\bq^3) & -(\bq+\bq^3) &1+\bq^4\end{matrix}\right).
\end{equation*}
As an illustration of formula (\ref{e:multi-restricted}), the matrix of multiplicity polynomials for this block is
\begin{equation*}
[n_{\sigma,\tau}(\bq)]=P_{W(B_2)}(\bq)\cdot [d_{\sigma,\tau}(\bq)]^{-1}=\left(\begin{matrix} 1&\bq^4 &\bq\\ \bq^4 &1 &\bq\\ \bq+\bq^3 & \bq+\bq^3 &1+\bq^2
\end{matrix}
\right);
\end{equation*}
here $P_{W(B_2)}(\bq)=(1-\bq^2)(1-\bq^4)$.

\subsection{Another example: $t=0$, $G=W(G_2)$} We use Carter's notation for irreducible representations of $W(G_2)$. These are: $\phi_{1,0}$, $\phi_{1,3}'$, $\phi_{1,3}''$, $\phi_{1,6}$, $\phi_{2,1}$, and $\phi_{2,2}$, where the first number is the dimension of the representation and the second is the lowest harmonic degree. For $\reschazero$ of type $G_2$ and with constant parameter $c$, there is a cuspidal CM family consisting of the irreducible $W(G_2)$-representations $\{\phi_{1,3}',\phi_{1,3}'',\phi_{2,2},\phi_{2,1}\}$. The graded $W(G_2)$-structures of the corresponding simple modules are:
\begin{equation*}
\begin{aligned}
\bar L(\phi_{1,3}')&=\phi_{1,3}',\\
\bar L(\phi_{1,3}'')&=\phi_{1,3}'',\\
\bar L(\phi_{2,2})&=\phi_{2,2},\\
\bar L(\phi_{2,1})&=(1+\bq^2)\phi_{2,1}+\bq (\phi_{1,0}+\phi_{1,6}).
\end{aligned}
\end{equation*}
 The modules $\bar L(\phi_{1,3}')$, $\bar L(\phi_{1,3}'')$, $\bar L(\phi_{2,2})$ can be constructed by requiring $\fh$ and $\fh^*$ to act identically by zero, see \cite{BT} and \cite{Ci2}. The structure of $\bar L(\phi_{2,1})$ can be obtained by analysing the structure of the standard module $\bar M(\phi_{1,3}')$. The graded character of ${\bigwedge}^{\!+}\fh-{\bigwedge}^{\!-}\fh$ is $\phi_{1,0}-\bq \phi_{2,1}+\bq^2\phi_{1,6}$. Applying Proposition \ref{p:local-index}, we find that 
the matrix of Dirac polynomials for the cuspidal block of $\{\phi_{1,3}',\phi_{1,3}'',\phi_{2,2},\phi_{2,1}\}$ in this order is:
\begin{equation*}
[d_{\sigma,\tau}(\bq)]=\left(\begin{matrix} 1 & \bq^2 &-\bq &0\\ \bq^2 &1&-\bq &0 \\ -\bq &-\bq &(1+\bq^2) &-\bq\\
0&0 & -(\bq+\bq^3) &1+\bq^4\end{matrix}\right).
\end{equation*}
As another illustration of formula (\ref{e:multi-restricted}), the matrix of multiplicity polynomials for this block is
\begin{equation*}
[n_{\sigma,\tau}(\bq)]=P_{W(G_2)}(\bq)\cdot [d_{\sigma,\tau}(\bq)]^{-1}=\left(\begin{matrix} 1 &\bq^6 &\bq+\bq^5 &\bq^2\\ \bq^6 &1 &\bq+\bq^5 &\bq^2\\ \bq+\bq^5 &\bq+\bq^5 &(1+\bq^2)(1+\bq^4) &\bq+\bq^3\\
\bq^2+\bq^4 &\bq^2+\bq^4 &\bq (1+\bq^2)^2 &(1+\bq^4)
\end{matrix}
\right);
\end{equation*}
here $P_{W(G_2)}(\bq)=(1-\bq^2)(1-\bq^6)$.

\section{integral-reflection representations}\label{s:IRR}

We construct, for the rational Cherednik algebra, analogues of the integral-reflection representations as in the theory of affine Hecke algebras and trigonometric Cherednik algebras \cite{HO}, \cite{EOS}.

\subsection{Integral operators} In this subsection, we extend the ideas of \cite{Gu} to the case of a complex reflection group. Denote by $\lpi\cdot,\cdot\rpi:\bbc[\fh]\times\bbc[\fh^*]\to\bbc$ the bilinear pairing defined by
\begin{equation}\label{e:polyduality}
\lpi p,q \rpi = \big(p(\partial)q\big)(0),
\end{equation}
for all $p\in \bbc[\fh]$ and $q\in \bbc[\fh^*]$. Here, an element $x\in \fh^*$ acts on $\fh$ by the derivative $\partial_x(y) = x(y)$, which is extended to any $q\in\bbc[\fh^*]$ by linearity and the Leibniz rule. We shall use the notation $x(\partial) = \partial_x$. We also have a similarly defined operator $\partial_y$ on $\bbc[\fh]$ for any $y\in\fh$. Note that the bilinear pairing above is an extension to $\Sym(\fh^*)\times \Sym(\fh)  = \bbc[\fh]\times\bbc[\fh^*]$ of the duality between $\fh^*\times \fh\to\bbc$. The following lemma is well known:

\begin{lemma}
If $x\in\fh^*$ and $y\in\fh$, let $\mu_x:\bbc[\fh]\to \bbc[\fh]$ and $\mu_y:\bbc[\fh^*]\to\bbc[\fh^*]$ denote the corresponding multiplication operators. Then $\mu_x$ is dual $\partial_x$ and $\partial_y$ is dual to $\mu_y$, under the pairing $\lpi\cdot,\cdot\rpi$ of (\ref{e:polyduality}).
\end{lemma}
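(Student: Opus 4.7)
\medskip

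\noindent\textbf{Proof plan.} The assertion consists of the two identities
\[
\lpi \mu_x p, q\rpi = \lpi p, \partial_x q\rpi \quad (x\in\fh^*),\qquad \lpi \partial_y p, q\rpi = \lpi p, \mu_y q\rpi \quad (y\in\fh).
\]
My plan is to first record the symmetric reformulation $\lpi p,q\rpi = (p(\partial) q)(0) = (q(\partial)p)(0)$, and then deduce both dualities by a one-line manipulation using that constant-coefficient differential operators commute.

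First I would verify the symmetry $(p(\partial)q)(0)=(q(\partial)p)(0)$, which only needs to be checked on a basis of monomials. Fixing dual bases $\{x_i\}\subset\fh^*$ and $\{y_i\}\subset\fh$ and multi-index notation, both sides of this identity evaluate on monomials $p=\ba x^\alpha$, $q=\bb y^\beta$ to $\delta_{\alpha,\beta}\alpha!$. This gives a single pairing which admits two equivalent descriptions, and reduces the two statements of the lemma to one another, so it suffices to prove, say, only the first.

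For the first identity, I would expand directly:
\[
\lpi \mu_x p,q\rpi = \bigl((xp)(\partial)\,q\bigr)(0) = \bigl(\partial_x\circ p(\partial)\,q\bigr)(0),
\]
using that if $p=x_{i_1}\cdots x_{i_k}$ then $(xp)(\partial)=\partial_x\,\partial_{x_{i_1}}\cdots\partial_{x_{i_k}}=\partial_x\circ p(\partial)$ as operators on $\bbc[\fh^*]$. Since $\partial_x$ and $p(\partial)$ are both constant-coefficient differential operators on $\bbc[\fh^*]$, they commute, so
\[
\bigl(\partial_x\circ p(\partial)\,q\bigr)(0) = \bigl(p(\partial)\circ \partial_x\,q\bigr)(0) = \lpi p,\partial_x q\rpi,
\]
as required. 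The second identity then follows by the same argument applied to the alternative expression $\lpi p,q\rpi = (q(\partial)p)(0)$, with the roles of $\fh$ and $\fh^*$ exchanged.

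There is no real obstacle; the only point one must not gloss over is that $p(\partial)$ and $\partial_x$ genuinely commute (the characteristic-zero commutativity of constant-coefficient differential operators on a polynomial ring), because this is precisely what turns the asymmetric definition of the pairing into a symmetric one and permits the duality assertions to be read off directly.
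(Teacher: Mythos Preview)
Your proof is correct and is essentially the standard argument one would give. The paper itself does not supply a proof of this lemma; it simply states it as ``well known'' and moves on, so there is nothing to compare against beyond noting that your approach is the expected one.
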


\begin{definition}
For each $s\in \Cs$ and choice of eigenvectors $\{\alpha_s,\alpha_s^\vee\}$ with $\alpha_s^\vee(\alpha_s) = 2$, define the \emph{integral operators} $I_s=I(s,\alpha_s):\bbc[\fh]\to\bbc[\fh]$ and $I^{\vee}_s = I(s,\alpha_s^\vee):\bbc[\fh^*]\to\bbc[\fh^*]$ by
\begin{equation}\label{e:integraloperators}
\begin{array}{rcl}
(I_sp)(y)&=& \displaystyle\int\limits_{-d_{s^{-1}}^\vee \alpha_s(y)}^0p(y+t\alpha_s^\vee)dt \\
(I^{\vee}_sq)(x)&=& \displaystyle\int\limits_{-d_{s^{-1}}x(\alpha_s^\vee)}^0q(x+t\alpha_s)dt, 
\end{array}
\end{equation}
where $d_{s^{-1}}$ and $d_{s^{-1}}^\vee$ are as in Lemma \ref{l:reflections}. 
\end{definition}

%\begin{remark}
We shall typically leave implicit the choice of pair of eigenvectors for $s$. Any other choice $\{\mu\alpha_s,\mu^{-1}\alpha_s^{\vee}\}$ scales the integral operators, $I(s,\mu\alpha_s) = \mu I(s,\alpha_s)$ and $I(s,\mu^{-1}\alpha^\vee_s) = \mu^{-1} I(s,\alpha^\vee_s)$. Similarly to the heuristics of Gutkin \cite{Gu}, we can think of these operators as the line integral from $s^{-1}(y)$ to $y$, respectively $s^{-1}(x)$ to $x$:
\[(I_sp)(y) = \int\limits_{s^{-1}(y)}^{y}p\qquad\textup{and}\qquad (I^{\vee}_sq)(x) = \int\limits_{s^{-1}(x)}^{x}q.\]
%\end{remark}

\begin{lemma}\label{l:intops}
For all $x\in\fh^*$ and $y\in\fh$, the integral operators defined above satisfy
\begin{enumerate}
\item $I_s\circ \partial_{\alpha_s^\vee} = 1 - s$,
\item $[\partial_y,I_s] = d_s\alpha_s(y) s$,
\item $I^{\vee}_s\circ \partial_{\alpha_s} = 1 - s$,
\item $[\partial_x,I^{\vee}_s] = d^\vee_sx(\alpha_s^\vee) s$.
\end{enumerate}
\end{lemma}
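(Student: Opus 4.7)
The plan is to prove all four identities by direct computation. Identities (1) and (2) follow from the fundamental theorem of calculus and Leibniz's rule for differentiation under the integral sign, respectively, applied to the defining formula of $I_s$. Since $I_s^\vee$ is defined by formulae completely symmetric in the roles of $\fh,\fh^*$, identities (3) and (4) reduce to (1) and (2) mutatis mutandis, so the main content lies in proving the first two.

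For (1), observe that the integrand becomes a total derivative in $t$:
\[
(I_s(\partial_{\alpha_s^\vee}p))(y) = \int_{-d_{s^{-1}}^\vee \alpha_s(y)}^{0}\frac{d}{dt}p(y+t\alpha_s^\vee)\,dt = p(y) - p(y-d_{s^{-1}}^\vee\alpha_s(y)\alpha_s^\vee).
\]
Using that $\lambda_{s^{-1}}=\lambda_s^{-1}$ gives $d_{s^{-1}}^\vee = d_s$, so by Lemma \ref{l:reflections} applied to $s^{-1}$ (with the natural choice $\alpha_{s^{-1}}=\alpha_s$, $\alpha_{s^{-1}}^\vee=\alpha_s^\vee$), one has $y - d_{s^{-1}}^\vee\alpha_s(y)\alpha_s^\vee = s^{-1}(y)$. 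Combined with the convention $(sp)(y)=p(s^{-1}y)$, this yields $I_s\circ\partial_{\alpha_s^\vee} = 1-s$.

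For (2), apply Leibniz's rule to $(\partial_y I_s p)(z)$, accounting for both the $z$-dependence of the integrand $p(z+t\alpha_s^\vee)$ and of the lower limit $a(z)=-d_{s^{-1}}^\vee\alpha_s(z)$:
\[
(\partial_y I_s p)(z) = \int_{-d_{s^{-1}}^\vee \alpha_s(z)}^{0}(\partial_y p)(z+t\alpha_s^\vee)\,dt - (\partial_y a)(z)\cdot p(z + a(z)\alpha_s^\vee).
\]
The first term equals $(I_s\partial_y p)(z)$, and the second term contributes $d_{s^{-1}}^\vee\alpha_s(y)\cdot p(s^{-1}z) = d_s\alpha_s(y)(sp)(z)$, using once more that $d_{s^{-1}}^\vee = d_s$ and $z + a(z)\alpha_s^\vee = s^{-1}z$. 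Subtracting gives exactly $[\partial_y, I_s] = d_s\alpha_s(y)s$.

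For (3) and (4), the same two arguments go through verbatim after swapping $(\fh,\fh^*)$, $(\alpha_s^\vee,\alpha_s)$, and $(y,x)$, using the identities $d_{s^{-1}}=d_s^\vee$ and $x - d_{s^{-1}}x(\alpha_s^\vee)\alpha_s = s^{-1}(x)$. The only genuine point of care throughout is the bookkeeping of the four scalars $d_s,d_s^\vee,d_{s^{-1}},d_{s^{-1}}^\vee$ and how they interact with the factor $\lambda_{s^{-1}}=\lambda_s^{-1}$; once these identifications are set, the rest is an application of elementary calculus. I do not anticipate any genuine obstacle beyond correctly signing the boundary contribution in Leibniz's rule and ensuring the action of $G$ on polynomials is consistently normalised as $(sp)(y)=p(s^{-1}y)$.
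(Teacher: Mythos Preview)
Your proof is correct and follows essentially the same strategy as the paper—direct calculus applied to the integral definition of $I_s$—though your execution is a bit cleaner. The paper first chooses an adapted basis with $x_1=\alpha_s/2$, $y_1=\alpha_s^\vee$, and $s(x_j)=x_j$ for $j>1$, thereby reducing $I_s$ to a one–variable integral $\int_{\lambda_s\eta_1}^{\eta_1}p(t,\eta_2,\dots,\eta_r)\,dt$; it then reads off (1) from the fundamental theorem of calculus and obtains (2) by separately computing $\partial_{\alpha_s^\vee}\circ I_s=1-\lambda_s s$ and noting that $[\partial_{y_j},I_s]=0$ for $j>1$. Your coordinate-free use of the Leibniz rule bypasses the basis choice and the auxiliary identity $\partial_{\alpha_s^\vee}\circ I_s=1-\lambda_s s$, getting the commutator in one step. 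Both routes rely on the same identifications $d_{s^{-1}}^\vee=d_s$ and $y-d_{s^{-1}}^\vee\alpha_s(y)\alpha_s^\vee=s^{-1}(y)$, which you handle correctly.
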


\begin{proof}
Our arguments are similar to those of \cite{Gu}. For the first two identities, pick a pair of basis $(\{x_j\},\{y_j\})$ in duality with the property that $x_1 = \alpha_s/2$ and $s(x_j) = x_j$, if $j>1$. Note that $y_1 = \alpha_s^\vee$. Write $\eta_j = x_j(y)$, so that we can write $y = (\eta_1,\ldots,\eta_r)$, with respect to  $(x_1,\ldots,x_r)$, where $r = \dim\fh$. Then,
\[(I_sp)(y)= (I_sp)(\eta_1,\ldots,\eta_r) =\displaystyle\int\limits_{-(1-\lambda_s)\eta_1}^0p(\eta_1+t,\eta_2,\ldots,\eta_r)dt=\displaystyle\int\limits_{\lambda_s\eta_1}^{\eta_1}p(t,\eta_2,\ldots,\eta_r)dt\]
after a change of variables. Since $\partial_{y_j}$ is the $j$-th partial derivative in these coordinates, it is then clear that $[\partial_{y_j},I_s] = 0$, if $j>1$. This and  $s(y_j) = y_j$ for $j>1$ implies
\[
(I_s\circ \partial_{\alpha_s^\vee} (p))(y) =p(\eta_1,\eta_2\ldots,\eta_r) - p(\lambda_s\eta_1,\eta_2,\ldots,\eta_r) = p(y) - p(s^{-1}(y)),
\]
from which we conclude $I_s\circ \partial_{\alpha_s^\vee} = 1 - s$, establishing (1). Similarly, $\partial_{\alpha_s^\vee}\circ I_s = 1-\lambda_s s$. If we write $y = x_1(y)y_1 + \sum_{j>1} x_j(y)y_j$, it follows from the above that
\[[\partial_y,I^{\vee}_s] = \frac{\alpha_s(y)}{2}[\partial_{\alpha_s^\vee},I_s] = \alpha_s(y)\frac{1-\lambda_s}{2} s,\]
proving (2). 

For the last identities, we use the pair of basis $((y_j),(x_j))$ in duality with $y_1 = \alpha^\vee_s/2$, $s(y_j) = y_j$ for $j>1$, which implies $x_1 = \alpha_s$. The integral operator can be written as 
\[(I^{\vee}_sq)(x)= \displaystyle\int\limits_{\lambda^{-1}_s\xi_1}^{\xi_1}p(t,\xi_2,\ldots,\xi_r)dt,\]
where $x = (\xi_1,\ldots,\xi_r)$ with respect to the coordinates $(y_1,\ldots, y_r)$. The rest is similar.
\end{proof}

\begin{proposition}\label{p:calcduality}
Via the pairing $\lpi\cdot,\cdot\rpi$ of (\ref{e:polyduality}), the operators $\Delta(s,\alpha_s),I(s,\alpha_s):\bbc[\fh]\to\bbc[\fh]$ are dual to $I^\vee(s^{-1},\alpha_s^\vee), \Delta(s^{-1},\alpha_s^\vee):\bbc[\fh^*]\to\bbc[\fh^*]$, respectively.
\end{proposition}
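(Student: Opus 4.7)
The plan is to reduce both claimed dualities to three basic adjointness facts and then exploit the defining relations from Lemma~\ref{l:intops} together with the definition of the divided difference operators. The three underlying facts are: (i) $\mu_x$ on $\bbc[\fh]$ is dual to $\partial_x$ on $\bbc[\fh^*]$ and $\partial_y$ on $\bbc[\fh]$ is dual to $\mu_y$ on $\bbc[\fh^*]$ (already noted in the preceding lemma); (ii) under the pairing $\lpi\cdot,\cdot\rpi$, the action of $g \in G$ on $\bbc[\fh]$ is dual to the action of $g^{-1}$ on $\bbc[\fh^*]$, which follows from evaluating on homogeneous monomials and using that the $G$-action on $\bbc[\fh^*] = \Sym\fh$ is contragredient to the one on $\bbc[\fh] = \Sym\fh^*$; and (iii) the fact that, with the standard normalisation $\alpha_s^\vee(\alpha_s)=2$, the eigenvectors for $s^{-1}$ may be taken to be $\alpha_{s^{-1}} = \alpha_s$ and $\alpha_{s^{-1}}^\vee = \alpha_s^\vee$ (the eigenvalues being reciprocal).

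For the duality $I(s,\alpha_s)^* = \Delta(s^{-1},\alpha_s^\vee)$, the idea is to start from Lemma~\ref{l:intops}(1), which characterises $I_s$ by the identity $I_s \circ \partial_{\alpha_s^\vee} = 1 - s$ on $\bbc[\fh]$. Taking adjoints via (i) and (ii) turns this into
\[
\mu_{\alpha_s^\vee} \circ I_s^{\,*} \;=\; 1 - s^{-1}
\]
on $\bbc[\fh^*]$. On the other hand, the very definition of the divided difference operator $\Delta(s^{-1},\alpha_s^\vee)$ gives $\mu_{\alpha_s^\vee}\circ \Delta(s^{-1},\alpha_s^\vee) = 1 - s^{-1}$. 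Since multiplication by the nonzero linear form $\alpha_s^\vee$ is injective on the polynomial ring $\bbc[\fh^*]$, one can cancel $\mu_{\alpha_s^\vee}$ on the left to conclude $I_s^{\,*} = \Delta(s^{-1},\alpha_s^\vee)$.

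For the dual claim $\Delta(s,\alpha_s)^* = I^\vee(s^{-1},\alpha_s^\vee)$, the strategy is symmetric. From the definition of $\Delta_s$ we have $\mu_{\alpha_s} \circ \Delta_s = 1 - s$ on $\bbc[\fh]$; taking adjoints produces
\[
\Delta_s^{\,*} \circ \partial_{\alpha_s} \;=\; 1 - s^{-1}.
\]
Now applying Lemma~\ref{l:intops}(3) to the reflection $s^{-1}$, with eigenvectors $\alpha_{s^{-1}}=\alpha_s$ and $\alpha_{s^{-1}}^\vee = \alpha_s^\vee$ by (iii), yields $I^\vee(s^{-1},\alpha_s^\vee) \circ \partial_{\alpha_s} = 1 - s^{-1}$. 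Both $\Delta_s^{\,*}$ and $I^\vee(s^{-1},\alpha_s^\vee)$ therefore agree after precomposition with $\partial_{\alpha_s}$, and since $\partial_{\alpha_s}$ is surjective on $\bbc[\fh^*]$ (antidifferentiation in the $\alpha_s^\vee$-coordinate always exists within polynomials), the two operators coincide.

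The main potential obstacle here is purely notational: one must be careful about the fact that the operators $I^\vee_s$ and $I^\vee_{s^{-1}}$ are \emph{not} equal---the limits of integration involve $d_{s^{-1}}$ versus $d_s$---so the appearance of $s^{-1}$ (rather than $s$) in the statement of the proposition is essential, and arises naturally from the ``$g \leftrightarrow g^{-1}$'' swap in step (ii). Once this bookkeeping is handled, the argument is a short exercise in taking formal adjoints.
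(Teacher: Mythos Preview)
Your proof is correct and follows essentially the same approach as the paper: both arguments dualise the defining relation $\mu_{\alpha_s}\circ\Delta_s = 1 - s$ (respectively $I_s\circ\partial_{\alpha_s^\vee} = 1 - s$), match the result against Lemma~\ref{l:intops} applied to $s^{-1}$, and then cancel using surjectivity of $\partial_{\alpha_s}$ (respectively injectivity of $\mu_{\alpha_s^\vee}$). The paper only spells out the $\Delta_s \leftrightarrow I^\vee(s^{-1},\alpha_s^\vee)$ half and declares the other similar, whereas you treat both halves explicitly; your use of injectivity of multiplication for the $I_s$ half is a harmless variant of the paper's surjectivity argument.
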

\begin{proof} If $A\in\End(\bbc[\fh])$, let us denote by $A^\bullet$ the uniquely defined endomorphism of $\bbc[\fh^*]$ that satisfies $\lpi Ap,q\rpi = \lpi p, A^\bullet q\rpi$. We note that $s^\bullet = s^{-1}$ for any $s\in\Cs$. That said, we follow the arguments of \cite{Gu} in the case of real reflection groups. The operator $\Delta_s = \Delta(s,\alpha_s)$ is defined by the equation
\[1 - s = \mu_{\alpha_s}\circ\Delta(s,\alpha_s)\]
in $\End(\bbc[\fh])$. We note that this equation is independent of the choice of $\alpha_s$. Therefore, the operator dual to $\Delta(s,\alpha_s)$ must verify the equation
\[1 - s^{-1} = \Delta(s,\alpha_s)^\bullet \circ \partial_{\alpha_s}\]
in $\End(\bbc[\fh^*])$. From Lemma \ref{l:intops}, it follows that $I(s^{-1},\alpha_s^\vee)$ satisfy this equation, and, since $\partial_{\alpha_s}$ is surjective, we conclude that $I(s^{-1},\alpha_s^\vee)$ is dual to $\Delta(s,\alpha_s)$. The proof that $I(s,\alpha_s)$ is dual to $\Delta(s^{-1},\alpha_s^\vee)$ is similar.
\end{proof}

\begin{remark}
If we assume that the choices of $\{\alpha_s,\alpha_s^\vee\}$ are such that $\alpha_{s^j} = \alpha_s$, for all $j\in\bbz$ and all reflections, we have that $\Delta_s,I_{s}$ are dual to $I_{s^{-1}}^\vee,\Delta_{s^{-1}}^\vee$. We assume this to be the case in what follows.
\end{remark}

\begin{proposition}\label{p:conjintegralop}
For any $g\in G$, $s\in\Cs$ and eigenvectors $\{\alpha_s,\alpha^\vee_s\}$ such that $\alpha^\vee_s(\alpha_s) = 2$, we have
\[gI(s,\alpha_s)g^{-1} = I(gsg^{-1},g(\alpha_s))\quad\textup{and}\quad gI(s,\alpha^\vee_s)g^{-1} = I(gsg^{-1},g(\alpha^\vee_s)).\]
\end{proposition}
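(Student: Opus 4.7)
The plan is to verify both identities by a direct computation from the definition~\eqref{e:integraloperators}, using the $G$-actions on $\bbc[\fh]$ and $\bbc[\fh^*]$ given by $(g\cdot p)(y) = p(g^{-1}y)$ and $(g\cdot q)(x) = q(g^{-1}x)$.

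First I would record the invariance of the numerical data under conjugation. If $t = gsg^{-1}$ then $s$ and $t$ have the same eigenvalues on $\fh^*$ (resp.\ $\fh$), and $g(\alpha_s)$, $g(\alpha_s^\vee)$ are nontrivial eigenvectors of $t$ on $\fh^*$, $\fh$ respectively, with the same eigenvalues $\lambda_s$, $\lambda_s^{-1}$ as $\alpha_s$, $\alpha_s^\vee$ are for $s$. Moreover $g(\alpha_s^\vee)(g(\alpha_s)) = \alpha_s^\vee(\alpha_s) = 2$, so $\{g(\alpha_s),g(\alpha_s^\vee)\}$ is an admissible choice of eigenvectors for $t$, and consequently $d_{t^{-1}}^\vee = d_{s^{-1}}^\vee$ and $d_{t^{-1}} = d_{s^{-1}}$.

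Next I would compute, for $p \in \bbc[\fh]$ and $y \in \fh$,
\begin{align*}
\bigl(gI(s,\alpha_s)g^{-1} p\bigr)(y)
&= \bigl(I(s,\alpha_s)(g^{-1}\cdot p)\bigr)(g^{-1}y) \\
&= \int_{-d_{s^{-1}}^\vee\,\alpha_s(g^{-1}y)}^{0} (g^{-1}\cdot p)(g^{-1}y + t\alpha_s^\vee)\,dt \\
&= \int_{-d_{s^{-1}}^\vee\, g(\alpha_s)(y)}^{0} p\bigl(y + t\,g(\alpha_s^\vee)\bigr)\,dt,
\end{align*}
using $\alpha_s(g^{-1}y) = g(\alpha_s)(y)$ in the lower limit and the linearity of $g$ inside $p$. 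By the identification $d_{t^{-1}}^\vee = d_{s^{-1}}^\vee$ from the previous step, the right-hand side is precisely $\bigl(I(gsg^{-1},g(\alpha_s))p\bigr)(y)$, proving the first identity. The second identity follows by the entirely analogous computation on $\bbc[\fh^*]$, replacing $d_{s^{-1}}^\vee$ by $d_{s^{-1}}$ and $\alpha_s,\alpha_s^\vee$ by $\alpha_s^\vee,\alpha_s$.

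There is no real obstacle here, only the bookkeeping: one must keep track of which eigenvalues govern the bounds of integration and verify that conjugation by $g$ preserves those scalars. This is handled uniformly in the first step, after which both identities reduce to an elementary change of variable in the defining integral.
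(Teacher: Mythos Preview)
Your proposal is correct and follows essentially the same approach as the paper: a direct computation from the defining integral~\eqref{e:integraloperators}, using $(g\cdot p)(y)=p(g^{-1}y)$ and $\alpha_s(g^{-1}y)=g(\alpha_s)(y)$. The paper's version is more terse, leaving the invariance of $d_{s^{-1}}^\vee$ under conjugation implicit, but your added bookkeeping about the eigenvalues of $gsg^{-1}$ is a welcome clarification rather than a genuine difference in method.
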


\begin{proof}
For any $p\in\bbc[\fh]$ and $y\in \fh^*$, we have
\begin{align*}
g\Big(I(s,\alpha_s)(g^{-1}(p))\Big)(y) &= \displaystyle\int\limits_{-d_{s^{-1}}^\vee \alpha_s(g^{-1}(y))}^0(g^{-1}p)(g^{-1}(y)+t\alpha_s^\vee)dt\\
&= \Big(I(gsg^{-1},g(\alpha_s))(p)\Big)(y),
\end{align*}
as required. The proof for $I(gsg^{-1},g(\alpha_s^\vee))$ is similar.
\end{proof}

\begin{proposition}\label{p:s-expansion}
A reflection $s\in\Cs$ has an expansion in the formal completion of the Weyl algebra acting on $\bbc[\fh]$ as
\[s = \sum_{n\geq 0}\frac{(-1)^n}{n!}d_s^n\mu_{\alpha_s}^{n}\circ\partial_{\alpha^\vee_s}^{n}.\]
\end{proposition}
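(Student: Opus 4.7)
The plan is to prove the identity by verifying its action on an arbitrary polynomial $p\in\bbc[\fh]$; the infinite sum makes sense in the formal completion of the Weyl algebra precisely because it truncates to a finite sum as soon as the degree of $p$ is bounded, since $\partial_{\alpha_s^\vee}^n p=0$ for $n>\deg p$. The core of the argument is then a one-variable Taylor expansion performed in a basis adapted to $s$.

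First I would choose a basis $\{x_1,\ldots,x_r\}$ of $\fh^*$ with $x_1=\alpha_s$ and $x_j\in(\alpha_s^\vee)^\perp$ for $j\geq 2$. By Lemma \ref{l:reflections}, $s$ fixes each such $x_j$ and satisfies $s(\alpha_s)=\lambda_s\alpha_s$, so this is an eigenbasis for $s$. Taking the dual basis $\{y_1,\ldots,y_r\}$ in $\fh$, the normalisation $\alpha_s(\alpha_s^\vee)=2$ forces $y_1=\tfrac12\alpha_s^\vee$, whence $\partial_{\alpha_s^\vee}=2\partial_{y_1}$ on $\bbc[\fh]$, where $\partial_{y_1}$ is the ordinary partial derivative with respect to $x_1$. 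Since $s$ acts on $\bbc[\fh]=\Sym(\fh^*)$ as the algebra automorphism extending $s|_{\fh^*}$, we have
\[s(p)(x_1,x_2,\ldots,x_r)=p(\lambda_s x_1,x_2,\ldots,x_r).\]

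Next, applying the (finite) Taylor expansion in the first variable with the shift $\lambda_s x_1=x_1+(\lambda_s-1)x_1$, and using $\lambda_s-1=-2d_s$, gives
\[p(\lambda_s x_1,x_2,\ldots,x_r)=\sum_{n\geq 0}\frac{(-2d_s x_1)^n}{n!}\,\partial_{y_1}^n p=\sum_{n\geq 0}\frac{(-1)^n d_s^n}{n!}\,\alpha_s^n\,\partial_{\alpha_s^\vee}^n p,\]
after substituting $x_1=\alpha_s$ and $2^n\partial_{y_1}^n=\partial_{\alpha_s^\vee}^n$. Recognising $\alpha_s^n\partial_{\alpha_s^\vee}^n=\mu_{\alpha_s}^n\circ\partial_{\alpha_s^\vee}^n$ yields the claimed formula.

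The argument is essentially a direct computation and presents no real obstacles; the only points requiring care are the normalisation of the dual vector (encoded in $y_1=\tfrac12\alpha_s^\vee$) and the convention that $s$ acts on $\bbc[\fh]$ via the algebra automorphism extending its action on $\fh^*$. Both are fixed by Lemma \ref{l:reflections} and the convention $\alpha_s(\alpha_s^\vee)=2$ adopted at the outset.
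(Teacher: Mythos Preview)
Your proof is correct. The approach differs from the paper's: you choose an eigenbasis for $s$ on $\fh^*$ and reduce the identity to the one-variable Taylor formula $p(t+h)=\sum_n \tfrac{h^n}{n!}\partial_t^n p$, whereas the paper proves (\ref{e:TBP1}) by induction on the degree of a monomial, using $s(px)=s(p)s(x)$ together with the higher Leibniz identity $\partial^n(px)=(\partial^n p)x+n(\partial^{n-1}p)\,x(\alpha_s^\vee)$. Your route is shorter and more conceptual, and the adapted basis you use is in fact exactly the one the paper employs in the proof of Lemma~\ref{l:intops}; the paper's induction, on the other hand, is coordinate-free and makes the multiplicativity $s(px)=s(p)s(x)$ do the work explicitly. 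Either way the content is the same finite Taylor expansion, so nothing is lost or gained mathematically.
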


\begin{proof}
Let $p\in\bbc[\fh]$. We shall show that
\begin{equation}\label{e:TBP1}
s(p) = \sum_{n\geq 0}\frac{(-d_s)^n}{n!}\alpha_s^{n}(\partial_{\alpha^\vee_s}^{n}p),
\end{equation}
which is a finite sum. The proof will be by induction on $d = \deg(p)$. By linearity, it is sufficient to assume that $p$ is a monomial. From Lemma \ref{l:reflections}, the result is true for $d=1$. Now suppose (\ref{e:TBP1}) holds for a monomial $p$ of degree $d$. Write $\partial = \partial_{\alpha_s^\vee}$ and note that
\begin{equation}\label{e:HighDirLeibniz}
\partial^n(px) = (\partial^np)x + n (\partial^{n-1} p)x(\alpha_s^\vee).
\end{equation}
Then, from $s(px) = s(p)s(x)$ and using (\ref{e:TBP1}), we compute
\begin{align*}
s(px) &= s(p)x - s(p)d_sx(\alpha_s^\vee)\alpha_s\\
&= \sum_{n\geq 0}\frac{(-d_s)^n}{n!}\alpha_s^{n}(\partial^{n}p)x + \sum_{n\geq 0}\frac{(-d_s)^{n+1}}{(n+1)!}(n+1)\alpha_s^{n+1}(\partial^{n}p)x(\alpha_s^\vee)\\
&=px + \sum_{m\geq 1}\frac{(-d_s)^m}{m!}\alpha_s^{m}\big((\partial^mp)x + m (\partial^{m-1} p)x(\alpha_s^\vee)\big),
\end{align*}
and the result follows from (\ref{e:HighDirLeibniz}).
\end{proof}

\begin{corollary}\label{c:operators-expansion}
The following expansions hold in the formal completion of the Weyl algebra acting on $\bbc[\fh]$:
\begin{align*}
(1-s) &= \sum_{n\geq 1}\frac{(-1)^{n+1}}{n!}d_s^n\mu^{n}_{\alpha_s}\circ\partial_{\alpha_s^\vee}^{n},\\
\Delta_s &= \sum_{n\geq 1}\frac{(-1)^{n+1}}{n!}d_s^n\mu^{n-1}_{\alpha_s}\circ\partial_{\alpha_s^\vee}^{n},\\
I_s &= \sum_{n\geq 1}\frac{(-1)^{n+1}}{n!}d_s^n\mu^{n}_{\alpha_s}\circ\partial_{\alpha_s^\vee}^{n-1}.
\end{align*}
\end{corollary}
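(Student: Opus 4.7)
All three expansions are downstream of Proposition \ref{p:s-expansion}, so the plan is to extract each identity from that single formula with a minimum of extra work.

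For the first identity, subtract $1$ from both sides of the expansion of $s$ and flip the sign: the $n=0$ term of $\sum_{n\ge 0}\tfrac{(-1)^n}{n!}d_s^n\mu_{\alpha_s}^n\circ\partial_{\alpha_s^\vee}^n$ is the identity, so $1-s$ picks up exactly the $n\ge 1$ terms with sign $(-1)^{n+1}$. This is immediate.

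For the second identity, I would invoke the defining relation $1-s=\mu_{\alpha_s}\circ\Delta_s$ in $\End(\bbc[\fh])$. Each summand $\mu_{\alpha_s}^n\circ\partial_{\alpha_s^\vee}^n$ with $n\ge 1$ in the expansion of $1-s$ factors as $\mu_{\alpha_s}\circ(\mu_{\alpha_s}^{n-1}\circ\partial_{\alpha_s^\vee}^n)$. Since $\mu_{\alpha_s}$ is injective on $\bbc[\fh]$ (it is multiplication by a nonzero polynomial), I can cancel this leading factor to read off the claimed formula for $\Delta_s$.

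For the third identity, the cleanest route is a direct Taylor expansion of the integral defining $I_s$. For a polynomial $p$, write
\[
p(y+t\alpha_s^\vee)=\sum_{k\ge 0}\frac{t^k}{k!}(\partial_{\alpha_s^\vee}^k p)(y),
\]
which is a finite sum. Integrating term-by-term from $t=-d_{s^{-1}}^\vee\alpha_s(y)$ to $t=0$, evaluating $\int t^k\,dt$, and collecting the resulting powers of $\alpha_s(y)$ as $\mu_{\alpha_s}^{k+1}$, gives a series in $\mu_{\alpha_s}^{k+1}\circ\partial_{\alpha_s^\vee}^k$. Reindexing with $n=k+1$ and using the elementary identity $d_{s^{-1}}^\vee=(1-\lambda_{s^{-1}}^{-1})/2=(1-\lambda_s)/2=d_s$ (which comes from $\lambda_{s^{-1}}=\lambda_s^{-1}$) converts the $(d_{s^{-1}}^\vee)^n$ that naturally appears into the desired $d_s^n$.

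The only real bookkeeping hazard is this last step: the coefficient that falls out of the integration is $(d_{s^{-1}}^\vee)^n$, and one must be careful not to conflate $d_s$ and $d_s^\vee$ before using $s\leftrightarrow s^{-1}$ to see they coincide. As a consistency check, one can verify that the proposed expansion for $I_s$, when composed on the right with $\partial_{\alpha_s^\vee}$, reproduces the expansion of $1-s$, matching Lemma \ref{l:intops}(1). I expect that verification, rather than the Taylor calculation itself, to be the slightly delicate step.
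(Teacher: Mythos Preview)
Your argument is correct. The first two expansions match the paper's approach exactly: the paper simply says the corollary is ``immediate from Proposition~\ref{p:s-expansion} and from $\mu_{\alpha_s}\circ\Delta_s = 1-s = I_s\circ\partial_{\alpha^\vee_s}$,'' and your treatment of $1-s$ and $\Delta_s$ unpacks precisely that.

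For $I_s$, your route diverges from the paper's. You do a direct Taylor expansion of the integrand and integrate term by term; this works, and the identity $d_{s^{-1}}^\vee = d_s$ (via $\lambda_{s^{-1}} = \lambda_s^{-1}$) that you flag is indeed the key bookkeeping step. The paper instead uses the relation $I_s\circ\partial_{\alpha_s^\vee} = 1-s$ from Lemma~\ref{l:intops}: since the claimed expansion for $I_s$ composed on the right with $\partial_{\alpha_s^\vee}$ visibly gives the already-established expansion of $1-s$, and since $\partial_{\alpha_s^\vee}$ is surjective on $\bbc[\fh]$, the two operators must agree. In other words, what you call a ``consistency check'' at the end of your argument is exactly the paper's entire proof of the $I_s$ case. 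Your Taylor approach is more self-contained (it does not invoke Lemma~\ref{l:intops}) but requires the $d_{s^{-1}}^\vee = d_s$ identification; the paper's approach is shorter and keeps everything phrased in terms of $d_s$ from the start, at the cost of relying on surjectivity of $\partial_{\alpha_s^\vee}$, which it leaves implicit.
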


\begin{proof} It is immediate from Proposition \ref{p:s-expansion} and from $\mu_{\alpha_s}\circ\Delta_s = 1-s = I_s\circ\partial_{\alpha^\vee_s}$.
\end{proof}
%\begin{remark}
As indicated in the proof of Proposition \ref{p:s-expansion}, the expansions above are well defined in $\End(\bbc[\fh])$, as they end up being a finite sum. By exchanging $\alpha_s$ and $\alpha^\vee_s$ (and using $d^\vee_s$ instead of $d_s$), similar formulae for the actions on $\bbc[\fh^*]$ also hold. 
%\end{remark}

We collect some other simple facts that follow from the definition of $I^\vee_s$ (similarly for $I_s$): 
\begin{lemma}\label{l:observation1}
Given a reflection $s$ and $q\in \bbc[\fh^*]$, then $I^\vee_s q = (1-s)\tilde{q}$, where $\tilde{q}$ is any polynomial such that $\partial_{\alpha_s}\tilde{q} = q$. 
\end{lemma}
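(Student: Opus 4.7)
The plan is to reduce the claim to identity (3) of Lemma \ref{l:intops}, namely $I^\vee_s \circ \partial_{\alpha_s} = 1 - s$, once two preliminary points are verified: that an antiderivative $\tilde q$ with $\partial_{\alpha_s}\tilde q = q$ exists for every $q\in\bbc[\fh^*]$, and that the value $(1-s)\tilde q$ does not depend on the particular choice of such $\tilde q$.

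For existence, I would adopt the coordinates used in the proof of Lemma \ref{l:intops}: pick bases $\{y_j\}$ of $\fh$ and $\{x_j\}$ of $\fh^*$ in duality with $y_1 = \alpha_s^\vee/2$ and $x_1 = \alpha_s$. In the resulting coordinates $\xi_j$ on $\fh^*$, the operator $\partial_{\alpha_s}$ is (up to the factor $2$) the partial derivative $\partial/\partial \xi_1$, and monomial-by-monomial integration in $\xi_1$ produces the desired $\tilde q$.

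For well-definedness, suppose $\tilde q_1$ and $\tilde q_2$ are two antiderivatives, so that $\tilde q_1 - \tilde q_2 \in \ker\partial_{\alpha_s}$. By the $\bbc[\fh^*]$-side analogue of Corollary \ref{c:operators-expansion} — obtained, as noted in the excerpt, by swapping $\alpha_s\leftrightarrow\alpha_s^\vee$ and $d_s\leftrightarrow d_s^\vee$ — one has
\[
1-s \;=\; \sum_{n\ge 1}\frac{(-1)^{n+1}}{n!}(d_s^\vee)^n\,\mu_{\alpha_s^\vee}^n\circ\partial_{\alpha_s}^n
\]
acting on $\bbc[\fh^*]$. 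Every summand carries a factor of $\partial_{\alpha_s}$, so $(1-s)(\tilde q_1 - \tilde q_2) = 0$.

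With these preliminaries in hand the conclusion is one line: for any admissible $\tilde q$,
\[
I^\vee_s q \;=\; I^\vee_s\big(\partial_{\alpha_s}\tilde q\big) \;=\; (1-s)\tilde q,
\]
by Lemma \ref{l:intops}(3). There is no genuine obstacle here; the only care needed is in invoking the $\bbc[\fh^*]$-side analogues of the Weyl-algebra expansions, which the excerpt already endorses. The whole argument should fit comfortably as a two- or three-sentence justification in the final text.
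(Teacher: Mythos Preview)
Your proposal is correct and follows essentially the same route as the paper: both reduce the claim to Lemma~\ref{l:intops}(3) after noting that $\partial_{\alpha_s}$ is surjective. Your separate well-definedness check via Corollary~\ref{c:operators-expansion} is valid but unnecessary: the one-line computation $I^\vee_s q = I^\vee_s(\partial_{\alpha_s}\tilde q) = (1-s)\tilde q$ already holds for \emph{any} antiderivative $\tilde q$, so independence of the choice follows automatically from the left-hand side being well defined.
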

\begin{proof}
It is clear from  Lemma \ref{l:intops}, as $I^\vee_s\circ \partial_{\alpha_s} = 1-s$. Such $\tilde{q}$ always exists as $\partial_{\alpha_s}$ is surjective.
\end{proof}

\begin{lemma}\label{l:observation2}
If $\partial_{\alpha_s}q = 0$, then $s(q) = q$. In particular, $I_s^\vee q = d_s^\vee\alpha^\vee_s q$.
\end{lemma}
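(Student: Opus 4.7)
\textbf{Proof plan for Lemma \ref{l:observation2}.}

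For part (a), my plan is to apply the formal series expansion of the reflection $s$, dual to the one established in Proposition \ref{p:s-expansion}/Corollary \ref{c:operators-expansion}. Namely, by swapping the roles of $\fh$ and $\fh^*$ (and using $d_s^\vee$ in place of $d_s$), the action of $s$ on $\bbc[\fh^*]=\Sym\fh$ expands as
\begin{equation*}
s \;=\; \sum_{n\geq 0}\frac{(-d_s^\vee)^n}{n!}\,\mu_{\alpha_s^\vee}^n\circ\partial_{\alpha_s}^n.
\end{equation*}
If $\partial_{\alpha_s}q=0$, then every $n\geq 1$ term annihilates $q$, and only the $n=0$ term survives, yielding $s(q)=q$. (Conceptually, this is just the statement that $\partial_{\alpha_s}q=0$ forces $q\in\Sym(\ker\alpha_s)$, a subspace on which $s$ acts trivially by Lemma \ref{l:reflections}.)

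For the ``in particular'' statement, I plan to invoke Lemma \ref{l:observation1}: it suffices to exhibit a $\tilde q\in\bbc[\fh^*]$ with $\partial_{\alpha_s}\tilde q=q$ and compute $(1-s)\tilde q$. The natural choice is $\tilde q=\tfrac{1}{2}\alpha_s^\vee q$: by the Leibniz rule and the normalisation $\alpha_s(\alpha_s^\vee)=2$,
\begin{equation*}
\partial_{\alpha_s}(\tfrac{1}{2}\alpha_s^\vee q)
 \;=\; \tfrac{1}{2}\alpha_s(\alpha_s^\vee)q+\tfrac{1}{2}\alpha_s^\vee\,\partial_{\alpha_s}q
 \;=\; q,
\end{equation*}
using $\partial_{\alpha_s}q=0$ in the second term. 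Then, using part (a) and the eigenvalue relation $s(\alpha_s^\vee)=\lambda_s^{-1}\alpha_s^\vee$,
\begin{equation*}
I_s^\vee q \;=\; (1-s)\tilde q
\;=\; \tfrac{1}{2}\bigl(\alpha_s^\vee q - s(\alpha_s^\vee)s(q)\bigr)
\;=\; \tfrac{1-\lambda_s^{-1}}{2}\,\alpha_s^\vee q
\;=\; d_s^\vee\,\alpha_s^\vee q.
\end{equation*}

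There is essentially no obstacle here: both assertions reduce to standard manipulations with the series expansion of Corollary \ref{c:operators-expansion} and the definitions. The only point requiring a touch of care is bookkeeping of the swap $\fh\leftrightarrow\fh^*$ (and $d_s\leftrightarrow d_s^\vee$, $\alpha_s\leftrightarrow\alpha_s^\vee$) when transporting the formulae from $\bbc[\fh]$ to $\bbc[\fh^*]$, but the expansion for $I_s^\vee$ would in fact give the ``in particular'' statement even more directly: only its $n=1$ term contributes when $\partial_{\alpha_s}q=0$.
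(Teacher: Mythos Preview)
Your proof is correct and, for the ``in particular'' clause, identical to the paper's: both pick $\tilde q=\tfrac{1}{2}\alpha_s^\vee q$ and invoke Lemma~\ref{l:observation1}. For the first claim the paper argues directly in coordinates (choosing a basis with $y_1=\alpha_s^\vee/2$ and $s(y_j)=y_j$ for $j>1$, so that $\partial_{\alpha_s}q=0$ forces $q\in\bbc[y_2,\dots,y_r]$), whereas you use the series expansion of $s$ from Corollary~\ref{c:operators-expansion}; your parenthetical remark is exactly the paper's argument, so the two are really the same observation packaged differently.
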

\begin{proof}
Choose a basis of $\fh$ so that $y_1 = \alpha_s^\vee/2$ and $s(y_j) = y_j$ for $j>1$. Then, $\partial_{\alpha_s}q = 0$ means that $q$ is a polynomial in $y_2,\ldots,y_r$, from which $s(q) = q$. For the in particular, pick $\tilde q = \frac{\alpha_s^\vee}{2}q$ and the result follows from Lemma \ref{l:observation1}.
\end{proof}
\begin{lemma}[Integration by parts] For any $q_1,q_2\in \bbc[\fh^*]$, it holds that \[I_s^\vee(q_1(\partial_{\alpha_s}q_2)) = (1-s)(q_1q_2) - I_s^\vee((\partial_{\alpha_s}q_1)q_2).\]
\end{lemma}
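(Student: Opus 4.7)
The plan is to reduce the identity directly to the relation $I_s^\vee\circ\partial_{\alpha_s}=1-s$, which is part (3) of Lemma \ref{l:intops}. First I would apply this relation to the product $q_1q_2$, obtaining
\[
I_s^\vee\bigl(\partial_{\alpha_s}(q_1q_2)\bigr)=(1-s)(q_1q_2).
\]
Next I would expand the left-hand side using the Leibniz rule for the derivation $\partial_{\alpha_s}$ on $\bbc[\fh^*]$, namely
\[
\partial_{\alpha_s}(q_1q_2)=(\partial_{\alpha_s}q_1)q_2+q_1(\partial_{\alpha_s}q_2),
\]
and then distribute $I_s^\vee$, which is $\bbc$-linear. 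Rearranging the resulting equation isolates the term $I_s^\vee\bigl(q_1(\partial_{\alpha_s}q_2)\bigr)$, giving exactly the claimed integration-by-parts formula.

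There is no substantive obstacle here: the identity is a straightforward consequence of the Leibniz rule combined with the already-established fundamental identity $I_s^\vee\partial_{\alpha_s}=1-s$. The only small subtlety worth noting is that, unlike in ordinary calculus, the boundary contribution is replaced by the action of $1-s$, which is precisely the content of Lemma \ref{l:intops}(3); once this is invoked, the derivation is a one-line manipulation.
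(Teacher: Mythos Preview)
Your proposal is correct and matches the paper's own proof essentially verbatim: the paper simply writes $I_s^\vee(\partial_{\alpha_s}(q_1q_2)) = (1-s)(q_1q_2)$ by Lemma \ref{l:intops} and declares the result, leaving the Leibniz expansion and rearrangement implicit. You have just spelled out those implicit steps.
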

\begin{proof}
We have $I_s^\vee(\partial_{\alpha_s}(q_1q_2)) = (1-s)(q_1q_2)$, from Lemma \ref{l:intops}, and thus the result.
\end{proof}

\subsection{Dualised induced modules}
If $\tau^*\in\Rep(G)$, we shall consider the standard $\chad$-module
\[M_{t,\check{c}}(\tau^*) = \chad\otimes_{\bbc[\fh]\rtimes\bbc G}\tau^* = \Ind_{\bbc G}^{\chad} \tau^*\]
and the projective $\chad$-module
\[X_{t,\check{c}}(\tau^*) = \chad\otimes_{\bbc G}\tau^* = \Ind_{\bbc G}^{\chad} \tau^*.\]

As linear spaces, we have $M_{t,\check{c}}(\tau^*) = \bbc[\fh^*] \otimes \tau^*$ and $X_{t,\check{c}}(\tau^*) = \bbc[\fh^*]\otimes \bbc[\fh] \otimes \tau^*$. Denote by $\lpi\cdot,\cdot\rpi$ the nondegenerate bilinear pairing between $\bbc[\fh^*]\otimes \bbc[\fh] \otimes \tau^*$ and $\bbc[\fh]\otimes \bbc[\fh^*] \otimes \tau$ given by the products of the pairings in (\ref{e:polyduality}) and the natural pairing between $\tau^*$ and $\tau$:
\begin{equation}\label{e:Xduality}
\lpi q\otimes p \otimes z^*, p'\otimes q' \otimes z\rpi = \big((q(\partial)p')(0)\big)\big((p(\partial)q')(0)\big)z^*(z),
\end{equation}
for all $p,p'\in\bbc[\fh]$, $q,q'\in \bbc[\fh^*]$ and $(z^*,z)\in \tau^*\times \tau$. We shall also denote by $\lpi\cdot,\cdot\rpi$ the similarly defined nondegenerate pairing between $\bbc[\fh^*]\otimes\tau^*$ and $\bbc[\fh]\otimes\tau$. Dualising the natural left multiplication action of $\chad$ using $(\lpi\cdot,\cdot\rpi,\gamma)$, we obtain modules for $\cha$.

\begin{theorem}\label{t:M-module}
The linear space $\fM_{t,c}(\tau)=\bbc[\fh]\otimes \tau$ endowed with operators $Q(x), Q(y)$ and $Q(s)$, for $x\in \fh^*$, $y\in\fh$ and $s\in \Cs$ defined by
\begin{align*}
Q(x)(p\otimes z)&=t\mu_x(p)\otimes z - \sum_s\frac{c(s)}{d_s}x(\alpha^\vee_s)I_s(p)\otimes s(z)\\
Q(y)(p\otimes z)&= \partial_y(p)\otimes z\\
Q(s)(p\otimes z)&= s(p)\otimes s(z)
\end{align*}
extends to a module for the Cherednik algebra $\cha$. As before, $d_s = \frac{1-\lambda_s}{2}$.
\end{theorem}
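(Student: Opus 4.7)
The plan is to realise $\fM_{t,c}(\tau)$ as the $\cha$-module obtained by dualising the standard $\chad$-module $M_{t,\check c}(\tau^*)=\bbc[\fh^*]\otimes\tau^*$. Using the nondegenerate pairing $\lpi\cdot,\cdot\rpi$ between $\bbc[\fh]\otimes\tau$ and $\bbc[\fh^*]\otimes\tau^*$ (the two-factor version noted at the end of this subsection) together with the anti-isomorphism $\gamma:\cha^{\textup{op}}\to\chad$ from Lemma \ref{l:duality}, we set
\[\lpi h\cdot(p\otimes z),\,q\otimes z^*\rpi=\lpi p\otimes z,\,\gamma(h)\cdot(q\otimes z^*)\rpi\]
for every $h\in\cha$. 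Because $M_{t,\check c}(\tau^*)$ is a genuine $\chad$-module, this formula supplies an $\cha$-module structure on its graded dual, and that graded dual is identified via the pairing with $\bbc[\fh]\otimes\tau$ (all degree pieces are finite-dimensional, and the grading on $\chad$ is the opposite of that on $\cha$ under $\gamma$). Thus the task reduces to identifying the adjoint operators $Q(h)$ explicitly and checking that they agree with the formulas in the statement.

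For $g\in G$, $\gamma(g)=g^{-1}$ and the $G$-invariance of $\lpi\cdot,\cdot\rpi$ immediately yields $Q(g)(p\otimes z)=g(p)\otimes g(z)$. For $y\in\fh$, $\gamma(y)=y$, and within $\chad$ this element lies in the primary polynomial subalgebra and acts on $M_{t,\check c}(\tau^*)$ by multiplication; since $\mu_y$ on $\bbc[\fh^*]$ is adjoint to $\partial_y$ on $\bbc[\fh]$, one recovers $Q(y)(p\otimes z)=\partial_y(p)\otimes z$.

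The substantive case is $Q(x)$ for $x\in\fh^*$. Applying the $\chad$-analogue of Proposition \ref{p:commutation} (swap the roles of $\fh$ and $\fh^*$ and replace $c$ by $\check c$), the action of $x\in\fh^*\subset\chad$ on $q\otimes z^*$ becomes
\[x\cdot(q\otimes z^*)=t\,\partial_x(q)\otimes z^* - \sum_{s\in\Cs}\frac{\check c(s)}{d_s^\vee}\,x(\alpha_s^\vee)\,\Delta^\vee_s(q)\otimes s(z^*).\]
Dualising term by term, using that $\partial_x$ on $\bbc[\fh^*]$ is adjoint to $\mu_x$ on $\bbc[\fh]$, that $\Delta^\vee_s$ is adjoint to $I_{s^{-1}}$ by Proposition \ref{p:calcduality} (together with the convention $\alpha_{s^{-1}}=\alpha_s$), and that the $G$-action on $\tau^*$ is adjoint to the $g\mapsto g^{-1}$ action on $\tau$, yields
\[Q(x)(p\otimes z)=t\,\mu_x(p)\otimes z - \sum_{s\in\Cs}\frac{\check c(s)}{d_s^\vee}\,x(\alpha_s^\vee)\,I_{s^{-1}}(p)\otimes s^{-1}(z).\]
Reindexing $s\mapsto s^{-1}$ and invoking the identities $\check c(s^{-1})=c(s)$, $d_{s^{-1}}^\vee=d_s$ and $\alpha_{s^{-1}}^\vee=\alpha_s^\vee$ produces exactly the announced formula for $Q(x)$.

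The main obstacle is the bookkeeping of the combined effect of: (i) $\gamma$ inverting group elements; (ii) the swap $c\leftrightarrow\check c$ built into the isomorphism $\chad\cong\cha^{\textup{op}}$; and (iii) the asymmetry in Proposition \ref{p:calcduality}, which pairs $\Delta^\vee_s$ with $I_{s^{-1}}$ rather than with $I_s$. A direct verification of the defining relations (\ref{e:relations}) for the announced operators is also available, but the hardest among them, $[Q(x),Q(x')]=0$, would require a nontrivial symmetry identity for compositions $I_sI_{s'}$; the duality argument transfers this relation for free from the trivial commutativity of multiplication by $x$ and $x'$ on $\bbc[\fh^*]\otimes\tau^*$.
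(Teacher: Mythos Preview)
Your argument is correct and is precisely the duality route the paper signals in its first sentence: the $Q$-operators are the transposes, under the pairing (\ref{e:polyduality}) and the anti-isomorphism $\gamma$, of the left $\chad$-action on $M_{t,\check c}(\tau^*)$. Your explicit computation of the adjoint of $x\in\fh^*$, including the reindexing $s\mapsto s^{-1}$ and the identities $\check c(s^{-1})=c(s)$, $d_{s^{-1}}^\vee=d_s$, is exactly what is needed to recover the stated formula for $Q(x)$. The paper, after noting this duality, chooses instead to re-verify the relations by hand (deferring $[Q(x),Q(x')]=0$ to the next Proposition, which proceeds via an explicit operator $\Psi_s$ and the expansion of $I_s$ from Corollary~\ref{c:operators-expansion}); your route bypasses that computation entirely, which is a genuine economy.

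One inaccuracy to fix in your final paragraph: on $M_{t,\check c}(\tau^*)=\bbc[\fh^*]\otimes\tau^*$ the elements $x,x'\in\fh^*$ do \emph{not} act by multiplication---it is $y\in\fh$ that multiplies, while $x$ acts by the Dunkl-type operator you wrote down. So the commutativity you are transporting is not ``trivial commutativity of multiplication'' but rather the defining relation $[x,x']=0$ in $\chad$ (equivalently, commutativity of Dunkl operators). This does not affect the validity of your argument---the relation holds in $\chad$ by definition, hence on any $\chad$-module, hence for the transposed operators---but the phrasing should be corrected.
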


\begin{proof}
The $Q$-operators above are dual to the operators on $M_{t,\check c}(\tau^*)$ induced by the left multiplication on $\chad$. But, one can actually check the defining relations by hand. Note that, for all $y\in\fh,x\in\fh^*,p\in\bbc[\fh]$ and $z\in \tau$, we have
\begin{align*}
[Q(y),Q(x)](p\otimes z) &= t[\partial_y,\mu_x](p)\otimes z - \sum_s\frac{c(s)}{d_s}x(\alpha^\vee_s)[\partial_y,I_s](p)\otimes s(z)\\
&= tx(y)p\otimes z - \sum_sc(s)x(\alpha_s^\vee)\alpha_s(y)s(p)\otimes s(z)\\
&= Q([y,x])(p\otimes z),
\end{align*}
where we used Lemma \ref{l:intops}. For the other relations in (\ref{e:relations}), using that $s\circ \partial_y\circ s^{-1} = \partial_{s(y)}$ and $s\circ \mu_x\circ s^{-1} = \mu_{s(x)}$, one can easily show that $Q(s)Q(y)Q(s^{-1}) = Q(s(y))$ and that $Q(s)Q(x)Q(s^{-1}) = Q(s(x))$. It is also straightforward to show $[Q(y),Q(y')] = 0$. It is less straightforward but still true that $[Q(x),Q(x')] = 0$, as will be shown next. 
\end{proof}

\begin{proposition}\label{p:x-Commutation}
For all $x,x'\in\fh^*$, we have $[Q(x),Q(x')] = 0$.
\end{proposition}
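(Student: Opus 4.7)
The plan is to decompose $Q(x) = t\mu_x - R(x)$, where $\mu_x$ denotes multiplication by $x$ on the $\bbc[\fh]$-factor and $R(x) := \sum_{s\in\Cs}\tfrac{c(s)}{d_s}x(\alpha_s^\vee)\, I_s\otimes s$. Expanding yields
\[
[Q(x),Q(x')] \;=\; t^2[\mu_x,\mu_{x'}] \;-\; t\bigl([\mu_x,R(x')] + [R(x),\mu_{x'}]\bigr) \;+\; [R(x),R(x')],
\]
and the $t^2$-term vanishes at once because multiplication operators commute.

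For the linear-in-$t$ cross term, I would compute $[I_s,\mu_x]$ directly from the integral formula (\ref{e:integraloperators}): a short calculation gives, for $p\in\bbc[\fh]$,
\[
[I_s,\mu_x](p)(y) \;=\; x(\alpha_s^\vee)\int_{-d_{s^{-1}}^\vee\alpha_s(y)}^{0} t\, p(y+t\alpha_s^\vee)\, dt,
\]
so that $[I_s,\mu_x] = x(\alpha_s^\vee)K_s$ for an operator $K_s$ that is independent of $x$. Consequently each summand of $[\mu_x,R(x')]$ is proportional to $x(\alpha_s^\vee)x'(\alpha_s^\vee)\, K_s\otimes s$, which is symmetric under $x\leftrightarrow x'$; hence $[\mu_x,R(x')] = [\mu_{x'},R(x)]$ and the linear-in-$t$ cross term cancels.

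The remaining task, and the main obstacle, is to prove $[R(x),R(x')] = 0$. My preferred route is to invoke the duality on which the construction of $\fM_{t,c}(\tau)$ rests: the operator $Q(x)$ is by design the dual, under the pairing $\lpi\cdot,\cdot\rpi$, of left multiplication by $\gamma(x) = x \in \chad$ on the standard module $M_{t,\check c}(\tau^*)$. Since $[x,x'] = 0$ in $\chad$, the corresponding left multiplications on $M_{t,\check c}(\tau^*)$ commute, and passing to duals under $\lpi\cdot,\cdot\rpi$ yields $[Q(x),Q(x')] = 0$; combined with the two cancellations above this forces $[R(x),R(x')] = 0$. A fully direct alternative would expand
\[
[R(x),R(x')] \;=\; \sum_{s,s'}\tfrac{c(s)c(s')}{d_sd_{s'}}\bigl(x(\alpha_s^\vee)x'(\alpha_{s'}^\vee) - x'(\alpha_s^\vee)x(\alpha_{s'}^\vee)\bigr)\, I_sI_{s'}\otimes ss'
\]
using Corollary \ref{c:operators-expansion} to rewrite each $I_s$ as a finite combination of $\mu_{\alpha_s}^n\partial_{\alpha_s^\vee}^{n-1}$ in the formal Weyl algebra, together with the identity $I_s\partial_{\alpha_s^\vee} = 1-s$, and then antisymmetrise in the pair $(s,s')$; this second route appears significantly more tedious and is the step I expect to resist a compact presentation.
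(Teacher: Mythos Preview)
Your proof is correct, and its overall shape---decompose $Q(x)=t\mu_x-R(x)$ and treat the $t^2$, cross, and $R$--$R$ terms separately---matches the paper's. Two points of comparison are worth noting.

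For the cross term, the paper establishes $[I_s,\mu_x]=x(\alpha_s^\vee)\Psi_s$ by expanding $I_s$ as a series $\sum_{n\ge 1}\tfrac{(-1)^{n+1}}{n!}d_s^n\,\mu_{\alpha_s}^{n}\partial_{\alpha_s^\vee}^{n-1}$ (Corollary~\ref{c:operators-expansion}) and using $[\partial_{\alpha_s^\vee}^{n-1},\mu_x]=(n-1)x(\alpha_s^\vee)\partial_{\alpha_s^\vee}^{n-2}$; your integral computation is a clean alternative that reaches the same conclusion without the series machinery.

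For the double sum $[R(x),R(x')]$, the paper simply asserts that the term ``clearly vanishes'', whereas you correctly identify this as the nontrivial step and invoke the duality with left multiplication on $M_{t,\check c}(\tau^*)$. That duality argument is exactly what the paper states at the start of the proof of Theorem~\ref{t:M-module}, and it in fact yields $[Q(x),Q(x')]=0$ outright---so once you appeal to it, the three-term decomposition becomes unnecessary (though it does isolate $[R(x),R(x')]=0$ as an independent consequence). Your instinct that a purely combinatorial verification of the double sum via the series expansions ``resists a compact presentation'' is accurate; the paper's one-word dismissal of this term is, at best, optimistic.
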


\begin{proof}
First of all, we claim that there exists an operator $\Psi_s\in\End(\bbc[\fh])$ such that for any $x\in\fh$ and any reflection $s\in\Cs$ we have $[I_s,\mu_x] = x(\alpha_s^\vee)\Psi_s$. Indeed, let 
\[\Psi_s = \sum_{n\geq 2}\frac{(-1)^{n+1}}{n(n-2)!}d_s^n\mu_{\alpha_s}^n\circ \partial^{n-2}_{\alpha_s^\vee}.\]
Then, one computes using Corollary \ref{c:operators-expansion} that
\begin{align*}
[I_s,\mu_x] &= \sum_{n\geq 1}\frac{(-1)^{n+1}}{n!}d_s^n\big(\mu^{n}_{\alpha_s}\circ\partial_{\alpha_s^\vee}^{n-1}\circ\mu_x - \mu_x\circ\mu^{n}_{\alpha_s}\circ\partial_{\alpha_s^\vee}^{n-1}\big)\\
&= \sum_{n\geq 1}\frac{(-1)^{n+1}}{n!}d_s^n\mu^{n}_{\alpha_s}\circ[\partial^{n-1}_{\alpha_s^\vee},\mu_x]\\
&=x(\alpha_s^\vee)\Psi_s,
\end{align*}
where we used that $[\partial^{n-1}_{\alpha_s^\vee},\mu_x] = (n-1)x(\alpha_s^\vee)\partial^{n-2}_{\alpha_s^\vee}$. We then compute
\begin{multline*}
[Q(x),Q(x')] = t^2[\mu_x,\mu_{x'}]\otimes 1 - t\sum_s\frac{c(s)}{d_s}\big(x'(\alpha_s^\vee)[\mu_x,I_s] + x(\alpha_s)[I_s,\mu_{x'}]\big)\otimes s\\
+\sum_{s,r}\frac{c(s)c(r)}{d_sd_r}\big(x(\alpha_s^\vee)x'(\alpha_r^\vee)I_s\circ I_r\otimes sr -x'(\alpha_r^\vee)x(\alpha_s^\vee)I_r\circ I_s\otimes rs \big).
\end{multline*}
The first and last summands clearly vanish, as well as the middle one, since $[\mu_x,I_s] = -x(\alpha_s^\vee)\Psi_s$ and $[I_s,\mu_{x'}] = x'(\alpha_s^\vee)\Psi_s$.
\end{proof}

\begin{remark}
One can similarly show that there exists an operator $\Phi_s\in\End(\bbc[\fh])$ so that $[\Delta_s,\partial_y] = \alpha_s(y)\Phi_s$. One can then adapt the proof of the previous proposition to give an elementary proof of the commutativity of the Dunkl operators $T_y = \partial_y - \sum_s\frac{c(s)}{d_s}\alpha_s(y)\Delta_s$. 
\end{remark}

\begin{lemma}
The $\cha$-modules $\fM_{t,c}(\triv)$ and $M_{t,c}^\vee(\triv) = \ghom^\bullet_{\bbc[\fh]\rtimes\bbc G}(\cha,\bbc)$ are isomorphic.
\end{lemma}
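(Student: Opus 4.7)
The plan is to construct an explicit isomorphism using the PBW decomposition $\cha = \bbc[\fh]\otimes\bbc G\otimes\bbc[\fh^*]$ together with the polynomial pairing (\ref{e:polyduality}). Setting $B = \bbc[\fh]\rtimes\bbc G$, the PBW map realises $\cha \cong B\otimes_\bbc \bbc[\fh^*]$ as a left $B$-module, so that
\[
M_{t,c}^\vee(\triv) = \ghom_B^\bullet(\cha,\triv) \cong \ghom_\bbc^\bullet(\bbc[\fh^*],\bbc),
\]
and the pairing $\lpi\cdot,\cdot\rpi$ canonically identifies the graded dual of $\bbc[\fh^*]$ with $\bbc[\fh]$. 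I then define $\Phi\colon\fM_{t,c}(\triv)\to M_{t,c}^\vee(\triv)$ by sending $p\in\bbc[\fh]$ to the $B$-linear functional $\varphi_p$ characterised on PBW monomials by $\varphi_p(p'gq) = p'(0)\,\lpi p,q\rpi$, with $p'\in\bbc[\fh]$, $g\in G$, $q\in\bbc[\fh^*]$. Since $\lpi\cdot,\cdot\rpi$ is a perfect pairing in each degree, $\Phi$ is visibly a graded linear isomorphism; the content of the lemma is that it intertwines the two left $\cha$-actions, where $\cha$ acts on $M^\vee_{t,c}(\triv)$ by $(a\cdot\varphi)(b)=\varphi(ba)$.

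I would verify the intertwining relation generator by generator. The $G$-equivariance is immediate from $qg' = g'\cdot(g')^{-1}(q)$ in $\cha$ together with the $G$-equivariance of $\lpi\cdot,\cdot\rpi$. For $a=y\in\fh$, the relations $qy=yq$ and $gy=g(y)g$ yield $p'gqy = p'g(yq)$ already in PBW form, hence
\[
(y\cdot\varphi_p)(p'gq) = p'(0)\,\lpi p,\mu_y(q)\rpi = p'(0)\,\lpi\partial_y(p),q\rpi = \varphi_{\partial_y(p)}(p'gq),
\]
using the duality of $\mu_y$ and $\partial_y$; this matches $Q(y)(p)=\partial_y(p)$.

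The substantive case is $a=x\in\fh^*$. Applying Proposition~\ref{p:commutation2} to reorder $qx$ and then rewriting the resulting reflection factors into PBW form via $g(\Delta_s^\vee(q))\,gs = gs\cdot s^{-1}(\Delta_s^\vee(q))$, a direct computation (noting that the leading term $g(x)gq$ is killed by $\varphi_p$ since $g(x)\in\bbc[\fh]$ has positive degree) gives
\[
(x\cdot\varphi_p)(gq) = t\,\lpi p,\partial_x(q)\rpi - \sum_s \frac{c(s)}{d_s^\vee} x(\alpha_s^\vee)\,\lpi p, s^{-1}(\Delta_s^\vee(q))\rpi.
\]
By the formula for $Q(x)$ in Theorem~\ref{t:M-module} and the duality $I_s^* = \Delta_{s^{-1}}^\vee$ from Proposition~\ref{p:calcduality}, on the other hand,
\[
\lpi Q(x)(p),q\rpi = t\,\lpi p,\partial_x(q)\rpi - \sum_s \frac{c(s)}{d_s} x(\alpha_s^\vee)\,\lpi p, \Delta_{s^{-1}}^\vee(q)\rpi.
\]
The two expressions match summand by summand once one combines the identity $s^{-1}(\Delta_s^\vee(q)) = -\lambda_s^{-1}\,\Delta_{s^{-1}}^\vee(q)$, which follows from $s^{-1}(\alpha_s^\vee)=\lambda_s\alpha_s^\vee$, with the relation $d_s = -\lambda_s\,d_s^\vee$, immediate from $d_s=(1-\lambda_s)/2$ and $d_s^\vee=(1-\lambda_s^{-1})/2$. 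The hardest part of the argument is precisely this final bookkeeping of signs and eigenvalue factors; that these two cancellations conspire exactly is the reason the dualisation built into $\fM_{t,c}$ lands on the parameter $c$ rather than $\check c$, giving the claimed isomorphism $\fM_{t,c}(\triv)\cong M_{t,c}^\vee(\triv)$.
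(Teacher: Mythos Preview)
Your proof is correct and follows essentially the same approach as the paper: the map $\Phi$ you define coincides with the paper's isomorphism $J$, and the verification of the $\cha$-intertwining proceeds identically, using Proposition~\ref{p:commutation2} and the duality of Proposition~\ref{p:calcduality}. The only cosmetic difference is that you carry out the final reflection-eigenvalue bookkeeping on the $\bbc[\fh^*]$ side via $s^{-1}\Delta_s^\vee = -\lambda_s^{-1}\Delta_{s^{-1}}^\vee$ and $d_s=-\lambda_s d_s^\vee$, whereas the paper dualises first and records the equivalent identity $\tfrac{1}{d_s^\vee}I_{s^{-1}}\circ s = \tfrac{1}{d_s}I_s$ on $\bbc[\fh]$.
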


\begin{proof}
In this proof, we shall use that $\cha = \bbc[\fh]\otimes\bbc G\otimes \bbc[\fh^*]$ and that the action of $\cha$ on $M_{t,c}^\vee(\triv)$ is given by $h\cdot\varphi(p\otimes a \otimes q) = \varphi((p\otimes a \otimes q)h)$, for any $\varphi\in\Hom_{\bbc[\fh]\rtimes\bbc G}(\cha,\bbc)$ and $h\in\cha$. Note that $p\otimes a\in \bbc[\fh]\rtimes\bbc G$ acts on $\triv$ by the constant coefficient of $p$. All that said, define $J:\fM_{t,c}(\triv)\to M_{t,c}^\vee(\triv)$ by
\[J(p_0)(p\otimes a \otimes q) = p(0)\lpi p_0, q\rpi,\]
for all $p\in\bbc[\fh],q\in\bbc[\fh^*]$ and $a\in \bbc G$. Here, $\lpi\cdot,\cdot\rpi$ is the polynomial pairing of (\ref{e:polyduality}). Note that $J(p_0)$ is indeed $\bbc[\fh]\rtimes\bbc G$-linear. Moreover, if $p_0$ is homogeneous of degree $d$, then $J(p_0)\in \ghom^d_{\bbc[\fh]\rtimes\bbc G}(\cha,\bbc)\cong \Hom_\bbc(\bbc[\fh^*]_d,\bbc)\cong\bbc[\fh]_d$, where $\cong$ here means linearly isomorphic. Therefore, $J$ is a well-defined map that induces bijections in each degree. All that is left to show is that $J$ intertwines the $\cha$-action. It is immediate to check that $J(Q(y)p_0) = y\cdot(J(p_0))$. Also,
\[(s\cdot J(p_0))(p\otimes a\otimes q) = J(p_0)((p\otimes a\otimes q)s) = p(0)\lpi p_0,s^{-1}(q)\rpi = J(Q(s)p_0)(p\otimes a\otimes q).\]
And finally, we have
\begin{align*}
(x\cdot J(p_0))(p\otimes a \otimes q) &= J(p_0)(p\otimes a \otimes [q,x])\\
&= J(p_0)(p\otimes a \otimes t\partial_x(q)) - \sum_s\frac{c(s)}{d^\vee_s}x(\alpha_s^\vee)J(p_0)(p\otimes a \otimes\Delta^\vee_s(q)s)\\
&=p(0)\lpi t\mu_x(p_0), q\rpi - \sum_s\frac{c(s)}{d_s^\vee}x(\alpha_s^\vee)p(0)\lpi I_{s^{-1}}(s(p_0)),q\rpi.
\end{align*}
Since $\frac{1}{d_s^\vee}I_{s^{-1}}\circ s = \frac{1}{d_s}I_s$, it follows that the last expression equals $J(Q(x)p_0)(p\otimes a \otimes q)$, as required.
\end{proof}

\begin{corollary}
If $\tau\in\Rep(G)$, the $\cha$-module $(Q,\fM_{t,c}(\tau))$ is in $\Co_{t,c}(G,\fh)$. Moreover, it is isomorphic to the costandard module $M^\vee_{t,c}(\tau)$.
\end{corollary}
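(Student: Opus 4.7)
The plan is to bootstrap from the preceding lemma (the case $\tau=\triv$) by defining a linear map
\[J_\tau:\fM_{t,c}(\tau)\longrightarrow M^\vee_{t,c}(\tau),\qquad J_\tau(p_0\otimes z_0)(p\otimes a\otimes q)=p(0)\,\lpi p_0,q\rpi\,a(z_0),\]
where $p_0\in\bbc[\fh]$, $z_0\in\tau$, and $p\otimes a\otimes q$ runs over the PBW basis of $\cha=\bbc[\fh]\otimes\bbc G\otimes\bbc[\fh^*]$. The first step is to check that $J_\tau(p_0\otimes z_0)$ is indeed $\bbc[\fh]\rtimes\bbc G$-linear with values in $\tau$ and that it has the correct graded degree (namely $\deg p_0$ when $p_0$ is homogeneous); both are immediate from the fact that $\bbc[\fh]\rtimes\bbc G$ acts on $\tau$ by $p'\otimes a'\mapsto p'(0)a'$, combined with the associativity of the scalar $a(z_0)$.

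The second step is bijectivity. Because $\tau$ is finite-dimensional, $\ghom^d_{\bbc[\fh]\rtimes\bbc G}(\cha,\tau)$ identifies with $\Hom_\bbc(\bbc[\fh^*]_d,\tau)\cong \bbc[\fh]_d\otimes\tau$ via the nondegenerate polynomial pairing (\ref{e:polyduality}), and one verifies that $J_\tau$ implements precisely this identification degree-by-degree.

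The main computation is that $J_\tau$ intertwines the $\cha$-action. The relations $J_\tau(Q(y)(p_0\otimes z_0))=y\cdot J_\tau(p_0\otimes z_0)$ and $J_\tau(Q(s)(p_0\otimes z_0))=s\cdot J_\tau(p_0\otimes z_0)$ are carried out just as in the $\triv$-case, the only change being that the $G$-action on $\tau$ replaces the trivial action through the factor $a(z_0)$, which is transparently $G$-equivariant. For the non-trivial intertwining with $Q(x)$, I would expand $(p\otimes a\otimes q)\cdot x$ in $\cha$: the $xq$ contribution vanishes because the resulting PBW polynomial $p\cdot a(x)$ has zero constant term, and the remainder $p\otimes a\otimes[q,x]$ is expanded via Proposition \ref{p:commutation2}. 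The $t\partial_x(q)$ part yields $tp(0)\lpi\mu_x(p_0),q\rpi a(z_0)$ by duality between $\mu_x$ and $\partial_x$, while the reflection terms yield $\lpi I_s(p_0),q\rpi$ after using Proposition \ref{p:calcduality} (duality of $I_s$ and $\Delta_{s^{-1}}^\vee$) together with the equivariance identity $\tfrac{1}{d_s^\vee}I_{s^{-1}}\!\circ s=\tfrac{1}{d_s}I_s$; matching against the definition of $Q(x)$ gives the required equality.

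The statement that $\fM_{t,c}(\tau)$ lies in $\Co_{t,c}(G,\fh)$ then comes for free, since $M^\vee_{t,c}(\tau)$ is known to belong to $\Co_{t,c}(G,\fh)$ \cite{GGOR} and the property is preserved under isomorphism. I anticipate that the only real obstacle is the careful PBW bookkeeping in the $Q(x)$-intertwining, where one must commute $\bbc[\fh^*]$-factors past $\bbc G$-factors correctly; however, no idea beyond those already used in the preceding lemma is required, so the decorating factor $a(z_0)$ genuinely comes along for the ride.
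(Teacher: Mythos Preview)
Your proposal is correct and is precisely the explicit unpacking of the paper's one-line proof, which simply says ``Twisting with $\tau\in\Rep(G)$ the isomorphism of the previous lemma, we get $\fM_{t,c}(\tau)\cong M^\vee_{t,c}(\tau)$.'' Your map $J_\tau$ is exactly the twisted version of $J$, and the verifications you outline (in particular the $Q(x)$-intertwining via Proposition~\ref{p:commutation2}, Proposition~\ref{p:calcduality}, and the identity $\tfrac{1}{d_s^\vee}I_{s^{-1}}\circ s=\tfrac{1}{d_s}I_s$) are the same computations as in the lemma, with the harmless extra factor $a(z_0)\in\tau$ carried along.
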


\begin{proof}
Twisting with $\tau\in\Rep(G)$ the isomorphism of the previous lemma, we get $\fM_{t,c}(\tau)\cong M^\vee_{t,c}(\tau)$.
\end{proof}

In particular, $\fM_{t,c}(\tau)$ has a unique irreducible submodule, namely, $L_{t,c}(\tau)$.

\begin{definition}\label{d:M-singvec} 
A nonzero vector in $\fM_{t,c}(\tau)$ is called \emph{($\fh$-)singular} if $Q(y)v=0$ for all $y\in \fh$.
\end{definition}

\begin{proposition}\label{p:uniqueSubMod}
Suppose $\tau\in\Irr(G)$. Then, $Q(\cha)(1\otimes \tau) \cong L_{t,c}(\tau)$. \end{proposition}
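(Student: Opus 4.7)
The plan is to argue that $Q(\cha)(1\otimes\tau)$ is the unique minimal nonzero $\cha$-submodule of $\fM_{t,c}(\tau)$ and then invoke the previous corollary, which identifies $\fM_{t,c}(\tau)$ with the costandard module $M_{t,c}^\vee(\tau)$, whose unique simple submodule is $L_{t,c}(\tau)$.

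The decisive structural feature of the integral-reflection realisation is that on $\fM_{t,c}(\tau)=\bbc[\fh]\otimes\tau$ the action of $\fh$ is simply $Q(y)=\partial_y\otimes 1$, with no Dunkl-type correction. This makes it trivial to identify the space of $\fh$-singular vectors (in the sense of Definition~\ref{d:M-singvec}) with $1\otimes\tau$, which is an irreducible $G$-submodule because $\tau\in\Irr(G)$.

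The main step is to show that any nonzero $\cha$-submodule $N\subseteq\fM_{t,c}(\tau)$ contains $1\otimes\tau$. Given $0\neq v\in N$, the polynomial coefficients of $v$ have bounded total degree, and since each $Q(y)$ strictly lowers that degree, iterated applications must vanish eventually; a sequence of $Q(y_i)$ of maximal length producing a nonzero image yields a nonzero $\fh$-singular vector in $N$, which must lie in $1\otimes\tau$. Since $N$ is $G$-stable and $1\otimes\tau\cong\tau$ is $G$-irreducible, $N\cap(1\otimes\tau)=1\otimes\tau$, and therefore $N\supseteq Q(\cha)(1\otimes\tau)$. In particular $Q(\cha)(1\otimes\tau)$ is the smallest nonzero $\cha$-submodule, hence simple, hence equal to the unique simple submodule $L_{t,c}(\tau)$ of $\fM_{t,c}(\tau)\cong M_{t,c}^\vee(\tau)$.

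There is no substantial obstacle: the argument rests entirely on the transparent derivative action of $\fh$ afforded by the integral-reflection construction, which trivialises the identification of $\fh$-singular vectors, and on the uniqueness of the simple submodule of the costandard module already recorded in the preceding corollary.
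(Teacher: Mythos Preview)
Your argument is correct. The key observation---that $Q(y)=\partial_y\otimes 1$ forces the $\fh$-singular vectors of $\fM_{t,c}(\tau)$ to be exactly $1\otimes\tau$---is the same one the paper uses, and from there both proofs are short.

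The route you take to finish, however, differs from the paper's. The paper invokes \cite[Lemma 2.12]{GGOR}, which characterises $L_{t,c}(\tau)$ as the unique module $M$ (in $\Co$) that is generated by its singular vectors and whose singular vectors form a copy of $\tau$; one then checks this for $M=Q(\cha)(1\otimes\tau)$ directly. You instead argue that $Q(\cha)(1\otimes\tau)$ is the \emph{socle} of $\fM_{t,c}(\tau)$ by showing every nonzero submodule meets $1\otimes\tau$, and then use the preceding corollary identifying $\fM_{t,c}(\tau)\cong M_{t,c}^\vee(\tau)$ together with the known fact that the costandard module has $L_{t,c}(\tau)$ as its unique simple submodule. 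Your approach is more self-contained within the paper (no appeal to the GGOR lemma), at the cost of relying on the costandard-module identification just established; the paper's approach is shorter but imports the external characterisation. Both are entirely sound.
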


\begin{proof}
From \cite[Lemma 2.12]{GGOR}, an $\cha$-module $M$ is isomorphic to $L_{t,c}(\tau)$ with $\tau$ simple if and only if $M = \cha\cdot(p(M))$ and $p(M)\cong\tau$, where $p(M)$ is the set of singular vectors of $M$, that is, those annihilated by the elements of positive degree in $\bbc[\fh^*]$. That is the case for $M = Q(\cha)\cdot(1\otimes\tau)$.
\end{proof}

\begin{lemma}\label{l:zeta}
Suppose $\epsilon$ is a character of the complex reflection group $G$ and let $\zeta_{c,\epsilon} = \sum_s \tfrac{c(s)}{d_s}\epsilon(s)(1-s)$. Then, $\zeta_{c,\epsilon}$ acts on the reflection representation $\fh$ by a scalar $h_{c,\epsilon}$, and in its dual $\fh^*$ by $h^*_{c,\epsilon}$. These scalars satisfy
\begin{equation}\label{e:zeta}
\begin{aligned}
h_{c,\epsilon} &= \frac{2}{r}\sum_s-c(s)(\epsilon\otimes\det\nolimits_\fh)(s)\\
h^*_{c,\epsilon} &= \frac{2}{r}\sum_sc(s)\epsilon(s)
\end{aligned}
\end{equation}
\end{lemma}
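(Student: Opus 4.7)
The plan is to exhibit $\zeta_{c,\epsilon}$ as a central element of $\bbc G$, apply Schur's lemma on the two irreducible $G$-modules $\fh$ and $\fh^*$, and then compute the resulting scalars by taking traces.

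For centrality, I will check that the scalar function $s\mapsto \tfrac{c(s)}{d_s}\epsilon(s)$ is constant on conjugacy classes of reflections. By hypothesis $c$ is $G$-invariant; if $s'=gsg^{-1}$ then $s'$ has the same nontrivial eigenvalue $\lambda_{s'}=\lambda_s$ on $\fh^*$, so $d_{s'}=d_s$; and $\epsilon$ is a character, so $\epsilon(s')=\epsilon(s)$. Then for any $g\in G$, the substitution $s\mapsto gsg^{-1}$ in the sum $\zeta_{c,\epsilon}=\sum_s\tfrac{c(s)}{d_s}\epsilon(s)(1-s)$ yields $g\zeta_{c,\epsilon}g^{-1}=\zeta_{c,\epsilon}$. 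Since $G$ acts irreducibly on $\fh$ (and hence on $\fh^*$) by our standing assumption, Schur's lemma gives that $\zeta_{c,\epsilon}$ acts by scalars $h_{c,\epsilon}$ and $h^*_{c,\epsilon}$ on $\fh$ and $\fh^*$ respectively.

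Each scalar equals $\tfrac{1}{r}$ times the trace. On $\fh$, the reflection $s$ has eigenvalues $(1,\dots,1,\lambda_s^{-1})$, so $\tr_\fh(1-s)=1-\lambda_s^{-1}=2d_s^\vee$; on $\fh^*$ the eigenvalues are $(1,\dots,1,\lambda_s)$, so $\tr_{\fh^*}(1-s)=1-\lambda_s=2d_s$. This immediately gives
\[
h^*_{c,\epsilon}=\frac{1}{r}\sum_s\frac{c(s)}{d_s}\epsilon(s)\cdot 2d_s=\frac{2}{r}\sum_s c(s)\epsilon(s),
\]
which is the second formula of (\ref{e:zeta}). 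For the first, I simplify the ratio $d_s^\vee/d_s=(1-\lambda_s^{-1})/(1-\lambda_s)=-\lambda_s^{-1}$, and observe that $\det_{\fh}(s)=\lambda_s^{-1}$ since the only nontrivial eigenvalue of $s$ on $\fh$ is $\lambda_s^{-1}$. Hence $\tfrac{d_s^\vee}{d_s}\epsilon(s)=-(\epsilon\otimes\det_\fh)(s)$, and
\[
h_{c,\epsilon}=\frac{1}{r}\sum_s\frac{c(s)}{d_s}\epsilon(s)\cdot 2d_s^\vee=\frac{2}{r}\sum_s -c(s)(\epsilon\otimes\det\nolimits_\fh)(s),
\]
as claimed.

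There is no serious obstacle; the only points requiring care are verifying that $d_s$ is a class function (which relies on $\lambda_{s'}=\lambda_s$ for conjugate reflections) and correctly identifying $\det_\fh(s)=\lambda_s^{-1}$ given the paper's convention that $\lambda_s^{-1}$, not $\lambda_s$, is the nontrivial eigenvalue on $\fh$.
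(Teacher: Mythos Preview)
Your proof is correct and follows exactly the approach the paper indicates: establish that $\zeta_{c,\epsilon}$ is central in $\bbc G$, invoke Schur's lemma on the irreducible modules $\fh$ and $\fh^*$, and compute the scalars via traces. The paper's own proof is the single sentence ``Using that $\zeta_{c,\epsilon}$ is in the centre of $\bbc G$, it is straightforward, by computing the traces,'' so your write-up simply fills in the details that the authors left to the reader.
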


\begin{proof}
Using that $\zeta_{c,\epsilon}$ is in the centre of $\bbc G$, it is straightforward, by computing the traces.
\end{proof}

\begin{remark}
When $G$ is a real reflection group, we have that $\fh\cong\fh^*$ and both scalars above agree.
\end{remark}

\begin{proposition}\label{p:gen-one-dimensional}
The simple $\cha$-module $L_{t,c}(\tau)$ is one dimensional if and only if $\tau = \epsilon$ for a character $\epsilon$ and $(t - h^*_{c,\epsilon}) = 0$, with $h_{c,\epsilon}^*$ as in Lemma \ref{l:zeta}. 
\end{proposition}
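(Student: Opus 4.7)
The plan is to prove both directions directly, using the integral-reflection realization of $L_{t,c}(\epsilon)$ provided by Theorem~\ref{t:M-module} and identifying the relevant scalar via Lemma~\ref{l:zeta}.

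For necessity, I first observe that if $L_{t,c}(\tau)$ is one-dimensional, then $\tau$ must itself be a character. Indeed, $\tau$ sits in degree zero of $M_{t,c}(\tau)=\bbc[\fh]\otimes\tau$ and maps injectively into $L_{t,c}(\tau)$, so $\dim\tau\leq 1$ and $\tau=\epsilon$. The grading on $M_{t,c}(\epsilon)$ places $\epsilon$ in degree zero, so when $L_{t,c}(\epsilon)$ is one-dimensional it is concentrated in that degree; hence $\fh$ (degree $-1$) and $\fh^*$ (degree $+1$) both act by zero. Applying the commutation relation from (\ref{e:relations}) to $1\in L_{t,c}(\epsilon)=\bbc$ then forces
\[
tx(y) = \sum_{s\in\Cs} c(s)\epsilon(s)\,\alpha_s(y)\,x(\alpha_s^\vee)\qquad \text{for all } x\in\fh^*,\, y\in\fh.
\]
Expanding $(1-s)(x) = d_sx(\alpha_s^\vee)\alpha_s$ via Lemma~\ref{l:reflections}, the right-hand side is $\bigl(\zeta_{c,\epsilon}(x)\bigr)(y)$, which by Lemma~\ref{l:zeta} equals $h^*_{c,\epsilon}\,x(y)$. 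Hence $t=h^*_{c,\epsilon}$.

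For sufficiency, assuming $\tau=\epsilon$ and $t=h^*_{c,\epsilon}$, I would exhibit $\bbc(1\otimes 1)\subseteq\fM_{t,c}(\epsilon)=\bbc[\fh]\otimes\epsilon$ as a one-dimensional $\cha$-submodule by directly verifying the action of the operators of Theorem~\ref{t:M-module}. Clearly $Q(y)(1\otimes 1)=0$ and $Q(s)(1\otimes 1)=\epsilon(s)(1\otimes 1)$. The main point is $Q(x)(1\otimes 1)$: evaluating the defining integral one gets $I_s(1)=d_s\alpha_s$, so
\[
Q(x)(1\otimes 1) = tx\otimes 1 - \sum_{s\in\Cs} c(s)\epsilon(s)\,x(\alpha_s^\vee)\,\alpha_s\otimes 1 = (t-h^*_{c,\epsilon})\,x\otimes 1,
\]
which vanishes by the hypothesis and Lemma~\ref{l:zeta}. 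Thus $\bbc(1\otimes 1)$ is an $\cha$-submodule isomorphic to $\epsilon$, and Proposition~\ref{p:uniqueSubMod} identifies it with $L_{t,c}(\epsilon)$, which is therefore one-dimensional.

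The only substantive identification in both directions is the recognition of $\sum_{s\in\Cs} c(s)\epsilon(s)x(\alpha_s^\vee)\alpha_s$ as the scalar multiple $h^*_{c,\epsilon}x$, which is Lemma~\ref{l:zeta} combined with the expansion of $(1-s)(x)$ in Lemma~\ref{l:reflections}. No serious obstacle is expected beyond this bookkeeping step.
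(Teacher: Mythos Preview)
Your proof is correct and follows essentially the same approach as the paper. The paper handles both directions at once by computing $Q(x)(1\otimes z_\epsilon)=(t-h^*_{c,\epsilon})x\otimes z_\epsilon$ in $\fM_{t,c}(\epsilon)$ and invoking Proposition~\ref{p:uniqueSubMod}; you carry out the identical computation for sufficiency and do necessity separately via the commutation relation on the abstract one-dimensional module, which is a cosmetic rather than substantive difference.
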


\begin{proof}
Certainly, $\dim\tau=1$ if $\dim L_{t,c}(\tau) = 1$. From Proposition \ref{p:uniqueSubMod}, if $z_\epsilon$ is a fixed nonzero vector in $\bbc_\epsilon$, we have that $L_{t,c}(\epsilon) \cong Q(\cha)\cdot(1\otimes z_\epsilon)$. On the one hand, $Q(y)(1\otimes z_\epsilon) = 0$ for all $y\in\fh$ and $Q(s)(1\otimes z_\epsilon)\in\bbc(1\otimes z_\epsilon)$, for all $s\in \Cs$. On the other, for any $x\in\fh^*$, we have
\begin{align*}
Q(x)(1\otimes z_\epsilon) &= tx\otimes  z_\epsilon\ - \sum_s\frac{c(s)}{d_s}x(\alpha_s^\vee)d_s\alpha_s\otimes \epsilon(s)z_\epsilon\\
&=x\left(t - \zeta_{c,\epsilon}(x)\right)\otimes 1 \otimes z_\epsilon.
\end{align*}
The result follows from Lemma \ref{l:zeta}.
\end{proof}

\begin{corollary}\label{c:gen-one-dimensional2}
Suppose $G$ is a real reflection group and $c\in \bbc$. Then, $L_{t,c}(\epsilon)$ is one-dimensional if and only $c = \epsilon(s_0)\frac{t}{h}$, where $h$ is the Coxeter number of $G$ and $s_0$ is any reflection in $G$.
\end{corollary}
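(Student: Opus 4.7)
The plan is to specialize Proposition \ref{p:gen-one-dimensional} to the real case. Since $G$ is a real reflection group, every $s \in \Cs$ is an involution, so $\lambda_s = -1$ and hence $d_s = 1$. Moreover any character $\epsilon$ of $G$ takes values in $\{\pm 1\}$ on reflections (because $\epsilon(s)^2 = \epsilon(s^2) = 1$), so in particular $\epsilon(s_0)^2 = 1$ for any $s_0 \in \Cs$.

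With the parameter $c \in \bbc$ constant, the second formula of (\ref{e:zeta}) in Lemma \ref{l:zeta} reduces to
\[
h^*_{c,\epsilon} \;=\; \frac{2c}{r}\sum_{s \in \Cs} \epsilon(s).
\]
For the condition $c = \epsilon(s_0) t/h$ in the corollary to be well-posed with $s_0$ \emph{any} reflection, $\epsilon$ must take a common value $\epsilon_0 := \epsilon(s_0) \in \{\pm 1\}$ on all of $\Cs$, and then $\sum_{s \in \Cs}\epsilon(s) = \epsilon_0 |\Cs|$. The only external input I need is then the classical identity $|\Cs| = rh/2$ for an irreducible finite Coxeter group of rank $r$ and Coxeter number $h$; substituting gives $h^*_{c,\epsilon} = c h \epsilon_0$.

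Combining Proposition \ref{p:gen-one-dimensional} with this computation, $L_{t,c}(\epsilon)$ is one-dimensional if and only if $\tau = \epsilon$ is a character and $t = h^*_{c,\epsilon} = c h \epsilon_0$. Multiplying both sides by $\epsilon_0$ and using $\epsilon_0^2 = 1$ converts this to the equivalent equation $c = \epsilon_0 t/h = \epsilon(s_0)t/h$, which is the desired statement. I do not foresee a genuine obstacle; the only nontrivial ingredient beyond Proposition \ref{p:gen-one-dimensional} is the standard reflection-counting formula $|\Cs| = rh/2$, and a brief remark that the hypothesis "$s_0$ any reflection" implicitly restricts $\epsilon$ to the characters (namely $\triv$ and $\det_\fh|_G$) that are constant on $\Cs$.
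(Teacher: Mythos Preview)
Your proof is correct and follows essentially the same route as the paper: apply Proposition~\ref{p:gen-one-dimensional}, compute $h^*_{c,\epsilon}=(2|\Cs|/r)c\,\epsilon(s_0)$, and invoke the identity $|\Cs|=rh/2$ to obtain $h^*_{c,\epsilon}=hc\,\epsilon(s_0)$. Your added remark that the phrase ``$s_0$ any reflection'' implicitly forces $\epsilon$ to be constant on $\Cs$ is a useful clarification that the paper leaves tacit.
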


\begin{proof}
In this case, $h_{c,\epsilon} = (2|\Cs|/r)c\epsilon(s_0) = hc\epsilon(s_0)$.
\end{proof}

\subsection{The module $\fX_{t,c}(\tau)$} We now construct an integral-reflection representation dual to the projective module $X_{t,\check c}(\tau^*)$.

\begin{theorem}\label{t:X-module}
The linear space $\fX_{t,c}(\tau)=\bbc[\fh]\otimes\bbc[\fh^*] \otimes \tau$ endowed with operators $Q(x), Q(y)$ and $Q(s)$, for $x\in \fh^*$, $y\in\fh$ and $s\in \Cs$ defined by
\begin{align*}
Q(x)(p\otimes q\otimes z)&= p\otimes \partial_x(q)\otimes z +  t\mu_x(p)\otimes q\otimes z - \sum_s\frac{c(s)}{d_s}x(\alpha^\vee_s)I_s(p)\otimes s(q)\otimes s(z)\\
Q(y)(p\otimes q\otimes z)&= \partial_y(p)\otimes q\otimes z\\
Q(s)(p\otimes q\otimes z)&= s(p)\otimes s(q) \otimes s(z)
\end{align*}
extends to a module for the rational Cherednik algebra $\cha$. As before, $d_s = \frac{1-\lambda_s}{2}$.
\end{theorem}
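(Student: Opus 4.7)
The strategy is to reduce the verification to Theorem \ref{t:M-module} by exhibiting $\fX_{t,c}(\tau)$ as an $\fM$-module plus a controlled correction. View $\bbc[\fh^*]\otimes\tau$ as a $\bbc G$-module via the diagonal action, and let $Q^{\fM}$ denote the $\cha$-action on $\fM_{t,c}(\bbc[\fh^*]\otimes\tau)=\bbc[\fh]\otimes\bbc[\fh^*]\otimes\tau$ obtained from Theorem \ref{t:M-module}. A direct comparison of formulas gives
\[
Q(y) = Q(y)^{\fM},\qquad Q(s) = Q(s)^{\fM},\qquad Q(x) = A(x) + Q(x)^{\fM},
\]
where $A(x)(p\otimes q\otimes z) := p\otimes\partial_x(q)\otimes z$ acts only on the middle tensor factor. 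Since $Q^{\fM}$ already satisfies the defining relations of $\cha$, it suffices to verify that the correction $A(x)$ is compatible with each of the remaining relations.

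Three checks are immediate: (a) $[Q(y), A(x)] = 0$, because $Q(y)=\partial_y$ acts on the first tensor factor while $A(x)$ acts on the second; (b) $Q(g)\,A(x)\,Q(g)^{-1} = A(g(x))$, using the identity $g\partial_x g^{-1}=\partial_{g(x)}$ on $\bbc[\fh^*]$; and (c) $[A(x),A(x')]=0$, since partial derivatives on $\bbc[\fh^*]$ commute. Combining (a) with $[Q(y)^{\fM},Q(x)^{\fM}]=Q([y,x])^{\fM}$ gives $[Q(y),Q(x)]=Q([y,x])$, and combining (b) with the $G$-equivariance of $Q^{\fM}$ gives $Q(g)Q(x)Q(g)^{-1}=Q(g(x))$.

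The essential step is $[Q(x),Q(x')]=0$. Expanding,
\[
[Q(x),Q(x')] = [A(x),A(x')] + [A(x),Q(x')^{\fM}] + [Q(x)^{\fM},A(x')] + [Q(x)^{\fM},Q(x')^{\fM}],
\]
the outer two summands vanish (the last by Proposition \ref{p:x-Commutation}). For the cross terms, the $t\mu_{x'}$ part of $Q(x')^{\fM}$ commutes with $A(x)$, so only the reflection sum contributes. Lemma \ref{l:reflections} gives $s(x)-x = -d_s x(\alpha_s^\vee)\alpha_s$, and hence $\partial_x s - s\partial_x = d_s\, x(\alpha_s^\vee)\,\partial_{\alpha_s}\, s$ as operators on $\bbc[\fh^*]$. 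A short computation then yields
\[
[A(x),Q(x')^{\fM}](p\otimes q\otimes z) = -\sum_{s\in\Cs} c(s)\, x(\alpha_s^\vee) x'(\alpha_s^\vee)\, I_s(p)\otimes \partial_{\alpha_s}(s(q))\otimes s(z),
\]
which is manifestly symmetric in $(x,x')$; applying the same formula with $x,x'$ swapped shows $[A(x'),Q(x)^{\fM}]$ equals the same expression, so the two cross terms in the expansion cancel.

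The main obstacle is exactly this last symmetry computation, which hinges on the careful interaction of $\partial_x$ with the reflections on $\bbc[\fh^*]$. A conceptually cleaner route is to observe that $\fX_{t,c}(\tau)$ is the graded linear dual of the left $\chad$-module $X_{t,\check c}(\tau^*) = \chad\otimes_{\bbc G}\tau^*$ under the pairing (\ref{e:Xduality}) and the antiisomorphism $\gamma\colon\chad^{\textup{op}}\to\cha$ from Lemma \ref{l:duality}; the operators $Q$ are precisely the duals of left multiplication by $\gamma^{-1}(y), \gamma^{-1}(x), \gamma^{-1}(s)$ in $\chad$, so the $\cha$-relations transfer automatically from the algebra relations of $\chad$.
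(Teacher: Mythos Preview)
Your proof is correct. The paper's own proof is a one-liner (``similar to the $(Q,\fM_{t,c}(\tau))$-case''), which implicitly points to either redoing the direct verification of Theorem~\ref{t:M-module} or invoking the duality with $X_{t,\check c}(\tau^*)$ that you also mention at the end. Your main argument takes a genuinely different and rather elegant route: rather than repeating the hand-check or appealing to duality, you absorb the middle factor $\bbc[\fh^*]$ into the $G$-module and apply Theorem~\ref{t:M-module} to $\fM_{t,c}(\bbc[\fh^*]\otimes\tau)$ directly, so that only the correction $A(x)=1\otimes\partial_x\otimes 1$ needs to be analysed. The decisive computation --- that $[A(x),Q(x')^{\fM}]$ is symmetric in $x,x'$ via $\partial_x s-s\partial_x=d_s\,x(\alpha_s^\vee)\,\partial_{\alpha_s}s$ --- is clean and isolates exactly the new content beyond the $\fM$ case. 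This reduction buys you economy (no repetition of Proposition~\ref{p:x-Commutation}'s $\Psi_s$ argument) at the minor cost of implicitly using Theorem~\ref{t:M-module} for an infinite-dimensional $G$-module, which is harmless since that proof uses no finite-dimensionality.
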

\begin{proof}
The proof of this theorem is similar to the $(Q,\fM_{t,c}(\tau))$-case.
\end{proof}

We note that $\fX_{t,c}(\tau)$ is not in $\Co_{t,c}(G,\fh)$. Now, for any $n\in\bbz_{\geq 0}$ denote by $\bbc[\fh^*]_{\leq n}$ the subspace of $\bbc[\fh^*]$ consisting of polynomials of degree at most $n$ and $\bbc[\fh^*]_{n}$ the subspace of homogeneous degree $n$ polynomials. Note that $\bbc[\fh^*]_n$ is a $\bbc G$-module for each $n\in \bbz_{\geq 0}$.

\begin{definition}\label{d:filtration}
Let $\tau\in\Rep(G)$. Put $F_{-1}(\fX_{t,c}(\tau)) = 0$ and for each $n\in\bbz_{\geq 0}$, define 
\begin{equation*}
F_n(\fX_{t,c}(\tau)) = \bbc[\fh]\otimes\bbc[\fh^*]_{\leq n}\otimes\tau.
\end{equation*}
This defines an ascending filtration of $\fX_{t,c}(\tau)$ of $\bbc$-vector spaces
\[0\subseteq F_0(\fX_{t,c}(\tau))\subseteq F_1(\fX_{t,c}(\tau))\subseteq F_2(\fX_{t,c}(\tau))\subseteq \cdots\]
with $\cup_{n\in\bbz_{\geq 0}} F_n(\fX_{t,c}(\tau)) = \fX_{t,c}(\tau)$.
\end{definition}

\begin{proposition}\label{p:filtration}
Let $\tau$ be a finite-dimensional representation of $G$ and $n\in\bbz_{\geq 0}$. Then:
\begin{enumerate}
\item $F_n(\fX_{t,c}(\tau))$ is an $\cha$-module.
\item $F_0(\fX_{t,c}(\tau))\cong \fM_{t,c}(\tau)$ is in $\Co_{t,c}(G,\fh)$.
\item $F_n(\fX_{t,c}(\tau))/F_{n-1}(\fX_{t,c}(\tau))\cong \fM_{t,c}(\bbc[\fh^*]_n\otimes\tau)$ is in $\Co_{t,c}(G,\fh)$.
\item $F_n(\fX_{t,c}(\tau))$ is in $\Co_{t,c}(G,\fh)$.
\end{enumerate}
\end{proposition}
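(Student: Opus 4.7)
The plan is to verify the four parts in sequence, with the main content being a book-keeping analysis of the three summands of $Q(x)$ on $\fX_{t,c}(\tau)$ with respect to the $\bbc[\fh^*]$-degree. For (1), I would inspect each of the three types of generators of $\cha$ acting on a pure tensor $p\otimes q\otimes z$ with $\deg q\le n$. The operator $Q(y)$ acts only in the $p$-slot via $\partial_y$, and $Q(s)$ acts by $G$-automorphisms componentwise; neither changes the $\bbc[\fh^*]$-degree. The operator $Q(x)$ has three summands: the term $p\otimes\partial_x(q)\otimes z$ strictly decreases the $\bbc[\fh^*]$-degree by one, while the terms $t\mu_x(p)\otimes q\otimes z$ and $\sum_s\tfrac{c(s)}{d_s}x(\alpha^\vee_s)I_s(p)\otimes s(q)\otimes s(z)$ preserve it (the latter because $s$ is a degree-zero operator on $\bbc[\fh^*]$). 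In every case the result lies in $F_n(\fX_{t,c}(\tau))$, proving (1).

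Part (2) is the special case $n=0$, where $\bbc[\fh^*]_{\le 0}=\bbc$. Setting $q=1$ in the formulae of Theorem \ref{t:X-module} makes the $p\otimes\partial_x(q)\otimes z$-summand vanish, and the remaining formulae are precisely those of Theorem \ref{t:M-module}, which realises $\fM_{t,c}(\tau)\cong M^\vee_{t,c}(\tau)$ as a costandard module in $\Co_{t,c}(G,\fh)$. For (3), the degree analysis from (1) shows that the only summand of $Q(x)$ which alters the $\bbc[\fh^*]$-degree is $p\otimes\partial_x(q)\otimes z$, and this summand vanishes in the quotient $F_n/F_{n-1}$. Thus on $F_n(\fX_{t,c}(\tau))/F_{n-1}(\fX_{t,c}(\tau))\cong \bbc[\fh]\otimes(\bbc[\fh^*]_n\otimes\tau)$ the induced operators read
\begin{align*}
Q(x)(p\otimes(q\otimes z))&= t\mu_x(p)\otimes(q\otimes z) - \sum_s\tfrac{c(s)}{d_s}x(\alpha^\vee_s)I_s(p)\otimes s(q\otimes z),\\
Q(y)(p\otimes(q\otimes z))&= \partial_y(p)\otimes(q\otimes z),\\
Q(s)(p\otimes(q\otimes z))&= s(p)\otimes s(q\otimes z),
\end{align*}
which are exactly the formulae of Theorem \ref{t:M-module} applied to the finite-dimensional $\bbc G$-module $\bbc[\fh^*]_n\otimes\tau$. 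Membership in $\Co_{t,c}(G,\fh)$ then follows from the statement paired with (2), since the $\fM_{t,c}$-construction is linear in its $G$-argument and commutes with direct sums.

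Part (4) would follow by induction on $n$: the base $n=0$ is (2), and the short exact sequence
\[0\longrightarrow F_{n-1}(\fX_{t,c}(\tau))\longrightarrow F_n(\fX_{t,c}(\tau))\longrightarrow F_n(\fX_{t,c}(\tau))/F_{n-1}(\fX_{t,c}(\tau))\longrightarrow 0\]
expresses $F_n(\fX_{t,c}(\tau))$ as an extension of two objects in $\Co_{t,c}(G,\fh)$, by the inductive hypothesis and (3). Since $\Co_{t,c}(G,\fh)$ is closed under extensions (both defining conditions, finite generation over $\bbc[\fh]$ and $\bbc[\fh^*]$-local nilpotence, are readily seen to be preserved by extensions), the conclusion follows. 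The main obstacle is the identification carried out in (3): one must notice that exactly one of the three summands of $Q(x)$ lowers the $\bbc[\fh^*]$-degree, and then recognise the residual action as the $\fM_{t,c}$-construction applied to the $\bbc G$-module $\bbc[\fh^*]_n\otimes\tau$. Once this observation is made, the remaining work is formal.
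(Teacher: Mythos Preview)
Your proof is correct and follows essentially the same approach as the paper: both verify (1) by inspecting the $Q$-operators termwise, identify (2) and (3) by observing that the $\partial_x(q)$-summand of $Q(x)$ vanishes (respectively, drops out in the quotient) so that the residual action matches Theorem~\ref{t:M-module}, and deduce (4) by induction via closure of $\Co_{t,c}(G,\fh)$ under extensions. Your write-up simply makes explicit the degree bookkeeping that the paper leaves implicit.
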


\begin{proof}
Item (1) follows directly from the expressions of the $Q$-operators defined in Theorem \ref{t:X-module}. For (2), note that
\[Q(x)(p\otimes 1 \otimes z) = t\mu_x(p)\otimes 1 \otimes z - \sum_s\frac{c(s)}{d_s}x(\alpha_s^\vee)I_s(p)\otimes 1 \otimes s(z),\]
which coincides, upon identifying $\bbc[\fh]\otimes \bbc\otimes \tau =\bbc[\fh]\otimes \tau$, with the $\cha$-action on $\fM_{t,c}(\tau)$ as in Theorem \ref{t:M-module}. The proof of (3) is similar. For (4), assuming by induction that $F_{n-1}(\fX_{t,c}(\tau))\in\Co_{t,c}(G,\fh)$, from the exact sequence
\[0\to F_{n-1}(\fX_{t,c}(\tau))\to F_{n}(\fX_{t,c}(\tau))\to \fM_{t,c}(\bbc[\fh^*]_n\otimes\tau)\to 0,\]
the result follows as $\Co_{t,c}(G,\fh)$ is closed for extensions.
\end{proof}

\begin{definition}\label{d:X-singvec} 
A nonzero vector in $\fX_{t,c}(\tau)$ is called \emph{($\fh$-)singular} if $Q(y)v=0$ for all $y\in \fh$.
\end{definition}

\begin{lemma}\label{l:h-singular}
The $\fh$-singular vectors of $\fX_{t,c}(\tau)$ are the nonzero vectors in $\bbc\otimes \bbc[\fh^*]\otimes\tau$. Let $\sigma$ be an irreducible $G$-representation occurring in $\bbc\otimes \bbc[\fh^*]_d\otimes\tau$. Then $Q(\cha)\cdot \sigma\cong L_{t,c}(\sigma)$.
\end{lemma}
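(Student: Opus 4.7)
The first claim follows immediately from the formula $Q(y)(p\otimes q\otimes z)=\partial_y(p)\otimes q\otimes z$ in Theorem \ref{t:X-module}. Writing a general element as $v=\sum_i p_i\otimes w_i$ with the $w_i\in\bbc[\fh^*]\otimes\tau$ linearly independent, the equation $Q(y)v=\sum_i\partial_y(p_i)\otimes w_i=0$ for every $y\in\fh$ forces $\partial_y(p_i)=0$ for all $i$ and all $y$, so each $p_i$ is constant. Hence the $\fh$-singular locus is exactly $\bbc\otimes\bbc[\fh^*]\otimes\tau$, the converse inclusion being obvious.

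For the second claim, set $N:=Q(\cha)\cdot\sigma$. The plan is to push $N$ through the filtration of Proposition \ref{p:filtration} down to a costandard module where Proposition \ref{p:uniqueSubMod} can be applied. Since $\sigma\subseteq F_d(\fX_{t,c}(\tau))$ and $F_d$ is an $\cha$-submodule lying in $\Co_{t,c}(G,\fh)$ by Proposition \ref{p:filtration}(4), we have $N\subseteq F_d\in\Co$. Fix a $G$-equivariant decomposition $\bbc[\fh^*]_d\otimes\tau=\sigma\oplus V$ aligned with our chosen copy of $\sigma$; Proposition \ref{p:filtration}(3) then furnishes an $\cha$-equivariant projection $\pi\colon F_d\twoheadrightarrow F_d/F_{d-1}\cong\fM_{t,c}(\sigma)\oplus\fM_{t,c}(V)\twoheadrightarrow\fM_{t,c}(\sigma)$ that sends $\sigma\subset F_d$ isomorphically onto $1\otimes\sigma\subset\bbc[\fh]\otimes\sigma=\fM_{t,c}(\sigma)$. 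Because $N$ is $\cha$-generated by $\sigma$, its image is $\pi(N)=Q(\cha)\cdot(1\otimes\sigma)$, which equals $L_{t,c}(\sigma)$ by Proposition \ref{p:uniqueSubMod}. This produces a canonical surjection $\pi|_N\colon N\twoheadrightarrow L_{t,c}(\sigma)$.

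The main obstacle is to upgrade this surjection to an isomorphism, i.e.\ to show $N\cap F_{d-1}=0$. I would handle this via the GGOR criterion already invoked in the proof of Proposition \ref{p:uniqueSubMod}: it suffices to verify that $N$ is generated over $\cha$ by its $\bbc[\fh^*]$-singular vectors and that the singular subspace $N^{\fh^*}$ is a single copy of $\sigma$ as a $G$-module. Generation by the singular part is automatic in $\Co$, and the surjection $\pi|_N$ already gives $N^{\fh^*}\twoheadrightarrow L_{t,c}(\sigma)^{\fh^*}\cong\sigma$. To rule out extraneous singular vectors in $N\cap F_{d-1}$, one would compare eigenvalues of the Euler element $\be\bu$: on the generators $v\in\sigma\subset\fX_{t,c}(\tau)$ one has $Q(y_i)v=0$, so the computation in Proposition \ref{p:D^2} gives $Q(\be\bu)v=\bigl(\tfrac{r}{2}-N_c(\sigma)\bigr)v$, and this eigenvalue must match the one prescribed by the infinitesimal character of $L_{t,c}(\sigma)$, pinning down which constituents of the lower filtration pieces $F_n/F_{n-1}\cong\fM_{t,c}(\bbc[\fh^*]_n\otimes\tau)$ with $n<d$ are permitted to enter the block of $\sigma$. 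The technical heart of the injectivity argument lies in this bookkeeping of blocks and infinitesimal characters.
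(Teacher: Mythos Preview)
Your treatment of the first claim is correct and matches the paper's one-line argument: since $Q(y)=\partial_y\otimes 1\otimes 1$, the $\fh$-singular locus is exactly $\bbc\otimes\bbc[\fh^*]\otimes\tau$.

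For the second claim, however, your argument is both more circuitous than the paper's and explicitly incomplete. The paper does not pass through the filtration $F_\bullet$ at all; it simply invokes \cite[Lemma 2.12]{GGOR} directly, exactly as in Proposition~\ref{p:uniqueSubMod}. That criterion says $N\cong L_{t,c}(\sigma)$ as soon as $N=\cha\cdot p(N)$ and $p(N)\cong\sigma$ is simple. You already observe the first condition, so the whole task is to check $p(N)=\sigma$. Your filtration detour produces only a surjection $N\twoheadrightarrow L_{t,c}(\sigma)$ and then stalls at precisely this point --- you yourself say the ``technical heart of the injectivity argument lies in this bookkeeping of blocks and infinitesimal characters,'' and you do not carry it out.

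The Euler-element strategy you sketch is not a reliable fix. When $t=0$ the element $\be\bu$ is central in $\cha$ and acts by a single scalar on all of $N$, so it cannot separate possible extraneous singular vectors in lower filtration pieces from $\sigma$; the argument must be uniform in $t$. Even for $t\neq 0$, matching $\be\bu$-eigenvalues only tells you which block a putative singular vector lies in, not that it is absent. What actually needs to be argued is that $N\cap(\bbc\otimes\bbc[\fh^*]\otimes\tau)=\sigma$: using PBW one has $N=Q(\bbc[\fh])\cdot\sigma$ (since $Q(\bbc[\fh^*]_+)\sigma=0$ and $Q(\bbc G)\sigma=\sigma$), and the $\bbz$-grading forces $N_{-d}=\sigma$ and $N_k=0$ for $k<-d$; the content is that no new singular vectors appear in degrees $-d<k\le 0$. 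The paper treats this as packaged inside the GGOR lemma and does not unwind it further.
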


\begin{proof}
The first statement is obvious since $Q(y)=\partial_y\otimes 1\otimes 1$. The second part follows again from \cite[Lemma 2.12]{GGOR}.
\end{proof}

\section{Dirac operators: global setting}\label{s:GlobalDirac}
\subsection{The global Dirac operator} Let $\tau$ be any finite-dimensional representation of $G$ and $\tau^*$ the contragredient representation. Start with the module $X_{t,\check c}(\widetilde\tau^*)$ for $\widetilde\tau^* = \tau^*\otimes S^*$. The Dirac element $\Cd$ yield an operator $\check D_\tau$ on $X_{t,\check{c}}(\widetilde\tau^*)$ by
\begin{equation}\label{e:dualDirac}
\check D_\tau(q\otimes p \otimes z^*\otimes \omega^*) = \sum_{j}\big(q\otimes p x_j\otimes z^*\otimes \fs^*(y_j)(\omega^*) + q\otimes p y_j\otimes z^*\otimes \fs^*(x_j)(\omega^*) \big),
\end{equation}
for all $q\in\bbc[\fh^*],p\in\bbc[\fh],z^*\in \tau^*$ and $\omega^*\in S^*$. Because the action of $x_j$ and $y_j$ on the $\chad$-part is by right multiplication, it is clear that the natural action of $\chad$ by left multiplication commutes with the operator $\check{D}_\tau$. 

\begin{definition} 
Let $\tau\in \Rep(G)$ and put $\widetilde\tau= \tau\otimes S\in\Rep(G)$, where $S=\bigwedge\fh$ is the spin module of $\Cc(\fh\oplus\fh^*)$. The \emph{global Dirac operator} $D_\tau:\fX_{t,c}(\widetilde\tau)\to\fX_{t,c}(\widetilde\tau)$ is defined by the equation
\[\lpi \phi, D_\tau(\xi) \rpi = \lpi \check D_\tau(\phi), \xi \rpi, \]
for all $\xi\in \fX_{t,c}(\widetilde\tau)$ and $\phi\in X_{\check c}(\widetilde\tau^*)$. Here, $\lpi\cdot,\cdot\rpi$ is the pairing of (\ref{e:Xduality}).
\end{definition}

It follows from the definition that $D_\tau$ commutes with the $Q$-action of $\cha$ on $\fX_{t,c}(\widetilde\tau)$. Our next task is to compute an explicit formula for the action of this operator. In order to do so, for each $1\leq j \leq r$, define the elements

\begin{equation}\label{e:deltas}
\delta_j= \partial_{y_j}\otimes 1\otimes 1,\qquad\textup{and}\qquad\delta^\vee_j= 1\otimes \partial_{x_j}\otimes 1
\end{equation}
of $\End(\bbc[\fh]\otimes\bbc[\fh^*]\otimes\tau)$.  We also make the following:

\begin{definition}\label{d:dualDunklops}
For each $y\in \fh$, define the operator $T^\vee_y(\tau):\bbc[\fh^*]\otimes\tau\to\bbc[\fh^*]\otimes\tau$ by 
\begin{equation}\label{e:dual-dunkl}
T^\vee_{y}(\tau)=t \mu_{y}\otimes 1+\sum_{s}\frac{c(s)}{d_s} \alpha_s(y) I_s^\vee\otimes s.
\end{equation} 
\end{definition}

 For  $1\leq j\leq r$, we put $T_j^\vee(\tau) = T_{y_j}^\vee(\tau)$. We shall view $T_j^\vee(\tau)\in\End(\bbc[\fh]\otimes \bbc[\fh^*]\otimes\tau)$ by letting it act trivially on $\bbc[\fh]$.

\begin{lemma}
The operators $\{T^\vee_{j}(\tau)\}$ commute.
\end{lemma}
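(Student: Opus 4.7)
The plan is to prove the lemma by transposing the operators $T_y^\vee(\tau)$ to a family of standard Dunkl operators on $\bbc[\fh]\otimes\tau^*$, where commutativity is built into the Cherednik algebra structure. To that end, extend the polynomial pairing of (\ref{e:polyduality}) to a nondegenerate bilinear form $\lpi\cdot,\cdot\rpi$ between $\bbc[\fh^*]\otimes\tau$ and $\bbc[\fh]\otimes\tau^*$, by tensoring with the canonical evaluation $\tau^*\times\tau\to\bbc$. Since transposition with respect to a nondegenerate pairing satisfies $(AB)^\bullet=B^\bullet A^\bullet$, and hence $[A,B]^\bullet=-[A^\bullet,B^\bullet]$, it is enough to prove that the transposes $\{(T_y^\vee(\tau))^\bullet\}_{y\in\fh}$ form a commuting family on $\bbc[\fh]\otimes\tau^*$.

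The transpose is computed termwise. Standard duality yields $(\mu_y)^\bullet=\partial_y$ and $s^\bullet=s^{-1}$ on $\bbc[\fh]\otimes\tau^*$, while Proposition \ref{p:calcduality}, applied under the standing convention $\alpha_{s^{-1}}=\alpha_s$, gives $(I_s^\vee)^\bullet=\Delta_{s^{-1}}$. Assembling these and reindexing the resulting sum by $s\mapsto s^{-1}$ (using $d_{s^{-1}}=d_s^\vee$ and $c(s^{-1})=\check c(s)$), one obtains
\[
(T_y^\vee(\tau))^\bullet = t\,\partial_y\otimes 1-\sum_{s\in\Cs}\frac{c'(s)}{d_s}\alpha_s(y)\,\Delta_s\otimes s,\qquad c'(s):=-\tfrac{d_s}{d_s^\vee}\check c(s).
\]
The function $c'$ is conjugation invariant because $c$ is a class function and $\lambda_s$ (hence $d_s,d_s^\vee$) depends only on the conjugacy class of $s$; so $\cha(G,\fh;t,c')$ is a bona fide rational Cherednik algebra.

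By Proposition \ref{p:commutation}, the right-hand side above coincides with the action of $y\in\fh$ on the standard module $M_{t,c'}(\tau^*)\cong\bbc[\fh]\otimes\tau^*$ of $\cha(G,\fh;t,c')$, with $\fh^*$ acting trivially on $\tau^*$. Since $\fh$ is an abelian subspace of the Cherednik algebra, its action on any module is by mutually commuting operators; in particular the $(T_y^\vee(\tau))^\bullet$ commute, and the nondegeneracy of $\lpi\cdot,\cdot\rpi$ then yields $[T_y^\vee(\tau),T_{y'}^\vee(\tau)]=0$.

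The delicate point is the duality bookkeeping for the reindexing $s\mapsto s^{-1}$, together with the verification that $c'$ is a legitimate conjugation-invariant parameter. A more direct computational alternative, patterned on the proof of Proposition \ref{p:x-Commutation}, should also work: one constructs an auxiliary operator $\Psi_s^\vee$ satisfying $[\mu_y,I_s^\vee]=-\alpha_s(y)\Psi_s^\vee$, which cancels the cross terms; the remaining pure $I^\vee$-quadratic part is then handled by the same kind of symmetry argument implicit in that earlier proposition.
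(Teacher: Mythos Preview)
Your argument is correct and takes a genuinely different route from the paper. The paper simply says the proof is ``completely analogous to that of Proposition~\ref{p:x-Commutation}'', i.e.\ one expands $[T^\vee_y(\tau),T^\vee_{y'}(\tau)]$ directly, introduces an auxiliary operator $\Psi_s^\vee$ with $[I_s^\vee,\mu_y]=\alpha_s(y)\Psi_s^\vee$ to kill the cross terms, and argues that the remaining quadratic $I^\vee$-part cancels by symmetry. Your approach instead transposes through the polynomial pairing and recognises $(T_y^\vee(\tau))^\bullet$ as the action of $y\in\fh$ on the standard module $M_{t,c'}(\tau^*)$ for a modified parameter $c'$, whence commutativity is inherited from the relation $[y,y']=0$ in the Cherednik algebra itself.

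What your approach buys is conceptual clarity: the lemma becomes a formal consequence of the PBW structure rather than a calculation, and the verification that $c'$ is a class function (indeed $c'(s)=\lambda_s\,\check c(s)$, since $d_s/d_s^\vee=-\lambda_s$) is immediate. What the paper's approach buys is self-containment and uniformity with the earlier argument for the $Q(x)$-operators; it also avoids having to track the duality bookkeeping for $I_s^\vee\leftrightarrow\Delta_{s^{-1}}$ and the reindexing $s\mapsto s^{-1}$, which you correctly flag as the delicate step. Your closing remark that the direct computational route ``should also work'' is exactly what the paper does.
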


\begin{proof}
The proof is completely analogous to that of Proposition \ref{p:x-Commutation}.
\end{proof}

\begin{remark}
We may refer to the operators $T_{y}^\vee(\tau)$ defined in (\ref{e:dual-dunkl}) as \emph{generalised dual Dunkl operators}. When $t=1$ and $\tau=\triv$, they are the dual (with respect to the pairing between $\bbc[\fh]$ and $\bbc[\fh^*]$) of the Dunkl operators for complex reflection groups studied in \cite{DJO} and \cite{DO}.
\end{remark}

\begin{proposition}
Let $\tau\in\Rep(G)$. The operator $D_\tau:\fX_{t,c}(\widetilde\tau)\to \fX_{t,c}(\widetilde\tau)$ satisfies the equation
\begin{equation}\label{e:DiracOp2}
\begin{aligned}
-D_\tau= \sum_j \delta^\vee_j \otimes \fs(y_j)+ \delta_j \otimes \fs(x_j) + T_j^\vee(\tau)\otimes\fs(x_j).
\end{aligned}
\end{equation}
\end{proposition}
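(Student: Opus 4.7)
I would argue by direct computation starting from the defining identity
\[\lpi \phi, D_\tau\xi\rpi = \lpi \check D_\tau\phi,\xi\rpi,\qquad \phi\in X_{t,\check c}(\widetilde\tau^*),\ \xi\in\fX_{t,c}(\widetilde\tau),\]
decomposing $\check D_\tau = \sum_j(\check D_\tau^{(1,j)}+\check D_\tau^{(2,j)})$ into the two natural summands coming from (\ref{e:dualDirac}), and determining the transpose of each piece under the pairing (\ref{e:Xduality}). The dualities I would invoke are: the polynomial transposes $\mu_x\leftrightarrow\partial_x$ and $\mu_y\leftrightarrow\partial_y$; the Clifford sign $\fs^*(v)\leftrightarrow-\fs(v)$ coming from $v^t=-v$; the $G$-duality $s\leftrightarrow s^{-1}$ on $\tau^*$ and on $S^*$; and the divided-difference/integral duality $\Delta_s\leftrightarrow I_{s^{-1}}^\vee$ of Proposition \ref{p:calcduality}. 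Since $x_j\in\fh^*$ already lies inside the polynomial factor $\bbc[\fh]$ of $\chad$, no commutation is needed for $\check D_\tau^{(1,j)}$: its transpose is $-\delta_j^\vee\otimes\fs(y_j)$, contributing the first summand $\delta_j^\vee\otimes\fs(y_j)$ of $-D_\tau$.

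The heart of the computation is $\check D_\tau^{(2,j)}$, which involves right multiplication by $y_j\in\fh$. Because $y_j$ sits in the factor $\bbc[\fh^*]=\Sym\fh$ of the PBW decomposition $\chad=\bbc[\fh^*]\otimes\bbc[\fh]\otimes\bbc G$, one must first PBW-reduce $py_j$. The necessary commutation is the $\chad$-analogue of Proposition \ref{p:commutation2}, obtained by formally swapping $\fh\leftrightarrow\fh^*$ and $c\to\check c$:
\[ [p,y]_{\chad}=t\,\partial_y(p)-\sum_{s}\frac{\check c(s)}{d_s}\,\alpha_s(y)\,\Delta_s(p)\,s,\qquad p\in\bbc[\fh],\ y\in\fh.\]
This splits $\check D_\tau^{(2,j)}$ into three sub-pieces, whose transposes produce respectively $-\delta_j\otimes\fs(x_j)$ (from the leading $y_jp$); $-t\mu_{y_j}\otimes\fs(x_j)$ (from the $t\partial_{y_j}(p)$ term, which accounts for the $t\mu_{y_j}\otimes 1$ part of $T_j^\vee(\tau)\otimes\fs(x_j)$); and a reflection-type term to be treated last.

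For the reflection term, the $s$-action on $\widetilde\tau^*=\tau^*\otimes S^*$ is diagonal and interacts with the Clifford action via $s(\fs^*(x_j)\omega^*)=\fs^*(s(x_j))s(\omega^*)$. Taking its transpose, re-indexing $s\mapsto s^{-1}$ (so that $\check c(s)\mapsto c(s)$, $d_s\mapsto d_s^\vee$, $\alpha_s\mapsto\alpha_s$, $I_{s^{-1}}^\vee\mapsto I_s^\vee$) and using $\sum_j\alpha_s(y_j)\fs(x_j)=\fs(\alpha_s)$, the contribution to $-D_\tau$, evaluated on $p'\otimes q'\otimes z\otimes\omega$, acquires the shape
\[-\sum_s\frac{c(s)}{d_s^\vee}\,p'\otimes I_s^\vee(q')\otimes s(z)\otimes\fs(\alpha_s)\,s(\omega).\]
The bridge to the target formula is the Clifford-theoretic identity
\[\fs(\alpha_s)\circ s=\lambda_s^{-1}\,\fs(\alpha_s)\qquad\text{as operators on }S,\]
whose proof is short but essential: $s\partial_{\alpha_s}s^{-1}=\partial_{s(\alpha_s)}=\lambda_s\partial_{\alpha_s}$, while $\im(\partial_{\alpha_s})\subseteq\bigwedge\fh^s$ (the $s$-fixed subspace $\fh^s=\ker\alpha_s$), so that $s\partial_{\alpha_s}=\partial_{\alpha_s}$ and hence $\partial_{\alpha_s}s=\lambda_s^{-1}\partial_{\alpha_s}$. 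Combined with the elementary scalar identity $\lambda_s^{-1}/d_s^\vee=-1/d_s$ (from $d_s^\vee=(1-\lambda_s^{-1})/2$ and $d_s=(1-\lambda_s)/2$), this rewrites the displayed expression as $\sum_s \tfrac{c(s)}{d_s}\,p'\otimes I_s^\vee(q')\otimes s(z)\otimes\fs(\alpha_s)\omega$, exactly matching the reflection summand of $\sum_j T_j^\vee(\tau)\otimes\fs(x_j)$, in which $s$ acts on $\tau$ alone. This Clifford identity---explaining how the a priori diagonal $s$-action on $\tau\otimes S$ collapses to an action on the $\tau$-factor only---is the principal obstacle; the remainder of the proof is careful bookkeeping of the signs from $\fs^*\leftrightarrow -\fs$ and of the inversions $s\leftrightarrow s^{-1}$ in the reflection sums.
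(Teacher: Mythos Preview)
Your proof is correct and follows essentially the same route as the paper's: dualise the two summands of $\check D_\tau$ separately, handle the $py_j$ term via the $\chad$-analogue of Proposition~\ref{p:commutation2}, re-index $s\mapsto s^{-1}$, and then collapse the diagonal $s$-action on $\tau\otimes S$ to an action on $\tau$ alone. The only cosmetic difference is in the final Clifford step: the paper works inside $\Cc$ and uses $\tfrac{1}{d_s^\vee}\alpha_s\tau_s^\vee=-\tfrac{1}{d_s}\alpha_s$ (recalling that $\tau_s^\vee$ acts as $s$ on $S$), whereas you prove the equivalent operator identity $\fs(\alpha_s)\circ s=\lambda_s^{-1}\fs(\alpha_s)$ directly on $S=\bigwedge\fh$ via $\partial_{\alpha_s}$; these are the same statement since $-d_s^\vee/d_s=\lambda_s^{-1}$.
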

\begin{proof}
We can assume that $\xi = p_0\otimes q_0 \otimes z \otimes \omega$, for $p_0\in\bbc[\fh],q_0\in\bbc[\fh^*],z\in\tau$ and $\omega\in S$. Now, for any $\phi = q\otimes p\otimes z^*\otimes \omega^*$ with $q\in\bbc[\fh^*],p\in\bbc[\fh],z^*\in\tau^*$ and $\omega^*\in S^*$, we have, 
\begin{align}\nonumber
\lpi \phi,D_\tau(\xi) \rpi &= \lpi \check D_\tau(\phi),\xi \rpi\\\label{eq:DOPinterm}
&= \sum_{j}\lpi q\otimes px_j\otimes z^*\otimes \fs^*(y_j)(\omega^*) + q\otimes p y_j\otimes z^*\otimes \fs^*(x_j)(\omega^*) ,\xi\rpi.
\end{align}
From (\ref{e:S-Cliffd}), we have $q\otimes p x_j\otimes z^*\otimes \fs^*(y_j)(\omega^*) = -q\otimes \mu_{x_j}(p)\otimes z^*\otimes \partial_{y_j}(\omega^*)$
which when dualised yield 
$\lpi q\otimes p x_j\otimes z^*\otimes \fs^*(y_j)(\omega^*),\xi\rpi =\lpi\phi, -p_0\otimes \partial_{x_j}(q_0)\otimes z\otimes \fs(y_j)(\omega)\rpi.$ 
This takes care of the first term of (\ref{eq:DOPinterm}). For the second term, we have

\begin{align*}
\sum_j q\otimes p y_j\otimes z^*\otimes \fs^*(x_j)(\omega^*)
%& = \sum_j\left\{\mu_{y_j}(q)\otimes p\otimes z^*\otimes \fs^*(x_j)(\omega^*) + q\otimes t\partial_{y_j}(p)\otimes z^*\otimes \fs^*(x_j)(\omega^*)\right.\\
%&&\mathllap{-\sum_s\frac{\check c(s)}{d_s}q\otimes \Delta_s(p) \otimes s(z^*)\otimes s(\fs^*(\alpha_s(y_j)x_j)(\omega^*))\}} \\
&=2\sum_j\left\{\mu_{y_j}(q)\otimes p\otimes z^*\otimes \mu_{x_j}(\omega^*) + q\otimes [p,y_j]\otimes z^*\otimes \mu_{x_j}(\omega^*)\right\}. 
%\\
%&&\mathllap{-2\sum_s\frac{\check c(s)}{d_s}q\otimes \Delta_s(p) \otimes s(z^*)\otimes s(\mu_{\alpha_s}(\omega^*)).}
\end{align*}
Expanding the bracket $[p,y_j]$ in $\chad$ (using Proposition \ref{p:commutation2}), dualising,
%\begin{multline*}
%-\sum_j\left\{\partial_{y_j}(p_0)\otimes q_0\otimes z\otimes \fs(x_j)(\omega) + p_0\otimes t\mu_{y_j}(q_0)\otimes z\otimes \fs(x_j)(\omega)\right\}\\
%+\sum_s\frac{\check c(s)}{d_s}p_0\otimes I_{s^{-1}}^\vee(q_0) \otimes s^{-1}(z)\otimes \fs(\alpha_s)(s^{-1}(\omega)).
%\end{multline*} 
exchanging $s$ by $s^{-1}$ in the sum, using $\alpha_s = \sum_j\alpha_s(y_j)x_j$ and observing that $d_{s^{-1}} = \frac{1-\lambda_{s^{-1}}}{2} = d_s^\vee$, we obtain the equation
\begin{multline}\label{e:DiracOp}
-D_\tau = \sum_j \left\{1\otimes \partial_{x_j}\otimes 1 \otimes \fs(y_j) + \partial_{y_j}\otimes 1\otimes 1 \otimes \fs(x_j)  +1\otimes t\mu_{y_j}\otimes 1 \otimes \fs(x_j)\right\} \\- \sum_s\frac{c(s)}{d^\vee_s}1\otimes I^\vee_s\otimes s\otimes \fs(\alpha_s)\circ s.
\end{multline}
Since $s(\omega)=\fs(\tau_s^\vee)(\omega)$, we have $\fs(\alpha_x)\circ s = \fs(\alpha_s\tau_s^\vee)$. From the straightforward equation $\frac 1{d_s^\vee}\alpha_s \tau_s^\vee=-\frac 1{d_s}\alpha_s$ (in  $\Cc$), we can rewrite (\ref{e:DiracOp}) as
\begin{equation}\label{e:DiracOp1}
\begin{aligned}
-D_\tau= \sum_j \left(1\otimes \partial_{x_j}\otimes 1 \otimes \fs(y_j)+ (\partial_{y_j}\otimes 1 + t\otimes \mu_{y_j})\otimes 1 \otimes \fs(x_j) \right) + \sum_s\frac{c(s)}{d_s}~1\otimes I^\vee_s\otimes s\otimes \fs(\alpha_s).
\end{aligned}
\end{equation}
From (\ref{e:DiracOp1}) and, again, the fact that $\alpha_s = \sum_j\alpha_s(y_j)x_j$, we obtain the desired equation.
\end{proof}

%\subsection{Easy consequences} We recall that our aim is to study the kernel of $D_\tau$. The following corollaries are immediate.
%
%\begin{corollary}\label{c:kercokerisHmod}
%The kernel and the cokernel of $D_\tau$ are $\cha$-modules.
%\end{corollary}
%
%\begin{proof}
%Follows because the $Q$-action commutes with $D_\tau$.
%\end{proof}
%
%Analogous to the local case of $D_X$, we may define the Dirac cohomology of $D_\tau$:
%\begin{equation}
%H_{D_\tau}=\ker D_\tau/(\ker D_\tau\cap \im D_\tau), \text{ which is an $\cha$-module}.
%\end{equation}
%
%\begin{corollary}\label{c:lowdegree}
%Let $z\neq 0$ be a fixed vector in $\tau$. Then, the operator $D_\tau\in\End_\bbc(\fX_{t,c}(\tau\otimes S))$ satisfy 
%\[D_\tau(p\otimes q \otimes z \otimes 1) = -\sum_j p\otimes \partial_{x_j}(q) \otimes z \otimes y_j.\]
%It follows that the kernel of $D_\tau$ contains a copy of $\fM_{t,c}(\tau)$ realised on the subspace $\bbc[\fh]\otimes \bbc \otimes \tau \otimes \bbc$.
%\end{corollary}
%
%\begin{proof}
%The formula is clear from (\ref{e:DiracOp}). As for the second statement, it follows from the description of the $Q$-operators action on the subspace $\bbc[\fh]\otimes \bbc \otimes \tau \otimes \bbc \cong \bbc[\fh]\otimes \tau$.
%\end{proof}

\subsection{The square of $D_\tau$}  Define the following elements:
\begin{equation}\label{e:nabla}
\begin{aligned}
\deg_{\fh^*} &= \sum_j\mu_{y_j}\partial_{x_j}&(\textup{in } \End(\bbc[\fh^*]))\\
\nabla &= \sum_j\partial_{y_j}\otimes\partial_{x_j}\otimes 1&(\textup{in } \End(\bbc[\fh]\otimes\bbc[\fh^*]\otimes\tau)).
\end{aligned}
\end{equation}
With respect to (\ref{e:deltas}), we have $\nabla = \sum_j\delta_j^\vee\delta_j=\sum_j\delta_j\delta_j^\vee$. We shall use equation (\ref{e:DiracOp2}) to compute $D_\tau^2$.  A large number of cancellations occur:
\begin{lemma}\label{l:somecancel}
The following identities hold in $\End(\fX(\widetilde\tau))$:
\[
\displaystyle
\begin{array}{rcr}
\displaystyle\sum_{i,j}\delta^\vee_i\delta^\vee_j\otimes \fs(y_iy_j) =0, &\qquad\qquad&  \displaystyle\sum_{i,j}\delta_i\delta_j\otimes \fs(x_ix_j) = 0,\\
\displaystyle\sum_{i,j}T_i^\vee(\tau)T^\vee_j(\tau)\otimes \fs(x_ix_j) = 0, &\qquad\qquad& \displaystyle\sum_{i,j}T_i^\vee(\tau)\delta_j\otimes \fs(x_ix_j) = 0.
\end{array}
\]

%\begin{align*}
%\sum_{i,j}\delta^\vee_i\delta^\vee_j\otimes \fs(y_iy_j) &= 0\\
%\sum_{i,j}\delta_i\delta_j\otimes \fs(x_ix_j)&=0\\
%\sum_{i,j}T_i^\vee(\tau)T^\vee_j(\tau)\otimes \fs(x_ix_j) &= 0\\
%\sum_{i,j}T_i^\vee(\tau)\delta_j\otimes \fs(x_ix_j) &= 0.
%\end{align*}
\begin{proof}
Clear, since in each case the part that acts on $\bbc[\fh]\otimes\bbc[\fh^*]\otimes\tau$ commutes while the Clifford algebra part anti-commutes.
\end{proof}

\end{lemma}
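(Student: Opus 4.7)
The plan is to verify each identity by the same antisymmetrization trick, which is why the four cases can be handled uniformly. In each sum $\sum_{i,j} A_i A_j \otimes \mathfrak{s}(v_i v_j)$ appearing in the statement, I will argue that the operators $A_i$ acting on $\bbc[\fh]\otimes\bbc[\fh^*]\otimes\tau$ pairwise commute, whereas the Clifford algebra vectors $v_i \in \fh$ (or $\fh^*$) pairwise anticommute inside $\Cc$. Swapping the dummy indices $i \leftrightarrow j$ then gives $\sum_{i,j} A_i A_j \otimes \mathfrak{s}(v_i v_j) = -\sum_{i,j} A_i A_j \otimes \mathfrak{s}(v_i v_j)$, forcing the sum to vanish.

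Concretely, first I will check the commutativity of the scalar parts. For the first two identities this is immediate: $\delta_i^\vee = 1 \otimes \partial_{x_i} \otimes 1$ and $\delta_j = \partial_{y_j} \otimes 1 \otimes 1$ are partial derivatives on disjoint tensor factors, so $[\delta_i^\vee, \delta_j^\vee] = 0$ and $[\delta_i, \delta_j] = 0$. For the third identity I will invoke the lemma stated immediately before the proposition, namely that the generalised dual Dunkl operators $\{T_i^\vee(\tau)\}$ commute. For the fourth identity I note that $T_i^\vee(\tau)$ acts trivially on the $\bbc[\fh]$-factor while $\delta_j$ acts trivially on $\bbc[\fh^*]\otimes\tau$, so $[T_i^\vee(\tau), \delta_j] = 0$.

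Next, for the Clifford side I will use the defining relation $vv' + v'v = -2(v,v')$ of $\Cc(V,(\cdot,\cdot))$. Since the extended bilinear form vanishes on $\fh\times\fh$ and on $\fh^*\times\fh^*$, we obtain $y_i y_j = -y_j y_i$ and $x_i x_j = -x_j x_i$ inside $\Cc$, hence the same relations hold after applying $\fs$. In particular $\fs(y_i^2) = 0 = \fs(x_i^2)$.

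With both ingredients in place, the antisymmetrization argument closes each identity: writing for instance
\begin{equation*}
\sum_{i,j}\delta_i^\vee \delta_j^\vee \otimes \fs(y_i y_j) = \sum_{i,j}\delta_j^\vee \delta_i^\vee \otimes \fs(y_i y_j) = -\sum_{i,j}\delta_i^\vee \delta_j^\vee \otimes \fs(y_j y_i),
\end{equation*}
and relabeling in the last sum yields the opposite of the original, so it is zero. The remaining three identities follow by the identical manipulation, substituting the appropriate commuting operators for $\delta_i^\vee$ and the appropriate anticommuting Clifford vectors ($x_i$'s) for $y_i$. There is no real obstacle here — the only thing to be careful about is making sure one has already established pairwise commutativity of $\{T_i^\vee(\tau)\}$, which is handled by the preceding lemma.
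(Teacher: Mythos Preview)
Your proof is correct and follows exactly the same approach as the paper's: you use that the scalar factors commute while the Clifford factors anticommute, then antisymmetrize. The paper condenses this into a single sentence, but your elaboration of why each pair of scalar operators commutes (disjoint tensor factors, or the preceding lemma on $\{T_i^\vee(\tau)\}$) is accurate and matches the intended argument.
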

\noindent For the next identities, given $\tau\in\Rep(G)$, if convenient we shall omit from the notation the tensor legs in which an operator of $\fX_{t,c}(\tau\otimes S)$ acts as the identity ({\it e.g.}, $\kappa = 1\otimes 1 \otimes 1 \otimes \kappa$, {\it etc.}, where $\kappa$ was defined in (\ref{e:tau})).
\begin{lemma}\label{l:idents}
Suppose that $\tau\in\Irr(G)$. The following identities hold in $\End(\fX_{t,c}(\tau\otimes S))$, where $r = \dim\fh$:
\begin{align}
\sum_{i,j}\delta_i\delta^\vee_j \otimes \fs(y_ix_j) + \delta^\vee_i\delta_j \otimes\fs(x_iy_j) &= -2\nabla\label{e:sqD1}\\
\sum_{i,j}1\otimes \partial_{x_i}\mu_{y_j}\otimes 1 \otimes \fs(y_ix_j) + 1\otimes \mu_{y_i}\partial_{x_j}\otimes 1 \otimes \fs(x_iy_j) &= -2\deg_{\fh^*} - r -  \kappa\label{e:sqD2}\\
%\sum_{i,\alpha>0}c_\alpha\otimes\Big(\partial_{x_i}I_{\alpha^\vee}\otimes s_\alpha\otimes y_i\alpha + I_{\alpha^\vee} \partial_{x_i}\otimes s_\alpha\otimes \alpha y_i\Big) &= 2\otimes\sum_{\alpha>0} c_\alpha s_\alpha\otimes s_\alpha\otimes s_\alpha - 2N_c(\tau)\label{e:sqD3}
\sum_{i}1\otimes\Big(\partial_{x_i}I_s^\vee\otimes s\otimes \fs(y_i\alpha_s) + I_s^\vee \partial_{x_i}\otimes s\otimes \fs(\alpha_s y_i)\Big) &= 2\otimes  (s\otimes s\otimes s - 1\otimes s\otimes 1).\label{e:sqD3}
\end{align}
\end{lemma}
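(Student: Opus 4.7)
The three identities follow by direct computation, separating the action on $\bbc[\fh]\otimes\bbc[\fh^*]\otimes\tau$ from the Clifford action on $S$. The key tools are the Clifford anticommutations $y_ix_j + x_jy_i = -2\delta_{ij}$ in $\Cc$, the canonical commutator $[\partial_{x_i},\mu_{y_j}] = \delta_{ij}$ on $\bbc[\fh^*]$, and the identities of Lemma~\ref{l:intops}.

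For~(\ref{e:sqD1}), since $\delta_i$ and $\delta_j^\vee$ commute, reindex the first sum via $i\leftrightarrow j$ and combine:
\begin{equation*}
\sum_{i,j}\delta_i\delta_j^\vee\otimes\fs(y_ix_j) + \delta_i^\vee\delta_j\otimes\fs(x_iy_j) = \sum_{i,j}\delta_i^\vee\delta_j\otimes\fs(y_jx_i + x_iy_j) = -2\sum_i \delta_i^\vee\delta_i = -2\nabla.
\end{equation*}
For~(\ref{e:sqD2}), the same reindexing together with $\mu_{y_j}\partial_{x_i} = \partial_{x_i}\mu_{y_j} - \delta_{ij}$ converts the left-hand side into
\begin{equation*}
\sum_{i,j}\partial_{x_i}\mu_{y_j}\otimes\fs(y_ix_j + x_jy_i) - \sum_i \fs(x_iy_i) = -2\sum_i(\mu_{y_i}\partial_{x_i} + 1) - \sum_i \fs(x_iy_i).
\end{equation*}
The first piece is $-2\deg_{\fh^*} - 2r$, and from $\kappa = \sum_i(x_iy_i + 1)$ the Clifford-side piece becomes $-\kappa + r$, yielding the stated $-2\deg_{\fh^*} - r - \kappa$.

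The identity~(\ref{e:sqD3}) is the one with real content, and the plan is to commute $\partial_{x_i}$ past $I_s^\vee$ using Lemma~\ref{l:intops}(4), which gives $\partial_{x_i}I_s^\vee = I_s^\vee\partial_{x_i} + d_s^\vee x_i(\alpha_s^\vee)\, s$ and splits the left-hand side of~(\ref{e:sqD3}) into a ``commutator piece'' and a ``symmetric piece''. For the commutator piece, contract $\sum_i x_i(\alpha_s^\vee)\,\fs(y_i\alpha_s) = \fs(\alpha_s^\vee\alpha_s)$ and recall that $\tau_s^\vee = d_s^\vee\alpha_s^\vee\alpha_s/2 + 1$ acts on $S$ as the reflection~$s$, so that $d_s^\vee\fs(\alpha_s^\vee\alpha_s) = 2(s - 1)$ on $S$; this contributes $s\otimes s\otimes 2(s-1) = 2(s\otimes s\otimes s - s\otimes s\otimes 1)$. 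For the symmetric piece, use $y_i\alpha_s + \alpha_s y_i = -2\alpha_s(y_i)$ in $\Cc$ to collapse the Clifford sum, and then $\sum_i \alpha_s(y_i)\partial_{x_i} = \partial_{\alpha_s}$ together with Lemma~\ref{l:intops}(3) to obtain $-2\,I_s^\vee\partial_{\alpha_s}\otimes s\otimes 1 = -2(1-s)\otimes s\otimes 1 = 2(s\otimes s\otimes 1 - 1\otimes s\otimes 1)$. Adding the two contributions yields the right-hand side.

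The only nontrivial input is the identification $\fs(\tau_s^\vee) = s|_S$, which is precisely what converts the scalar-plus-Clifford structure of the commutator $[\partial_{x_i},I_s^\vee]$ into the reflection~$s$ acting on~$S$ and produces the $s\otimes s\otimes s$ term; everything else is routine bookkeeping across the four tensor factors.
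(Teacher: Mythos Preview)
Your proof is correct and follows essentially the same approach as the paper's: for~(\ref{e:sqD3}) you use exactly the same splitting into a commutator piece (handled via Lemma~\ref{l:intops}(4) and $\fs(\tau_s^\vee)=s|_S$) and a symmetric piece (handled via $y_i\alpha_s+\alpha_s y_i=-2\alpha_s(y_i)$ and Lemma~\ref{l:intops}(3)). For~(\ref{e:sqD1}) and~(\ref{e:sqD2}) your reindexing $i\leftrightarrow j$ is a slightly cleaner bookkeeping device than the paper's diagonal/off-diagonal split, but the content is identical.
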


\begin{proof}
For (\ref{e:sqD1}), we break the sum $\sum_{i,j} = \sum_{i=j} + \sum_{i\neq j}$ and write
\begin{align*}
\sum_{i,j} %&= \sum_{i=j} + \sum_{i\neq j}\\
&= \sum_{i\neq j} + \sum_k\partial_{y_k}\otimes \partial_{x_k}\otimes 1 \otimes \fs(y_kx_k + 1) + \partial_{y_k}\otimes \partial_{x_k}\otimes 1 \otimes \fs(x_ky_k+1) - 2\nabla\\
&= -2\nabla,
\end{align*}
as $x_jy_i = -y_ix_j$ if $i\neq j$, causing $\sum_{i\neq j} = 0$. For (\ref{e:sqD2}), using again $\sum_{i,j} = \sum_{i=j} + \sum_{i\neq j}$ we obtain, since $\partial_{x_k}\mu_{y_k}+\mu_{y_k}\partial_{x_k} = 2\mu_{y_k}\partial_{x_k}+1$ in the Weyl algebra, that
\begin{align*}
\sum_{i,j} &= \sum_{i\neq j} + \sum_k1\otimes \partial_{x_k}\mu_{y_k}\otimes 1 \otimes \fs(y_kx_k + 1) + 1\otimes \mu_{y_k}\partial_{x_k}\otimes 1 \otimes \fs(x_ky_k + 1) - 2\deg_{\fh^*} -r\\
&= - 2\deg_{\fh^*} - r - \sum_k1\otimes [\partial_{x_k},\mu_{y_k}]\otimes 1 \otimes \fs(x_ky_k + 1) + \sum_{i\neq j} 1\otimes [\partial_{x_i},\mu_{y_j}]\otimes 1 \otimes \fs(y_ix_j),
%&= - 2\deg_{\fh^*} - r -\kappa.
\end{align*}
and thus the claim. Finally, for the last equation, we get that the left hand side equals:
\begin{align*}
& \sum_i[\partial_{x_i}, I_s^\vee]\otimes s\otimes \fs(y_i\alpha_s)-2\sum_i \alpha_s(y_i) I_s^\vee \partial_{x_i}\otimes s\otimes 1,&\text{(using $\alpha_sy_i=-y_i\alpha_s-2\alpha_s(y_i)$ in $\Cc$)}\\
&=\sum_i d_s^\vee x_i(\alpha_s^\vee) s\otimes s\otimes \fs(y_i \alpha_s)-2 I_s^\vee\partial_{\alpha_s}\otimes s\otimes 1 &\text{(using Lemma \ref{l:intops})}\\
&= d_s^\vee s\otimes s\otimes \fs(\alpha_s^\vee \alpha_s) - 2(1-s) \otimes s\otimes 1 &\text{(again by Lemma \ref{l:intops})}\\
&=2(s\otimes s\otimes \tau_s - 1\otimes s\otimes 1),
\end{align*}
finishing the proof.
\end{proof}

\begin{corollary}\label{c:idents2}
 We have 
 \[\sum_{i,j}\Big(\delta_i^\vee T_j^\vee(\tau)\otimes\fs(y_ix_j) + T_i^\vee(\tau)\delta_j^\vee\otimes\fs(y_ix_j)\Big) = -2t(\deg_{\fh^*} + r/2 +  \kappa/2) - 2\otimes\sum_s\frac{c(s)}{d_s} (1\otimes s\otimes 1-s\otimes s\otimes s).\]
\end{corollary}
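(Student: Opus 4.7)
The proof will be a direct assembly from the identities already established in Lemma \ref{l:idents}. The plan is to substitute the explicit formula
\[T_j^\vee(\tau) = t\,(1\otimes\mu_{y_j}\otimes 1) + \sum_s\frac{c(s)}{d_s}\alpha_s(y_j)\,(1\otimes I_s^\vee\otimes s)\]
from Definition \ref{d:dualDunklops} into the left-hand side. This splits the expression into a piece proportional to $t$, arising from the multiplication-by-$\mu_{y_j}$ summand, and a piece bilinear in $c(s)$ and $I_s^\vee$.

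The $t$-piece is, after factoring out $t$, exactly the left-hand side of identity \eqref{e:sqD2} (with the second Clifford factor $\fs(x_iy_j)$ coming from the ordering of $T_i^\vee\delta_j^\vee$), so it evaluates to $-2t(\deg_{\fh^*}+r/2+\kappa/2)$. For the reflection-piece, the key manoeuvre is to collapse the outer sums inside the Clifford arguments using $\sum_j \alpha_s(y_j)\, x_j = \alpha_s$ (and the mirror identity $\sum_i \alpha_s(y_i)\,x_i = \alpha_s$). Thus, for each fixed $s$,
\[\sum_{i,j}\alpha_s(y_j)\,\partial_{x_i} I_s^\vee \otimes s\otimes \fs(y_i x_j) \;=\; \sum_i \partial_{x_i}I_s^\vee\otimes s\otimes\fs(y_i\alpha_s),\]
and analogously $\sum_{i,j}\alpha_s(y_i)\,I_s^\vee\partial_{x_j}\otimes s\otimes\fs(x_iy_j) = \sum_j I_s^\vee\partial_{x_j}\otimes s\otimes\fs(\alpha_s y_j)$. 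What remains is precisely the left-hand side of identity \eqref{e:sqD3}, summed over $s$ with coefficient $c(s)/d_s$, so the reflection-piece evaluates to $2\sum_s\frac{c(s)}{d_s}(s\otimes s\otimes s - 1\otimes s\otimes 1)$. Adding the two contributions delivers the right-hand side of the corollary.

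The proof presents no substantive obstacle beyond bookkeeping: the real content — the cancellations producing the $\kappa$-term and the anticommutator formula for $I_s^\vee$ with $\partial_{x_i}$ that reproduces the spinorial reflections $\tau_s^\vee$ — was already carried out in Lemma \ref{l:idents}. The only care needed is in tracking the ordering of the Clifford arguments ($y_i x_j$ versus $x_i y_j$) as they come out naturally from the products $\delta_i^\vee T_j^\vee$ and $T_i^\vee \delta_j^\vee$ respectively, so that each half of the left-hand side matches the corresponding half of the relevant identity of Lemma \ref{l:idents}.
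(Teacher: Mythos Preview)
Your proposal is correct and follows exactly the approach indicated in the paper, which simply says to apply \eqref{e:sqD2} and \eqref{e:sqD3} in light of the definition \eqref{e:dual-dunkl} of the dual Dunkl operators. You have in fact been more explicit than the paper about the bookkeeping step of collapsing $\sum_j \alpha_s(y_j)x_j=\alpha_s$ inside the Clifford argument, and you correctly flagged that the second Clifford factor should read $\fs(x_iy_j)$ (as dictated by the product $T_i^\vee(\tau)\delta_j^\vee$ in the expansion of $D_\tau^2$), which is what is needed to match \eqref{e:sqD2} and \eqref{e:sqD3}.
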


\begin{proof}
Just apply (\ref{e:sqD2}) and (\ref{e:sqD3}), in view of the definition of the dualised Dunkl operators in (\ref{e:dual-dunkl}).
\end{proof}

\begin{proposition}\label{p:D-tau-squared}
If $\tau\in\Irr(G)$, we have the following formula for the square of $D_\tau$ in $\End(\fX_{t,c}(\tau\otimes S))$:
\begin{equation}\label{e:D-tau-squared}
-\frac 12 D_\tau^2 = \nabla+t( \deg_{\fh^*}+ \kappa/2 + r/2) + N_c(\tau) - 1\otimes\sum_{s} \frac{c(s)}{d_s} s\otimes s \otimes s.
\end{equation}
%where $\kappa\in \Cc$ was defined in (\ref{e:tau}), $\deg_{\fh^*}$ and $\nabla$ were defined in (\ref{e:nabla}), $r=\dim\fh$, and $N_c(\tau)$ is the scalar by which $\sum_{s}\frac {c(s)}{d_s} s$ acts on $\tau$.
\end{proposition}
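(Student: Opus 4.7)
The plan is to square the formula~(\ref{e:DiracOp2}) for $D_\tau$ and then reduce the result using the identities already assembled in Lemma~\ref{l:somecancel}, Lemma~\ref{l:idents}, and Corollary~\ref{c:idents2}. Setting $A_j=\delta_j^\vee\otimes\fs(y_j)$, $B_j=\delta_j\otimes\fs(x_j)$, and $C_j=T_j^\vee(\tau)\otimes\fs(x_j)$ so that $-D_\tau=\sum_j(A_j+B_j+C_j)$, expanding the square produces nine bilinear sums. By Lemma~\ref{l:somecancel}, the three ``pure'' sums $\sum_{i,j}A_iA_j$, $\sum_{i,j}B_iB_j$, $\sum_{i,j}C_iC_j$ all vanish, as do the two $(B,C)$ cross-sums: in each case the module-part operators commute while the Clifford factor anti-commutes. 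Hence only the $(A,B)$ and $(A,C)$ cross-sums survive.

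The $(A,B)$ contribution is exactly the left-hand side of~(\ref{e:sqD1}), giving $-2\nabla$. For the $(A,C)$ piece, I would split $T_j^\vee(\tau)$ into its two summands from~(\ref{e:dual-dunkl}): the multiplication part $t\mu_{y_j}\otimes 1$ feeds into~(\ref{e:sqD2}) and yields $-2t(\deg_{\fh^*}+r/2+\kappa/2)$; the reflection part $\sum_s\tfrac{c(s)}{d_s}\alpha_s(y_j)I_s^\vee\otimes s$ feeds, after using $\alpha_s=\sum_k\alpha_s(y_k)x_k$ to fold the $j$-dependence into the Clifford side, into~(\ref{e:sqD3}), yielding $2\sum_s\tfrac{c(s)}{d_s}(s\otimes s\otimes s - 1\otimes s\otimes 1)$. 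By Schur's lemma applied to the central element $\sum_s\tfrac{c(s)}{d_s}s\in Z(\bbc G)$ (central because $c$ is conjugation-invariant and $d_s$ is a class function of $s$), the summand $\sum_s\tfrac{c(s)}{d_s}(1\otimes s\otimes 1)$ acts on the irreducible $\tau$ by the scalar $N_c(\tau)$. Collecting contributions and dividing by $-2$ gives exactly~(\ref{e:D-tau-squared}).

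The main obstacle is the careful bookkeeping encoded in~(\ref{e:sqD3}): one must combine the commutator $[\partial_{x_i},I_s^\vee]=d_s^\vee x_i(\alpha_s^\vee)s$ of Lemma~\ref{l:intops} with the Clifford relation $\alpha_s^\vee\alpha_s+\alpha_s\alpha_s^\vee=-4$ and with the realisation $\tau_s^\vee=\tfrac{d_s^\vee}{2}\alpha_s^\vee\alpha_s+1$ of $s$ inside $\Cc$ from section~3.2. It is precisely this combination that produces the separate $1\otimes s\otimes 1$ summand (responsible for the $N_c(\tau)$ term) alongside the diagonal $s\otimes s\otimes s$ summand appearing in~(\ref{e:D-tau-squared}). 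A secondary nuisance is that $A_iC_j$ produces Clifford elements $\fs(y_ix_j)$ while $C_iA_j$ produces $\fs(x_iy_j)$; this is reconciled via $\fs(x_iy_j)=-\fs(y_jx_i)-2\delta_{ij}$, and the resulting diagonal correction gets absorbed into the $\deg_{\fh^*}$ and $\kappa$ terms on the way to~(\ref{e:sqD2}).
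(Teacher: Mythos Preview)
Your proposal is correct and follows essentially the same route as the paper's proof: square (\ref{e:DiracOp2}), discard the five vanishing bilinear sums via Lemma~\ref{l:somecancel}, and evaluate the surviving $(A,B)$ and $(A,C)$ cross-terms using (\ref{e:sqD1}) and Corollary~\ref{c:idents2} (which packages (\ref{e:sqD2}) and (\ref{e:sqD3}) together), then invoke Schur's lemma for the $N_c(\tau)$ scalar. Your additional remarks on how (\ref{e:sqD3}) is established and how the $\fs(x_iy_j)$ versus $\fs(y_ix_j)$ discrepancy is absorbed are accurate elaborations of steps the paper leaves to Lemma~\ref{l:idents}.
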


\begin{proof}
When we compute $D_\tau^2$, using (\ref{e:DiracOp2}), the terms that survive the cancellations of Lemma \ref{l:somecancel} are
\begin{align*}
\sum_{i,j}\Big(\delta_i\delta^\vee_j\otimes \fs(y_ix_j) + \delta_i \delta_j^\vee \otimes \fs(x_iy_j) \Big)\\
\sum_{i,j}\Big(\delta_i^\vee T_j^\vee(\tau)\otimes\fs(y_ix_j) + T_i^\vee(\tau)\delta_j^\vee\otimes\fs(y_ix_j)\Big),
\end{align*}
which were treated by Lemma \ref{l:idents} and Corollary \ref{c:idents2}.
\end{proof}

\begin{corollary}
Let $\sigma,\tau\in\Irr(G)$. Suppose that $\sigma$ occurs in the subspace $\bbc\otimes\bbc[\fh^*]_d\otimes\tau\otimes {\bigwedge}^{\!\ell}\fh$ of $\fX_{t,c}(\tau\otimes S)$. Then, $D^2_\tau$ acts on this copy of $\sigma$ as the scalar
\[-\frac 12 D^2_\tau\mid_\sigma = t(d + \ell) + N_c(\tau) - N_c(\sigma).\]
In particular, $L_{t,c}(\sigma)\cong Q(\cha)\cdot \sigma$ is in the kernel of $D^2_\tau$ if and only if \[N_c(\sigma) - N_c(\tau) =t( d+\ell).\]
\end{corollary}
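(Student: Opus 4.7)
\medskip

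\noindent\textbf{Proof plan.} The strategy is to evaluate each of the terms in the formula from Proposition~\ref{p:D-tau-squared} on the specified subspace. Concretely, I would fix a copy of $\sigma$ inside $\bbc\otimes \bbc[\fh^*]_d\otimes \tau\otimes {\bigwedge}^{\!\ell}\fh\subseteq \fX_{t,c}(\tau\otimes S)$ (here the first factor $\bbc$ denotes the degree-zero part $\bbc\subseteq \bbc[\fh]$) and compute the action of each summand of
\[-\tfrac 12 D_\tau^2 = \nabla+t(\deg_{\fh^*}+ \kappa/2 + r/2) + N_c(\tau) - 1\otimes\sum_{s}\tfrac{c(s)}{d_s}\, s\otimes s \otimes s\]
on this copy.

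First, $\nabla=\sum_j\delta_j\delta_j^\vee$ acts trivially because $\delta_j=\partial_{y_j}\otimes 1\otimes 1\otimes 1$ kills the constant polynomial $1\in\bbc\subseteq\bbc[\fh]$. Second, $\deg_{\fh^*}$ acts as multiplication by $d$ on $\bbc[\fh^*]_d$, and the spin action of $\kappa/2$ on ${\bigwedge}^{\!\ell}\fh$ equals $-r/2+\ell$ (as recalled right after Proposition~\ref{p:D^2}); hence $t(\deg_{\fh^*}+\kappa/2+r/2)$ contributes the scalar $t(d+\ell)$. Third, $N_c(\tau)$ is already a scalar, by definition. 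Finally, on the factors $\bbc[\fh^*]\otimes \tau\otimes S$ the operator $s\otimes s\otimes s$ is precisely the diagonal $G$-action on the tensor product, so $\sum_s\tfrac{c(s)}{d_s}s\otimes s\otimes s$ acts on the whole $\sigma$-isotypic component (and in particular on our chosen copy of $\sigma$) by the central scalar $N_c(\sigma)$. Collecting these contributions gives
\[-\tfrac 12 D_\tau^2\mid_\sigma = t(d+\ell) + N_c(\tau)-N_c(\sigma),\]
as required.

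For the \emph{in particular} statement, recall from Lemma~\ref{l:h-singular} that every vector of $\bbc\otimes \bbc[\fh^*]_d\otimes \tau\otimes S$ is $\fh$-singular, and that the copy of $\sigma$ chosen above generates an $\cha$-submodule isomorphic to $L_{t,c}(\sigma)$ under the $Q$-action. Since, by construction (see the paragraph introducing $D_\tau$), the global Dirac operator commutes with the $Q$-action of $\cha$ on $\fX_{t,c}(\tau\otimes S)$, so does its square. Therefore $D_\tau^2$ restricted to $Q(\cha)\cdot \sigma\cong L_{t,c}(\sigma)$ is completely determined by its action on the cyclic $G$-submodule $\sigma$: $L_{t,c}(\sigma)\subseteq \ker D_\tau^2$ if and only if the scalar computed above vanishes, i.e.\ iff $N_c(\sigma)-N_c(\tau)=t(d+\ell)$.

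No step in this argument looks genuinely hard; the only point requiring a little care is the bookkeeping of which tensor factor each piece of the formula in Proposition~\ref{p:D-tau-squared} acts on, so that $\kappa/2$ is correctly identified with the spin-module scalar and $\sum_s\tfrac{c(s)}{d_s}s\otimes s\otimes s$ is correctly recognised as the central $G$-element acting diagonally, hence by $N_c(\sigma)$ on the chosen copy of $\sigma$.
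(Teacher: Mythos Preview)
Your proof is correct and follows essentially the same approach as the paper's own proof: evaluate each summand of the formula in Proposition~\ref{p:D-tau-squared} on the given subspace (where $\nabla$ vanishes, $\deg_{\fh^*}$ gives $d$, $\kappa/2$ gives $\ell-r/2$, and the diagonal $G$-element gives $N_c(\sigma)$), then invoke Lemma~\ref{l:h-singular} for the second claim. Your treatment of the ``in particular'' part is in fact more explicit than the paper's, which simply cites Lemma~\ref{l:h-singular} without spelling out that the commutation of $D_\tau^2$ with the $Q$-action is what propagates vanishing on $\sigma$ to vanishing on all of $Q(\cha)\cdot\sigma$.
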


\begin{proof}
This is because on $\bbc\otimes\bbc[\fh^*]_d\otimes\tau\otimes {\bigwedge}^{\!\ell}\fh$, $\nabla$ acts by $0$, $\deg_{\fh^*}$ acts by $d$ on $\bbc[\fh^*]_d$, and $\kappa$ acts on ${\bigwedge}^{\!\ell}\fh$ by $2\ell-r$. For the second claim, recall that $L_c(\sigma)\cong Q(\cha)\cdot \sigma$ by Lemma \ref{l:h-singular}.
\end{proof}

\subsection{The kernel and cokernel of $D_\tau$} Since $D_\tau$ commutes with the $Q$-action of $\cha$, it is clear that both the kernel and the cokernel of $D_\tau$ are $\cha$-modules. In this subsection, we show that when $t\neq 0$, actually, they lie in category $\Co$. Before that, we state a corollary from the constructions made so far:

\begin{corollary}\label{c:lowdegree}
Let $z\neq 0$ be a fixed vector in $\tau$. Then, the operator $D_\tau\in\End_\bbc(\fX_{t,c}(\widetilde\tau))$ satisfy 
\[D_\tau(p\otimes q \otimes z \otimes 1) = -\sum_j p\otimes \partial_{x_j}(q) \otimes z \otimes y_j.\]
It follows that the kernel of $D_\tau$ contains a copy of $\fM_{t,c}(\tau)$ realised on the subspace $\bbc[\fh]\otimes \bbc \otimes \tau \otimes \bbc$.
\end{corollary}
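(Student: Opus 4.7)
The plan is to compute $D_\tau$ directly on vectors of the form $p\otimes q\otimes z\otimes 1$ using the explicit formula (\ref{e:DiracOp2}), where the final ``$1$'' denotes the generator $1\in\bigwedge^{\!0}\fh\subseteq S$. The key observation is that the Clifford action on this generator is very asymmetric: by (\ref{e:S-Cliff}), $\fs(y_j)(1)=y_j\wedge 1 = y_j\in{\bigwedge}^{\!1}\fh$, while $\fs(x_j)(1)=-2\partial_{x_j}(1)=0$. Consequently, in the expansion
\[
-D_\tau=\sum_j\delta^\vee_j\otimes\fs(y_j)+\sum_j\delta_j\otimes\fs(x_j)+\sum_j T^\vee_j(\tau)\otimes\fs(x_j),
\]
the second and third sums annihilate $p\otimes q\otimes z\otimes 1$, while the first sum contributes $\sum_j p\otimes\partial_{x_j}(q)\otimes z\otimes y_j$ since $\delta^\vee_j=1\otimes\partial_{x_j}\otimes 1$. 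This yields the claimed formula.

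Next, specialising to $q=1\in\bbc=\bbc[\fh^*]_{\leq 0}$ gives $\partial_{x_j}(q)=0$ for every $j$, so $D_\tau$ vanishes identically on the subspace $\fN:=\bbc[\fh]\otimes\bbc\otimes\tau\otimes\bbc$. To finish I need to verify that $\fN$ is $\cha$-stable under the $Q$-action and isomorphic to $\fM_{t,c}(\tau)$. For stability: $\fN$ is contained in $F_0(\fX_{t,c}(\widetilde\tau))=\bbc[\fh]\otimes\bbc\otimes\widetilde\tau$, which is $\cha$-stable by Proposition \ref{p:filtration}(1). Within $F_0$, the $\bbc$-factor coming from $\bigwedge^{\!0}\fh\subseteq S$ is a trivial $G$-subrepresentation (the $G$-action on $S$ being the exterior power of the action on $\fh$, which preserves the grading), so $Q(s)$ preserves $\fN$. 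Inspecting the remaining generators, $Q(y)$ acts only on the $\bbc[\fh]$-factor, and $Q(x)$ reduces on $\fN$ to
\[
Q(x)(p\otimes 1\otimes z\otimes 1)=t\mu_x(p)\otimes 1\otimes z\otimes 1-\sum_s\frac{c(s)}{d_s}x(\alpha_s^\vee)I_s(p)\otimes 1\otimes s(z)\otimes 1,
\]
because $\partial_x(1)=0$ and $s$ fixes $1\in\bigwedge^{\!0}\fh$. Thus $\fN$ is $\cha$-stable, and the identification $\fN\cong\bbc[\fh]\otimes\tau$ (forgetting the two $\bbc$-factors) carries the $Q$-action above to precisely the action defining $\fM_{t,c}(\tau)$ in Theorem \ref{t:M-module}.

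Since every vector of $\fN$ lies in $\ker D_\tau$, this realises $\fM_{t,c}(\tau)$ as a submodule of $\ker D_\tau$. There is essentially no obstacle here beyond bookkeeping; the only mild subtlety is keeping track of the four tensor legs and noting that the trivial $G$-action on $\bigwedge^{\!0}\fh$ makes the identification with the costandard module $\fM_{t,c}(\tau)$ exact rather than merely up to a twist.
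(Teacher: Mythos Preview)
Your proof is correct and follows exactly the approach the paper intends: the paper's own proof simply says the formula ``is clear from (\ref{e:DiracOp2})'' and that the second statement ``follows from the description of the $Q$-operators action on the subspace $\bbc[\fh]\otimes\bbc\otimes\tau\otimes\bbc\cong\bbc[\fh]\otimes\tau$.'' You have merely spelled out these two sentences, correctly observing that $\fs(x_j)(1)=0$ kills all but the $\delta_j^\vee$ terms and then matching the restricted $Q$-action with Theorem \ref{t:M-module}.
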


\begin{proof}
The formula is clear from (\ref{e:DiracOp2}). As for the second statement, it follows from the description of the $Q$-operators action on the subspace $\bbc[\fh]\otimes \bbc \otimes \tau \otimes \bbc \cong \bbc[\fh]\otimes \tau$.
\end{proof}

We recall that $\cha\otimes\Cc$ is viewed as a $\bbz$-graded algebra graded by total degree, in which $\fh$ has degree $-1$, $\fh^*$ has degree $+1$ and $\bbc G$ has degree $0$ (note, in passing, that the elements $\tau_s^\vee$ and $\tau_s$ of (\ref{e:tau}) are, therefore, in degree $0$, which is consistent with the fact that they act as $s$ or $s^{-1}$ in the spin modules). We give the module $\fX(\widetilde\tau)$ a similar grading. Denote 
\[\fX(\widetilde\tau)_{m,d}^\ell=\bbc[\fh]_m\otimes \bbc[\fh^*]_d\otimes\tau\otimes{\bigwedge}^{\!\ell}\fh,
\]
and for each $n\in\bbz$ let $\fX(\widetilde\tau)_n = \oplus_{m-d-\ell=n} \fX(\widetilde\tau)_{m,d}^\ell$. This is the grading we shall be considering in this subsection. Two observations are important for what comes next: (i) the operator $D_\tau$ is homogeneous of degree $0$ and (ii) $D_\tau$ commutes with the $G$- action (by means of the $Q$-operators). It follows that $D_\tau$ is an operator on the isotypic components $\fX(\widetilde\tau)_{n,\sigma} = \fX_{t,c}(\widetilde\tau)_n\cap\fX_{t,c}(\widetilde\tau)_\sigma$, for each $\sigma\in\Irr(G)$ and $n\in\bbz$.

\begin{theorem}\label{t:fredholm}
Suppose $t\neq 0$. The kernel and the cokernel of $D^2_\tau$ are in $\Co_{t,c}(G,\fh)$.
\end{theorem}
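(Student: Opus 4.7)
The plan is to exploit the ascending filtration $F_\bullet$ from Definition \ref{d:filtration} applied to $\widetilde\tau$ in place of $\tau$: put $F_n := \bbc[\fh]\otimes\bbc[\fh^*]_{\leq n}\otimes\widetilde\tau$. By Proposition \ref{p:filtration} each $F_n$ is an $\cha$-submodule of $\fX_{t,c}(\widetilde\tau)$ that already lies in $\Co_{t,c}(G,\fh)$. My goal is to show that $\ker D_\tau^2\subseteq F_N$ and $\fX_{t,c}(\widetilde\tau)=F_N+\im D_\tau^2$ for some $N$; both containments yield objects of $\Co_{t,c}(G,\fh)$ upon passing to the kernel and cokernel, since $\Co_{t,c}(G,\fh)$ is closed under submodules and quotients.

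First I would show that $D_\tau^2$ preserves $F_n$. Reading off Proposition \ref{p:D-tau-squared}, the only summand of $D_\tau^2$ capable of moving outside the filtration step is $\nabla$, and it strictly \emph{decreases} $\bbc[\fh^*]$-degree. All other terms either respect the $\bbc[\fh^*]$-grading ($t\deg_{\fh^*}$ and the scalars $t\kappa/2+tr/2+N_c(\tau)$ act by scalars on fixed degree components) or act only on the ``internal'' factors (the element $\sum_s\tfrac{c(s)}{d_s}\,1\otimes s\otimes s\otimes s$ is the diagonal $G$-action on $\bbc[\fh^*]\otimes\tau\otimes S$). On the associated graded $F_n/F_{n-1}\cong\bbc[\fh]\otimes(\bbc[\fh^*]_n\otimes\widetilde\tau)$ the $\nabla$ term vanishes, so the induced operator takes the form $1_{\bbc[\fh]}\otimes A_n$ where $A_n$ is an endomorphism of the finite-dimensional space $\bbc[\fh^*]_n\otimes\widetilde\tau$. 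By Schur's lemma applied to the $G$-invariant element $\sum_s\tfrac{c(s)}{d_s}s\in\bbc G$, acting diagonally on $\bbc[\fh^*]_n\otimes\tau\otimes\bigwedge^\ell\fh$, $A_n$ is semisimple and acts as the scalar
\[
-\tfrac{1}{2}A_n = t(n+\ell)+N_c(\tau)-N_c(\sigma)
\]
on the $\sigma$-isotypic subspace of $\bbc[\fh^*]_n\otimes\tau\otimes\bigwedge^\ell\fh$.

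The key observation comes next: because $\Irr(G)$ is finite and $0\leq\ell\leq r$, the values $|N_c(\tau)-N_c(\sigma)|$ are uniformly bounded. Hence, when $t\neq 0$, there exists $N$ such that for every $n>N$ the scalars above are all nonzero, making $A_n$ (and therefore $\overline{D_\tau^2}$ on $F_n/F_{n-1}$) invertible. For the kernel: any nonzero $v\in\ker D_\tau^2$ contained in $F_n\setminus F_{n-1}$ with $n>N$ would produce a nonzero element $\bar v\in\ker(\overline{D_\tau^2}|_{F_n/F_{n-1}})$, contradicting invertibility; thus $\ker D_\tau^2\subseteq F_N$, which lies in $\Co_{t,c}(G,\fh)$, so the kernel does too. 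For the cokernel: surjectivity of $\overline{D_\tau^2}$ for $n>N$ yields $F_n\subseteq F_{n-1}+\im D_\tau^2$, and a straightforward induction gives $\fX_{t,c}(\widetilde\tau)=\bigcup_n F_n=F_N+\im D_\tau^2$; therefore $\coker D_\tau^2$ is a quotient of $F_N/(F_N\cap\im D_\tau^2)\in\Co_{t,c}(G,\fh)$.

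The main obstacle will be the careful bookkeeping of the $G$-action in the graded analysis: recognising that the term $1\otimes\sum_s\tfrac{c(s)}{d_s}\,s\otimes s\otimes s$ of Proposition \ref{p:D-tau-squared} acts as the \emph{diagonal} $G$-action on $\bbc[\fh^*]\otimes\tau\otimes S$ while the outer $\bbc[\fh]$-factor is untouched, so that the relevant Schur scalar $N_c(\sigma)$ refers to irreducible constituents of the ``internal'' tensor product $\bbc[\fh^*]_n\otimes\widetilde\tau$ rather than of the full module $\fX_{t,c}(\widetilde\tau)$. Once this identification is clear, the remainder is a routine filtration/scalar-bound argument that works uniformly in $t\neq 0$.
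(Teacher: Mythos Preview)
Your argument is correct and follows the same underlying idea as the paper---use the formula of Proposition~\ref{p:D-tau-squared} to see that, modulo the ``lower--order'' term $\nabla$, the operator $D_\tau^2$ acts by scalars that are eventually nonzero, and then run a leading--term argument. The organization, however, is genuinely different and in one respect cleaner. The paper fixes the total $\bbz$--degree $n$ and the $\sigma$--isotypic component for the \emph{full} $Q$--action, and then decomposes by the $\bbc[\fh]$--degree $m$; it asserts that on $\fX(\widetilde\tau)_{n,\sigma}\{m\}$ one has $-\tfrac{1}{2}D_\tau^2=\nabla+\mu(m)\,\mathrm{Id}$ with $\mu(m)=N_c(\tau)-N_c(\sigma)+t(m-n)$. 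As you correctly flag in your last paragraph, the term $1\otimes\sum_s\tfrac{c(s)}{d_s}\,s\otimes s\otimes s$ is the diagonal $G$--action on the \emph{internal} factors $\bbc[\fh^*]\otimes\tau\otimes S$ only, so it does not in general act by a single scalar on the full--$Q$ isotypic piece (for instance, take $G=S_3$, $m=2$: the internal types contributing to a fixed full $\sigma$ can have distinct $N_c$--values). Your choice to work with the filtration $F_\bullet$ of Definition~\ref{d:filtration} and to diagonalize $A_n$ via the \emph{internal} isotypic decomposition of $\bbc[\fh^*]_n\otimes\tau\otimes{\bigwedge}^{\!\ell}\fh$ sidesteps this issue entirely and makes the bound on $N$ transparent. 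What the paper's decomposition buys is compatibility with the $\cha$--grading throughout; what your filtration buys is that each $F_n$ is already known to lie in $\Co_{t,c}(G,\fh)$ by Proposition~\ref{p:filtration}, so the conclusion is immediate once invertibility on the associated graded is established.
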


\begin{proof}
We start with the kernel. Fix $\sigma\in\Irr(G)$. For $n,m\in \bbz$ write
\begin{equation}\label{e:catO1}
\fX(\widetilde\tau)_{n,\sigma}\{m\} = \bigoplus_{(d,\ell)\colon d+\ell = m-n} \fX(\widetilde\tau)_{m,d}^\ell.
\end{equation}
That is,  $\fX(\widetilde\tau)_{n,\sigma}\{m\}$ is the direct sum of all spaces in the degree-$n$ piece $\fX(\widetilde\tau)_{n,\sigma}$ with a fixed $m\in\bbz$, which we can assume to be in $\bbz_{\geq 0}$. Note that $D_\tau^2$ does not preserve $\fX(\widetilde\tau)_{n,\sigma}\{m\}$. However, from (\ref{e:D-tau-squared}) it satisfies the equation
\begin{equation}\label{e:catO2}
-\tfrac{1}{2}D^2_\tau|_{\fX(\widetilde\tau)_{n,\sigma}\{m\}} = \nabla + \mu(m)\textup{Id},
\end{equation}
where $\nabla$ was defined in (\ref{e:nabla}) and $\mu(m) = N_c(\tau)-N_c(\sigma) + t(m-n)$ is a scalar that only depends on $m$ (we recall that $d+\ell = m-n$ in the space we are considering). Now let $\xi = \sum_{m=0}^{m_0}\xi\{m\}$ be any element in $\fX(\widetilde\tau)_{n,\sigma}$, written in terms of the decomposition (\ref{e:catO1}). Assume that the element $\xi\{m_0\}$ in the biggest $m$-indexed part is nonzero. We then have
\begin{equation*}
-\tfrac{1}{2}D_\tau^2(\xi) = \mu(m_0)\xi\{m_0\} + \sum_{m=0}^{m_0-1}\Big(\nabla(\xi\{m+1\}) +\mu(m)\xi\{m\}\Big).
\end{equation*}
The assumption $\xi\in\ker (D_\tau^2)$ implies $\mu(m_0) = N_c(\tau)-N_c(\sigma) + t(m_0-n) = N_c(\tau)-N_c(\sigma) + t(d_0+\ell_0) = 0$. Since there are only finitely many possibilities for $d\in\bbz_{\geq 0}$ for which an equation
\begin{equation}\label{e:catO3}
N_c(\tau)-N_c(\sigma) + t(d+\ell) = 0
\end{equation}
is true, it follows that there exists $d_0\in\bbz_{\geq 0}$ such that $\xi\in F_{d_0}(\fX_{t,c}(\widetilde\tau))$ (see Definition \ref{d:filtration}). It follows that $\ker (D_\tau^2)$ is itself contained in $F_{d}(\fX_{t,c}(\widetilde\tau))$ for a sufficiently large $d$. This last module is in category $\Co$, by Proposition \ref{p:filtration}. This settles the claim for the kernel.

Now we consider the cokernel. Write $\coker (D_\tau^2) = \oplus_\sigma \Ck_\sigma$, where $\Ck_\sigma = (\fX_{t,c}(\widetilde\tau)_\sigma/(\fX_{t,c}(\widetilde\tau)_\sigma\cap \im D_\tau^2)$. We shall show that there exists a sufficiently large $d\in\bbz$ such that $F_d(\fX_{t,c}(\widetilde\tau)_\sigma)$ surjects onto $\Ck_\sigma$, when we take the canonical projection. Fix $n\in\bbz$ and $\sigma\in\Irr(G)$. Decompose $\fX(\widetilde\sigma)_{n,\sigma}$ as in (\ref{e:catO1}). We have two possibilities: either the scalar $\mu(m)$ of equation (\ref{e:catO2}) is nonzero, for all $m\in\bbz_{\geq 0}$, or there exists values of $m$ for which it becomes zero. The first case implies that $\fX(\widetilde\tau)_{n,\sigma}$ is actually in the image of $D_\tau^2$. Indeed, starting with $m=0$, we have $D_\tau^2|_{\fX(\widetilde\tau)_{n,\sigma}\{0\}} = \mu(0)\textup{Id}$ is invertible. Assuming, by induction, that $\fX(\widetilde\tau)_{n,\sigma}\{m\}$ is in the image, (\ref{e:catO2}) together with $\mu(m+1)\neq 0$ implies that $\fX(\widetilde\tau)_{n,\sigma}\{m+1\}$ is also in the image. It follows that $\fX(\widetilde\tau)_{n,\sigma}\to 0$ under the quotient map, in this case. On the other hand, assume that there exists values of $m$ for which $\mu(m) = 0$. Necessarily, there are only finitely many possible values of $m$ for which $\mu(m) = 0$. If $m_0$ is the largest possible one, the previous inductive argument can be applied to show that $\fX(\widetilde\tau)_{n,\sigma}\{m\}\subseteq \im(D_\tau^2)$ for $m\geq m_0$.

Just as in the case of the kernel, since $\mu(m) = N_c(\tau)-N_c(\sigma) + t(m-n) = N_c(\tau)-N_c(\sigma) + t(d+\ell)$, there are only finitely many possibilities for $d$ so that (\ref{e:catO3}) holds. Thus, for a sufficiently large $d\in\bbz_{\geq 0}$, we have that $F_d(\fX_{t,c}(\widetilde\tau)_\sigma)$ surjects onto $\Ck_\sigma$, and, since $\Irr(G)$ is finite, we are done.
\end{proof}

\begin{corollary}\label{c:fredholm}
Suppose $t\neq 0$. The kernel and the cokernel of $D_\tau$ are in $\Co_{t,c}(G,\fh)$.
\end{corollary}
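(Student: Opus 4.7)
The strategy is to deduce the corollary from Theorem \ref{t:fredholm} by the standard sandwich $\ker D_\tau \subseteq \ker D_\tau^2$ and $\im D_\tau^2 \subseteq \im D_\tau$, combined with the fact that $\Co_{t,c}(G,\fh)$ is closed under taking $\cha$-submodules and $\cha$-quotients.

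First I would observe that since $D_\tau$ commutes with the $Q$-action of $\cha$ on $\fX_{t,c}(\widetilde\tau)$, both $\ker D_\tau$ and $\im D_\tau$ are $\cha$-submodules of $\fX_{t,c}(\widetilde\tau)$, hence $\coker D_\tau = \fX_{t,c}(\widetilde\tau)/\im D_\tau$ is naturally an $\cha$-module. Next, $D_\tau^2 = D_\tau \circ D_\tau$ gives the containments
\begin{equation*}
\ker D_\tau \subseteq \ker D_\tau^2, \qquad \im D_\tau \supseteq \im D_\tau^2,
\end{equation*}
so there is a canonical $\cha$-linear surjection $\coker D_\tau^2 \twoheadrightarrow \coker D_\tau$.

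Then I would invoke the standard fact that $\Co_{t,c}(G,\fh)$ is stable under $\cha$-submodules and $\cha$-quotients: local $\fh^*$-nilpotence is inherited by both, and $\bbc[\fh]$-finite generation is inherited by quotients trivially, and by submodules because $\bbc[\fh]$ is Noetherian (so a submodule of a finitely generated module over $\bbc[\fh]$ is again finitely generated). Applying Theorem \ref{t:fredholm} to $D_\tau^2$ gives $\ker D_\tau^2, \coker D_\tau^2 \in \Co_{t,c}(G,\fh)$, and the two containments above then yield $\ker D_\tau \in \Co_{t,c}(G,\fh)$ and $\coker D_\tau \in \Co_{t,c}(G,\fh)$.

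There is no real obstacle here: the whole point is that Theorem \ref{t:fredholm} has already absorbed the nontrivial analytic content (the explicit scalar action of $D_\tau^2$ on graded isotypic pieces modulo the nilpotent $\nabla$), and the corollary is merely a formal consequence. The only thing to double-check is the closure of $\Co_{t,c}(G,\fh)$ under sub- and quotient objects, which is part of the standard setup of \cite{GGOR}.
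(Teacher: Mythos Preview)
Your proof is correct and is essentially identical to the paper's own argument: the paper simply notes that $\ker D_\tau$ is a submodule of $\ker D_\tau^2$ and that $\coker D_\tau^2$ surjects onto $\coker D_\tau$, then invokes Theorem \ref{t:fredholm}. You have supplied a bit more detail (closure of $\Co_{t,c}(G,\fh)$ under submodules and quotients), but the route is the same.
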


\begin{remark}
When $t=0$, we may work with the restricted category $\Co$ for the finite-dimensional restricted algebra $\reschazero$. In that setting, there are analogous modules for the ones considered here and they are all finite dimensional.
\end{remark}

\begin{proof}
Clear since $\ker D_\tau$ is a submodule of $\ker D^2_\tau$ and $\coker D_\tau^2$ surjects onto $\coker D_\tau$.
\end{proof}

Similarly to the local case, let us write $\fX_{t,c}(\widetilde\tau)^\pm=\fX_{t,c}(\tau\otimes S^\pm).$ We may also define $D_\tau^+$, $D_\tau^-$ as the restrictions of $D_\tau$ to $\fX_{t,c}(\widetilde\tau)^+$ and $\fX_{t,c}(\widetilde\tau)^-$, respectively. To work with all three types of operators simultaneously, let us denote them by $D_\tau^\varepsilon$, where $\varepsilon$ could be $+$, $-$, or empty. Then, 
\[D_\tau^\varepsilon: \fX_{t,c}(\widetilde\tau)^\varepsilon\to\fX_{t,c}(\widetilde\tau)^{-\varepsilon}.\]
Note that  $\fX_{t,c}(\widetilde\tau)^\varepsilon$ is an element of the category $\Co^{\textup{ln}}_{t,c}(G,\fh)$ \cite[Section 2.2]{GGOR} of locally nilpotent left $\cha$-modules (not necessarily finitely generated). {\it A priori}, the kernel and the cokernel of $D_\tau$ are just in $\Co^{\textup{ln}}_{t,c}(G,\fh)$. Define the global indices 
\[I(\tau)^\pm=\ker D_\tau^\pm - \coker D_\tau^\pm.\] 
Since the Euler-Poincar\'e principle also holds in this category \cite[Chapter II, Proposition 6.6]{We}, we obtain, 
\[I^+ = \fX_{t,c}(\widetilde\tau)^+ - \fX_{t,c}(\widetilde\tau)^- = -I^-,\]
identities in Grothendieck group of $\Co^{\textup{ln}}_{t,c}(G,\fh)$. Therefore, the indices defined above agree, up to a sign.

\begin{definition} \label{d:globalindex}
The \emph{global Dirac index of} $\tau$ is defined as $I(\tau) = \ker D_\tau^+ - \coker D_\tau^+$, an element in the Grothendieck group of $\Co^{\textup{ln}}_{t,c}(G,\fh)$.
\end{definition}

\begin{proposition}\label{p:globindex}
As virtual modules in $\Co^{\textup{ln}}_{t,c}(G,\fh)$, we have, for every $\tau\in\Irr(G)$,
\[I(\tau) = \fM_{t,c}(\tau).\]
In particular, the index is in category $\Co$ for all parameters $t,c$.
\end{proposition}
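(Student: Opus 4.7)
The starting point, already set up in the paragraph preceding Definition~\ref{d:globalindex}, is the Euler-Poincar\'e identity
\[
[I(\tau)] \;=\; [\fX_{t,c}(\widetilde\tau)^+] - [\fX_{t,c}(\widetilde\tau)^-]
\]
obtained from the four-term exact sequence determined by $D_\tau^+$ in $\Co^{\textup{ln}}_{t,c}(G,\fh)$. By Theorem~\ref{t:fredholm} and Corollary~\ref{c:fredholm} (with the restricted-algebra analogue noted in the remark following Corollary~\ref{c:fredholm} for $t=0$), this virtual class already lies in $K_0(\Co_{t,c}(G,\fh))$. My plan is to identify it with $[\fM_{t,c}(\tau)]$ by comparing graded $G$-characters, leveraging the linear independence of the $\textup{ch}(L_{t,c}(\sigma),g,\bq)$.

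First I would invoke the filtration $F_\bullet$ of Proposition~\ref{p:filtration}, whose subquotients satisfy
\[
F_n^\pm/F_{n-1}^\pm \;\cong\; \fM_{t,c}(\bbc[\fh^*]_n\otimes \tau\otimes S^\pm)
\]
(with the grading shift forced by the $\bbz$-grading on $\fX_{t,c}(\widetilde\tau)$). Telescoping yields
\[
[F_N^+] - [F_N^-] \;=\; \sum_{n=0}^N \bigl([\fM_{t,c}(\bbc[\fh^*]_n\otimes \tau\otimes S^+)] - [\fM_{t,c}(\bbc[\fh^*]_n\otimes \tau\otimes S^-)]\bigr)
\]
in $K_0(\Co_{t,c}(G,\fh))$. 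In any fixed $\bbz$-degree of $\fX_{t,c}(\widetilde\tau)$, only finitely many indices $n$ contribute once $N$ is large enough, which legitimizes taking $N\to\infty$ at the level of graded characters.

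The character calculation then reduces to the two Koszul-type generating identities
\[
\sum_{\ell\geq 0}(-\bq^{-1})^\ell\textup{ch}\left({\bigwedge}^{\!\ell}\fh,g\right) = {\det}_\fh(1-g\bq^{-1}), \qquad \sum_{n\geq 0}\bq^{-n}\textup{ch}(\bbc[\fh^*]_n,g) = \frac{1}{\det_\fh(1-g\bq^{-1})},
\]
together with the formula $\textup{ch}(\fM_{t,c}(\sigma),g,\bq)=\textup{ch}(\sigma,g)/\det_{\fh^*}(1-g\bq)$. Multiplying these produces the cancellation
\[
\sum_{n\geq 0} \Bigl(\textup{ch}(\fM_{t,c}(\bbc[\fh^*]_n\otimes \tau\otimes S^+),g,\bq) - \textup{ch}(\fM_{t,c}(\bbc[\fh^*]_n\otimes \tau\otimes S^-),g,\bq)\Bigr) \;=\; \frac{\textup{ch}(\tau,g)}{\det_{\fh^*}(1-g\bq)},
\]
which equals $\textup{ch}(\fM_{t,c}(\tau),g,\bq)$, and character injectivity on $K_0(\Co_{t,c}(G,\fh))$ completes the argument.

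The main obstacle will be the infinite-dimensionality of $\fX_{t,c}(\widetilde\tau)^\pm$: their individual graded pieces carry infinite multiplicities of $G$-representations, so the formal identities above are only meaningful after being lifted through the filtration $F_\bullet$ into $K_0(\Co_{t,c}(G,\fh))$. This is exactly why the Fredholm property of $D_\tau$ proved in Theorem~\ref{t:fredholm} is indispensable: it guarantees that $[I(\tau)]$ is genuinely a class in $K_0(\Co_{t,c}(G,\fh))$, on which graded characters are injective, so that the formal manipulations close up in an honest Grothendieck-group identity.
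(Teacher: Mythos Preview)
Your strategy---pass to the filtration $F_\bullet$ of Proposition~\ref{p:filtration} and invoke the Koszul identity for $\bbc[\fh^*]\otimes\bigwedge\fh$---is exactly the paper's. The formal character calculation you display is also correct. The gap is in the sentence that is supposed to justify the limit: the claim that ``in any fixed $\bbz$-degree of $\fX_{t,c}(\widetilde\tau)$, only finitely many indices $n$ contribute once $N$ is large enough'' is false. For a fixed total degree $k$, the subquotient $F_n/F_{n-1}\cong\fM_{t,c}(\bbc[\fh^*]_n\otimes\tau\otimes S^\pm)$ contributes $\bbc[\fh]_{k+n+\ell}\otimes\bbc[\fh^*]_n\otimes\tau\otimes\bigwedge^\ell\fh$, which is nonzero for every $n\ge 0$; the partial alternating sums $[F_N^+]-[F_N^-]$ therefore do \emph{not} stabilise degree-wise, and the appeal to character injectivity on $K_0(\Co)$ does not close the argument by itself. (The paper's own last line, asserting $F_d^+-F_d^-=\fM_{t,c}(\tau)$ ``for all $d$'', suffers from the same imprecision with the filtration $F_\bullet$.)

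The clean repair is to replace the formal limit by an honest exact-complex identity. With the total grading on $\bbc[\fh^*]\otimes\bigwedge\fh$ (degree $-(d+\ell)$ on $\bbc[\fh^*]_d\otimes\bigwedge^\ell\fh$), each graded piece is finite dimensional, and the Koszul resolution
\[
0\to\bbc[\fh^*]\otimes{\bigwedge}^{\!r}\fh\to\cdots\to\bbc[\fh^*]\otimes\fh\to\bbc[\fh^*]\to\bbc\to 0
\]
is an exact complex of $\bbz$-graded $G$-modules with degree-$0$ differentials. Hence $[\bbc[\fh^*]\otimes S^+]-[\bbc[\fh^*]\otimes S^-]=[\bbc]$ already in the Grothendieck group of that category; tensor with $\tau$ and apply the exact functor $\fM_{t,c}(-)$ into $\Co^{\textup{ln}}$. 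Equivalently, replace $F_\bullet$ by the filtration $\tilde F_N=\bbc[\fh]\otimes(\bigoplus_{d+\ell\le N}\bbc[\fh^*]_d\otimes\bigwedge^\ell\fh)\otimes\tau$: then each successive quotient $\tilde F_N/\tilde F_{N-1}$ has $[\,\cdot\,]^+-[\,\cdot\,]^-=0$ for $N\ge 1$ by the degree-$N$ Koszul identity, so $[\tilde F_N^+]-[\tilde F_N^-]=[\fM_{t,c}(\tau)]$ genuinely for every $N$, and no limit is needed.
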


\begin{proof}
Recall the filtration $0\subseteq F_0(\fX_{t,c}(\widetilde\tau)^\varepsilon)\subseteq F_1(\fX_{t,c}(\widetilde\tau)^\varepsilon)\subseteq \cdots$ of $\fX_{t,c}(\widetilde\tau)^\varepsilon$ (see Definition \ref{d:filtration}). Write
\[G_d(\fX_{t,c}(\widetilde\tau)^\varepsilon) = F_d(\fX_{t,c}(\widetilde\tau)^\varepsilon)/F_{d-1}(\fX_{t,c}(\widetilde\tau)^\varepsilon).\]
From Proposition \ref{p:filtration} we obtain the identity
\begin{equation}\label{e:d-filtered-piece}
F_d(\fX_{t,c}(\widetilde\tau)^\varepsilon) = \sum_{i=0}^d \fM_{t,c}(\bbc[\fh^*]_d\otimes\tau\otimes{\bigwedge}^{\!\varepsilon}\fh) 
\end{equation}
in the Grothendieck group of $\Co_{t,c}(G,\fh)$. Now, if $n^\varepsilon_{d,\tau,\sigma}=\dim\Hom_G(\sigma,\bbc[\fh^*]_d\otimes\tau\otimes{\bigwedge}^{\!\varepsilon}\fh)$ we also have
\begin{equation}\label{e:costd-decomp}
\fM_{t,c}\left(\bbc[\fh^*]_d\otimes\tau\otimes{\bigwedge}^{\!\varepsilon}\fh\right)  = \sum_\sigma n^\varepsilon_{d,\tau,\sigma}\fM_{t,c}(\sigma).
\end{equation}
Using (\ref{e:d-filtered-piece}) and (\ref{e:costd-decomp}), we get
\begin{align*}
F_d(\fX_{t,c}(\widetilde\tau)^+)-F_d(\fX_{t,c}(\widetilde\tau)^-) &= \sum^d_{i=0}\sum_\sigma(n_{i,\tau,\sigma}^+-n_{i,\tau,\sigma}^-)\fM_{t,c}(\sigma) .
\end{align*}
On the other hand, we know that the graded $G$-character of $\bbc[\fh^*]\otimes\tau\otimes({\bigwedge}^{\!+}\fh -{\bigwedge}^{\!-}\fh)$  satisfy
\begin{align*}
\tau &= \sum_{d\geq 0} \bq^{-d}\sum_\sigma\sum_\ell (-1)^\ell \dim\Hom_G\left((\bbc[\fh^*]_d\otimes\tau\otimes{\bigwedge}^{\!\ell}\fh\right)\sigma\\
&= \sum_{d\geq 0} \bq^{-d}\sum_\sigma(n^+_{d,\tau,\sigma}-n^-_{d,\tau,\sigma})\sigma.
\end{align*}
It follows that $F_d(\fX_{t,c}(\widetilde\tau)^+)-F_d(\fX_{t,c}(\widetilde\tau)^-) = \fM_{t,c}(\tau)$, for all $d$, and we are done.
\end{proof}

\subsection{The action of $D_\tau$ on singular vectors}\label{ss:globDiracinSingVecs} If we are interested in the occurrence of a simple module $L_c(\sigma)$ in $\ker D_\tau$, we need to determine which $\fh$-singular copies of $\sigma$ are killed by $D_\tau$. 
Recall from Lemma \ref{l:h-singular}, that  $\bigoplus_{d,\ell}\fX(\widetilde\tau)_{0,d}^\ell$ is the space of $\fh$-singular vectors. From (\ref{e:DiracOp2}), we see that on $\fX(\widetilde\tau)_{0,d}^\ell$, $D_\tau$ acts by
\begin{equation}\label{e:D-0}
D_\tau(0,d,\ell)= -\sum_j\delta^\vee_j\otimes\fs(y_j) - \sum_j  T^\vee_{j}(\tau)\otimes \fs(x_j).
\end{equation}
Notice that $D_\tau$ preserves the space of $\fh$-singular vectors. Define:
\begin{equation}\label{e:delta-complex}
0\to \bbc\to \bbc[\fh^*]\xrightarrow{\Delta}\bbc[\fh^*]\otimes\fh\xrightarrow{\Delta}\bbc[\fh^*]\otimes{\bigwedge}^{\!2}\fh\rightarrow\cdots,
\end{equation}
and
\begin{equation}\label{e:dunkl-complex}
0\leftarrow \tau\leftarrow \bbc[\fh^*]\otimes\tau\xleftarrow{\eta}\bbc[\fh^*]\otimes\tau\otimes \fh\xleftarrow{\eta}\bbc[\fh^*]\otimes\tau\otimes{\bigwedge}^{\!2}\fh\leftarrow\cdots,
\end{equation}
where
\[
\begin{aligned}
\Delta: \bbc[\fh^*]_d\otimes{\bigwedge}^{\!\ell}\fh\to \bbc[\fh^*]_{d-1}\otimes{\bigwedge}^{\!\ell+1}\fh,\quad \Delta&=\sum_j \partial_{x_j}\otimes \fs(y_j),\\
\eta:  \bbc[\fh^*]\otimes \tau\otimes {\bigwedge}^{\!\ell}\fh\to \bbc[\fh^*]\otimes \tau\otimes {\bigwedge}^{\!\ell-1}\fh,\quad \eta&=\sum_j  T^\vee_{y_j}(\tau)\otimes \fs(x_j).
\end{aligned}
\]
\begin{lemma}
Consider $v\in \fX(\widetilde\tau)_{0,d}^\ell$. Then $v\in \ker D_\tau$ if and only if $v\in\ker\Delta\cap\ker\eta$.
\end{lemma}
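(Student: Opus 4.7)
The plan is to exploit the bigrading on singular vectors $\fX(\widetilde\tau)_{0,d}^\ell$ and show that the two terms of $D_\tau$ on this space, as described by (\ref{e:D-0}), land in distinct bigraded pieces, so they must vanish independently.

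First I would refine the observation that $D_\tau$ acts on the space of $\fh$-singular vectors $\bigoplus_{d,\ell}\fX(\widetilde\tau)_{0,d}^\ell$ by noting that each of the two summands in (\ref{e:D-0}) is itself homogeneous for the finer bigrading by $(d,\ell)$. Concretely, the summand $-\sum_j \delta_j^\vee\otimes\fs(y_j)$, which is exactly $-\Delta$, sends $\fX(\widetilde\tau)_{0,d}^\ell$ into $\fX(\widetilde\tau)_{0,d-1}^{\ell+1}$, since $\partial_{x_j}$ lowers the $\bbc[\fh^*]$-degree by one while $\fs(y_j)=\mu_{y_j}$ raises the $\bigwedge\fh$-degree by one. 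On the other hand, the summand $-\sum_j T_j^\vee(\tau)\otimes\fs(x_j)$, which is exactly $-\eta$, sends $\fX(\widetilde\tau)_{0,d}^\ell$ into $\fX(\widetilde\tau)_{0,d+1}^{\ell-1}$, because both $\mu_{y_j}$ and each integral operator $I_s^\vee$ appearing in $T_j^\vee(\tau)$ raise the $\bbc[\fh^*]$-degree by one, while $\fs(x_j)=-2\partial_{x_j}$ lowers the $\bigwedge\fh$-degree by one.

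Given $v\in\fX(\widetilde\tau)_{0,d}^\ell$, we therefore have the decomposition
\begin{equation*}
D_\tau v \;=\; -\Delta(v)\;-\;\eta(v)\;\in\; \fX(\widetilde\tau)_{0,d-1}^{\ell+1}\,\oplus\,\fX(\widetilde\tau)_{0,d+1}^{\ell-1},
\end{equation*}
and the bidegrees $(d-1,\ell+1)$ and $(d+1,\ell-1)$ are distinct, hence the two summands belong to linearly independent components of $\fX(\widetilde\tau)$. Consequently $D_\tau v=0$ forces both $\Delta(v)=0$ and $\eta(v)=0$, and the converse is trivial. Since everything reduces to bookkeeping of degrees, there is no real obstacle; the only point requiring care is the verification that the operator $T_j^\vee(\tau)$ truly raises the $\bbc[\fh^*]$-degree by exactly one, which is immediate from its definition (\ref{e:dual-dunkl}).
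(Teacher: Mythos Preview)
Your argument is correct and is precisely the degree comparison the paper has in mind; the paper's one-line proof ``Clear by comparing the degrees of these operators'' is exactly your observation that $-\Delta$ and $-\eta$ send $\fX(\widetilde\tau)_{0,d}^\ell$ into the disjoint pieces $\fX(\widetilde\tau)_{0,d-1}^{\ell+1}$ and $\fX(\widetilde\tau)_{0,d+1}^{\ell-1}$, respectively. You have simply made the bookkeeping explicit.
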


\begin{proof}Clear by comparing the degrees of these operators.
\end{proof}

Recall the Koszul resolution:
\begin{equation*}
\begin{aligned}
0\leftarrow \bbc\leftarrow \bbc[\fh^*]\xleftarrow{\bd}\bbc[\fh^*]\otimes \fh\xleftarrow{\bd}\bbc[\fh^*]\otimes{\bigwedge}^{\!2}\fh\leftarrow\cdots,\\
\bd(q\otimes y_{i_1}\wedge\dots\wedge y_{i_\ell})=\sum_{j=1}^\ell (-1)^{j-1} qy_{i_j}\otimes y_{i_1}\wedge\dots\wedge\hat y_{i_j}\wedge\dots\wedge y_{i_\ell}.
\end{aligned}
\end{equation*}
Notice that $\bd=-1/2\sum_{j}\mu_{y_j}\otimes \fs(x_j)$, in other words, when $t\neq 0$ the first term in $\eta$ is $-2t\bd$ and we can write $\eta = -2t\bd + \sum_s\tfrac{c(s)}{d_s}I_s^\vee\otimes s \otimes\fs(\alpha_s)$. 

\begin{lemma}
\begin{enumerate}
\item[(a)] $\Delta^2=0$ and $\eta^2=0$;
\item[(b)] $\Delta \bd+\bd\Delta=\textup{deg},$ where $\textup{deg}:\bbc[\fh^*]_d\otimes{\bigwedge}^{\!\ell}\fh\to \bbc[\fh^*]_d\otimes{\bigwedge}^{\!\ell}\fh$ is $\textup{deg}(q\otimes \omega)=d+\ell$. In particular, (\ref{e:delta-complex}) is a resolution and so 
\[\ker\Delta\mid_{\bbc[\fh^*]}=\bbc,\quad \ker\Delta\mid_{\bbc[\fh^*]_d\otimes{\bigwedge}^{\!\ell}\fh}=\Delta\left(\bbc[\fh^*]_{d+1}\otimes{\bigwedge}^{\!\ell-1}\fh\right),\ \ell\ge 1.
\]
%the kernel of $\Delta$ on $\bbc[\fh^*]_d\otimes{\bigwedge}^{\!\ell}\fh$ is $\Delta(\bbc[\fh^*]_{d+1}\otimes{\bigwedge}^{\!\ell-1}\fh).$
\end{enumerate}
\end{lemma}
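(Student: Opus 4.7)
Both assertions are routine Koszul-style computations; the only care needed is bookkeeping with the Clifford signs. I will handle (a) by a symmetry/antisymmetry cancellation, and (b) by computing $\Delta\bd+\bd\Delta$ directly and recognising the resulting scalar as the total degree $d+\ell$.

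For (a), I expand
\begin{equation*}
\Delta^2 = \sum_{i,j}\partial_{x_i}\partial_{x_j}\otimes \fs(y_i)\fs(y_j).
\end{equation*}
The polynomial factor is symmetric in $(i,j)$, while the Clifford factor is antisymmetric since $y_iy_j+y_jy_i=-2(y_i,y_j)=0$ in $\Cc$; so the sum vanishes. The same argument gives $\eta^2=0$, using the already-established commutativity of the dual Dunkl operators $T^\vee_{y_j}(\tau)$ together with $x_ix_j+x_jx_i=0$. This is exactly the pattern of Lemma \ref{l:somecancel}.

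For (b), inserting $\bd=-\tfrac12\sum_j \mu_{y_j}\otimes \fs(x_j)$ yields
\begin{equation*}
\Delta\bd+\bd\Delta=-\tfrac12\sum_{i,j}\bigl(\partial_{x_i}\mu_{y_j}\otimes\fs(y_ix_j)+\mu_{y_j}\partial_{x_i}\otimes\fs(x_jy_i)\bigr).
\end{equation*}
When $i\ne j$, the polynomial factors commute and $\fs(y_ix_j)+\fs(x_jy_i)=-2(y_i,x_j)=0$, so the off-diagonal contributions cancel in pairs. For $i=j$, I apply the Weyl relation $\partial_{x_i}\mu_{y_i}=\mu_{y_i}\partial_{x_i}+1$ and the Clifford relation $y_ix_i+x_iy_i=-2$; after a short simplification the diagonal part collapses to $\sum_i\mu_{y_i}\partial_{x_i}\otimes\mathrm{Id}-\tfrac12\sum_i 1\otimes\fs(y_ix_i)$. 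The first summand is the Euler operator on $\bbc[\fh^*]$, acting by $d$ on $\bbc[\fh^*]_d$; the second, since $\fs(y_i)\fs(x_i)(\omega)=-2\,y_i\wedge\partial_{x_i}(\omega)$, is the number operator on $\bigwedge^\bullet\fh$, acting by $\ell$ on $\bigwedge^\ell\fh$. Together they give $d+\ell=\deg$.

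For the exactness claim, on each subspace $\bbc[\fh^*]_d\otimes\bigwedge^{\ell}\fh$ with $d+\ell>0$, the identity $\Delta\bd+\bd\Delta=d+\ell$ shows that $(d+\ell)^{-1}\bd$ is a contracting homotopy: any $v\in\ker\Delta$ in that bidegree satisfies $v=\Delta((d+\ell)^{-1}\bd v)$, so $v\in\im\Delta$. The only bidegree not covered is $d=\ell=0$, i.e.\ the constants in $\bbc[\fh^*]$, which match the augmentation $\bbc\hookrightarrow\bbc[\fh^*]$. I do not anticipate a substantive obstacle; the only subtlety is keeping the Clifford signs consistent, which the paper's convention $vv'+v'v=-2(v,v')$ (\ref{e:Cliff-relation}) handles cleanly.
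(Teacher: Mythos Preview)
Your proof is correct and follows the same approach as the paper: part (a) via the commute/anti-commute cancellation, and part (b) via the classical Koszul homotopy identity. The paper's own proof simply asserts (a) for the same reason you give and dismisses (b) as ``easy and well known from the classical Koszul resolution''; your explicit diagonal computation and contracting-homotopy argument just fill in those details.
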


\begin{proof}
Formulae (a) are immediate since the terms in the left side of the tensor product defining $\Delta$ or $\eta$ commute while the ones in the right side anti-commute.
Part (b) is easy and well known from the classical Koszul resolution. 
\end{proof}

\begin{remark}
The complex $\eta$ (\ref{e:dunkl-complex}) is the dual and the generalisation (for arbitrary $\tau$) of the one defined in \cite[sections 2.2-2.3]{DO} for $t=1$ and $\tau=\triv$.
\end{remark}

\subsection{The dual Dunkl-Opdam complex} In this subsection, we assume that $t=1$. To emphasise the dependence on the parameter $c$, write $\eta(c)$ for the complex (\ref{e:dunkl-complex}):
\[0 \leftarrow\bbc[\fh^*]\otimes\tau\xleftarrow{\eta}\bbc[\fh^*]\otimes\tau\otimes \fh\xleftarrow{\eta}\bbc[\fh^*]\otimes\tau\otimes{\bigwedge}^{\!2}\fh\leftarrow\cdots,
\]
and $H_i(\eta(c))$ for its homology groups, $0\le i\le r$. 
In particular, $\eta(0)=-2\bd$ which has $H_i(\eta(0))=0$ for $i>0$ and $H_0(\eta(0))=\tau$. It is clear that in general, $\eta(c)(\bbc[\fh^*]\otimes\tau\otimes \fh)\subseteq \bbc[\fh^*]_{\ge 1}\otimes \tau$, hence 
\[H_0(\eta(c))\supseteq\tau.\]
 The following result is the analogue of \cite[Theorem 2.9 and Corollary 2.14]{DO}, with a very similar proof.

\begin{theorem}\label{t:dunkl-opdam}The following are equivalent:
\begin{itemize}
\item[(i)] $H_i(\eta(c))=0$ for all $i>0$;
\item[(ii)] $H_0(\eta(c))=\tau$.
\end{itemize}
\end{theorem}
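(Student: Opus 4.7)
The plan is to identify $\eta(c)$ as the Koszul complex of the commuting family $T^\vee_{y_1}(\tau), \ldots, T^\vee_{y_r}(\tau)$ acting on $M := \bbc[\fh^*]\otimes\tau$, and then exploit the graded structure via Euler-Poincar\'e for one direction and graded Nakayama plus a dimension count for the other. Since these operators commute, they define an action of the polynomial ring $R := \bbc[u_1, \ldots, u_r]$ (graded with $\deg u_j = 1$) on $M$ via $u_j \mapsto T^\vee_{y_j}(\tau)$. Using that $\fs(x_j)(y_k) = -2\delta_{jk}$ on $\bigwedge \fh$, the differential $\eta(c) = \sum_j T^\vee_{y_j}(\tau)\otimes \fs(x_j)$ coincides with the standard Koszul differential on $M \otimes \bigwedge^\bullet \fh$ up to the harmless factor $-2$. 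Under the total grading $n = d + \ell$ on $\bbc[\fh^*]_d \otimes \tau \otimes {\bigwedge}^{\!\ell}\fh$, the differential is homogeneous and the complex splits into finite-dimensional pieces, while $H_0(\eta(c)) = M/(u_1, \ldots, u_r)M$.

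For (i) $\Rightarrow$ (ii) the argument is Euler-Poincar\'e: in each total degree $n$,
\[
\sum_\ell (-1)^\ell [H_\ell(\eta(c))_n] = \sum_\ell (-1)^\ell [\bbc[\fh^*]_{n-\ell} \otimes \tau \otimes {\bigwedge}^{\!\ell}\fh]
\]
as virtual graded $G$-characters, and the right-hand side depends only on the $G$-module structure of the terms, hence is independent of $c$. Evaluating at $c = 0$, where $T^\vee_{y_j} = t\mu_{y_j}$ and $\eta(0) = -2\bd$ is (up to a scalar) the classical Koszul differential tensored with $\tau$, this Euler characteristic equals $[\tau]$ at $n = 0$ and vanishes for $n \geq 1$. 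Assuming $H_i(\eta(c)) = 0$ for all $i \geq 1$, this immediately forces $H_0(\eta(c)) = \tau$.

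For the reverse direction (ii) $\Rightarrow$ (i), assume $H_0(\eta(c)) = M/(u_1, \ldots, u_r)M = \tau$. The inclusion $\tau = \bbc \otimes \tau \hookrightarrow M$ in polynomial degree $0$ extends uniquely to a graded $R$-module homomorphism $\phi: R \otimes_\bbc \tau \to M$, and by hypothesis $\phi$ becomes an isomorphism after reducing modulo $(u_1, \ldots, u_r)$. Since $M$ is graded and bounded below, the graded Nakayama lemma yields the surjectivity of $\phi$. Injectivity follows from the dimension identity
\[
\dim (R \otimes \tau)_n = \binom{n+r-1}{r-1} \dim\tau = \dim M_n.
\]
Hence $\phi$ is a graded isomorphism and $M$ is a free $R$-module of rank $\dim\tau$. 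The Koszul complex of $(u_1, \ldots, u_r)$ on the free module $M \cong R \otimes \tau$ then identifies with the standard Koszul resolution of $\bbc$ over $R$ tensored with $\tau$, whose homology is $\tau$ concentrated in degree $0$. In particular, $H_i(\eta(c)) = 0$ for all $i \geq 1$.

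The only real content is the freeness of $M$ over $R$, which the hypothesis $H_0 = \tau$ unlocks via graded Nakayama. The minor subtlety is that a priori the operators $T^\vee_{y_j}(\tau)$ need only generate a graded quotient of $R$ inside $\End(M)$, but the dimension count retroactively forces this quotient to be all of $R$, so that $M$ is genuinely $R$-free and the classical Koszul resolution applies.
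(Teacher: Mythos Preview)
Your proof is correct and takes a genuinely different route from the paper's.

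For (i) $\Rightarrow$ (ii) both arguments are the same Euler--Poincar\'e computation. For (ii) $\Rightarrow$ (i), however, the paper follows Dunkl--Opdam and constructs, by induction on polynomial degree, a $G$-invariant linear map $\mathcal{V}(c):K^\bullet\to K^\bullet$ which is the identity on $\tau$, acts through the $\bbc[\fh^*]\otimes\tau$ factor only, and intertwines $\eta(c)$ with $\eta(0)$; one then checks $\mathcal{V}(c)$ is an isomorphism, so the two complexes have the same homology. Your argument instead recognises $\eta(c)$ as (up to the scalar $-2$) the Koszul complex of the commuting operators $T^\vee_{y_j}(\tau)$ on $M=\bbc[\fh^*]\otimes\tau$, and uses graded Nakayama together with the dimension identity $\dim(R\otimes\tau)_n=\dim M_n$ to conclude that $M$ is free over $R=\bbc[u_1,\dots,u_r]$, whence the Koszul complex is acyclic in positive degrees.

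What each approach buys: the paper's construction produces an explicit intertwiner $\mathcal{V}(c)$ generalising the classical Dunkl intertwining operator, an object of independent interest (and the proof shows it is unique and $G$-equivariant). Your commutative-algebra argument is shorter and more conceptual for the bare statement, and makes transparent that the equivalence is really a freeness criterion: condition (ii) is exactly the hypothesis needed for graded Nakayama to force $M$ to be $R$-free. The two proofs are secretly related---your isomorphism $\phi:R\otimes\tau\to M$, composed with the inverse of the corresponding map at $c=0$, is essentially the paper's $\mathcal{V}(c)$---but your packaging avoids the inductive bookkeeping.
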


\begin{proof}We follow closely the proof of \cite[Theorem 2.9]{DO}. That part (i) implies (ii) follows immediately from the graded Euler-Poincar\'e principle:
\[\sum_{i\ge 0}(-1)^i\textup{dimgr} ~H_i(\eta(c))=\sum_{i\ge 0}(-1)^i \textup{dimgr}~ \bbc[\fh^*]\otimes\tau\otimes{\bigwedge}^{\!i}\fh=\tau.
\]
For the other direction, denote for simplicity $K_m^\ell=\bbc[\fh^*]_m\otimes\tau\otimes {\bigwedge}^{\!\ell}\fh$, and we construct inductively a linear map
\[\Cv(c): K^\bullet\to K^\bullet,\text{ such that }\Cv(c)(K_m^\ell)\subseteq K_m^\ell,\text{ and which satisfies }
\]
\begin{enumerate}
\item $\Cv(c)$ is the identity operator on $K_0^0=\tau$;
\item $\Cv(c)(p\otimes \omega)=(\Cv(c)p)\otimes \omega$, for all $p\in \bbc[\fh^*]\otimes\tau$ and $\omega\in \bigwedge \fh$;
\item $\Cv(c)\eta(c)=\eta(0)\Cv(c)$.
\end{enumerate}

Firstly, let us assume that such $\Cv(c)$ has been constructed. We claim that $\Cv(c)$ is a linear isomorphism. Suppose not and let $m$ be the smallest nonnegative integer such that $\Cv(c)$ is not an isomorphism on $K_m^\bullet$. Because of (2), $\Cv(c)$ is not an isomorphism on $K_m^0$ then. By (1), we must have $m\ge 1$, and there exists $p\in K_m^0$ such that $p\notin \im \Cv(c)$. Because of the properties of the complex $\eta(0)$, there exists $q\in K_{m-1}^1$ such that
\[p=\eta(0)q.
\] 
By the assumption on $m$, $\Cv(c)$ must be an isomorphism on $K_{m-1}^1$, so there exists a unique $q'\in K_{m-1}^1$ such that $q=\Cv(c) q'$. Then $p=\eta(0)\Cv(c)q'=\Cv(c) \eta(c) q'$ by (3). But this is a contradiction since we have assumed that $p$ is not in the image of $\Cv(c)$.

Similarly, one can prove that $\Cv(c)$ is in fact unique. Since $g\cdot \Cv(c)\cdot g^{-1}$ for $g\in G$ also satisfies (1),(2),(3) when $\Cv(c)$ does (because $\eta(c)$ is $G$-invariant), we  see that in fact $\Cv(c)$ is also $G$-invariant.

Now, since $\Cv(c)$ gives an isomorphic intertwiner between $\eta(c)$ and $\eta(0)$, we have that the homology groups are the same, and in particular (i) holds.

It remains to construct $\Cv(c)$ inductively under the assumption that (ii) holds. Take $m\ge 1$ and suppose $\Cv(c)$ has been constructed on $K^\bullet_i$ for all $i<m$ (the base case is given by (1) and (2)). Let $p\in \bbc[\fh^*]_m\otimes\tau$ be given. By the assumption (ii), there exists $q\in \bbc[\fh^*]_{m-1}\otimes \tau\otimes \fh$ such that
\[p=\eta(c)q.
\]
By induction, $\Cv(c)\eta(c)q=\eta(0)\Cv(c)q$, where $\Cv(c)q$ is known, as $q\in K_{m-1}^1$. Hence, define $\Cv(c)p=\eta(0)\Cv(c)q$. 
The definition is completed by extending $\Cv(c)$ on $K^\bullet_m$ using the required property (2).
\end{proof}

\begin{remark}\label{r:DO}
\begin{enumerate}
\item Condition (ii) in Theorem \ref{t:dunkl-opdam} is equivalent with the requirement that
\[\sum_{j} \im T_{y_j}^\vee(\tau)=\bbc[\fh^*]_{\ge 1}\otimes \tau.
\]
\item It would be interesting to determine the precise parameters $c$ for which the homology of the complex is nontrivial, and in such cases, determine the homology groups. This would be a generalisation of the results obtained in the case $\tau=\triv$ in \cite{DJO,DO}.
\end{enumerate}
\end{remark}

\subsection{Particular cases}The two extreme cases when $\Delta=0$ on $\bbc[\fh^*]_d\otimes{\bigwedge}^{\!\ell}\fh$ are: (a) $\ell=r$, the top degree, and (b) $d=0$.

\medskip

Consider the particular case (a). Denote by $\vol=y_1\wedge\ldots\wedge y_r\in S$, the canonical generator of the top degree space in the spin module. Also, let
\[\omega_j = \partial_{x_j}(\vol) = (-1)^{j-1}y_1\wedge\ldots\wedge \hat y_j\wedge\ldots \wedge y_r\in S,\]
where, as usual, the symbol $\hat y_j$, means we omitted $y_j$.

\begin{proposition}\label{p:topdegree}
%Let $\epsilon$ be a character of $G$. Fix $z_\epsilon\neq 0$ be a fixed vector in $\bbc_\epsilon$. Then, the operator $D:\fX_{t,c}(\bbc_\epsilon\otimes S)\to\fX_{t,c}(\bbc_\epsilon\otimes S)$ satisfy the equation
On $\fX(\widetilde\tau)_{0,d}^r$, we have:
\[-\frac{1}{2}D_\tau(0,d,r):\quad 1\otimes v(d,\tau) \otimes \vol \mapsto \sum_j 1\otimes T_{y_j}^\vee(\tau)(v(d,\tau))\otimes \omega_j,\]
where $v(d,\tau)\in \bbc[\fh^*]_d\otimes\tau$. Therefore, $1\otimes v(d,\tau)\otimes \vol\in \ker D_\tau$ if and only if 
\[v(d,\tau)\in\bigcap_j \ker  T_{y_j}^\vee(\tau).
\]
\end{proposition}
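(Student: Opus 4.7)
The proof is essentially a direct unpacking of the explicit formula~(\ref{e:DiracOp2}) for $-D_\tau$ on the particular subspace $\fX(\widetilde\tau)_{0,d}^r = \bbc\otimes\bbc[\fh^*]_d\otimes\tau\otimes{\bigwedge}^{r}\fh$, using that three of the four pieces vanish on vectors of the form $1\otimes v(d,\tau)\otimes\vol$. I expect no real obstacle; the statement is a formal consequence of a careful bookkeeping of what survives.

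Concretely, my plan is to apply
\[
-D_\tau \;=\; \sum_j \delta^\vee_j \otimes \fs(y_j) \;+\; \delta_j\otimes \fs(x_j) \;+\; T^\vee_j(\tau)\otimes \fs(x_j)
\]
to a vector of the form $1\otimes v(d,\tau)\otimes\vol$ and examine each term. First, $\delta_j = \partial_{y_j}\otimes 1\otimes 1$ annihilates the constant polynomial $1\in\bbc[\fh]$, so the second sum contributes zero. Next, by the Clifford action~(\ref{e:S-Cliff}), $\fs(y_j)(\vol) = y_j\wedge\vol = 0$, since $y_j$ already appears in $\vol = y_1\wedge\dots\wedge y_r$; hence the first sum also vanishes on these vectors. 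The only surviving term is the third, and using $\fs(x_j)(\vol) = -2\,\partial_{x_j}(\vol) = -2\,\omega_j$ I get
\[
-D_\tau\bigl(1\otimes v(d,\tau)\otimes\vol\bigr) \;=\; -2\sum_j 1\otimes T^\vee_{y_j}(\tau)(v(d,\tau))\otimes\omega_j,
\]
which after dividing by $-2$ yields the displayed formula. (Note the result lies in $\bbc\otimes\bbc[\fh^*]_d\otimes\tau\otimes{\bigwedge}^{r-1}\fh$, consistent with $D_\tau$ being homogeneous of degree $0$.)

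For the second assertion, the vectors $\omega_1,\dots,\omega_r$ form a basis of ${\bigwedge}^{r-1}\fh$, so the image $\sum_j 1\otimes T^\vee_{y_j}(\tau)(v)\otimes\omega_j$ vanishes in $\bbc[\fh^*]\otimes\tau\otimes{\bigwedge}^{r-1}\fh$ if and only if each component $T^\vee_{y_j}(\tau)(v) = 0$, i.e.\ $v\in\bigcap_j \ker T^\vee_{y_j}(\tau)$. This gives the equivalence claimed in the proposition.
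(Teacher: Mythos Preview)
Your proof is correct and follows essentially the same approach as the paper: the paper simply cites equation~(\ref{e:D-0}) (the specialisation of~(\ref{e:DiracOp2}) to $\fh$-singular vectors) and the linear independence of the $\omega_j$ in ${\bigwedge}^{r-1}\fh$, which is exactly what you unpack in detail. One small slip: in your parenthetical, $T^\vee_{y_j}(\tau)$ raises the $\bbc[\fh^*]$-degree by one, so the image lies in $\bbc\otimes\bbc[\fh^*]_{d+1}\otimes\tau\otimes{\bigwedge}^{r-1}\fh$, not $\bbc[\fh^*]_d$; this is still consistent with $D_\tau$ having total degree $0$.
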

\begin{proof}
The expression of $D_\tau(0,d,r)$ is immediate from (\ref{e:D-0}). For the second part, notice that $\omega_j$ are linearly independent in ${\bigwedge}^{\!r-1}\fh.$
\end{proof}

\begin{example}\label{c:gen-one-dimensional3}
Let $\epsilon$ be a character of the complex reflection group $G$ and let $h_{c,\epsilon}$ be as in Lemma \ref{l:zeta}. If $t+h_{c,\epsilon} = 0$, then $L_{t,c}(\det_\fh\otimes\epsilon)$ is one dimensional and occurs in the kernel of $D_\epsilon$.
\end{example}

\begin{proof}
By the previous proposition and since $t+h_{c,\epsilon} = 0$, $D(1\otimes 1\otimes z_\epsilon\otimes\vol) = 0$. As $F_0(\fX_{t,c}(\epsilon\otimes{\bigwedge}^{\!r}\fh))$ is isomorphic to $\fM_{t,c}(\epsilon\otimes\det_\fh),$ the result follows from Proposition \ref{p:gen-one-dimensional}.
\end{proof}

Now consider the other extreme $d=0$. 

\begin{lemma}\label{l:d=0} 
On $\fX(\widetilde\tau)_{0,0}^\ell$, we have
\[D_\tau(0,0,\ell)= 1\otimes\sum_j \mu_{y_j}\otimes\left((t+N_c(\tau))(1\otimes \fs(x_j))-\sum_s\frac{c(s)}{d_s}s\otimes \fs(s^{-1}(x_j)) \right).
\]
\end{lemma}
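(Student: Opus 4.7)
The plan is to specialise formula (\ref{e:DiracOp2}) for $-D_\tau$ to a vector of the form $1\otimes 1\otimes z\otimes\omega\in\fX(\widetilde\tau)_{0,0}^\ell$ and then rearrange the surviving term so that the symmetries of the dualised Dunkl operator produce the expression $(t+N_c(\tau))$ and the combination $x_j-s^{-1}(x_j)$.

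First I would observe that, of the three summands in (\ref{e:DiracOp2}), two vanish on vectors with trivial $\bbc[\fh]$- and $\bbc[\fh^*]$-parts: indeed $\delta_j^\vee=1\otimes\partial_{x_j}\otimes 1$ kills the constant polynomial $1\in\bbc[\fh^*]_0$, and $\delta_j=\partial_{y_j}\otimes 1\otimes 1$ kills $1\in\bbc[\fh]_0$. Hence on $\fX(\widetilde\tau)_{0,0}^\ell$ only the term $\sum_j T_j^\vee(\tau)\otimes\fs(x_j)$ contributes, in agreement with formula (\ref{e:D-0}) at $d=0$.

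Next I would compute $T_j^\vee(\tau)(1\otimes z)$ using Definition \ref{d:dualDunklops}: the first summand gives $ty_j\otimes z$ (since $\mu_{y_j}(1)=y_j$ as an element of $\bbc[\fh^*]_1$), while for each reflection $s$ the value $I_s^\vee(1)$ is computed directly from (\ref{e:integraloperators}) as the polynomial $x\mapsto d_{s^{-1}}x(\alpha_s^\vee)$, which identifies with $d_s^\vee\alpha_s^\vee\in\fh\subset\bbc[\fh^*]$ (using $d_{s^{-1}}=d_s^\vee$). Combining,
\[
T_j^\vee(\tau)(1\otimes z)=t y_j\otimes z+\sum_s \frac{c(s) d_s^\vee}{d_s}\alpha_s(y_j)\,\alpha_s^\vee\otimes s(z).
\]

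Finally, to match the claimed formula, I would rearrange $\sum_j T_j^\vee(\tau)(1\otimes z)\otimes\fs(x_j)\omega$ in two steps. On the $t$-piece, no manipulation is needed: it already reads $\sum_j y_j\otimes tz\otimes\fs(x_j)\omega$. On the reflection piece, I would use the dual basis identities $\sum_j x_j(\alpha_s^\vee)y_j=\alpha_s^\vee$ and $\sum_j\alpha_s(y_j)x_j=\alpha_s$ together with Lemma \ref{l:reflections} applied to $s^{-1}$ (which, under the convention $\alpha_{s^{-1}}=\alpha_s$, $\alpha_{s^{-1}}^\vee=\alpha_s^\vee$, $d_{s^{-1}}=d_s^\vee$, gives $x_j-s^{-1}(x_j)=d_s^\vee x_j(\alpha_s^\vee)\alpha_s$). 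This lets me recognise
\[
\sum_s\frac{c(s)d_s^\vee}{d_s}\alpha_s^\vee\otimes s(z)\otimes\fs(\alpha_s)\omega=\sum_j y_j\otimes\sum_s\frac{c(s)}{d_s}s(z)\otimes\fs(x_j-s^{-1}(x_j))\omega,
\]
and then, distributing $\fs(x_j)$ versus $\fs(s^{-1}(x_j))$ and absorbing $\sum_s\frac{c(s)}{d_s}s$ into $N_c(\tau)$ acting on $z$, I obtain the desired expression (up to the overall sign dictated by (\ref{e:D-0})).

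The calculation is routine once the key input $I_s^\vee(1)=d_s^\vee\alpha_s^\vee$ is identified; the only subtlety to watch is the bookkeeping with $s$ versus $s^{-1}$ (using $d_{s^{-1}}=d_s^\vee$) that converts $\fs(\alpha_s)$ into the difference $\fs(x_j)-\fs(s^{-1}(x_j))$. No step is genuinely difficult, so the main thing to be careful about is the sign and this swap $s\leftrightarrow s^{-1}$ in the reflection parameters.
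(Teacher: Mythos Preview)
Your proposal is correct and follows essentially the same route as the paper's proof. The paper starts from the equivalent formula (\ref{e:DiracOp1}) rather than (\ref{e:DiracOp2}), but the substance is identical: both use $I_s^\vee(1)=d_s^\vee\alpha_s^\vee$, expand $\alpha_s^\vee=\sum_j x_j(\alpha_s^\vee)y_j$, rewrite $d_s^\vee x_j(\alpha_s^\vee)\alpha_s=x_j-s^{-1}(x_j)$ via Lemma~\ref{l:reflections}, and then absorb $\sum_s\frac{c(s)}{d_s}s$ acting on $\tau$ as the scalar $N_c(\tau)$.
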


\begin{proof}
It is easier to use (\ref{e:DiracOp1}). Since $I_s^\vee(1)=d_s^\vee \alpha_s^\vee$ and $\alpha_s^\vee=\sum_j x_j(\alpha_s^\vee) y_j$, we get \[D_\tau(0,0,\ell)= 1\otimes\sum_j \mu_{y_j}\otimes\left(t(1\otimes \fs(x_j))-\sum_s\frac{c(s)}{\lambda_s}s\otimes x_j(\alpha_s^\vee)\fs(\alpha_s) \right).
\]
Then, we write $\lambda_s^{-1}x_j(\alpha_s^\vee)\alpha_s=s^{-1}(x_j)-x_j$ and finally we use that $\sum_s\frac{c(s)}{d_s}s$ acts by $N_c(\tau)$ in $\tau$.
\end{proof}

For every $x\in\fh^*$, define
\begin{equation}\label{e:d=0}
t_x(\tau): \tau\otimes{\bigwedge}^{\!\ell}\fh\to \tau\otimes{\bigwedge}^{\!\ell-1}\fh,\quad t_x(\tau)=(t+N_c(\tau))(1\otimes \fs(x_j))-\sum_s\frac{c(s)}{d_s}s\otimes \fs(s^{-1}(x_j)) .
%t(1\otimes x_j)-\sum_s\frac{c(s)}{\lambda_s}s\otimes x_j(\alpha_s^\vee)\alpha_s .
\end{equation}
Since $y_j$ are linear independent in $\fh$, we see that $D_\tau(0,0,\ell)=0$ if and only if $t_{x_j}(\tau)=0$ for all $j$. Hence:
\begin{comment}
But this operator can be rewritten as the composition:
\begin{equation}
(1\otimes x_j)\cdot (t+N_c(\tau)-\rho(\Omega_G))\text{ acting on } \tau\otimes{\bigwedge}^{\!\ell}\fh.
\end{equation}
If $\sigma$ is an irreducible $G$-representation in $\tau\otimes{\bigwedge}^{\!\ell}\fh$ then this acts by $(t+N_c(\tau)-N_c(\sigma))(1\otimes x_j).$ If $\ell=0$, then we get the situation in Corollary \ref{c:lowdegree}. Otherwise $ x_j$ does not act by $0$ for all $j$, so the kernel comes from $N_c(\sigma)-N_c(\tau)=t$. We summarise this discussion.
\end{comment}

\begin{proposition}\label{p:d=0}
The kernel of $D_\tau$ on $\fX(\widetilde\tau)_{0,0}^\ell$ consists of $\bbc\otimes\bbc\otimes u(\tau,\ell)$, where 
\[u(\tau,\ell)\in \bigcap_j \ker t_{x_j}(\tau)\subseteq\tau\otimes{\bigwedge}^{\!\ell}\fh.
\]
\end{proposition}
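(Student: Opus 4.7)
The plan is to extract the statement as a direct corollary of Lemma \ref{l:d=0}, which already gives an explicit formula for the restriction of $D_\tau$ to the subspace $\fX(\widetilde\tau)_{0,0}^\ell = \bbc\otimes\bbc\otimes(\tau\otimes{\bigwedge}^{\!\ell}\fh)$. I would rewrite that formula in the factored form
\[
D_\tau(0,0,\ell)\;=\;\sum_{j} \bigl(1\otimes\mu_{y_j}\bigr)\otimes t_{x_j}(\tau),
\]
where the operator $t_{x_j}(\tau)\colon \tau\otimes{\bigwedge}^{\!\ell}\fh\to \tau\otimes{\bigwedge}^{\!\ell-1}\fh$ is precisely the one introduced in (\ref{e:d=0}). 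This factored form matches the display in Lemma \ref{l:d=0} once the tensor legs are regrouped so that the $\tau$-factor is pulled next to the spin factor.

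A general element of $\fX(\widetilde\tau)_{0,0}^\ell$ is $1\otimes 1\otimes u$ with $u\in \tau\otimes{\bigwedge}^{\!\ell}\fh$, and applying the displayed formula gives
\[
D_\tau(1\otimes 1\otimes u)\;=\;\sum_{j} 1\otimes y_j \otimes t_{x_j}(\tau)(u),
\]
which lies in $\bbc\otimes \bbc[\fh^*]_1\otimes(\tau\otimes{\bigwedge}^{\!\ell-1}\fh)$. Since $\{y_1,\ldots,y_r\}$ is a basis of $\fh$, identified with the degree-$1$ part $\bbc[\fh^*]_1$, the vectors $1\otimes y_j\otimes w_j$ sum to zero if and only if each $w_j$ is zero. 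Therefore the vanishing of $D_\tau(1\otimes 1\otimes u)$ is equivalent to $t_{x_j}(\tau)(u)=0$ for every $j$, i.e.\ $u\in \bigcap_j \ker t_{x_j}(\tau)$, which is the claim. There is no substantial obstacle: the proposition is essentially a bookkeeping reformulation of Lemma \ref{l:d=0}, and the only fact used beyond that lemma is the linear independence of a basis of $\fh$ inside $\bbc[\fh^*]$.
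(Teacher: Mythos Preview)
Your proof is correct and follows essentially the same approach as the paper: the paper's argument, given in the sentence immediately preceding the proposition, is precisely the observation that the formula of Lemma \ref{l:d=0} factors as $\sum_j (1\otimes\mu_{y_j})\otimes t_{x_j}(\tau)$ and that linear independence of the $y_j$ in $\fh$ forces each $t_{x_j}(\tau)(u)$ to vanish separately. You have simply spelled out this one-line argument in more detail.
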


\subsection{Local-global relations} We relate the kernel of the global Dirac operator to that of the local Dirac operator. This is, of course, inspired by the ideas from the classical setting of Dirac operators on symmetric spaces \cite{AS}, \cite{Pa}, \cite{Vo}. 

Let $\widetilde \tau$ be a $G$-representation. We shall use repeatedly the duality
\[\cha^{\textup{op}}\cong \chad.
\]
We begin by observing that our integral-reflection module $\fX_{t,c}(\widetilde\tau)$ is an explicit realisation of the $(\bbz\times\bbz)$-graded dual of the projective module $X_{t,\check c}(\widetilde\tau^*) \cong \bbc[\fh^*]\otimes\bbc[\fh]\otimes\widetilde\tau^*$:
\begin{equation*}
\fX_{t,c}(\widetilde\tau)=\ghom^{\bbz\times\bbz}_{\bbc}(X_{t,\check c}(\widetilde\tau^*),\bbc).
\end{equation*}
In the previous identity, the $G$-module $\widetilde\tau^*\cong\bbc\otimes\bbc\otimes\widetilde\tau^*$ is in degree $(0,0)$. Note that, even though $X_{t,\check c}(\widetilde\tau^*)$ and $\fX_{t,c}(\widetilde\tau)$ are naturally $(\bbz\times\bbz)$-graded vector spaces, we can view them as $\bbz$-graded vector spaces by taking total degrees. We then have the following:

\begin{proposition}\label{p:Xish-finite}
The graded module $\fX_{t,c}(\widetilde\tau)$ equals the space of $\fh$-finite vectors of $\ghom^\bbz_\bbc(X_{t,\check{c}}(\widetilde\tau^*),\bbc)$.
\end{proposition}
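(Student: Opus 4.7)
The plan is to identify $\fX_{t,c}(\widetilde\tau)$ with the \emph{bigraded} dual of $X_{t,\check c}(\widetilde\tau^*)$ through the pairing (\ref{e:Xduality}), and then to show that this bigraded dual coincides, inside the larger singly-graded dual $\ghom^\bbz_\bbc(X_{t,\check c}(\widetilde\tau^*),\bbc)$, with the $\fh$-finite subspace. First, I would bigrade both modules by the polynomial degrees of their two polynomial factors, so that $X_{t,\check c}(\widetilde\tau^*)_{(m,n)}=\bbc[\fh^*]_m\otimes\bbc[\fh]_n\otimes\widetilde\tau^*$ and $\fX_{t,c}(\widetilde\tau)_{(m,n)}=\bbc[\fh]_m\otimes\bbc[\fh^*]_n\otimes\widetilde\tau$. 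Since (\ref{e:Xduality}) is a product of the polynomial pairing (\ref{e:polyduality}) and the evaluation $\widetilde\tau^*\times\widetilde\tau\to\bbc$, it is nondegenerate between matching finite-dimensional bi-degree pieces, and this yields a bigraded vector-space isomorphism $\fX_{t,c}(\widetilde\tau)\cong\ghom_\bbc^{\bbz\times\bbz}(X_{t,\check c}(\widetilde\tau^*),\bbc)$. Collapsing to the total grading, each total-degree piece of $X_{t,\check c}(\widetilde\tau^*)$ is an infinite direct sum over the line of bi-degrees with $n-m=\gamma$; hence $\ghom^\bbz_\bbc(X_{t,\check c}(\widetilde\tau^*),\bbc)$ is the corresponding direct \emph{product} of bi-degree duals, strictly containing $\fX_{t,c}(\widetilde\tau)$, whose image consists precisely of functionals with finite bi-degree support in each total degree.

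The forward direction is then immediate from Theorem \ref{t:X-module}: the action $Q(y)=\partial_y\otimes 1\otimes 1$ lowers the $\bbc[\fh]$-polynomial degree by one, so $\fh^{n+1}$ annihilates anything of $\bbc[\fh]$-degree $n$, making $\fX_{t,c}(\widetilde\tau)$ $\fh$-locally nilpotent. For the converse, the key observation is that $\fh\subset\chad$ sits inside the commutative first PBW-factor $\bbc[\fh^*]\subset\chad$ of $\chad=\bbc[\fh^*]\otimes\bbc[\fh]\otimes\bbc G$; consequently, left multiplication by $y\in\fh$ on $X_{t,\check c}(\widetilde\tau^*)$ reduces simply to $\mu_y\otimes 1\otimes 1$ on the first factor, with no $[y,q]$-corrections. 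Dualising through (\ref{e:Xduality}) recovers $Q(y)$ on the bigraded part and shows that the $\fh$-action on the full singly-graded dual is still a pure shift lowering the first bi-degree index by one.

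Given $\phi\in\ghom^\gamma_\bbc(X_{t,\check c}(\widetilde\tau^*),\bbc)$, decompose $\phi=\sum_\lambda\phi_{(m_\lambda,n_\lambda)}$ into its bi-homogeneous components on the line $n-m=\gamma$. For each nonzero $\phi_{(m_0,n_0)}$, I would show $\fh^N\cdot\phi_{(m_0,n_0)}\neq 0$ whenever $N\leq m_0$: this reduces to the surjectivity of the multiplication map $\Sym^N\fh\otimes\bbc[\fh^*]_{m_0-N}\to\bbc[\fh^*]_{m_0}$ combined with the nondegeneracy of the pairing on the bi-degree piece $X_{t,\check c}(\widetilde\tau^*)_{(m_0,n_0)}$. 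Since the shifted components $\fh^N\cdot\phi_{(m_\lambda,n_\lambda)}$ sit in pairwise distinct bi-degrees $(m_\lambda-N,n_\lambda)$, they cannot cancel in the sum; the equation $\fh^N\cdot\phi=0$ therefore forces $N>m_\lambda$ for every $\lambda$, and $\fh$-finiteness of $\phi$ forces the set $\{m_\lambda\}$ to be bounded, which on the line $n-m=\gamma$ is equivalent to finite support. The main technical obstacle is establishing the clean PBW-reduction of the $\fh$-action on the $\chad$-side cleanly; once that is in hand, the rest follows from elementary surjectivity of polynomial multiplication and bi-degree bookkeeping.
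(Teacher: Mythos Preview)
Your proposal is correct and follows essentially the same approach as the paper. Both arguments rest on the same two observations: first, that $\fX_{t,c}(\widetilde\tau)$ is the bigraded dual of $X_{t,\check c}(\widetilde\tau^*)$ sitting inside the singly-graded dual as the functionals of finite bi-degree support; second, that the $\fh$-action on the dual is a pure degree-shift (derivation on the $\bbc[\fh]$ factor, dual to left multiplication by $\fh\subset\bbc[\fh^*]$ on the $\chad$-side), so $\fh$-finiteness is equivalent to bounded $\bbc[\fh]$-degree. The paper phrases the converse more tersely---representing elements of the singly-graded dual as formal power series in the $p_j$'s and saying $\fh$-finiteness forces them to be polynomials---while you spell out the non-cancellation of bi-degree components and the surjectivity of $\Sym^N\fh\otimes\bbc[\fh^*]_{m_0-N}\to\bbc[\fh^*]_{m_0}$; this extra detail is fine and fills in what the paper leaves implicit. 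Your remark that ``the main technical obstacle is establishing the clean PBW-reduction'' is overstated: as you yourself note, $\fh$ lies in the commutative first factor of the PBW decomposition of $\chad$, so there is nothing to reduce.
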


\begin{proof}
We start by noting that $\Hom^{\bbz}_\bbc(X_{t,\check{c}}(\widetilde\tau^*),\bbc) \cong \oplus_{d\in\bbz} \Hom_\bbc(\oplus_{m-n=d}\bbc[\fh^*]_m\otimes\bbc[\fh]_n\otimes\widetilde\tau^*,\bbc)$ and so it can be identified with the space of all finite linear combinations $\sigma = \sum_d\sigma_d$ in which each summand is a formal power series $\sigma_d = \sum_j p_{j}\otimes q_{j}\otimes z_{j}$ ({\it i.e.}, not necessarily finitely many summands), such that $p_{j}\in\bbc[\fh],q_{j}\in\bbc[\fh^*],z_{j}\in\widetilde\tau$ and $\deg(p_{j})-\deg(q_{j}) = d$. Under this identification, the action of $\fh$ is by the $Q$-operators, and it is therefore by derivation of the polynomials $\{p_j\}$ in the direction of $y\in\fh$. Asking for $\fh$-finiteness, is equivalent to saying that each $\sigma_d$ is a polynomial, yielding the desired identification with $\ghom^{\bbz\times\bbz}_{\bbc}(X_{t,\check c}(\widetilde\tau^*),\bbc)=\fX_{t,c}(\widetilde\tau)$.
\end{proof}

For each ($\bbz$-)graded $\cha$-module $Y$ (respectively, $\chad$-module), denote the contragredient module
\begin{equation*}
Y^\dagger=\ghom^\bbz_{\bbc}(Y,\bbc),
\end{equation*}
which is a graded module for $\cha$ (respectively, $\chad$).
\begin{lemma}
If $Y$ is in $\Co_{t,c}(G,\fh)$, then $Y^\dagger$ is in $\Co_{t,\check c}(G,\fh^*)$. Also, $M_{t,\check{c}}(\tau^*)^\dagger\cong\fM_{t,c}(\tau)$ and 
\begin{equation*}
L_c(\tau)\cong L_{\check c}(\tau^*)^\dagger.
\end{equation*}
\end{lemma}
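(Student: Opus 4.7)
The plan is to address the three claims of the lemma in turn.

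For the first claim, that $Y^\dagger$ lies in $\Co_{t,\check c}(G,\fh^*)$, I will use the isomorphism $\gamma:\chad^{\textup{op}}\to\cha$ of Lemma \ref{l:duality} to endow $Y^\dagger$ with the structure of a $\chad$-module. Because $\gamma$ acts as the identity on $\fh\oplus\fh^*$, it is degree-preserving, so the dualised action of $\bbc[\fh]\subset\chad$ on $Y^\dagger$ raises degree, while $\bbc[\fh^*]\subset\chad$ lowers it, just as in $\cha$. Since $Y$ is a graded object of $\Co_{t,c}(G,\fh)$, it is bounded below in degree with finite-dimensional graded components, so $Y^\dagger$ is bounded above in degree with finite-dimensional graded components. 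Local nilpotency of $\bbc[\fh]\subset\chad$ on $Y^\dagger$ is then automatic, as iterated application eventually pushes an element above the degree bound into zero. Finite generation of $Y^\dagger$ over $\bbc[\fh^*]\subset\chad$ is the dual statement of finite generation of $Y$ over $\bbc[\fh]$: the finite-dimensional dual of $Y/\bbc[\fh]_+Y$ embeds as a $\bbc[\fh]$-annihilated subspace of $Y^\dagger$ from which the whole module is generated by iterated $\bbc[\fh^*]$-action, mirroring the standard contragredient construction for graded category $\Co$ modules.

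For the second claim, that $M_{t,\check c}(\tau^*)^\dagger\cong\fM_{t,c}(\tau)$ as graded $\cha$-modules, this is essentially tautological. By the PBW theorem for $\chad$, we have $M_{t,\check c}(\tau^*)\cong \bbc[\fh^*]\otimes\tau^*$ as a graded vector space. The polynomial pairing \eqref{e:polyduality} between $\bbc[\fh]$ and $\bbc[\fh^*]$, tensored with the canonical pairing $\tau^*\times\tau\to\bbc$, identifies the graded dual with $\bbc[\fh]\otimes\tau=\fM_{t,c}(\tau)$ as a graded vector space. The $\cha$-action on $M_{t,\check c}(\tau^*)^\dagger$ induced by $\gamma$ coincides, by construction, with the $Q$-action of Theorem \ref{t:M-module}: as noted in the proof of that theorem, the $Q$-operators were chosen precisely to be the duals of left multiplication of $\chad$ on $M_{t,\check c}(\tau^*)$. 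The grading match is immediate from how the pairings interact with the degree assignments.

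For the third claim, that $L_{t,c}(\tau)\cong L_{t,\check c}(\tau^*)^\dagger$, the plan is to exploit that $\dagger$ is an exact contravariant functor on graded modules of finite type that sends simple modules to simple modules. Since $L_{t,\check c}(\tau^*)$ is the unique simple quotient of $M_{t,\check c}(\tau^*)$, applying $\dagger$ and using the second claim yields a simple submodule embedding $L_{t,\check c}(\tau^*)^\dagger\hookrightarrow M_{t,\check c}(\tau^*)^\dagger\cong \fM_{t,c}(\tau)$. But $\fM_{t,c}(\tau)$ is isomorphic to the costandard module $M_{t,c}^\vee(\tau)$ (established in the excerpt), whose unique simple submodule is $L_{t,c}(\tau)$, forcing $L_{t,\check c}(\tau^*)^\dagger\cong L_{t,c}(\tau)$.

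The main technical hurdle is the first claim, specifically the verification of finite generation of $Y^\dagger$ over $\bbc[\fh^*]$, which involves carefully dualising the noetherian generation condition on $Y$. Once this is settled, the remaining two assertions are essentially formal consequences of the construction of $\fM_{t,c}(\tau)$ and the standard uniqueness of simple sub/quotient modules in category $\Co$.
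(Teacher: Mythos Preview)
The paper does not actually prove this lemma: its entire proof reads ``This is known. See \cite[section 4.2.1]{GGOR} or \cite[Proposition 3.32 and Proposition 3.34]{EM}).'' So you have supplied considerably more than the paper does, and your outline is essentially the argument found in those references.

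Your treatment of the second and third claims is correct and efficient: the identification $M_{t,\check c}(\tau^*)^\dagger\cong\fM_{t,c}(\tau)$ is indeed immediate from the construction in Theorem~\ref{t:M-module} (the $Q$-operators were \emph{defined} as the duals of left multiplication on $M_{t,\check c}(\tau^*)$), and the third claim follows cleanly from exactness of $\dagger$ and uniqueness of the simple socle of the costandard module.

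One comment on the first claim. Your argument for finite generation of $Y^\dagger$ over $\bbc[\fh^*]$ is a bit compressed and mixes up which subalgebra is doing what; recall $\bbc[\fh]=\Sym(\fh^*)$, so ``$\bbc[\fh]$-annihilated'' means annihilated by $\fh^*$, not by $\fh$. The idea of dualising a finite generating set can be made to work, but the route taken in \cite{GGOR} is cleaner: every object of $\Co_{t,c}(G,\fh)$ has finite length, and since $\dagger$ is exact on graded modules of finite type, $Y^\dagger$ also has finite length and is therefore automatically finitely generated. This sidesteps the need to track generators through the duality explicitly.
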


\begin{proof}
This is known. See \cite[section 4.2.1]{GGOR} or \cite[Proposition 3.32 and Proposition 3.34]{EM}).
\end{proof}

\begin{lemma}\label{l:duality}
Suppose $Y$ is a graded $\cha$-module. Then
\begin{equation}\label{e:hom-duality}
\ghom_{\cha}(Y,\fX_{t,c}(\widetilde\tau))=\ghom_G(\widetilde\tau^*,Y^\dagger\mid_G).
\end{equation}
\end{lemma}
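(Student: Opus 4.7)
The plan is to establish (\ref{e:hom-duality}) by composing two natural isomorphisms: a duality statement coming from the construction of $\fX_{t,c}(\widetilde\tau)$ and graded Frobenius reciprocity for the induced module $X_{t,\check c}(\widetilde\tau^*)$.

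\emph{Step 1 (Duality).} First I would establish the adjunction
\begin{equation*}
\ghom_\cha(Y,\fX_{t,c}(\widetilde\tau))\;\cong\;\ghom_{\chad}(X_{t,\check c}(\widetilde\tau^*),Y^\dagger).
\end{equation*}
By construction (Theorem~\ref{t:X-module} together with Proposition~\ref{p:Xish-finite}), $\fX_{t,c}(\widetilde\tau)$ is the $\fh$-finite subspace of the graded dual of $X_{t,\check c}(\widetilde\tau^*)$, and its $\cha$-module structure is the one obtained from the left $\chad$-action on $X_{t,\check c}(\widetilde\tau^*)$ by transport along the algebra anti-isomorphism $\gamma\colon\chad^{\textup{op}}\isom\cha$ of Lemma~\ref{l:duality}. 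Given $f\in\ghom_\cha(Y,\fX_{t,c}(\widetilde\tau))$, the bilinear form $B_f(y,\xi):=\lpi\xi,f(y)\rpi$ coming from the pairing of (\ref{e:Xduality}) defines its adjoint $g_f\colon X_{t,\check c}(\widetilde\tau^*)\to Y^\dagger$ by $g_f(\xi)(y):=B_f(y,\xi)$. The duality relation $\lpi h\xi,\phi\rpi=\lpi\xi,\gamma(h)\phi\rpi$, for $h\in\chad$, $\xi\in X_{t,\check c}(\widetilde\tau^*)$, $\phi\in\fX_{t,c}(\widetilde\tau)$, translates the $\cha$-linearity of $f$ into the $\chad$-linearity of $g_f$; gradings are preserved directly from the definitions.

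\emph{Step 2 (Frobenius reciprocity).} Since $X_{t,\check c}(\widetilde\tau^*)=\chad\otimes_{\bbc G}\widetilde\tau^*$ is the $\chad$-module induced from the finite-dimensional $G$-representation $\widetilde\tau^*$, standard graded Frobenius reciprocity gives
\begin{equation*}
\ghom_{\chad}(X_{t,\check c}(\widetilde\tau^*),Y^\dagger)\;\cong\;\ghom_G(\widetilde\tau^*,Y^\dagger|_G),\qquad g\longmapsto\bigl(z^*\mapsto g(1\otimes 1\otimes z^*)\bigr),
\end{equation*}
with inverse sending a $G$-equivariant graded map $\psi$ to its unique $\chad$-linear extension. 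Composing this with the bijection of Step~1 yields the lemma.

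\emph{Main obstacle.} The delicate verification is the converse direction of Step~1: one must check that for a graded $\chad$-linear $g$, the associated functional $f(y)\in\ghom^\bbz_\bbc(X_{t,\check c}(\widetilde\tau^*),\bbc)$ actually lies in the $\fh$-finite subspace $\fX_{t,c}(\widetilde\tau)$ singled out by Proposition~\ref{p:Xish-finite}, rather than in its formal-power-series completion. The way to handle this is to use Step~2 to reduce $g$ to the finite-dimensional datum $g_0=g|_{1\otimes 1\otimes\widetilde\tau^*}\colon\widetilde\tau^*\to Y^\dagger$. Writing an arbitrary element of $X_{t,\check c}(\widetilde\tau^*)$ in the PBW form $h(1\otimes 1\otimes z^*)$ with $h\in\chad$, one has $\lpi h(1\otimes 1\otimes z^*),f(y)\rpi=g_0(z^*)(\gamma(h)y)$; since $g_0(z^*)$ is supported on a single graded piece of $Y$ (as $g_0$ is homogeneous and $Y^\dagger$ is graded in the direct-sum sense), the nonvanishing of this pairing forces $h$ to lie in a single grade of $\chad$, and the grading compatibility together with the $G$-finite nature of $g_0$ pins down the associated tensor as polynomial in $\bbc[\fh]$, placing $f(y)$ inside $\fX_{t,c}(\widetilde\tau)$.
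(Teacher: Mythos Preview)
Your two-step outline---an adjunction between $\fX_{t,c}(\widetilde\tau)$ and $X_{t,\check c}(\widetilde\tau^*)$ followed by Frobenius reciprocity---is exactly the paper's approach. The paper records it as the chain
\[
\ghom_{\cha}(Y,\fX_{t,c}(\widetilde\tau))=\ghom_{\cha}(Y,\ghom^\bbz_\bbc(X_{t,\check c}(\widetilde\tau^*),\bbc))=\ghom_{\chad}(X_{t,\check c}(\widetilde\tau^*),Y^\dagger)=\ghom_G(\widetilde\tau^*,Y^\dagger\mid_G),
\]
the last two identifications being tensor--hom adjunction and Frobenius reciprocity.

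Your treatment of the ``main obstacle'', however, has a gap. You correctly isolate the issue: one must check that the map $f$ associated to a graded $\chad$-linear $g$ lands in the $\fh$-finite subspace $\fX_{t,c}(\widetilde\tau)$ rather than merely in $\ghom^\bbz_\bbc(X_{t,\check c}(\widetilde\tau^*),\bbc)$. But your argument only shows that $f(y)$ is supported in a single \emph{total} degree of $X_{t,\check c}(\widetilde\tau^*)$. That is not sufficient: within a fixed total degree $m-n=\text{const}$ there are infinitely many bidegrees $(m,n)$, and nothing you wrote prevents $f(y)$ from being nonzero on all of them (this is precisely the formal-power-series phenomenon described in the proof of Proposition~\ref{p:Xish-finite}). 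The appeal to the ``$G$-finite nature of $g_0$'' is not to the point---$G$-finiteness is automatic here and has no bearing on $\fh$-finiteness.

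The paper closes this gap by using a hypothesis that the lemma's statement omits but its proof invokes explicitly: that $Y$ lies in $\Co_{t,c}(G,\fh)$, hence is locally $\fh$-nilpotent. Given that, any $\cha$-linear $f\colon Y\to\ghom^\bbz_\bbc(X_{t,\check c}(\widetilde\tau^*),\bbc)$ automatically takes values in the $\fh$-finite part: for $v\in Y$ there is $N$ with $\fh^N v=0$, whence $Q(\fh)^N f(v)=f(\fh^N v)=0$, so $f(v)\in\fX_{t,c}(\widetilde\tau)$ by Proposition~\ref{p:Xish-finite}. This is the missing ingredient; without the local $\fh$-nilpotency assumption on $Y$, the first equality in the chain above need not hold.
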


\begin{proof}
Since $Y$ is in $\Co_{t,c}(G,\fh)$, it follows from the local $\fh$-nilpotency (which is equivalent to local $\fh$-finiteness, when $Y$ is graded) and from Proposition \ref{p:Xish-finite} that even though $\fX_{t,c}(\widetilde\tau)=\ghom^{\bbz\times\bbz}_{\bbc}(X_{t,\check c}(\widetilde\tau^*),\bbc)$, we actually have
\[\ghom^\bbz_{\cha}(Y,\fX_{t,c}(\widetilde\tau))=\ghom^\bbz_{\cha}(Y,\ghom^\bbz_{\bbc}(X_{t,\check c}(\widetilde\tau^*),\bbc)).\]
The rest of the proof is ``abstract nonsense":
\begin{equation}\label{e:nonsense}
\begin{aligned}
\ghom^\bbz_{\cha}(Y,\fX_{t,c}(\widetilde\tau))&=\ghom^\bbz_{\cha}(Y,\ghom^\bbz_{\bbc}(X_{t,\check c}(\widetilde\tau^*),\bbc))\\
&=\ghom^\bbz_{\chad}(X_{t,\check c}(\widetilde\tau^*),\ghom^\bbz_{\bbc}(Y,\bbc))\\
&=\ghom^\bbz_{\chad}(\chad\otimes_{\bbc G}\widetilde\tau^*,Y^\dagger)\\
&=\ghom^\bbz_G(\widetilde\tau^*,Y^\dagger\mid_G),
\end{aligned}
\end{equation}
where the last equality is Frobenius reciprocity.
\end{proof}
Let $\tau$ be an irreducible $G$-representation and specialise $\widetilde \tau=\tau\otimes S.$ Recall the global Dirac operators  $D^\varepsilon_\tau: \fX_{t,c}(\widetilde\tau)^\varepsilon\to \fX_{t,c}(\widetilde\tau)^{-\varepsilon}$, defined in the previous subsections. Each operator $D_\tau^\varepsilon$ commutes with the action of $\cha$, thus it induces linear maps
\begin{equation*}
D_\tau^\varepsilon(Y): \ghom_{\cha}(Y,\fX_{t,c}(\tau\otimes S^\varepsilon))\longrightarrow \ghom_{\cha}(Y,\fX_{t,c}(\tau\otimes S^{-\varepsilon})),
\end{equation*}
for every graded $\cha$-module $Y$.

Recall also the local Dirac operator $D_Y: Y\otimes S\to Y\otimes S$ for every $Y$ in $\Co_{t,c}(G,\fh)$, and similarly, $D_Y^\varepsilon$.  
The operator $D_Y^\varepsilon$ is $G$-invariant, and thus induces linear maps $\ghom_G(\sigma,Y\otimes S^\varepsilon)\to \ghom_G(\sigma,Y\otimes S^{-\varepsilon})$, which equivalently can be written as
\begin{equation*}
D_Y^\varepsilon(\sigma): \ghom_G(\sigma\otimes (S^\varepsilon)^*,Y)\longrightarrow \ghom_G(\sigma\otimes (S^{-\varepsilon})^*,Y).
\end{equation*}

\begin{proposition}\label{p:local-global} Let $Y$ be a module in $\Co_{t,c}(G,\fh)$. Denote by $\iota$ the map providing the identification of the two sides of (\ref{e:hom-duality}). The following diagram commutes:
\begin{equation}\label{e:local-global-diagram}
\begin{tikzcd}
\ghom_{\cha}(Y,\fX_{t,c}(\tau\otimes S^\varepsilon)) \xar{r}{D_\tau^\varepsilon(Y)} &\ghom_{\cha}(Y,\fX_{t,c}(\tau\otimes S^{-\varepsilon}))\\
\ghom_G(\tau^*\otimes (S^\varepsilon)^*,Y^\dagger)  \xar{r}{D_{Y^\dagger}^\varepsilon(\tau^*)}\xar{u}{\iota} & \ghom_G(\tau^*\otimes (S^{-\varepsilon})^*,Y^\dagger)\xar{u}{\iota}
\end{tikzcd}
\end{equation} 
In particular,
\begin{equation}\label{e:local-global-rels}
\begin{aligned}
\ghom_G(\tau^*,\ker D_{Y^\dagger}^\varepsilon)&\cong\ghom_{\cha}(Y,\ker D_\tau^\varepsilon),\\
\ghom_G(\tau^*,\coker D_{Y^\dagger}^\varepsilon)&\hookrightarrow\ghom_{\cha}(Y, \coker D_\tau^\varepsilon).\\
%\ghom_{\cha}(Y, I_{D_\tau})&=\ghom_G(\widetilde\tau^*,I_D(Y^\dagger)).\\
\end{aligned}
\end{equation}
\end{proposition}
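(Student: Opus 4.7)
The plan is to establish commutativity of the displayed diagram by tracing the Dirac operators through the chain of natural isomorphisms in~(\ref{e:nonsense}) defining $\iota$; the two stated consequences then follow by standard homological algebra.

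First, the plan is to make $\iota$ explicit. Frobenius reciprocity extends a $G$-linear map $\phi:\tau^*\otimes(S^\varepsilon)^*\to Y^\dagger$ to a $\chad$-linear map $\tilde\phi:X_{t,\check c}(\tau^*\otimes(S^\varepsilon)^*)\to Y^\dagger$ via $\tilde\phi(h\otimes_G v^*) = h\cdot\phi(v^*)$. The pairing~(\ref{e:Xduality}) then identifies $\tilde\phi$ with an $\cha$-linear map $F = \iota(\phi):Y\to\fX_{t,c}(\tau\otimes S^\varepsilon)$ characterised by $\lpi h,F(y)\rpi = \tilde\phi(h)(y)$ for all $h\in X_{t,\check c}(\tau^*\otimes(S^\varepsilon)^*)$ and $y\in Y$; the image lies in $\fX_{t,c}(\tau\otimes S^\varepsilon)$ (rather than its larger $\fh$-completion) by the $\fh$-finiteness established in Proposition~\ref{p:Xish-finite}.

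Next, for commutativity, observe that $D_\tau$ was defined as the pairing-dual of $\check D_\tau$ (see the definition preceding~(\ref{e:DiracOp2})). Hence post-composition with $D_\tau^\varepsilon$ on the top row corresponds, under the first isomorphism in~(\ref{e:nonsense}), to pre-composition with $\check D_\tau$ on $\ghom_{\chad}(X_{t,\check c}(\tau^*\otimes(S^\varepsilon)^*), Y^\dagger)$. By Frobenius reciprocity, this pre-composition is determined by its effect on the subspace $\tau^*\otimes(S^\varepsilon)^*\subseteq X_{t,\check c}(\tau^*\otimes(S^\varepsilon)^*)$. Using~(\ref{e:dualDirac}) with $p=q=1$ together with $\chad$-linearity of $\tilde\phi$ one finds
\[
(\tilde\phi\circ\check D_\tau)(1\otimes_G v^*) = \sum_j\bigl(x_j\cdot\phi((1\otimes\fs^*(y_j))v^*) + y_j\cdot\phi((1\otimes\fs^*(x_j))v^*)\bigr),
\]
with $x_j,y_j$ acting on $Y^\dagger$ through its $\chad$-module structure. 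Transporting through the adjunction $\ghom_G(\tau^*\otimes(S^\varepsilon)^*,Y^\dagger)\cong\ghom_G(\tau^*,Y^\dagger\otimes S^\varepsilon)$ and using that $\fs^*$ on $S^*$ is dual to $\fs$ on $S$ under $\lpi\cdot,\cdot\rpi$, this expression coincides with post-composition with the local Dirac $D_{Y^\dagger}^\varepsilon:Y^\dagger\otimes S^\varepsilon\to Y^\dagger\otimes S^{-\varepsilon}$, which is governed by the same Dirac element $\Cd=\sum_j x_j\otimes y_j + y_j\otimes x_j$ now viewed in $\chad\otimes\Cc$. This yields $D_\tau^\varepsilon(Y)\circ\iota = \iota\circ D_{Y^\dagger}^\varepsilon(\tau^*)$.

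Finally, for the consequences: the kernel identification is immediate since $\iota$ is an isomorphism and the kernels of the horizontal arrows are tautologically $\ghom_\cha(Y,\ker D_\tau^\varepsilon)$ and $\ghom_G(\tau^*,\ker D_{Y^\dagger}^\varepsilon)$. For the cokernels, applying the exact functor $\ghom_G(\tau^*,-)$ (exact because $\bbc G$ is semisimple) to the short exact sequence $0\to\im D_{Y^\dagger}^\varepsilon\to Y^\dagger\otimes S^{-\varepsilon}\to\coker D_{Y^\dagger}^\varepsilon\to 0$ yields $\coker D_{Y^\dagger}^\varepsilon(\tau^*)\cong\ghom_G(\tau^*,\coker D_{Y^\dagger}^\varepsilon)$; the merely left-exact $\ghom_\cha(Y,-)$ applied to the analogous $\cha$-module sequence yields only an injection $\coker D_\tau^\varepsilon(Y)\hookrightarrow\ghom_\cha(Y,\coker D_\tau^\varepsilon)$, and combining with the cokernel isomorphism $\coker D_{Y^\dagger}^\varepsilon(\tau^*)\cong\coker D_\tau^\varepsilon(Y)$ coming from the commutative diagram produces the stated monomorphism. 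The main obstacle is the commutativity calculation: one must verify that the $\chad$-linear shuffle of~(\ref{e:dualDirac}) really reproduces the formula for $D_{Y^\dagger}^\varepsilon$ on $Y^\dagger\otimes S^\varepsilon$, which ultimately rests on the fact that $\cha$ and $\chad$ share the generators $x_j,y_j\in V$ (so that a common Dirac element $\Cd$ drives both operators) and on the duality of Clifford actions $\fs^*$ and $\fs$ under $\lpi\cdot,\cdot\rpi$.
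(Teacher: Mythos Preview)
Your proposal is correct and follows essentially the same approach as the paper: you trace the Dirac operators through the chain of identifications~(\ref{e:nonsense}) defining $\iota$, and then deduce the kernel/cokernel relations by applying the hom-functors to the relevant exact sequences, using that $\ghom_G(\tau^*,-)$ is exact (equivalently, $\tau^*$ is projective) while $\ghom_{\cha}(Y,-)$ is only left exact. The paper's proof leaves the commutativity verification as a one-line ``trace the definitions'' remark, whereas you have spelled out the computation explicitly; your added detail is accurate and constitutes a faithful unpacking of what the paper asserts.
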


\begin{proof}
The commutativity of the diagram follows at once by tracing the definitions of the Dirac operators in the chain of identifications (\ref{e:nonsense}). For the relations in (\ref{e:local-global-rels}), simply apply the respective hom-functors  to the corresponding exact sequence of kernel and cokernel to obtain the commutative diagram (for lack of space, we write $\Hom$ instead of $\ghom$ and omitted the indices) with exact rows:
\[
\begin{tikzcd}
\Hom(Y,\ker D_\tau^\varepsilon) \xar\mono{r}&\Hom(Y,\fX_{t,c}(\widetilde\tau)^\varepsilon) \xar{r}{D_\tau^\varepsilon(Y)} &\Hom(Y,\fX_{t,c}(\widetilde\tau)^{-\varepsilon})\xar{r}&\Hom(Y,\coker D_\tau^\varepsilon)\\
\Hom(\tau^*,\ker D_{Y^\dagger}^\varepsilon) \xar\mono{r}\xar[dotted]{u}{\varphi}&\Hom(\tau^*\otimes (S^\varepsilon)^*,Y^\dagger)  \xar{r}{D_{Y^\dagger}^\varepsilon(\tau^*)}\xar{u}{\iota} & \Hom(\tau^*\otimes (S^{-\varepsilon})^*,Y^\dagger)\xar\epi{r}\xar{u}{\iota}&\Hom(\tau^*,\coker D_{Y^\dagger}^\varepsilon).\xar[dotted]{u}{\psi}
\end{tikzcd}
\]
Here, the last row is exact since $\tau^*$ is projective in the category of (graded) $\bbc G$-modules. It is then clear that $\varphi$ is an isomorphism while $\psi$ is injective.
\end{proof}

\begin{remark}
The second relation of (\ref{e:local-global-rels}) is an isomorphism if and only if $\Hom(Y,\fX_{t,c}(\widetilde\tau)^{-\varepsilon})\to\Hom(Y,\coker D_\tau^\varepsilon)$ is surjective.
\end{remark}

%\begin{corollary}\label{c:local-global} Let $\sigma,\tau$ be irreducible $G$-representations.
%\begin{enumerate}
%\item If $\ghom_{\cha}(L_c(\sigma), \ker {D_\tau})\neq 0$ then $\sigma\sim\tau$.
%\item $\ghom_{\cha}(L_c(\tau), \ker {D_\tau})\neq 0$.
%\end{enumerate}
%\end{corollary}
%
%\begin{proof}
%Part (1) follows from Proposition \ref{p:local-global} and Theorem \ref{t:vogan-conj}. Part (2) is then implied by the fact that $\ghom_G(\tau^*,H_D(L_{\check c}(\tau^*)))\neq 0$. Notice that (2) also follows from the direct calculation in Corollary \ref{c:lowdegree}.
%\end{proof}
%When $t\neq 0$, part (1) of Corollary \ref{c:local-global} also holds with $\ker D_\tau$ in place of $H_{D_\tau}.$

\subsection{The $\bbz_2$-example}
For the particular case in which $G = \bbz_2$, and $t=1$ there are many simplifications to the formulae above. So suppose that $\dim\fh = 1$ and $G=\bbz_2$. Fix $x = \alpha$ and $y = \alpha^\vee/2$ basis of $\fh^*$ and $\fh$, respectively. The Cherednik relation is
\[[y,x] = 1 - 2cs.\]
More generally, Proposition \ref{p:commutation} reads here as
\begin{equation}\label{e:gensl2}
[y,x^m] = mx^{m-1} - c(1-(-1)^{m})x^{m-1}s.
\end{equation}
Fix $\tau\in\{\triv,\sgn\}$ and consider $\fX_{t,c}(\widetilde\tau) = \bbc[x]\otimes\bbc[y]\otimes\bbc_\tau\otimes\bigwedge\fh$. We shall denote by $\partial$ and $\mu$ the derivation and multiplication by $x$ or $y$ in the respective polynomial algebras. The formulae for $(Q,\fX_{t,c}(\widetilde\tau))$ and $D^\varepsilon_\tau$ become
\begin{equation*}
\begin{aligned}
Q(x)&=1\otimes\partial\otimes 1 \otimes 1 + \mu\otimes 1 \otimes 1 \otimes 1 - 2cI\otimes s\otimes s \otimes s\\
Q(y)&=\partial\otimes 1\otimes 1 \otimes 1 \\
%D_\tau&= D_\tau^+ + D_\tau^- = D|_{\fX_{t,c}(\widetilde\tau)^{+}} + D|_{\fX_{t,c}(\widetilde\tau)^{-}}\\
D_\tau^+&= -1\otimes\partial\otimes 1 \otimes \fs(y)\\
D_\tau^-&= -\partial\otimes 1 \otimes 1 \otimes \fs(x) - 1\otimes(\mu\otimes 1 +  cI^\vee\otimes s)\otimes \fs(x),
\end{aligned}
\end{equation*}
where $\fs(y)(\cdot)=y\wedge(\cdot)$ and $\fs(x)(\cdot)=-2\partial(\cdot)$ in $\bigwedge\fh$. We now embark in the quest to determine, for $G = \bbz_2$, the kernel and the cokernel of $D_\tau^\varepsilon$.  Throughout, we shall make use of the following notation. For $m,n\in\bbz_{\geq 0}$ and a fixed $0\neq z_\tau\in\bbc_\tau$, define the elements
\begin{align*}
\psi_{m,d} &= x^m\otimes y^d\otimes z_\tau\otimes 1\in \fX(\widetilde\tau)_{m,d}^0\\
\xi_{m,d} &= x^m\otimes y^d\otimes z_\tau\otimes y\in \fX(\widetilde\tau)_{m,d}^1.
\end{align*}

\begin{lemma}\label{l:computation}
In terms of the basis $\{\psi_{m,d}\}$ of $\fX_{t,c}(\widetilde\tau)^{+}$ and the basis $\{\xi_{m,d}\}$ of $\fX_{t,c}(\widetilde\tau)^{-}$, we have:
\begin{align*}
\tfrac{1}{2}D_\tau^-(\xi_{m,2k-1}) &= m\psi_{m-1,2k-1} + \psi_{m,2k}\\
\tfrac{1}{2}D_\tau^-(\xi_{m,2k}) &= m\psi_{m-1,2k} + \left(\frac{2k+1 + 2c\tau(s)}{2k+1}\right)\psi_{m,2k+1}.
\end{align*}
\end{lemma}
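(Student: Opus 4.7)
\medskip

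\textbf{Proof plan.} The plan is a direct unwinding of the formula for $D_\tau^-$ displayed in the subsection on the $\bbz_2$-example, one term at a time, together with an explicit evaluation of $I_s^\vee$ on powers of $y$.

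First I would recall the three summands making up $-D_\tau^-$: namely $1\otimes\partial\otimes 1\otimes\fs(y)$, $\partial\otimes 1\otimes 1\otimes\fs(x)$, and $1\otimes(\mu\otimes 1+cI^\vee\otimes s)\otimes\fs(x)$. Applied to $\xi_{m,d}=x^m\otimes y^d\otimes z_\tau\otimes y$, the first summand vanishes because $\fs(y)(y)=y\wedge y=0$, while $\fs(x)(y)=-2\partial_x(y)=-2\alpha(\alpha^\vee/2)\cdot 1=-2$. Hence
\[
-D_\tau^-(\xi_{m,d})=-2m\,\psi_{m-1,d}-2\,\psi_{m,d+1}-2c\tau(s)\,x^m\otimes I_s^\vee(y^d)\otimes z_\tau\otimes 1.
\]
All that remains is to compute $I_s^\vee(y^d)$.

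For this the cleanest route is via Lemma \ref{l:observation1}: choose $\tilde q=y^{d+1}/(d+1)$, so that $\partial_{\alpha_s}\tilde q=\partial_x\tilde q=y^d$ (using $x(\alpha^\vee/2)=1$), and then $I_s^\vee(y^d)=(1-s)\tilde q$. Since $s(y)=y-d_s^\vee\alpha(y)\alpha^\vee=-y$ in our setting, we get
\[
I_s^\vee(y^d)=\frac{1-(-1)^{d+1}}{d+1}\,y^{d+1}=\frac{1+(-1)^d}{d+1}\,y^{d+1},
\]
which is $0$ when $d=2k-1$ and $\frac{2}{2k+1}y^{2k+1}$ when $d=2k$. (Alternatively one may expand $I_s^\vee$ as in Corollary \ref{c:operators-expansion} and sum the resulting binomial identity.)

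Finally I would substitute these two cases back into the formula for $-D_\tau^-(\xi_{m,d})$ and divide by $-2$, obtaining
\[
\tfrac12 D_\tau^-(\xi_{m,2k-1})=m\,\psi_{m-1,2k-1}+\psi_{m,2k},
\]
and
\[
\tfrac12 D_\tau^-(\xi_{m,2k})=m\,\psi_{m-1,2k}+\left(1+\tfrac{2c\tau(s)}{2k+1}\right)\psi_{m,2k+1}=m\,\psi_{m-1,2k}+\tfrac{2k+1+2c\tau(s)}{2k+1}\,\psi_{m,2k+1},
\]
which are exactly the two asserted identities. There is no real obstacle here; the only place a small care is required is in the evaluation of $I_s^\vee(y^d)$, where one must correctly track the sign $s(y)=-y$ and the parity-dependent cancellation it produces.
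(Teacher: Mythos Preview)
Your proof is correct and follows essentially the same approach as the paper, which simply says ``Direct computation. Note that $I^\vee(y^d)=0$ if $d$ is odd and it equals $\tfrac{2}{d+1}y^{d+1}$ if $d$ is even''; you have merely supplied the details (and a clean derivation of the $I_s^\vee(y^d)$ formula via Lemma~\ref{l:observation1}). One small slip: the term $1\otimes\partial\otimes 1\otimes\fs(y)$ you list is actually $-D_\tau^+$, not a summand of $-D_\tau^-$, but since you immediately observe it vanishes on $\xi_{m,d}$ this has no effect on the argument.
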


\begin{proof}
Direct computation. Note that $I^\vee(y^d) = 0$ if $d$ is odd and it equals $\frac{2}{d+1}y^{d+1}$ if $d$ is even.
\end{proof}

\begin{corollary}\label{c:computation}
When $m=0$, we have
\[D_\tau^-(\xi_{0,d}) = 2\left(1 + c\tau(s)\frac{1+(-1)^{d}}{d+1}\right)1\otimes y^{d+1}\otimes z_\tau\otimes 1.\]
Thus, $D_\tau^-(\xi_{0,d}) = 0$ if and only if $d = 2k$ and $2c\tau(s) + (2k +1) = 0$.
\end{corollary}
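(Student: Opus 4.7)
The plan is to simply specialize Lemma \ref{l:computation} to $m=0$ and then repackage the two parity cases into the single unified formula. The factor $m$ in front of the first term on the right-hand side of each formula in Lemma \ref{l:computation} kills the ``$\psi_{m-1,\bullet}$'' contribution when $m=0$, so only the $\psi_{m,\bullet+1}$ piece survives. In particular, setting $m=0$ produces a degree-$(d+1)$ monomial in $y$ (and $x^0=1$ in $\bbc[x]$), which matches the shape $1\otimes y^{d+1}\otimes z_\tau\otimes 1$ appearing in the statement.

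More precisely, I would argue by parity of $d$. If $d=2k-1$ is odd, Lemma \ref{l:computation} gives $\tfrac{1}{2}D_\tau^-(\xi_{0,2k-1})=\psi_{0,2k}=1\otimes y^{2k}\otimes z_\tau\otimes 1$, while the proposed formula for odd $d$ reduces to $2\cdot 1 \cdot (1\otimes y^{d+1}\otimes z_\tau\otimes 1)$ since $1+(-1)^d=0$; the two agree. If instead $d=2k$ is even, Lemma \ref{l:computation} gives $\tfrac{1}{2}D_\tau^-(\xi_{0,2k})=\tfrac{2k+1+2c\tau(s)}{2k+1}\,\psi_{0,2k+1}$, and the proposed formula evaluates to $2\bigl(1+\tfrac{2c\tau(s)}{2k+1}\bigr)\,1\otimes y^{2k+1}\otimes z_\tau\otimes 1$, which again matches upon clearing the common denominator. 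The two cases combine into the announced single formula because $\tfrac{1+(-1)^d}{d+1}$ is exactly $0$ for odd $d$ and $\tfrac{2}{d+1}$ for even $d$.

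For the ``in particular'' statement, I would read off the vanishing condition directly: since $1\otimes y^{d+1}\otimes z_\tau\otimes 1$ is nonzero in $\fX_{t,c}(\widetilde\tau)^{+}$, the operator $D_\tau^-(\xi_{0,d})$ vanishes if and only if the scalar coefficient $1+c\tau(s)\tfrac{1+(-1)^d}{d+1}$ vanishes. For $d$ odd the coefficient is $1\neq 0$, so vanishing is impossible; for $d=2k$ even the coefficient becomes $1+\tfrac{2c\tau(s)}{2k+1}$, which is zero precisely when $2c\tau(s)+(2k+1)=0$. There is no real obstacle here: the whole corollary is a cosmetic rewriting of the $m=0$ specialization of Lemma \ref{l:computation} together with a trivial parity bookkeeping.
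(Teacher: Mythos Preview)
Your proof is correct and matches the paper's approach: the corollary is stated without proof in the paper, as it is an immediate specialization of Lemma~\ref{l:computation} to $m=0$ together with the parity bookkeeping you spell out. Your explicit verification of both cases and the vanishing analysis is exactly what is needed.
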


\begin{proposition}\label{p:kernelD}
Fix $0\neq z_\tau\in\bbc_\tau$. Suppose that $2c\tau(s) + (2k+1) = 0$ with $k\in\bbz_{\geq 0}$.  Then, we realise the modules
\[Q(\cha)\cdot \xi_{0,2k} \cong L_c(\triv)\subseteq \fX_{t,c}(\sgn\otimes S),\]
if $2c = 2k + 1,\tau = \sgn$ and if $-2c = 2k+1,\tau = \triv$, 
\[Q(\cha)\cdot \xi_{0,2k} \cong L_c(\sgn)\subseteq \fX_{t,c}(\triv\otimes S)\]
in the kernel of $D_\tau^-$. These modules are of dimension $2k+1$.
\end{proposition}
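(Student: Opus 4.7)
The plan is to string together three observations: a kernel check via the formula already computed, the identification of the $\cha$-submodule generated through Lemma~\ref{l:h-singular}, and an explicit dimension count using the standard module $M_{t,c}$.

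First, since $\xi_{0,2k}=1\otimes y^{2k}\otimes z_\tau\otimes y$ lives in the $m=0$ part of $\fX_{t,c}(\widetilde\tau)^-$, Corollary~\ref{c:computation} specialised to $d=2k$ gives
\[
\tfrac12 D_\tau^-(\xi_{0,2k})=\left(1+c\tau(s)\frac{2}{2k+1}\right)1\otimes y^{2k+1}\otimes z_\tau\otimes 1,
\]
and the hypothesis $2c\tau(s)+(2k+1)=0$ forces $\xi_{0,2k}\in\ker D_\tau^-$. This covers both cases: $\tau=\sgn$ with $2c=2k+1$ and $\tau=\triv$ with $-2c=2k+1$.

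Next, because $\xi_{0,2k}$ lies in $\bbc\otimes\bbc[\fh^*]\otimes\bbc_\tau\otimes{\bigwedge}\fh$, it is automatically $\fh$-singular by Lemma~\ref{l:h-singular}. To apply the second half of that lemma I must identify the one-dimensional $G$-representation $\bbc\cdot\xi_{0,2k}$. Since $s$ acts by $+1$ on $y^{2k}\in\bbc[\fh^*]$, by $\tau(s)$ on $z_\tau$, and by $-1$ on $y\in\fh$, we get $Q(s)\xi_{0,2k}=-\tau(s)\xi_{0,2k}$. Therefore $\bbc\cdot\xi_{0,2k}\cong\triv$ when $\tau=\sgn$ and $\bbc\cdot\xi_{0,2k}\cong\sgn$ when $\tau=\triv$, and Lemma~\ref{l:h-singular} yields the claimed isomorphisms $Q(\cha)\cdot\xi_{0,2k}\cong L_{t,c}(\sgn\otimes\tau)$.

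The step I expect to require the most care is the dimension count, but it is short. Using the ungraded presentation $M_{1,c}(\triv)=\bbc[x]\otimes z_\triv$ together with the commutation relation~(\ref{e:gensl2}), one computes
\[
y\cdot(x^m\otimes z_\triv)=\bigl(m-c(1-(-1)^m)\bigr)x^{m-1}\otimes z_\triv,
\]
so $x^{2k+1}\otimes z_\triv$ is singular precisely when $2c=2k+1$, generating a proper submodule whose quotient is the $(2k+1)$-dimensional $L_{1,c}(\triv)\cong\bbc[x]/(x^{2k+1})$. The analogous computation on $M_{1,c}(\sgn)$ shows $\dim L_{1,c}(\sgn)=2k+1$ when $-2c=2k+1$. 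In both situations the $\cha$-module $Q(\cha)\cdot\xi_{0,2k}$, being isomorphic to the corresponding simple module, has dimension $2k+1$, finishing the proof.
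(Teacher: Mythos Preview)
Your proof is correct, but it follows a different route from the paper for the module identification and dimension count.

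Both you and the paper verify $\xi_{0,2k}\in\ker D_\tau^-$ identically, via Corollary~\ref{c:computation}. After that you diverge. You first identify the $G$-type of the singular vector $\xi_{0,2k}$ (computing $Q(s)\xi_{0,2k}=-\tau(s)\xi_{0,2k}$), invoke Lemma~\ref{l:h-singular} to get $Q(\cha)\cdot\xi_{0,2k}\cong L_{t,c}(\tau\otimes\sgn)$ abstractly, and then compute $\dim L_{t,c}(\tau\otimes\sgn)$ externally by locating the first singular vector in the standard module $M_{1,c}(\tau\otimes\sgn)$. The paper instead stays inside $\fX_{t,c}(\widetilde\tau)$ throughout: it shows by a direct computation of the $Q(x)$-action that $Q(x^j)(\xi_{0,2k})\neq 0$ for $0\le j\le 2k$ (tracking the constant term $a_0=(2k)!/(2k-j)!$) and then derives $Q(x^{2k+1})(\xi_{0,2k})=0$ from the relation $Q(y)Q(x^{2k+1})(\xi_{0,2k})=(2k+1+2c\tau(s))Q(x^{2k})(\xi_{0,2k})$, a contradiction argument using~(\ref{e:gensl2}). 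Your approach is cleaner and more conceptual, separating the identification of the simple module from its dimension; the paper's is more self-contained within the integral-reflection picture and avoids passing to a different model of the module.
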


\begin{proof}
First, for $c$ and $\tau$ arbitrary, from Lemma \ref{c:computation}, $D_\tau^-(\xi_{0,2k}) = 0$ if and only if $2c = 2k+1$ and $\tau = \sgn$ or $2c = -(2k+1)$ and $\tau = \triv$. Hence, the submodule of $\fX_{t,c}(\tau\otimes \bigwedge\fh)$ generated by $\xi_{0,2k}$ specified in the statement is in the kernel. We claim that for any $0\leq j \leq 2k$, we have $Q(x^j)(\xi_{0,2k})$ is nonzero. Indeed, note that
\[Q(x^j)(\xi_{0,2k}) = (a_01\otimes 1 + a_1x\otimes y + \cdots + a_j x^j\otimes y^j)y^{2k-j}\otimes z_\tau\otimes y,\]
for scalars $a_i\in\bbc$. Note that $a_0 = (2k)(2k-1)\cdots(2k-j+1)$, so $Q(x^j)(\xi_{0,2k})\neq 0$ for $j\leq 2k$. We thus obtain a polynomial expression for $Q(x^{2k})(\xi_{0,2k})$ with constant term equal to $(2k)!$. Applying $x$ once more we get
\[Q(x^{2k+1})(\xi_{0,2k}) = (b_1x\otimes y + \cdots + b_{2k} x^{2k+1}\otimes y^{2k+1})y^{-1}\otimes z_\tau\otimes y,\]
for scalars $b_j$. Assuming that $Q(x^{2k+1})(\xi_{0,2k})\neq 0$ implies that $Q(y)Q(x^{2k+1})(\xi_{0,2k})\neq 0$. Commuting and applying the Cherednik relation (\ref{e:gensl2}) gives
\[0\neq Q(y)Q(x^{2k+1})(\xi_{0,2k}) = Q([y,x^{2k+1}])(\xi_{0,2k}) = (2k+1 + 2c\tau(s))Q(x^{2k})(\xi_{0,2k}) = 0,\]
which is a contradiction.
\end{proof}
\begin{remark}
Note that $D_\tau^2$ acts on $\bbc\otimes \bbc[y]_m\otimes\bbc_\tau\otimes{\bigwedge}^{\!1}\fh$ as the scalar (cf. (\ref{e:D-tau-squared}))
\[-2(m + 1 + c\tau(s) - c\tau(s)(-1)^{m+1}),\]
which is zero if and only if $m=2k$ is even and $(2c\tau(s) + 2k+1) = 0$.
\end{remark}

In the rest of this section, we shall say that the parameter $c$ is \emph{regular} if $c\notin \frac{1}{2} + \bbz$. As is suggested by Lemma \ref{c:computation} and we shall shortly see, the kernel of $D_\tau^-$ will be nonzero only if $c$ is {\it not} regular. We now explicitly determine the kernel and cokernel of the operators $D_\tau^{\pm}$.

\begin{corollary}\label{c:kerZ2}
Let $G=\bbz_2$, $t=1$ and $\tau\in\{\triv,\sgn\}$. Then:
\[
\begin{array}{lcl}
\textup{ if }2c \in -\tau(s)(2\bbz_{\geq 0}+1), &\quad& \left\{
\begin{array}{l}
\ker D_\tau^+ \cong \fM_{1,c}(\tau)\\
\ker D_\tau^-\cong L_{1,c}(\tau\otimes\sgn),
\end{array}
\right.\\\\
\textup{ otherwise, } &\quad& \left\{
\begin{array}{l}
\ker D_\tau^+ \cong \fM_{1,c}(\tau)\\
\ker D_\tau^- = 0.
\end{array}  
\right.
\end{array}
\]
\end{corollary}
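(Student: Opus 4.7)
The plan treats the two sides separately, using the explicit one-variable formulas already derived for the $\bbz_2$ case. For $D_\tau^+$, since $\dim\fh = 1$ and $\fs(y)(1) = y$, equation (\ref{e:DiracOp2}) collapses to $D_\tau^+(\psi_{m,d}) = -d\,\xi_{m,d-1}$; hence $\ker D_\tau^+$ is spanned by $\{\psi_{m,0}\}_{m\ge 0}$, which is the submodule $F_0(\fX_{1,c}(\widetilde\tau)^+) = \bbc[x]\otimes \bbc\otimes \bbc_\tau\otimes \bbc$, isomorphic to $\fM_{1,c}(\tau)$ by Proposition \ref{p:filtration}(2).

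For $D_\tau^-$ the idea is to exploit the fact that $D_\tau^-$ preserves the total degree $N = m - d - 1$, reducing the kernel computation to a one-variable recurrence on each homogeneous slice. Basis elements of $(\fX_{1,c}(\widetilde\tau)^-)_N$ are $\xi_m := \xi_{m,m-N-1}$ for $m\ge m_0 := \max(0,N+1)$, and Lemma \ref{l:computation} rewrites as $\tfrac{1}{2}D_\tau^-(\xi_m) = m\psi_{m-1} + \gamma_{m-N-1}\psi_m$, where $\gamma_{2k-1} = 1$ and $\gamma_{2k} = (2k+1+2c\tau(s))/(2k+1)$. In particular $\gamma_d = 0$ precisely when $d = 2k$ with $2c\tau(s) = -(2k+1)$. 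So the kernel in total degree $N$ is governed by the recurrence $(m+1)a_{m+1} + \gamma_{m-N-1}a_m = 0$, restricted to finite-support sequences.

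A case split on $N$ finishes the count. For $N\ge 0$ one has $m_0\ge 1$, and the minimal image $\psi_{m_0-1}$ is hit only by the term $m_0 a_{m_0}$, forcing $a_{m_0} = 0$ and then $a_m = 0$ for all $m$ by forward induction. For $N\le -1$, the recurrence starts at $m = 0$ without boundary constraint, and finite-support solutions exist iff the recurrence truncates, i.e., iff some $\gamma_{m^*-N-1}$ vanishes with $m^*\ge 0$. Under the hypothesis $2c\tau(s) = -(2k+1)$ this forces $m^* = 2k+N+1 \ge 0$, which happens exactly for $N\in\{-1,\ldots,-(2k+1)\}$, giving a one-dimensional kernel in each such degree; otherwise no $\gamma_d$ in the permitted range vanishes and only the zero sequence has finite support.

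This yields $\dim\ker D_\tau^- = 2k+1$ in the singular case and $0$ otherwise, and identifying the kernel with $L_{1,c}(\tau\otimes\sgn)$ is then immediate from Proposition \ref{p:kernelD}, which exhibits a $(2k+1)$-dimensional submodule $Q(\cha)\cdot \xi_{0,2k}\cong L_{1,c}(\tau\otimes\sgn)$ inside $\ker D_\tau^-$. The main delicate point is the boundary analysis separating $N\ge 0$ (no kernel, forced by the leading $m_0 a_{m_0}$-constraint) from $N\le -1$ (one-dimensional kernel when the recurrence truncates); everything else is bookkeeping on the recurrence, together with the translation of the singular parameter condition $2c\tau(s) = -(2k+1)$ into the statement's form $2c \in -\tau(s)(2\bbz_{\ge 0}+1)$.
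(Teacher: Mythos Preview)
Your argument is correct and takes a genuinely different route from the paper. For $D_\tau^+$ both arguments amount to the same direct observation, but for $D_\tau^-$ the paper proceeds module-theoretically: it uses that $\ker D_\tau^-$ is an $\cha$-submodule, pushes an arbitrary kernel element down to a singular vector by applying $Q(y)^m$, identifies the singular vectors of the kernel via Corollary~\ref{c:computation}, and then invokes Proposition~\ref{p:kernelD} to recognise the $\cha$-submodule so generated. Your approach instead slices $\fX_{1,c}(\widetilde\tau)^-$ by total degree $N=m-d-1$ and solves the two-term recurrence $(m+1)a_{m+1}+\gamma_{m-N-1}a_m=0$ on each slice, which is pure linear algebra. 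The boundary dichotomy you set up ($N\ge 0$ forces $a_{m_0}=0$ from the lowest image term; $N\le -1$ has a free initial condition, with finite support iff the recurrence truncates) is exactly right and yields the precise dimension $\dim\ker D_\tau^-=2k+1$ in the singular case and $0$ otherwise. The identification with $L_{1,c}(\tau\otimes\sgn)$ then follows from Proposition~\ref{p:kernelD} by a straight dimension count rather than by a module-structure argument. The payoff of your approach is that it is entirely self-contained and sidesteps the somewhat delicate step in the paper of recovering the full kernel from knowledge of its singular vectors alone; the paper's approach, on the other hand, is more in the spirit of category~$\Co$ and suggests how one would proceed in higher rank, where a slice-by-slice recurrence becomes impractical.
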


\begin{proof}
We just need to show that the kernel of $D^{\pm}_\tau$ is contained in a subspace of $\fX_{t,c}(\widetilde\tau)$ as specified since the other inclusions follow from Corollary \ref{c:lowdegree} and Proposition \ref{p:kernelD}. Suppose that $\xi$ is in the kernel of $D_\tau = D_\tau^++D_\tau^-$. There are two options: either $\xi\in F_0(\fX_{1,c}(\widetilde\tau))$ or not. If yes, then $\xi$ is  in $F_0(\fX_{1,c}(\widetilde\tau))\cong \fM_{1,c}(\tau\otimes {\bigwedge}^{\!0}\fh)\oplus \fM_{1,c}(\tau\otimes {\bigwedge}^{\!1}\fh)$. From Corollary \ref{c:lowdegree}, $\fM_{1,c}(\tau\otimes {\bigwedge}^{\!0}\fh)\subseteq\ker D_\tau^+$, for all values of $c$. So suppose that $\xi$ is in $\fM_{1,c}(\tau\otimes {\bigwedge}^{\!1}\fh)$. But note that $D_\tau = D_\tau^-$ in that space and 
\[D^-_\tau(p\otimes 1 \otimes z_\tau\otimes y) = 2\partial(p)\otimes 1 \otimes z_\tau\otimes 1 +2(1+2c\tau(s))p\otimes y \otimes z_\tau\otimes 1.\]
This can only be zero if $p\in\bbc$ and $1 + 2c\tau(s) = 0$. So $\xi = \xi_{0,0}$ and $Q(\chad)\cdot(\xi_{0,0}) \cong L_{1,-\tau(s)\frac{1}{2}}(\tau\otimes\sgn)$. These are the possibilities for $\xi\in F_0(\fX_{1,c}(\widetilde\tau))$.

For the other case, upon replacing $\xi$ by $Q(y^m)\xi$, for a sufficiently big $m$, we can assume that $\xi \in \bbc\otimes \bbc[y]_{\leq n}\otimes \bbc_\tau\otimes \bigwedge \fh$, for an $n>0$. From Corollary \ref{c:lowdegree}, we have $D_\tau$ is never zero on $\bbc\otimes \bbc[y]_d\otimes \bbc_{\tau}\otimes {\bigwedge}^{\!0}\fh$, if $d>0$, so it must be that $\xi \in \bbc\otimes \bbc[y]_{\leq n}\otimes \bbc_{\tau}\otimes {\bigwedge}^{\!1}\fh$, and $D_\tau$ acts as $D_\tau^-$ on $\xi$.  In the present case case, we can thus write
\[\xi = 1\otimes q \otimes z_\tau\otimes y,\]
for a polynomial $q = a_0 + a_1 y+\cdots + a_n y^n$ and $0\neq z_\tau\in\bbc_\tau$. Thus, $\xi = \sum_{d} a_d\xi_{0,d}$. By linear independence, $D_\tau(\xi) = 0$ implies $a_dD_\tau(\xi_{0,d}) = 0$ for all $d$. From Corollary \ref{c:computation}, $D(\xi_{0,d}) = 0$ if and only if $d=2k$ is even and $2k+1 + 2c\tau(s) = 0$. We thus showed that there exists a $k>0$ (we are assuming that $\xi\notin F_0(\fX_{1,c}(\widetilde\tau))$) such that $2c\tau(s) + (2k+1) = 0$ and our initial $\xi$ is in $\big(Q(\cha)\cdot\xi_{0,2k}\big)\cong L_{1,c}(\tau\otimes\sgn)$. 
\end{proof}

\begin{remark}
The second case in the Corollary \ref{c:kerZ2} includes the situation in which $c$ is regular and also if $c$ is not regular, but $\tau(s)+c/|c|\neq 0$.
\end{remark}
We now look at the image and cokernel of $D_\tau$. We shall treat the operators $D_\tau^{\pm}$ separately. 

\begin{lemma}\label{l:Z2genImage} 
The operator $D_\tau^+:\fX_{t,c}(\tau\otimes{\bigwedge}^{\!0}\fh)\to\fX_{t,c}(\tau\otimes{\bigwedge}^{\!1}\fh)$ is surjective.
\end{lemma}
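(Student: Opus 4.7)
The plan is to exploit the very explicit form of $D_\tau^+$ on the basis $\{\psi_{m,d}\}$ of $\fX_{t,c}(\tau\otimes{\bigwedge}^{\!0}\fh)$ and to write down preimages of basis elements of the target by hand. Because the source and the target are polynomial spaces in the variables $x$ and $y$, and $D_\tau^+$ is homogeneous (of total degree $0$), surjectivity will follow at the level of each graded piece from an elementary calculation.

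First I would note that on ${\bigwedge}^{\!0}\fh=\bbc$ the operator $\fs(x)$ acts as $0$, so the formula in the excerpt collapses to
\[
D_\tau^+=-1\otimes\partial\otimes 1\otimes\fs(y),
\]
and no Clifford/integral terms contribute. Applied to a basis vector $\psi_{m,d}=x^m\otimes y^d\otimes z_\tau\otimes 1$ of the source, and remembering that $\fs(y)(1)=y\wedge 1=y$, this yields
\[
D_\tau^+(\psi_{m,d})=-\,d\,\big(x^m\otimes y^{d-1}\otimes z_\tau\otimes y\big)=-d\,\xi_{m,d-1},
\]
valid for all $m\ge 0$ and $d\ge 1$, and giving $0$ when $d=0$. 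Note this is consistent with the grading: $\psi_{m,d}$ has total degree $m-d$ and $\xi_{m,d-1}$ has total degree $m-(d-1)-1=m-d$.

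Now I would conclude: the vectors $\{\xi_{m,d}\colon m,d\ge 0\}$ form a $\bbc$-basis of $\fX_{t,c}(\tau\otimes{\bigwedge}^{\!1}\fh)=\bbc[x]\otimes\bbc[y]\otimes\bbc_\tau\otimes y$, and each is in the image by the explicit formula
\[
\xi_{m,d}=-\,\frac{1}{d+1}\,D_\tau^+(\psi_{m,d+1}).
\]
Since $d+1\neq 0$, these preimages are well-defined, and a finite $\bbc$-linear combination of the $\xi_{m,d}$ can be lifted by the corresponding linear combination of the $\psi_{m,d+1}/(-(d+1))$. Hence $D_\tau^+$ is surjective.

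There is no real obstacle here: once one observes that $\fs(x)$ kills the generator of ${\bigwedge}^{\!0}\fh$, the operator $D_\tau^+$ is essentially the partial derivative $-\partial_y$ (tensored with wedging by $y$), which is manifestly surjective on polynomial algebras over $\bbc$. The only thing worth double-checking is that $D_\tau^+$ really preserves the total grading used on $\fX_{t,c}(\tau\otimes S)$, so that the lifts constructed above are legitimate within the graded module; this is immediate from the degree assignments $\deg x=1$, $\deg y=-1$, and the identification of $1\in{\bigwedge}^{\!0}\fh$, $y\in{\bigwedge}^{\!1}\fh$ in degrees $0$ and $-1$ respectively.
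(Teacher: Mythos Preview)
Your proof is correct and follows essentially the same approach as the paper: both observe that $D_\tau^+=-1\otimes\partial\otimes 1\otimes\fs(y)$ (since $\fs(x)$ kills ${\bigwedge}^{\!0}\fh$), and surjectivity follows because $\partial$ is surjective on $\bbc[y]$ and $\fs(y)$ is surjective from ${\bigwedge}^{\!0}\fh$ to ${\bigwedge}^{\!1}\fh$. Your version simply spells out the explicit preimages $\psi_{m,d+1}/(-(d+1))$, whereas the paper leaves this implicit.
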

\begin{proof}
Clear, since $D_\tau^+ = 1\otimes\partial\otimes 1\otimes \fs(y)$, and both $\partial\in\End(\bbc[y])$ and $\fs(y):{\bigwedge}^{\!0}\fh\to {\bigwedge}^{\!1}\fh$ are surjective.
\end{proof}

\begin{lemma}\label{l:image-c-regular}
Suppose there is no $k\in\bbz_{\geq 0}$ such that $2k+1 + 2\tau(s)c = 0$. Then, the image of $D_\tau^-$ contains the subspace
\[\bigoplus_{m-d<0}\bbc[x]_m\otimes\bbc[y]_d \otimes \bbc_\tau \otimes {\bigwedge}^{\!0}\fh\]
\end{lemma}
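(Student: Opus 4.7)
The plan is to use the explicit formulas from Lemma~\ref{l:computation} to show by induction on $m$ that every basis element $\psi_{m,d}$ with $m<d$ lies in the image of $D_\tau^-$. Under the standing hypothesis, the scalars
\[
\beta_k \;=\; \frac{2k+1+2c\tau(s)}{2k+1} \qquad (k\in\bbz_{\geq 0})
\]
appearing in the second formula of Lemma~\ref{l:computation} are all nonzero, so they may be inverted throughout.

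First I would treat the base case $m=0$. For $d=2k+1$ odd, Lemma~\ref{l:computation} gives $\tfrac12 D_\tau^-(\xi_{0,2k})=\beta_k\psi_{0,2k+1}$, while for $d=2k\geq 2$ even it gives $\tfrac12 D_\tau^-(\xi_{0,2k-1})=\psi_{0,2k}$. Hence every $\psi_{0,d}$ with $d\geq 1$ lies in the image of $D_\tau^-$.

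For the inductive step, fix $m\geq 1$ and assume the conclusion holds for $m-1$. Given $d>m$, I would rearrange Lemma~\ref{l:computation} as follows: if $d=2k+1$ is odd then
\[
\beta_k\,\psi_{m,d} \;=\; \tfrac12 D_\tau^-(\xi_{m,d-1}) - m\,\psi_{m-1,d-1},
\]
and if $d=2k$ is even then
\[
\psi_{m,d} \;=\; \tfrac12 D_\tau^-(\xi_{m,d-1}) - m\,\psi_{m-1,d-1}.
\]
In both cases, $d-1>m-1$, so by the induction hypothesis $\psi_{m-1,d-1}\in\Img D_\tau^-$; since $\beta_k\neq 0$, it follows that $\psi_{m,d}\in\Img D_\tau^-$ as well. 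This completes the induction and yields the claimed containment.

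There is no serious obstacle here: the recursion from Lemma~\ref{l:computation} is strictly triangular in $(m,d)$ along the diagonals, the only invertibility issue is the scalar $\beta_k$, and the hypothesis on $c$ is exactly what guarantees $\beta_k\neq 0$ for every $k\geq 0$. The only (mild) care needed is to make sure that the inductive reduction $(m,d)\mapsto(m-1,d-1)$ stays inside the region $m<d$, which is automatic since this region is stable under this shift as long as $d\geq 1$.
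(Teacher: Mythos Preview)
Your proof is correct and takes essentially the same approach as the paper: both argue by induction on $m$ using the recursion from Lemma~\ref{l:computation}, reducing $\psi_{m,d}$ (with $d>m$) to $\psi_{m-1,d-1}$ modulo the image of $D_\tau^-$, and the hypothesis on $c$ is used exactly to ensure the leading coefficient is invertible. Your version is in fact slightly more explicit, separating the even and odd cases for $d$, whereas the paper simply asserts that the relevant scalar $b$ is nonzero.
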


\begin{proof}
From the assumption on $c$, we have, from Lemma \ref{l:computation}, that $D_\tau^-(\xi_{m,d}) = m\psi_{m-1,d}+b\psi_{m,d+1}$ with $b\in\bbc$ nonzero. It follows that the subspace $\oplus_{d>0}\fX_{t,c}(\widetilde\tau)_{0,d}^0$ is in the image, as $D_\tau^-(\xi_{0,d}) = b\psi_{0,d+1}$ for a nonzero $b\in\bbc$. Assume, by induction on $n$, that the space
$\oplus_{d>0}\fX_{t,c}(\widetilde\tau)_{n,n+d}^0$ is in the image, for $n>0$. As
\[D_\tau^-(\xi_{n+1,n+1+d}) = a\psi_{n,n+1+d}+b\psi_{n+1,n+1+d+1}\]
with $a,b$ nonzero, the induction hypothesis implies that $\fX_{t,c}(\widetilde\tau)_{n+1,n+1+d}^0$ is in the image for all $d>0$, and hence the space $\oplus_{n\geq0}\oplus_{d>0}\fX_{t,c}(\widetilde\tau)_{n,n+d}^0$ is in the image, finishing the proof.
\end{proof}

\begin{proposition}\label{p:coker-c-regular}
Suppose there is no $k\in\bbz_{\geq 0}$ such that $2k+1 + 2\tau(s)c = 0$. Then, the cokernel of $D_\tau^-$ is linearly isomorphic to $\bbc[x]\otimes \bbc \otimes \bbc_\tau\otimes {\bigwedge}^{\!0}\fh$. It follows that $\coker D_\tau^{-}\cong \fM_{1,c}(\tau),$ as $\cha$-modules.
\end{proposition}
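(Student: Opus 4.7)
The plan is to determine the cokernel directly from the explicit relations of Lemma~\ref{l:computation}. Writing
\[
\tfrac{1}{2} D_\tau^-(\xi_{m,d}) = m\,\psi_{m-1,d} + a(d)\,\psi_{m,d+1},\qquad a(d)=\begin{cases}1 & d\text{ odd},\\[.2ex]\tfrac{d+1+2c\tau(s)}{d+1} & d\text{ even},\end{cases}
\]
the regularity hypothesis on $c$ ensures $a(d)\ne 0$ for every $d\geq 0$. Put $V = \fX_{t,c}(\widetilde\tau)^+$, $W=\im D_\tau^-$, and $V_0 = F_0(V) = \bbc[x]\otimes\bbc\otimes\bbc_\tau\otimes{\bigwedge}^{\!0}\fh$, the linear span of $\{\psi_{m,0}:m\geq 0\}$. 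The main goal is to establish the vector space decomposition $V = V_0\oplus W$; the identification with $\fM_{1,c}(\tau)$ will then be automatic.

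For $V\subseteq V_0+W$, the displayed relation yields $\psi_{m,d}\equiv -\tfrac{m}{a(d-1)}\,\psi_{m-1,d-1}\pmod{W}$ whenever $m,d\geq 1$, together with $\psi_{0,d}\in W$ for all $d\geq 1$ (the $m=0$ case, which is precisely Lemma~\ref{l:image-c-regular}). Iterating along $(m,d)\mapsto (m-1,d-1)$ shows that every $\psi_{m,d}$ with $d\geq 1$ is congruent mod $W$ either to a nonzero scalar multiple of $\psi_{m-d,0}$ (when $m\geq d$) or to $0$ (when $m<d$), which gives $V\subseteq V_0+W$.

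The more delicate task is to verify $V_0\cap W = \{0\}$. Suppose $\sum_{(m,d)\in S} c_{m,d}\cdot\tfrac{1}{2}D_\tau^-(\xi_{m,d}) = \sum_{m'} a_{m'}\psi_{m',0}$ with $S$ finite. Reading off the coefficient of $\psi_{m',d'}$ on the left-hand side, the equation splits into $a_{m'}=(m'+1)\,c_{m'+1,0}$ in $y$-degree $0$, and $(m'+1)\,c_{m'+1,d'} = -a(d'-1)\,c_{m',d'-1}$ in $y$-degree $d'\geq 1$. If some $a_{m_0}\neq 0$ then $c_{m_0+1,0}\neq 0$, and the recursion forces $c_{m_0+1+k,\,k}\neq 0$ for all $k\geq 0$, contradicting the finiteness of $S$. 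Hence every $a_{m'}$ vanishes.

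Combining the two steps, the composition $\fM_{1,c}(\tau)\cong F_0(V)\hookrightarrow V\twoheadrightarrow\coker D_\tau^-$, which is an $\cha$-module map by Proposition~\ref{p:filtration}(1)-(2), is a linear isomorphism and hence an isomorphism of $\cha$-modules. The principal obstacle is the transverse condition $V_0\cap W=\{0\}$: the relation $\tfrac12 D_\tau^-(\xi_{m,0}) = m\psi_{m-1,0} + a(0)\psi_{m,1}$ shows that $W$ does meet the $V_0$-component of individual generators, so the statement cannot be read off from a single relation, and only the finiteness-of-support argument on the coefficients $\{c_{m,d}\}$ rules out nontrivial $V_0$-valued elements of $W$.
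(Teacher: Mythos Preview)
Your proof is correct and follows essentially the same approach as the paper --- using the relations of Lemma~\ref{l:computation} to reduce every $\psi_{m,d}$ with $d\geq 1$ modulo the image to an element of $V_0$, which is exactly the paper's argument that each diagonal $P_n=\bigoplus_{m-d=n}\fX(\widetilde\tau)_{m,d}^0$ is represented in the cokernel by $\psi_{n,0}$. In fact you go further than the paper's own proof: your finite-support argument explicitly verifies the transversality $V_0\cap W=\{0\}$, a point the paper asserts (``each subspace $P_n$ is one-dimensional'') but does not spell out.
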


\begin{proof}
For each $n\in \bbz$, consider the diagonal spaces $P_n=\oplus_{m-d = n}\fX_{t,c}(\widetilde\tau)_{m,d}^0$, with $m,d\in\bbz_{\geq 0}$. It is then clear that $\fX_{t,c}(\tau\otimes{\bigwedge}^{\!0}\fh) = \oplus_{n\in\bbz} P_n$. From the previous Lemma, we get that $P_n = 0$ in $\coker(D_\tau^-)$, if $n<0$. We now claim that, modulo the image of $D_\tau^-$, each subspace $P_n$ is one-dimensional and represented by $\fX_{t,c}(\widetilde\tau)_{n,0,0} = \bbc[x]_n\otimes\bbc\otimes\bbc_\tau\otimes\bbc$, for $n\geq 0$. The claim will finish the proof. But for that, note from Lemma \ref{l:computation} that
\[
\tfrac{1}{2}D_\tau^-(\xi_{n+d,d-1}) = (n+d)\psi_{n+d-1,d-1}+b\psi_{n+d,d},
\]
for nonzero constant $b\in\bbc$, because of the assumption on $c$. By induction, we have that, for all $d>0$, $\psi_{n,0} = \lambda_d\psi_{n+d,d}$ in $\coker(D_\tau^-)$ for a nonzero $\lambda_d\in\bbc$.
\end{proof}

\begin{lemma}\label{l:image-c-sing}
Suppose that the parameter $c$ satisfies $2c =-\tau(s)(2k+1)$, where $s$ is the nontrivial element of $\bbz_2$. Then, the image of $D_\tau^-$ contains the subspace
\[\left(\bigoplus_{m-d<2k+1}\bbc[x]_m\otimes\bbc[y]_d \otimes \bbc_\tau \otimes {\bigwedge}^{\!0}\fh\right)\oplus\Big(\bbc[x]\otimes\bbc[y]_{\leq 2k}\otimes\tau\otimes{\bigwedge}^{\!0}\fh\Big).\]
\end{lemma}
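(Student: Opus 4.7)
The plan is to adapt the inductive argument used to prove Lemma~\ref{l:image-c-regular}, exploiting the new feature of the singular parameter: by Lemma~\ref{l:computation}, under the hypothesis $2c\tau(s)=-(2k+1)$ the coefficient $\frac{2j+1+2c\tau(s)}{2j+1}$ vanishes exactly at $j=k$, while remaining nonzero for every other $j$. This vanishing simultaneously produces the isolated image elements of the second summand and forces a careful handling of the ``break'' at $d=2k$ in the diagonal induction for the first summand.

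First I would establish that $\bbc[x]\otimes\bbc[y]_{\leq 2k}\otimes\tau\otimes{\bigwedge}^{\!0}\fh$ lies in the image of $D_\tau^-$. The vanishing of the coefficient at $j=k$ reduces the formula in Lemma~\ref{l:computation} to $\tfrac{1}{2}D_\tau^-(\xi_{m,2k})=m\psi_{m-1,2k}$, and applying it with $m$ replaced by $m+1$ places $\psi_{m,2k}$ in the image for every $m\geq 0$. Downward induction on $d$ from $d=2k$ down to $d=0$ then completes the argument: assuming that $\psi_{m,d+1}$ lies in the image for every $m\geq 0$, the relation $\tfrac{1}{2}D_\tau^-(\xi_{m+1,d}) = (m+1)\psi_{m,d}+\lambda_d\psi_{m+1,d+1}$, with $\lambda_d\neq 0$ for every $d<2k$ by Lemma~\ref{l:computation}, expresses a nonzero multiple of $\psi_{m,d}$ as $D_\tau^-(\xi_{m+1,d})$ minus a known scalar multiple of $\psi_{m+1,d+1}$.

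For the first summand I would mimic the diagonal induction of Lemma~\ref{l:image-c-regular}. The base case $\psi_{0,d}$ with $d\geq 1$ follows from $\tfrac{1}{2}D_\tau^-(\xi_{0,d-1})=\lambda_{d-1}\psi_{0,d}$ whenever $d-1\neq 2k$, and the inductive step propagates from the diagonal indexed by $n$ to the one indexed by $n+1$ via $\tfrac{1}{2}D_\tau^-(\xi_{n+1,n+d}) = (n+1)\psi_{n,n+d}+\lambda_{n+d}\psi_{n+1,n+d+1}$, provided the coefficient $\lambda_{n+d}$ is nonzero. The main obstacle will be bridging the break at $d=2k$: the exceptional pairs $(n,d)$ with $n+d=2k$ are precisely those where the naive induction fails. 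To handle them I would use the first step, which has already placed $\psi_{m,2k}$ in the image for every $m$, and then chain the relations $\tfrac{1}{2}D_\tau^-(\xi_{m+1,2k+1}) = (m+1)\psi_{m,2k+1}+\psi_{m+1,2k+2}$ and $\tfrac{1}{2}D_\tau^-(\xi_{m+2,2k+2}) = (m+2)\psi_{m+1,2k+2}+\tfrac{2}{2k+3}\psi_{m+2,2k+3}$, whose coefficients are both nonzero, to cross the break and reconnect with the diagonal induction in the regime $d\geq 2k+1$.
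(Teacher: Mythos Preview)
Your treatment of the second summand is correct and matches the paper's argument exactly: the vanishing of the coefficient at $d=2k$ produces $\psi_{m,2k}$ directly, and downward induction handles $d<2k$.

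For the first summand, however, your bridging argument breaks down, and the underlying reason is that the inequality in the printed statement is a typo. Concretely, $\psi_{0,2k+1}$ satisfies $0-(2k+1)<2k+1$, so it lies in the first summand as written, yet it is \emph{not} in the image of $D_\tau^-$. Indeed, the only $\xi_{m',d'}$ whose image lies in the diagonal $m-d=-(2k+1)$ are the $\xi_{m',m'+2k}$ with $m'\ge 0$; but $D_\tau^-(\xi_{0,2k})=0$, and for $m'\ge 1$ each $D_\tau^-(\xi_{m',m'+2k})$ is a combination of $\psi_{m'-1,m'+2k}$ and $\psi_{m',m'+2k+1}$ with both coefficients nonzero. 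A finite linear combination of these can never isolate $\psi_{0,2k+1}$; this is precisely why that diagonal contributes a one-dimensional piece to the cokernel in Proposition~\ref{p:coker-c-singular}. Your chaining of relations at $d=2k+1,2k+2,\ldots$ only shows that $\psi_{m,2k+1},\psi_{m+1,2k+2},\psi_{m+2,2k+3},\ldots$ are all proportional \emph{modulo} the image, not that any of them lies \emph{in} the image. Knowing $\psi_{m,2k}\in\im D_\tau^-$ does not help here, because no image element links $\psi_{*,2k}$ to $\psi_{*,2k+1}$ (the relation through $\xi_{*,2k}$ has vanishing second term).

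The intended inequality is $m-d<-(2k+1)$. With this correction the paper's phrase ``shifting $d\to d+(2k+1)$'' and its remark that $b\neq 0$ for all $d\ge 2k+1$ are exactly your diagonal induction restricted to second index $\ge 2k+1$: the base case $\psi_{0,d}$ for $d\ge 2k+2$ uses $\xi_{0,d-1}$ with $d-1\ge 2k+1$, and the inductive step never meets the exceptional index $2k$. No bridging is required, and your argument then coincides with the paper's.
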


\begin{proof}
First note that the choice of $c$ forces $\tfrac{1}{2}D_\tau^-(\xi_{m,2k}) = m\psi_{m-1,2k}$. This, coupled with the fact that, if $d\neq 2k$, then
\begin{equation}\label{e:important}
\tfrac{1}{2}D_\tau^-(\xi_{m,d}) = m\psi_{m-1,d}+b\psi_{m,d+1}
\end{equation}  
with $b$ nonzero, implies that $\bbc[x]\otimes\bbc[y]_{\leq 2k}\otimes\tau\otimes{\bigwedge}^{\!0}\fh$ is in the image. 

The proof that the other part of the statement is in the image is completely analogous to the proof of Lemma \ref{l:image-c-regular}, by shifting $d\to d+(2k+1)$. Note that the equation (\ref{e:important}) has $b\in\bbc$ nonzero for all $d\geq 2k+1$.
\end{proof}

\begin{proposition}\label{p:coker-c-singular}
Suppose that the parameter $c$ satisfies $2c =-\tau(s)(2k+1)$, where $s$ is the nontrivial element of $\bbz_2$. Then, the cokernel of $D_\tau^-$ is linearly isomorphic to $\bbc[x]\otimes \bbc[y]_{2k+1} \otimes \bbc_\tau\otimes {\bigwedge}^{\!0}\fh$. It follows that $\coker D_\tau^{-}\cong \fM_{1,c}(\tau\otimes\sgn),$ as $\cha$-modules.
\end{proposition}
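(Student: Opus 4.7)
The strategy will be to compute $\coker D_\tau^-$ directly using the action formulae of Lemma~\ref{l:computation} specialised to the singular parameter $2c\tau(s)=-(2k+1)$, and then to match the result with $\fM_{1,c}(\tau\otimes\sgn)$. The crucial feature of the singular parameter is that the scalar $\tfrac{2\ell+1+2c\tau(s)}{2\ell+1}$ appearing in the second formula of Lemma~\ref{l:computation} (reading that lemma with the letter $k$ replaced by a generic $\ell$) vanishes precisely at $\ell=k$, so that $\tfrac12 D_\tau^-(\xi_{m,2k})$ collapses to $m\psi_{m-1,2k}$. This single collapse is responsible for the appearance of the $\bbc[y]_{2k+1}$ factor in the cokernel.

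First I will sharpen Lemma~\ref{l:image-c-sing} by combining the vanishing at $\ell=k$ with the coupled relations at $\ell<k$ to prove, by downward induction on $d$ from $d=2k$, that $\psi_{m,d}\in\im D_\tau^-$ for every $m\ge 0$ and every $d\le 2k$. Next, by upward induction on $d$ starting at $d=2k+2$, and alternating the two formulae of Lemma~\ref{l:computation} at values $\ell>k$ (where all coefficients are nonzero), I will show that
\[
\psi_{m,d}\equiv c_{m,d}\,\psi_{m-(d-2k-1),\,2k+1}\pmod{\im D_\tau^-},\qquad c_{m,d}\ne 0\ \text{ when }\ m\ge d-2k-1,
\]
and $\psi_{m,d}\equiv 0$ otherwise. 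This exhibits $\{\psi_{m,2k+1}:m\ge 0\}$ as a spanning set for $\coker D_\tau^-$. For their linear independence the cleanest route is to invoke the Grothendieck-group identity $[\coker D_\tau^-]=[\ker D_\tau^-]+[\fM_{1,c}(\tau)]$ furnished by the global-index computation of Proposition~\ref{p:globindex}, combined with Corollary~\ref{c:kerZ2}, which identifies $\ker D_\tau^-$ with $L_{1,c}(\tau\otimes\sgn)$ of finite dimension $2k+1$; this forces the graded dimension of $\coker D_\tau^-$ to match that of $\bbc[x]\otimes\bbc[y]_{2k+1}\otimes\bbc_\tau$, so the spanning set is in fact a basis.

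To compare $\cha$-module structures, I will note that $\bbc[y]_{2k+1}\cong\sgn$ as a $G$-module (since $G=\bbz_2$ acts on $y$ by $-1$ and $2k+1$ is odd), so the candidate map $\psi_{m,2k+1}\mapsto x^m\otimes z_{\tau\otimes\sgn}$ is a $G$-equivariant isomorphism, and $Q(y)$-equivariance is automatic. The substantive check is $Q(x)$: using that $\psi_{m+1,2k}\equiv 0$ in the cokernel by the first step, and that $I_s(x^m)=\tfrac{1-(-1)^{m+1}}{2(m+1)}x^{m+1}$, one obtains
\[
Q(x)(\psi_{m,2k+1})\equiv \Bigl(1+\tfrac{c\tau(s)(1-(-1)^{m+1})}{m+1}\Bigr)\psi_{m+1,2k+1}\pmod{\im D_\tau^-},
\]
which matches the formula of Theorem~\ref{t:M-module} for $Q(x)(x^m\otimes z_{\tau\otimes\sgn})$ after expanding $s(z_{\tau\otimes\sgn})=-\tau(s)z_{\tau\otimes\sgn}$. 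The main obstacle will be the scalar bookkeeping in the upward induction of the second paragraph; the Grothendieck-group shortcut sidesteps duplicating this work for a direct linear-independence argument, and the $\cha$-structure comparison then reduces to matching two explicit scalars.
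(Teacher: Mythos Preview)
Your spanning argument (the downward induction for $d\le 2k$ and the upward recursion for $d\ge 2k+2$) is essentially the same as the paper's, just phrased in terms of the individual basis vectors $\psi_{m,d}$ rather than the diagonal pieces $Q_n=\oplus_{m-d=n-(2k+1)}\fX(\widetilde\tau)_{m,d}^0$ that the paper uses. Both reduce to the same two-term relations coming from Lemma~\ref{l:computation}.

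Where you genuinely diverge is in the linear-independence step and the identification of the $\cha$-structure. The paper does not argue linear independence at all (it simply asserts that each $Q_n$ with $n\ge 0$ is one-dimensional in the cokernel), and for the $\cha$-module isomorphism it relies implicitly on Proposition~\ref{p:filtration}(3): since $F_{2k}(\fX_{1,c}(\tau))\subseteq\im D_\tau^-$ by the first step, the quotient $F_{2k+1}/F_{2k}\cong\fM_{1,c}(\bbc[\fh^*]_{2k+1}\otimes\tau)=\fM_{1,c}(\tau\otimes\sgn)$ maps onto $\coker D_\tau^-$, and the linear isomorphism makes this a bijection of $\cha$-modules. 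Your route---using $I(\tau)^-=-\fM_{1,c}(\tau)$ from Proposition~\ref{p:globindex} together with $\ker D_\tau^-\cong L_{1,c}(\tau\otimes\sgn)$ from Corollary~\ref{c:kerZ2} to pin down the graded dimension, and then checking $Q(x)$ by hand---is more explicit and entirely self-contained (no circularity: neither of the results you invoke depends on the present proposition). The trade-off is that the paper's structural shortcut via the filtration avoids your scalar computation for $Q(x)$, while your index argument actually fills a gap the paper leaves open. Both are valid; yours is more thorough.
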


\begin{proof}
The proof is similar to the one of Proposition \ref{p:coker-c-regular}. We define $Q_n = \oplus_{m-d = n-(2k+1)}\fX_{t,c}(\widetilde\tau)_{m,d}^0$ (with $m,d\in\bbz_{\geq 0}$ and $n\in\bbz$) and note that
\[\fX_{t,c}\left(\tau\otimes{\bigwedge}^{\!0}\fh\right) = \Big(\bbc[x]\otimes\bbc[y]_{\leq 2k}\otimes\tau\otimes{\bigwedge}^{\!0}\fh\Big)\oplus\Big(\oplus_{n\in \bbz}Q_n\Big).\]
Then, modulo the image of $D_\tau^-$, each diagonal $Q_n$ with $n<0$ becomes $0$ (Lemma \ref{l:image-c-sing}) while with $n\geq0$ it is one dimensional and represented by $\fX_{t,c}(\widetilde\tau)_{n,2k+1}^0$. It is important to remark that the equation
\[
D_\tau^-(\xi_{n+d,2k+d}) = a\psi_{n+(d-1),2k+1+(d-1)}+b\psi_{n+d,2k+1+d},
\]
also has nonzero constants $a,b\in\bbc$, if $d>0$,  from which $\psi_{n,2k+1} = \lambda_d\psi_{n+d,2k+1+d}$ in $\coker(D_\tau^-)$ for a $\lambda_d\neq 0$, for all $d>0$.
\end{proof}

We summarise this example with the  diagrams for
\[
\begin{tikzcd}
0\xar{r}&\ker D_\tau^{\varepsilon}\xar{r} &\fX_{1,c}(\widetilde\tau)^{\varepsilon} \xar{r}{D_\tau^{\varepsilon}} & \fX_{1,c}(\widetilde\tau)^{-\varepsilon} \xar{r} &\coker D_\tau^{\varepsilon}\xar{r}&0.
\end{tikzcd}
\]
If $c=-\tau(s)(2k+1)/2$ we have
\[
\begin{tikzcd}
0\xar{r}&\fM_{1,c}(\tau)\xar{r} &\fX_{1,c}(\widetilde\tau)^{+}\xar{r}{D^+_\tau}& \fX_{1,c}(\widetilde\tau)^{-} \xar{r} &0\xar{r}&0\\
0\xar{r}&L_{1,c}(\tau\otimes\sgn)\xar{r} &\fX_{1,c}(\widetilde\tau)^{-} \xar{r}{D^-_\tau} & \fX_{1,c}(\widetilde\tau)^{+} \xar{r} &\fM_{1,c}(\tau\otimes\sgn)\xar{r}&0,
\end{tikzcd}
\]
while otherwise, we have
\[
\begin{tikzcd}
0\xar{r}&\fM_{1,c}(\tau)\xar{r} &\fX_{1,c}(\widetilde\tau)^{+} \xar{r}{D^+_\tau} & \fX_{1,c}(\widetilde\tau)^{-} \xar{r} &0\xar{r}&0\\
0\xar{r}&0\xar{r} & \fX_{1,c}(\widetilde\tau)^{-}\xar{r}{D^-_\tau} & \fX_{1,c}(\widetilde\tau)^{+} \xar{r} &\fM_{1,c}(\tau)\xar{r}&0.
\end{tikzcd}
\]
In both cases, we have $I(\tau) = \fM_{t,c}(\tau) = -I(\tau)^-$, in the Grothendieck group of $\Co$.

\end{document}